\newcommand{\bbN}{{\mathbb N}}
\newcommand{\bbQ}{{\mathbb Q}}
\newcommand{\bbR}{{\mathbb R}}
\newcommand{\bbZ}{{\mathbb Z}}
\newcommand{\bbC}{{\mathbb C}}
\newcommand{\bbF}{\mathbb{F}}
\newcommand{\bfG}{{\mathbf G}}
\newcommand{\calF}{\mathcal{F}}
\newcommand{\calO}{\mathcal{O}}
\newcommand{\banach}{\mathsf{Ban}} 
\newcommand{\defq}{\mathrel{\mathop:}=}
\newcommand{\qdef}{=\mathrel{\mathop:}}
\newcommand{\id}{\operatorname{id}}
\newcommand{\im}{\operatorname{im}}
\newcommand{\res}{\operatorname{res}}
\newcommand{\vol}{\operatorname{vol}}
\newcommand{\rig}{\operatorname{rig}}
\newcommand{\Prob}{\operatorname{Prob}}
\newcommand{\coker}{\operatorname{coker}}
\newcommand{\SL}{\operatorname{SL}}
\newcommand{\Aut}{\operatorname{Aut}}
\newcommand{\Sub}{\operatorname{Sub}}
\newcommand{\Sp}{\operatorname{Sp}}
\newcommand{\tors}{\operatorname{tors}}
\newcommand{\temp}{\operatorname{temp}}
\newcommand{\acts}{\curvearrowright}
\newcommand{\norm}[1]{{\left\lVert #1\right\rVert}}
\let\origbullet\bullet
\renewcommand{\bullet}{{\mathbin{\mathsmaller{\origbullet}}}}
\newtheorem{theorem}{Theorem}
\newtheorem{lemma}[theorem]{Lemma}
\newtheorem{cor}[theorem]{Corollary}
\newtheorem{prop}[theorem]{Proposition}
\newtheorem{rem}[theorem]{Remark}
\theoremstyle{definition}
\newtheorem*{theorem_o}{Theorem}
\newtheorem{defn}[theorem]{Definition}
\newtheorem{convention}[theorem]{Convention}
\newtheorem{example}[theorem]{Example}
\newtheorem{question}[theorem]{Question}
\newtheorem{remark}[theorem]{Remark}
\newtheorem{conjecture}[theorem]{Conjecture}
\begin{document}

\title[Higher property~T]{Higher property~T and below-rank phenomena of lattices}

\subjclass[2020]{Primary 22D10, 22E40; Secondary 20J06, 20F65, 46L10}
\keywords{higher property T, group cohomology, lattices, operator algebras}
\author{Uri Bader}
\address{UMD}
\email{uri.bader@gmail.com}

\author{Roman Sauer}
\address{Karlsruhe Institute of Technology}
\email{roman.sauer@kit.edu}

\begin{abstract}
The purpose of this paper is twofold. We explore higher property~T as an abstract group-theoretic property. In particular, we provide new operator-algebraic characterizations of higher property~T. 
Then we turn to lattices in semisimple Lie groups. We relate higher property~T to other cohomological, rigidity and geometric phenomena below the real rank. The second part outlines a conjectural framework that unifies these aspects and reviews recent advances.
\end{abstract}
\maketitle

%\tableofcontents

\section{Introduction}
The purpose of this paper is twofold:  
To explore higher property~T, a higher analogue of Kazhdan's property~T, as a group-theoretic property and to relate it to various  ``below-rank phenomena'' of semisimple groups and their lattices\footnote{~A lattice $\Gamma$ in a locally compact group $G$ is a discrete subgroup such that $\vol(G/\Gamma)<\infty$.}.

The \emph{rank} of a semisimple algebraic group over a field $F$ is the dimension of a maximal $F$-split torus in it.
Over an archimedean local field, that is for a Lie group, it coincides with the dimension of a maximal flat in the associated symmetric space,
and over a non-archimedean local field it coincides with the dimension of the associated Bruhat-Tits building.
It is an empirical fact that this invariant is the threshold for various rigidity phenomena associated with the group $G=\mathbf{G}(F)$, as well as with its lattices.

Our main focus is on the following prototypical example, saying that lattices in simple groups satisfy a higher version of Kazhdan's property~T.

\begin{theorem} \label{thm:higher}
       Let $\mathbf{G}$ be a simple algebraic group of rank $r$ over a characteristic 0 local field $F$ and let $\Gamma<G=\mathbf{G}(F)$ be a lattice. 
       Then $\Gamma$ has property $(T_{r-1})$. If the field $F$ is non-archimedean then $\Gamma$ has property $[T_{r-1}]$.
\end{theorem}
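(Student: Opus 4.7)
The plan is to reduce the assertion for $\Gamma$ to the analogous statement for the ambient group $G$, and then invoke classical vanishing theorems for $G$ itself. Concretely, I would carry out two steps: first prove that $G$ already satisfies higher property~T up to degree $r-1$, and then transfer this property along the inclusion $\Gamma<G$ via induction of representations.

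For the first step, I would establish $\bar H^k_{ct}(G;\pi)=0$ for every $k\leq r-1$ and every unitary representation $\pi$ of $G$ without nonzero invariant vectors. In the archimedean case the main input is the Borel--Wallach vanishing theorem: for an irreducible unitary $\pi$ without invariants, $H^*_{ct}(G;\pi)$ identifies with $(\mathfrak{g},K)$-cohomology and vanishes in degrees strictly below the real rank $r$; a direct integral decomposition reduces the general unitary case to the irreducible one. In the non-archimedean case, the input is a building-theoretic vanishing theorem of Garland/Dymara--Januszkiewicz type: the Bruhat--Tits building $X$ of $G$ is an $r$-dimensional contractible simplicial complex with uniform spectral gap on the links of its vertices, which forces the reduced continuous cohomology of $G$ to vanish in degrees $\leq r-1$ for every unitary coefficient. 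The same combinatorial argument, in the spirit of Oppenheim and Ershov--Jaikin, extends to isometric representations on a large class of super-reflexive Banach spaces, yielding the stronger $[T_{r-1}]$ in the $p$-adic case.

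For the second step, given a unitary representation $\pi$ of $\Gamma$, form the $L^2$-induced representation $\mathrm{Ind}_\Gamma^G\pi$, which is unitary because $G/\Gamma$ carries a finite $G$-invariant measure. A Shapiro-type lemma for continuous cohomology of locally compact groups provides a natural isomorphism
\[
H^k_{ct}\bigl(G;\mathrm{Ind}_\Gamma^G\pi\bigr)\cong H^k(\Gamma;\pi)
\]
for all $k$. Crucially, this isomorphism is \emph{topological} with respect to the natural seminorm topologies on the bar complexes computing both sides, so it descends to an isomorphism of reduced cohomologies. Combining this with the first step yields $\bar H^k(\Gamma;\pi)=0$ for all $k\leq r-1$, which is property $(T_{r-1})$ for $\Gamma$. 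The same scheme, applied to the Banach-theoretic vanishing from the first step in the non-archimedean case, upgrades this to $[T_{r-1}]$.

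The main technical obstacle lies at the interface of these two steps, namely the compatibility of the Shapiro isomorphism with the topology on cochains: one must argue that the image of the coboundary is closed on the $\Gamma$-side if and only if it is closed on the $G$-side, so that vanishing of reduced cohomology transfers cleanly. A second related obstacle, specific to the $[T_{r-1}]$ half of the statement, is verifying that $L^2$-induction (or its appropriate Banach analogue) preserves the particular class of isometric representations for which the building-theoretic vanishing of the first step applies; this is where the archimedean/non-archimedean dichotomy and the distinction between $(T_{r-1})$ and $[T_{r-1}]$ materially enter.
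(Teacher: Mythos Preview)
Your overall architecture matches the paper's: vanishing for $G$ below the rank via Borel--Wallach and direct integrals (archimedean) or Garland-type arguments (non-archimedean), followed by a transfer to~$\Gamma$. However there are two genuine gaps.

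First, you conflate reduced and ordinary cohomology. Step~1 only gives $\bar H^k_{c}(G,\pi)=0$, and after Shapiro you only get $\bar H^k(\Gamma,\pi)=0$. This is \emph{not} property $(T_{r-1})$: Definition~\ref{def:higherT} requires vanishing of the unreduced $H^k$. The passage from reduced to unreduced vanishing is nontrivial and is the content of the ultrapower argument (Lemma~\ref{lem:uptrick}): one uses that the class of unitary $\Gamma$-modules without invariants is closed under ultrapowers. Crucially, this trick cannot be run on~$G$ itself, because the ultrapower of a continuous $G$-representation is no longer continuous. This is why the paper performs the zig-zag of Remark~\ref{rem:zigzag}: pass from $G$ to a \emph{cocompact} lattice via Shapiro (reduced), apply the ultrapower trick there (unreduced), then go back to~$G$, and only then to an arbitrary lattice.

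Second, your Shapiro step as written only handles cocompact lattices. For non-uniform~$\Gamma$ the $L^2$-induction Shapiro isomorphism you invoke fails, and the paper's step~(6) is devoted precisely to building a substitute: a polynomial-cohomology Shapiro lemma whose validity below the rank rests on the Leuzinger--Young theorem (Theorem~\ref{thm: leuzinger+young}) that the filling functions of irreducible lattices are polynomial in those degrees. You flag topological compatibility of Shapiro as the obstacle, but the actual obstacle is that no such isomorphism exists in the non-uniform case without this geometric input.

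A smaller point: you misread the distinction between $(T_n)$ and $[T_n]$. Both concern \emph{unitary} coefficients; $[T_n]$ drops the hypothesis $V^\Gamma=0$, it is not a Banach-space statement. The reason $[T_{r-1}]$ holds non-archimedeanly is that $H^k_c(G,\bbC)=0$ for $1\le k\le r-1$ there (the Bruhat--Tits building has no cohomology in that range), which fails over~$\bbR$.
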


To use cohomology in defining higher property~T seems natural in view of the Guichardet-Delorme characterization of classical property~T. 

\begin{defn}[{\cite[Definition 1.1]{badsau}}] \label{def:higherT}
    A locally compact second countable group $G$ has property $[T_n]$ if
its continuous cohomology $H_c^j(G,V)$ vanishes for every unitary $G$-representation $V$
and every $1 \leq j \leq n$. It has property $(T_n)$ if the same is true provided that $V$ has
no nontrivial $G$-invariant vectors.
\end{defn}

Property $(T_1)$ is equivalent to property $[T_1]$, and both properties are equivalent to Kazhdan’s property T.
The fact that higher rank lattices\footnote{~Higher rank lattices are lattices in groups of rank $r\geq 2$.} have property T is due to Kazhdan.
For $F$ non-archimedean Theorem~\ref{thm:higher} is due to Dymara-Januszkiewicz \cite{Dy-Jan} and
Grinbaum-Reizis-Oppenheim \cite{GR-Opp}
 (using the so called ``Garland Method'').
The general case is 
due to the authors~\cite{badsau}. The proof will be sketched in \S\ref{sec:proof}.

\begin{rem}\label{rem: harder}
  In view of \cite{harder}, it is highly likely that the characteristic 0 assumption on $F$ in Theorem~\ref{thm:higher} could be removed.
  See \S\ref{subsec:poly} for a further discussion.
\end{rem}

\begin{rem}
    Over an archimedean field~$F$, the groups $G$ and lattices $\Gamma$ as in Theorem~\ref{thm:higher} do not typically satisfy $[T_{r-1}]$.
    For example $H^2(\Sp_n(\mathbb{Z}),\mathbb{C})\neq 0$.
\end{rem}

Despite the previous remark, the range of higher property~T can exceed the rank. There are even rank~$1$ groups with higher property~T, as the following theorem demonstrates. 
Recall that isometry group of the Cayley hyperbolic plane, denoted $F_4^{-20}$, is a rank 1 simple Lie group.

\begin{theorem}\label{thm:F4}
The group $F_4^{-20}$ and its lattices have property~$[T_3]$.
\end{theorem}

Noting that the 2nd and 3rd cohomologies for $F_4^{-20}$ with trivial coefficients vanish, see~e.g \cite{Borel-Hirzebruch}*{\S19.1}, Theorem~\ref{thm:F4} follows for the Lie group and its uniform lattices by \cite{badsau}*{Theorem~B}. For non-uniform lattices it is due to L{\'o}pez Neumann and Paucar~\cite{neumann+paucar}*{Corollary~0.3}.

\begin{rem}
    Over a non-archimedean field, this is never the case, as the Steinberg representation is always cohomological in the degree of the rank.
\end{rem}

Theorem~\ref{thm:higher} might be conjecturally extended in two ways: By extending the class of coefficients to Banach spaces and by allowing semisimple or $S$-semisimple groups instead of only simple groups. 
\begin{conjecture} \label{conj:SR}
   Let $\Gamma$ and $r$ be as in Theorem~\ref{thm:higher}.
   Let $V$ be a super-reflexive Banach space\footnote{~A Banach space is super-reflexive if any of its ultrapowers is reflexive.} and $\Gamma\to B(V)$ be a linear isometric representation with no invariants, that is $V^\Gamma=0$.
   Then for every $j<r$, 
   \[ H^j(\Gamma,V)=0. \]
\end{conjecture}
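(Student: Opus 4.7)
The plan is to transplant the strategy of Theorem~\ref{thm:higher} from unitary to super-reflexive Banach coefficients, combining it with Banach versions of strong property~(T). A first step is a Shapiro-type identification reducing the question to continuous cohomology of $G$: $H^j(\Gamma,V)\cong H^j_c(G,\calF)$ for a suitably induced super-reflexive $G$-module $\calF$, e.g.\ an $L^p(G/\Gamma)$-valued version of $V$. Super-reflexivity is the natural hypothesis here because it is inherited by such induced modules and guarantees that bar resolutions, averaging over compact subgroups, and Eckmann-Shapiro-type identifications all behave much as in the Hilbert setting.

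With the question transferred to $G$, one proceeds degree by degree, aiming to prove $H^j_c(G,W)=0$ for $j<r$ and every super-reflexive isometric $G$-module $W$ without invariants. In degree one this is the Bader-Furman-Gelander-Monod fixed-point theorem for higher rank semisimple groups; in higher degrees, the plan is to feed quantitative spectral information from Lafforgue's strong Banach property~(T), extended to further higher rank simple groups by de Laat-de la Salle, into a geometric resolution --- arising from the parabolic flag variety in the archimedean case or the Bruhat-Tits building in the non-archimedean case --- and run a spectral sequence argument to force degeneration of the cohomology below degree~$r$.

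The main obstacle, in my view, is that the unitary proof of Theorem~\ref{thm:higher} relies on tools with no direct super-reflexive analogue: Plancherel-type decompositions, delicate vanishing statements for parabolic subgroups with tempered coefficients, and Howe-Moore mixing. In the non-archimedean case, this can plausibly be circumvented by a Garland-type local argument for super-reflexive coefficients, along the lines of Oppenheim's $L^p$-Garland theorems and subsequent Banach-valued refinements: positivity of the spectral gap of the link Laplacian with values in a uniformly convex target implies local-to-global cohomology vanishing on the building. The archimedean case looks the hardest. There one lacks a building, and I would attempt to reduce to $S$-arithmetic situations via Margulis arithmeticity and restriction of scalars, so that a non-archimedean place carries the Garland argument while $L^p$-induction transports the conclusion back to $V$; making this precise while preserving super-reflexivity throughout is itself the principal technical challenge.
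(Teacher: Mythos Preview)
First, note that this statement is a \emph{conjecture}: the paper does not prove it unconditionally. What the paper does prove (Theorem~\ref{thm:mainBBBS}) is that Conjecture~\ref{conj:SR} follows from Conjecture~\ref{conj:ai} --- the assertion that every standard rank~$1$ subgroup of $G$ acts with no almost invariant vectors on any super-reflexive isometric $G$-module. That hypothesis is verified for Hilbert spaces, $L^p$-spaces, and noncommutative $L^p$-spaces (Proposition~\ref{prop:list}), but remains open for general super-reflexive coefficients.

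The paper's route to this conditional result is quite different from yours. It proceeds by induction on the rank, using the \emph{opposition complex} of the Tits building: an $(r-2)$-connected $G$-complex whose simplex stabilizers are Levi subgroups of lower semisimple rank. An equivariant spectral sequence over this complex reduces the vanishing for $G$ to the inductive hypothesis applied to these Levis (Propositions~\ref{prop:induction} and~\ref{prop:center}). Hausdorffness at the rank comes from an ultrapower argument, and it is precisely there that Conjecture~\ref{conj:ai} is invoked, to ensure no new invariants appear in the ultralimit. The passage between $G$ and lattices uses the same polynomial-cohomology Shapiro substitute as in Theorem~\ref{thm:higher}. Crucially, this argument avoids Vogan--Zuckerman and direct integral decomposition altogether, which is the point: those tools have no super-reflexive analogue, as you correctly note.

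Your proposed strategy has genuine gaps. Strong Banach property~(T) \`a la Lafforgue and de Laat--de la Salle is degree~$1$ information; ``feeding it into a spectral sequence from the flag variety'' to force vanishing up to degree~$r-1$ is not an existing technique and would itself be the heart of the matter. Your archimedean workaround via $S$-arithmetic reduction is more seriously flawed: Margulis arithmeticity tells you a higher-rank lattice is arithmetic, but for a cocompact lattice in a real simple group the arithmetic structure can involve only archimedean places, so there need be no non-archimedean place on which to run a Garland argument. Even when such a place exists, transporting the conclusion back to the original $\Gamma$-module $V$ via $L^p$-induction presupposes exactly the kind of Shapiro-type control you are trying to establish.
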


In short, we regard this conjecture as saying that $\Gamma$ has the \emph{super-reflexive property $(T_{r-1})$}.
The case $j=1$ (which was conjectured in \cite{BFGM}) was proved recently in \cite{Oppenheim} and \cite{Sa-Le-at}, see Theorem~\ref{thm:Opp} below.
When removing the assumption $V^\Gamma=0$ but leaving as is the rest of the setup of Conjecture~\ref{conj:SR},
using induction techniques developed in \cite{badsau}, the conjecture implies that for every $j<r$,
we have
\begin{equation} \label{eq:simple}
    H^j(\Gamma,V) \cong H^j(\Gamma,V^\Gamma) \cong H^j(G,V^\Gamma) \cong V^\Gamma \otimes H^j(G,\mathbb{C}),
\end{equation}
   where the left map comes from the inclusion $V^\Gamma\to V$ and the middle map from the restriction from $G$ to $\Gamma$.

   Next, we discuss the case where the group $G$ is not necessarily simple but semisimple or $S$-semisimple\footnote{~$G=\prod_{s\in S}\mathbf{G}_s(F_s)$ for local fields $F_s$ and $F_s$-semisimple groups $\mathbf{G}_s$, $r$ being the sum of ranks.}.
   Then there is no hope for the vanishing below the rank phenomenon.
   However, we still expect that all the cohomology of~$\Gamma$ below the rank will come from $G$, as in \eqref{eq:simple}.

   \begin{conjecture} \label{conj:semisimple}
       Let $G$ be an $S$-semisimple group of rank $r$ and $\Gamma<G$ an irreducible\footnote{~$\Gamma$ is irreducible if its projection to each simple factor of $G$ is dense.} lattice.
       Let $V$ be a super-reflexive Banach space and $\Gamma\to B(V)$ be a linear isometric representation.
       There exists a $\Gamma$-invariant subspace $V_0\subset V$ on which the $\Gamma$-action extends to a $G$-action and such that for every $j<r$, 
   \[ H^j(\Gamma,V) \cong H^j(\Gamma,V_0) \cong H^j(G,V_0), \]
   where the left map comes from the inclusion $V_0\hookrightarrow V$ and the right map from the restriction from $G$ to $\Gamma$.
   In particular, if
   the $\Gamma$-action on $V$ does not extend to a $G$-action on any non-trivial subrepresentation then $H^j(\Gamma,V)= 0$.
   \end{conjecture}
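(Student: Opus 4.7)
The plan is to construct $V_0$ as a maximal closed $\Gamma$-invariant subspace on which the action extends to a continuous isometric $G$-action, and then to establish the two isomorphisms separately via a long exact sequence and via the induction techniques of \cite{badsau}. First, I would set up $V_0$ by a Zorn-type argument: the family of closed $\Gamma$-invariant subspaces admitting a compatible $G$-action is nonempty and closed under taking the closure of ascending unions. Super-reflexivity is essential here, since extending a $\Gamma$-action to $G$ by approximation requires weak-limit procedures on the unit ball of the bidual, where reflexivity provides the necessary compactness and lets one verify continuity of the extended action on the closure.

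With $V_0$ fixed, the long exact cohomology sequence associated to $0 \to V_0 \to V \to V/V_0 \to 0$ reduces the first isomorphism $H^j(\Gamma,V) \cong H^j(\Gamma,V_0)$ in degrees $j<r$ to the vanishing $H^j(\Gamma, V/V_0) = 0$ for all $j<r$. By maximality, $V/V_0$ contains no nonzero $\Gamma$-subrepresentation on which the action extends to a $G$-action. To exploit this, I would apply Shapiro's lemma to pass to
\[ H^j(\Gamma, V/V_0) \cong H^j\bigl(G, \operatorname{Ind}_\Gamma^G (V/V_0)\bigr), \]
and then argue, using irreducibility of $\Gamma$ in $G$ together with a Mautner/Howe--Moore analysis applied to each simple factor of $G$, that the induced representation has no invariants along any simple factor. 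A super-reflexive version of higher property $(T_{r-1})$ for the ambient group $G$ (itself a semisimple extension of Conjecture~\ref{conj:SR}) should then yield the desired vanishing.

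The second isomorphism $H^j(\Gamma, V_0) \cong H^j(G, V_0)$ in the range $j<r$ is handled by the induction techniques developed in \cite{badsau}, which in this range identify the restriction map from $G$-cohomology to $\Gamma$-cohomology of a $G$-representation, provided one controls certain auxiliary cohomology groups by vanishing. These in turn are again controlled by the same super-reflexive $(T_{r-1})$ input for $G$.

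The hardest step, I expect, is the vanishing $H^j(\Gamma, V/V_0) = 0$ for $j<r$. This is a cohomological analog of the Margulis normal subgroup theorem in the super-reflexive Banach category: representations of an irreducible lattice admitting no $G$-extendable piece ought to be cohomologically rigid throughout the range $j<r$. In the unitary world this would be approached through direct integral decomposition, the Mackey machine, and Howe--Moore vanishing for matrix coefficients; in the super-reflexive Banach setting these tools are unavailable, and substitutes must be developed. The existing techniques of \cite{Oppenheim} and \cite{Sa-Le-at} treat only the degree $j=1$ case, and extending them uniformly through the full range below the rank is where the bulk of the technical work must lie.
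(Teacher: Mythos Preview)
The statement you are attempting to prove is Conjecture~\ref{conj:semisimple}, which the paper does \emph{not} prove. The paper explicitly states that ``Conjecture~\ref{conj:semisimple} is open even in degree~$1$'', and only establishes partial cases (e.g., $G$ a Lie group with property~T and $V$ unitary, via \cite{badsau}*{Theorem~D}) or conditional implications (e.g., Conjecture~\ref{conj:SSG} $\Rightarrow$ Conjecture~\ref{conj:semisimple} for unitary representations in the Lie case). There is thus no proof in the paper to compare against.

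Your proposal is not a proof but a sketch that openly invokes a conjectural input: ``a super-reflexive version of higher property $(T_{r-1})$ for the ambient group~$G$ (itself a semisimple extension of Conjecture~\ref{conj:SR})''. This is essentially circular, since the vanishing of $H^j(\Gamma, V/V_0)$ for $j<r$ when $V/V_0$ has no $G$-extendable piece is precisely the content of the ``in particular'' clause of the conjecture you are trying to establish. You correctly identify this as the hardest step and note that the required Banach-space tools are unavailable; but that is an acknowledgment that the argument is incomplete, not a strategy for completing it.

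There are also genuine gaps in the supporting steps. First, maximality of $V_0$ via Zorn does \emph{not} imply that $V/V_0$ has no nonzero closed $\Gamma$-subspace on which the action extends to~$G$: if $W\subset V/V_0$ is such a subspace with preimage $V_1\subset V$, then $V_0$ and $V_1/V_0\cong W$ both carry $G$-actions, but there is no reason the $\Gamma$-action on $V_1$ itself extends to~$G$ (this is an extension problem, and super-reflexivity does not furnish a $\Gamma$-equivariant splitting). Second, your appeal to Shapiro's lemma in the Banach setting for a possibly non-uniform lattice is exactly the difficulty the paper addresses at length via polynomial cohomology and the Leuzinger--Young filling theorem (see \S\ref{subsec:poly}); it cannot be invoked as a black box.
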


    The following two theorems from \cite{badsau} establish Conjecture~\ref{conj:semisimple} in two important special cases. Theorem~\ref{thm:semisimple Lie without T} is a generalization of Borel's stability theorem to a wider range of degrees and from constant to unitary coefficients. 

\begin{theorem}[\cite{badsau}*{Theorem D}]\label{thm:semisimple Lie with T}   
If $G$ is a semisimple Lie group with property~T and $V$ is a unitary $\Gamma$-representation, then Conjecture~\ref{conj:semisimple} holds true.
\end{theorem}

\begin{theorem}[\cite{badsau}*{Theorem C}]\label{thm:semisimple Lie without T}   
If $G$ is a semisimple Lie group (not necessarily with property~T) and $V$ is the restriction of a unitary $G$-representation, then Conjecture~\ref{conj:semisimple} holds true.
\end{theorem}

We expect that the proof of Theorem~\ref{thm:higher} could be also extended to the S-semisimple case (Conjecture~\ref{conj: S-arithmetic filling} below), which is concerned with an extension of Theorem~\ref{thm: leuzinger+young} by Leuzinger-Young to this setting.
This is due to the Strong Spectral Gap Conjecture~\ref{conj:SSG}, 
which will be discussed in \S\ref{sec:deg1}.
We stress that Conjecture~\ref{conj:semisimple} is open even in degree~$1$.
The degree~$1$ vanishing over unitary representations, which is relevant for irreducible lattices in product of rank~$1$ groups, is quite intriguing.
It is equivalent to Shalom's Spectral Gap Conjecture~\ref{conj:SG}.
Its validity implies for instance the full solution of Connes' \emph{Character Rigidity conjecture} and Lubotzky's \emph{property $\tau$ conjecture}. See \S\ref{sec:deg1} for details.

In a paper \cite{BBBS}, yet to be written together with Saar Bader and  Shaked Bader, we show that lattices satisfy vanishing results for general isometric actions on von Neumann modules.
In \S\ref{sec:proof} we will sketch the proof of the next theorem 
and relate it to some further conjectures. 

For the notions of \emph{$\Gamma$-unitary von Neumann algebra} and the \emph{noncommutative $L^p$-space $L^p(M)$ of a von Neumann algebra~$M$} we refer to the beginning of~\S\ref{subsec: vN algebras}. The reader who prefers a von-Neumann free statement should look at Corollary~\ref{cor:com} below. 

\begin{theorem}[\cite{BBBS}] \label{thm:noncom}
Let $\Gamma$ be a lattice in a simple Lie group $G$ of rank $r$ and $M$ a von Neumann algebra. Then for $k\leq r-1$ and $1\leq p < \infty$, for every 
linear isometric action of $\Gamma$ on $L^p(M)$
the inclusion $L^p(M)^\Gamma\hookrightarrow L^p(M)$ induces isomorphisms
    \[ H^k(\Gamma, L^p(M)^\Gamma) \cong H^k(\Gamma,L^p(M)). \]
    Moreover, $H^r(\Gamma,L^p(M))$ is Hausdorff.
    For $p=\infty$ this holds under the extra assumption that $M$ is $\Gamma$-unitary.
    Similar statements hold for $G$.
\end{theorem}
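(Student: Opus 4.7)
The plan is to follow the script of Theorem~\ref{thm:higher} (to be sketched in~\S\ref{sec:proof}) with unitary coefficients replaced by non-commutative $L^p$-coefficients, handled in three movements.

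First, split off the invariants. For $1<p<\infty$, $L^p(M)$ is uniformly convex, hence super-reflexive, and a Ryll-Nardzewski style mean ergodic argument produces a bounded $\Gamma$-equivariant projection $L^p(M)\twoheadrightarrow L^p(M)^\Gamma$ whose kernel $W$ is itself a non-commutative $L^p$-module with no nonzero $\Gamma$-fixed vector. The induced splitting together with the long exact sequence reduces the isomorphism claim to showing $H^k(\Gamma,W)=0$ for $k\le r-1$. The endpoints $p=1$ and $p=\infty$ fall outside the reflexive range and I would handle them by duality; this is why the case $p=\infty$ requires the extra $\Gamma$-unitarity on $M$, so that $L^\infty(M)=M$ pairs with $L^1(M)$ as $\Gamma$-modules in a way compatible with the action.

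Second, prove the vanishing on $W$, following the two-step pattern of~\cite{badsau}. Use Shapiro/induction machinery to identify $H^k(\Gamma,W)$ in degrees $k<r$ with continuous $G$-cohomology on an induced $L^p$-module, thereby coupling the lattice and group statements simultaneously. Then establish the higher-rank Guichardet-Delorme-type vanishing $H^k_c(G,W')=0$ for $k\le r-1$ and any $L^p$-type $W'$ with $(W')^G=0$. Degree one is Theorem~\ref{thm:Opp} of Oppenheim and de~Laat-Sauer; for higher degrees I would iterate a Garland/Leuzinger-Young argument (Theorem~\ref{thm: leuzinger+young}), realising cocycles as $K$-equivariant maps on the symmetric space $X=G/K$ and averaging across the $r$-dimensional flats, with uniform convexity of $L^p(M)$ replacing the Hilbert projection lemma. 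The Hausdorff statement in degree~$r$ drops out of the same averaging: it produces a bounded chain homotopy with closed image.

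The main obstacle is the higher-degree vanishing. In the Hilbertian setting, Hodge theory supplies harmonic cocycles and a spectral calculus that underpin both~\cite{Dy-Jan,GR-Opp} and~\cite{badsau}, but neither tool survives the passage to $p\ne 2$. The convexity-based substitutes must be sharp enough to upgrade the isoperimetric filling of Leuzinger-Young from real to $L^p(M)$-valued coefficients and to remain quantitatively uniform as one slides along flats; making this robust enough to annihilate cocycles in every degree below~$r$, and simultaneously robust enough at the degree-$r$ boundary to secure closed image, is what I expect to be the technical heart of~\cite{BBBS}.
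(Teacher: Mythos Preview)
Your plan diverges from the paper in the crucial second movement. The paper explicitly warns (see the Remark after Theorem~\ref{thm:mainBBBS}) that the script of Theorem~\ref{thm:higher} \emph{does not} extend to Banach coefficients: steps (1) and (2) there rely on Vogan--Zuckerman and direct integral decomposition, neither available outside the unitary world, and Leuzinger--Young enters only as a Shapiro substitute for non-uniform lattices, not as a vanishing mechanism. Your proposed ``iterate a Garland/Leuzinger--Young argument, averaging across flats with uniform convexity replacing Hilbert projections'' is not a known technique and would be a new theorem in itself; the paper does something structurally different. The actual route in~\cite{BBBS} is induction on the rank via the \emph{opposition complex} of the Tits building: its $k$-simplex stabilizers are Levi subgroups of semisimple rank $r-k-1$, and one feeds the inductive hypothesis (Propositions~\ref{prop:induction} and~\ref{prop:center}) back in, using a Howe--Moore argument to control invariants under simple factors. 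The whole scheme is powered by Conjecture~\ref{conj:ai}, which is verified for (non-commutative) $L^p$ spaces, $1<p<\infty$, in Proposition~\ref{prop:list}; this is what replaces the Hilbert-space representation theory, not an averaging over flats. The zig-zag between $G$ and an arithmetic lattice of full split rank, together with the polynomial-cohomology Shapiro machinery, is still used, but as a technical device inside the induction, not as the source of vanishing.

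Your treatment of the endpoints is also off. The case $p=1$ is \emph{not} handled by duality: $L^1(M)$ is generally not a dual space, and the theorem asserts the $p=1$ cohomological statement without any $\Gamma$-unitarity hypothesis. The paper runs the same opposition-complex induction for $p=1$ (the analogue of Conjecture~\ref{conj:ai} holds by~\cite{Olivier}), but the step corresponding to Lemma~\ref{lem:subadd0} fails because the coboundary space $B^r(S,V)$ need not be super-reflexive; instead one shows it is \emph{L-embedded} and invokes the fixed-point theorem of~\cite{BGM}. Duality is used only in the other direction, to obtain the homological statements for $1<p\le\infty$ and the $p=\infty$ cohomological statement under the $\Gamma$-unitary assumption.
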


A duality argument shows that the homological analogue of the above results holds for $1< p \leq \infty$ and for $p=1$ under the extra assumption that $M$ is $\Gamma$-unitary.
However, the extra assumption in the theorem is indeed needed for $p=\infty$ and for the homological version with $p=1$.
Compare with the remark after Conjecture~\ref{conj:tracial}. 
For its importance, we state the corollary of this theorem in case $M$ is a commutative von Neumann algebra.

\begin{cor} \label{cor:com}
Let $\Gamma$ be a lattice in a simple Lie group $G$ of rank $r$. Then for $k\leq r-1$ and $1\leq p < \infty$, for every 
linear isometric action of $\Gamma$ on $L^p(X)$ where $X$ is a measured space,
the inclusion $L^p(X)^\Gamma\hookrightarrow L^p(X)$ induces isomorphisms
    \[ H^k(\Gamma, L^p(X)^\Gamma) \cong H^k(\Gamma,L^p(X)). \]
    In particular, if $L^p(X)^\Gamma=0$, then for every $k\leq r-1$, $H^k(\Gamma,L^p(X))=0$.
        Similar statements hold for $G$.
\end{cor}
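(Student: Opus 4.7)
The plan is to deduce this as a direct specialization of Theorem~\ref{thm:noncom} to a commutative von Neumann algebra. Given a measured space $X$, I would set $M = L^\infty(X)$, a commutative von Neumann algebra. Under the standard identification between classical and noncommutative integration, the noncommutative $L^p$-space $L^p(M)$ of~\S\ref{subsec: vN algebras} is canonically isometric, as a Banach space, to the usual Lebesgue space $L^p(X)$ for every $1 \leq p < \infty$, and the $\Gamma$-invariants of the two spaces coincide under this identification.

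With this translation in place, I would apply Theorem~\ref{thm:noncom} to $M$. Since the corollary only treats $1 \leq p < \infty$, no ``$\Gamma$-unitary'' hypothesis on $M$ is needed, and the theorem yields the desired isomorphism $H^k(\Gamma, L^p(X)^\Gamma) \cong H^k(\Gamma, L^p(X))$ for $k \leq r-1$. The ``In particular'' clause is then immediate from $H^k(\Gamma, 0) = 0$, and the analogous statements for $G$ follow by the same transfer applied to the $G$-version of Theorem~\ref{thm:noncom}.

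There is no substantive obstacle. The only point that merits a brief check is naturality of $L^p(M) \cong L^p(X)$ with respect to a general isometric $\Gamma$-action on $L^p(X)$---one should observe that Theorem~\ref{thm:noncom} takes as input an action by isometries on $L^p(M)$ itself, not merely one induced from an action on $M$, so no factorization through $\Aut(M)$ is required and the transfer is purely formal.
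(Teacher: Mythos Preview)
Your proposal is correct and matches the paper's own treatment: the paper does not give a separate proof of Corollary~\ref{cor:com} but simply introduces it as ``the corollary of this theorem in case $M$ is a commutative von Neumann algebra,'' i.e., exactly the specialization $M=L^\infty(X)$ you describe. Your remark that Theorem~\ref{thm:noncom} takes as input an isometric action on $L^p(M)$ (rather than one induced from $\Aut(M)$) is apt and confirms that the transfer is purely formal.
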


\begin{rem}\label{rem: gaussian}
Associated with any unitary representation $V$ of $\Gamma$ there is a corresponding Gaussian measure, which yields a probability measure preserving action of $\Gamma$ on a space $X$ such that $V$ embeds into $L^2(X)$ as a subrepresentation~\cite{zimmer}*{Example~5.2.13 on p.~110}.
\end{rem}

\begin{rem}[Comparision of the paper~\cite{badsau} and the forthcoming paper~\cite{BBBS}]
    We clarify the relation between the original higher property T paper~\cite{badsau} and the announced paper~\cite{BBBS}.
In view of Remark~\ref{rem: gaussian}
    we could express Theorem~\ref{thm:higher}, that is~\cite{badsau}*{Theorem~A}, in a similar way as Corollary~\ref{cor:com} in the case~$p=2$. On the other hand, the generalization of Theorems B,C,D,E,F,G in~\cite{badsau}, which includes Theorems~\ref{thm:F4},~\ref{thm:semisimple Lie with T} and~\ref{thm:semisimple Lie without T}, to the setting of Banach representations remains unclear at the moment. 
    
    The forthcoming proofs of Theorem~\ref{thm:noncom} and Corollary~\ref{cor:com} do not subsume the proof of Theorem~\ref{thm:higher} but depend on it.    
    The reason is that a comparatively simple step in~\cite{badsau} is no longer applicable for $p\ne 2$ and must be replaced by a completely new and delicate analysis, while the full technical machinery developed in~\cite{badsau} remains indispensable. See also Remark~\ref{rem: proof strategy}.
\end{rem}
We denote by $(T_n)_{L^p}$, where $n=r-1$, the property established for $\Gamma$ in this corollary.
Note that $(T_n)_{L^2}$ is simply $(T_n)$.
We will see later that $(T_n)_{L^1}$ has some particular geometric applications.

%The proof of Theorem~\ref{thm:noncom}, along with some related conjectures, will be discussed in section \ref{sec:proof}.

The previous theorems and conjectures have lots of interesting applications and ramifications in relation to various rigidity and geometric phenomena below the rank. We discuss these in~\S\ref{subsec: cohomological below-rank},~\S\ref{sec:geo} and~\S\ref{sec:sd}. Let us highlight a few of them here.

In~\S\ref{subsec: Borel stability} we discuss generalizations of Borel's stability theorem and theorems of Borel-Yang to a wider range of degrees and to unitary coefficients. These are based on Theorem~\ref{thm:semisimple Lie without T} and certain adelic generalizations of it. 
Corollary~\ref{cor:com} implies conjectures of Gromov and Farb that will be dealt with in \S\ref{sec:LP} and \S\ref{sec:geo}, respectively.
In \S\ref{sec:other} we speculate on torsion growth,
a speculation which we relate by the end of \S\ref{sec:geo}, after discussing expansions and waist inequalities,
to the relations between expansion over the reals, over the integers and over finite fields.
In \S\ref{sec:sd} we regard cohomological vanishing in small degrees. 
We discuss degree 1 applications of Conjecture~\ref{conj:semisimple}, which we already alluded to earlier. Then we turn to degree 2 applications of Theorem~\ref{thm:higher} to the theory of stability.
Finally, we briefly discuss an interesting use of Theorem~\ref{thm:F4} in~\S\ref{subsec:measdiv}

We are not shy of making here many conjectures and speculations.
For the reader's orientation, we included Figure~\ref{fig1}, collecting various concepts discussed in \S\ref{sec:geo} and their possible relations, and Figure~\ref{fig2}, collecting various conjectures discussed in \S\ref{sec:sd} and their known graph of implications. 

While still essentially all examples of higher property~T arise from lattices in semisimple groups, the importance of its notion and its geometric implications (e.g.~Theorem~\ref{thm:Farb2}) suggest to study it more broadly as an abstract group-theoretic property. This is done in~\S\ref{sec: as abstract group property}. The style of that section is less survey-like than~\S\ref{sec: lattices in semisimple groups} and more research-oriented. We give a taste by presenting two results of this section here. 

We provide a sum-of-squares characterization of property~$(T_n)$ in~\S\ref{subsec: C-algebraic reformulation} for the Laplace-operator regarded as a matrix over the group ring, generalizing the celebrated Ozawa criterion \cite{OzawaT}.
See \S\ref{subsec:FP} for a discussion of finiteness properties.

\begin{theorem}\label{thm:grpalg}
Assume $\Gamma$ satisfies the finiteness property $FP_\infty(\bbQ)$.
     The group $\Gamma$ has property $(T_n)$ if and only if 
  for every $0\leq k \leq n$ there exists $\epsilon > 0$ and elements $x_1,\dots,x_j\in M_{m_k}(\bbC\Gamma)$ such that 
    \[ \Delta_0(\Delta_k -\epsilon)\Delta_0 = \sum_{i=0}^j x_i^*x_i.\]   
\end{theorem}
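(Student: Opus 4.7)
The plan is to recast property $(T_n)$ as uniform spectral gaps of combinatorial Hodge--Laplacians via the Garland picture, and then adapt Ozawa's sum-of-squares characterization of classical property $(T)$ to the higher-degree, matrix-valued setting. Using $FP_\infty(\bbQ)$, I fix a resolution $P_\bullet \twoheadrightarrow \bbQ$ by finitely generated free $\bbQ\Gamma$-modules $P_k = (\bbQ\Gamma)^{m_k}$ with $m_0 = 1$, and write each boundary $\partial_k\colon P_k \to P_{k-1}$ as a matrix over $\bbQ\Gamma$. Combining the involution $g\mapsto g^{-1}$ with matrix transpose gives an involution $*$ on every $M_m(\bbC\Gamma)$, and I set
\[ \Delta_k \defq \partial_{k+1}\partial_{k+1}^* + \partial_k^*\partial_k \in M_{m_k}(\bbC\Gamma). \]
In particular $\Delta_0 = \partial_1\partial_1^* \in \bbC\Gamma$; I embed it as the scalar matrix $\Delta_0 \cdot I_{m_k}$ whenever working inside $M_{m_k}(\bbC\Gamma)$, and by abuse denote this embedding also by $\Delta_0$.

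For any unitary representation $V$, the cochain complex $(V^{m_\bullet}, d^\bullet)$ with $d^k$ encoded by $\partial_{k+1}^*$ computes $H^\bullet(\Gamma, V)$; standard Hodge theory identifies $\ker \Delta_k \subset V^{m_k}$ with reduced $H^k(\Gamma, V)$, and a uniform spectral gap $\Delta_k \geq \epsilon$ on $V^{m_k}$ is equivalent to the vanishing of unreduced $H^k(\Gamma, V)$. Since $\ker(\Delta_0 I_{m_k}) = (V^\Gamma)^{m_k}$ and $\Delta_0$ is self-adjoint, $\overline{\im(\Delta_0 I_{m_k})} = ((V^\Gamma)^\perp)^{m_k}$. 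Property $(T_n)$ thus becomes: for each $0 \leq k \leq n$ there is $\delta_k > 0$ with $\Delta_k \geq \delta_k$ on $\overline{\im(\Delta_0 I_{m_k})}$ in every unitary representation $V$. The easy direction of the theorem follows immediately: if $\Delta_0(\Delta_k - \epsilon)\Delta_0 = \sum_i x_i^* x_i$ in $M_{m_k}(\bbC\Gamma)$, then for any $v \in V^{m_k}$ one has $\langle(\Delta_k-\epsilon)\Delta_0 v, \Delta_0 v\rangle = \sum_i \|x_i v\|^2 \geq 0$, so $\Delta_k \geq \epsilon$ on $\overline{\im \Delta_0}$, which for $V$ with $V^\Gamma = 0$ is all of $V^{m_k}$, yielding $H^k(\Gamma, V) = 0$.

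For the converse, assuming $(T_n)$ and using the reformulation, multiplying the inequality $\Delta_k \geq \delta_k$ on $\overline{\im(\Delta_0 I_{m_k})}$ by $\Delta_0$ on both sides --- which factors through that closed subspace --- yields
\[ \Delta_0(\Delta_k - \epsilon)\Delta_0 \geq (\delta_k - \epsilon)\Delta_0^2 \geq 0 \qquad (0 < \epsilon < \delta_k) \]
in every unitary representation, hence in $M_{m_k}(C^*_{\max}(\Gamma))$. The main step --- and the actual obstacle --- is to upgrade this universal $C^*$-positivity, whose lower bound $(\delta_k - \epsilon)\Delta_0^2$ is itself manifestly a sum of hermitian squares, into an exact sum-of-squares identity inside the purely algebraic $M_{m_k}(\bbC\Gamma)$.

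I would follow Ozawa's blueprint: work in the finite-dimensional subspace of self-adjoint elements of $M_{m_k}(\bbC\Gamma)$ supported on a fixed finite subset of $\Gamma$, and use Hahn--Banach to separate $\Delta_0(\Delta_k-\epsilon)\Delta_0$ from the truncated sum-of-squares cone; any separating functional is non-negative on hermitian squares, extends to a positive matrix state on $M_{m_k}(C^*_{\max}(\Gamma))$, and via GNS gives rise to a unitary representation witnessing negativity of $\Delta_0(\Delta_k-\epsilon)\Delta_0$, contradicting the $C^*$-inequality. The technical heart of the matter is establishing closedness of the truncated sum-of-squares cone in the relevant finite-dimensional norm, together with an a priori support bound ensuring that a single truncation suffices for all the $x_i$; beyond this Ozawa-style dualization, everything is routine once the Hodge-theoretic bookkeeping above is in place.
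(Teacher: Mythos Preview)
Your setup and the easy direction are fine, but the hard direction has a real gap at the step you flag as ``the technical heart''. The Ozawa--Netzer--Thom Positivstellensatz (which is what your Hahn--Banach/GNS argument amounts to) does \emph{not} say that a $C^\ast$-positive element of $M_{m_k}(\bbC\Gamma)$ is a sum of hermitian squares; it only says that $a\ge 0$ in $M_{m_k}(C^\ast\Gamma)$ implies $a+\epsilon'\cdot 1\in M_{m_k}(\bbC\Gamma)_+$ for every $\epsilon'>0$. The reason is exactly the one hidden in your sentence ``any separating functional \ldots\ extends to a positive matrix state'': a functional that is nonnegative only on the \emph{truncated} sum-of-squares cone need not extend to a state on $C^\ast\Gamma$, and in fact the claim as you wrote it would imply that every $C^\ast$-positive element of $\bbC\Gamma$ is a sum of squares, which is false. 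Running your argument honestly yields only
\[
\Delta_0(\Delta_k-\epsilon)\Delta_0+\epsilon'\cdot 1\in M_{m_k}(\bbC\Gamma)_+\quad\text{for all }\epsilon'>0,
\]
and your strict lower bound $(\delta_k-\epsilon)\Delta_0^2$ does not help: you cannot trade the scalar $\epsilon'\cdot 1$ for a multiple of $\Delta_0^2$, since $\Delta_0$ is never invertible in $C^\ast\Gamma$ (it kills the trivial representation), so $\Delta_0(\Delta_k-\epsilon)\Delta_0$ always sits on the boundary of the $C^\ast$-positive cone.

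The paper absorbs this $\epsilon'$ by a trick you are missing entirely: use property~T (which $(T_n)$ includes) to get the Kazhdan projection $z\in C^\ast\Gamma$, set $e=1-z$, and work in the intermediate $*$-algebra $B=M_{m_k}(\bbC\Gamma)+\bbC e$. Invertibility of $\Delta_k$ in $M_{m_k}(C_0^\ast\Gamma)=M_{m_k}(eC^\ast\Gamma)$ gives $\Delta_k e-2\epsilon e\ge 0$ in $C^\ast$, hence $\Delta_k e-2\epsilon e+\epsilon\cdot 1\in B_+$ by the Positivstellensatz applied in $B$. Now the key identity $\Delta_0 e=\Delta_0$ forces $\Delta_0 B_+\Delta_0\subseteq M_{m_k}(\bbC\Gamma)_+$ (since $(x+\lambda e)\Delta_0=x\Delta_0+\lambda\Delta_0\in M_{m_k}(\bbC\Gamma)$), and conjugating the displayed element by $\Delta_0$ turns $e$ into $1$ and the scalar $\epsilon\cdot 1$ into $\epsilon\Delta_0^2$, yielding exactly $\Delta_0(\Delta_k-\epsilon)\Delta_0\in M_{m_k}(\bbC\Gamma)_+$. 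That passage through $B$ via the Kazhdan projection is the missing idea.
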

Here $\Delta_k$ is the Laplace operator in degree $k$, and $\Delta_0$ is an element in the group ring which, viewed as a scalar, can be multiplied with matrices over the group ring. 
The corresponding result for $[T_n]$ (Theorem~\ref{thm: bader-nowak for bracket T}) was (essentially) proved in~\cite{Bader-Nowak}. See the comment after Theorem~\ref{thm: bader-nowak for bracket T} in~\S\ref{subsec: group algebra reformulation}. 

We also have a characterization of higher property~T in terms of the maximal $C^*$-algebra $C^\ast\Gamma$ of a group~$\Gamma$ proved in~\S\ref{subsec: C-algebraic reformulation}. In the sequel, we denote the kernel of the augmentation homomorphism $C^\ast\Gamma\to\bbC$ by $C^\ast_0\Gamma$. See~\S\ref{subsec: kazhdan proj} for a detailed discussion of $C^\ast_0\Gamma$. 

\begin{theorem}[Higher property T criteria] \label{thm:criteria}
Assume $\Gamma$ satisfies the finiteness property $FP_\infty(\bbQ)$.
For $n>0$, the following are equivalent. 
\begin{enumerate}
    \item The group~$\Gamma$ has property $[T_n]$.
    \item $H^k(\Gamma,C^*\Gamma)=0$ for all $k\in\{1,\dots, n\}$ and $H^{n+1}(\Gamma, C^\ast\Gamma)$ is Hausdorff. 
    \item $H_k(\Gamma,C^*\Gamma)=0$ for all $k\in\{1,\dots, n\}$ and $H_0(\Gamma, C^\ast\Gamma)$ is Hausdorff. 
\end{enumerate}
A similar equivalence holds for $(T_n)$ where $C^\ast\Gamma$ replaced by $C_0^\ast\Gamma$.
\end{theorem}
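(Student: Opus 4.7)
The plan is to exploit the fact that $C^\ast\Gamma$ is the universal $C^\ast$-algebra whose nondegenerate $*$-representations correspond to unitary $\Gamma$-representations. For any unitary $V$ and unit vector $v\in V$ the map $\phi_v\colon C^\ast\Gamma\to V$, $a\mapsto\pi_V(a)v$, is a $\Gamma$-equivariant contraction with dense image, so $C^\ast\Gamma$ serves as a universal test coefficient for unitary cohomology. The finiteness hypothesis $FP_\infty(\bbQ)$ lets us compute each $H^k(\Gamma,-)$ by a fixed complex $\mathrm{Hom}_{\bbZ\Gamma}(F_\bullet,-)$ with $F_\bullet$ a resolution of $\bbQ$ by finitely generated free $\bbZ\Gamma$-modules. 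Consequently the cochain spaces are Banach (for Banach coefficients), the differentials are bounded operators, and the Hausdorff versus non-Hausdorff distinction reduces to whether the image of a bounded operator between Banach spaces is closed---a condition accessible via the open mapping theorem.

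For $(1)\Rightarrow(2)$, I would approximate $C^\ast\Gamma$ by unitary modules. The universal representation $\pi_u$ embeds $C^\ast\Gamma$ as a $\Gamma$-invariant closed subspace of $B(H_u)$, where $H_u$ is a sufficiently large sum of cyclic unitary representations; combining property $[T_n]$ with the $FP_\infty$ complex shows $H^k(\Gamma,H_u)$ vanishes for $1\leq k\leq n$, and transferring this to $C^\ast\Gamma$ proceeds by a comparison argument in which the Hausdorffness of $H^{n+1}(\Gamma,C^\ast\Gamma)$ rules out topological error terms. For $(2)\Rightarrow(1)$, given a cyclic unitary $V$ with unit cyclic vector $v$ and a cocycle $c\in Z^k(\Gamma,V)$, $1\leq k\leq n$, the $FP_\infty$ complex expresses $c$ as an element of a finite-rank Banach space $V^{r_k}$; I would approximate $c$ by cochains valued in $\phi_v(C^\ast\Gamma)\subset V$, lift to cochains valued in $C^\ast\Gamma$, apply $H^k(\Gamma,C^\ast\Gamma)=0$ to produce primitives, and push these primitives down through $\phi_v$. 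The residual approximation error must be absorbed by the Hausdorffness hypothesis on $H^{n+1}(\Gamma,C^\ast\Gamma)$, which prevents a limit of coboundaries from failing to be one.

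The equivalence $(2)\Leftrightarrow(3)$ I would handle by a duality argument intrinsic to the $FP_\infty$ resolution. The chain complex $F_\bullet\otimes_{\bbZ\Gamma}C^\ast\Gamma$ computing $H_\ast(\Gamma,C^\ast\Gamma)$ and the cochain complex $\mathrm{Hom}_{\bbZ\Gamma}(F_\bullet,C^\ast\Gamma)$ computing $H^\ast(\Gamma,C^\ast\Gamma)$ are linked by a $C^\ast\Gamma$-valued pairing that is nondegenerate because each $F_i$ is finitely generated free. Combined with the $C^\ast$-involution, this pairing relates the two complexes degree-by-degree and matches vanishing/Hausdorff conditions in corresponding degrees: cohomological vanishing in degrees $1,\dots,n$ is dual to homological vanishing in the same range, and Hausdorffness of $H^{n+1}(\Gamma,C^\ast\Gamma)$ is dual to Hausdorffness of $H_0(\Gamma,C^\ast\Gamma)$, since both are closedness statements for the first differential. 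The parallel $(T_n)$ statement then follows by running the same arguments with $C_0^\ast\Gamma$ in place of $C^\ast\Gamma$; the splitting $C^\ast\Gamma=\bbC\oplus C_0^\ast\Gamma$ ensures the two versions differ only by the trivial summand, which is exactly the difference between $(T_n)$ and $[T_n]$.

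The main obstacle I expect is the direction $(2)\Rightarrow(1)$: translating vanishing for the single test coefficient $C^\ast\Gamma$ into vanishing for every unitary $V$. Because $\phi_v(C^\ast\Gamma)$ is typically dense but proper in $V$, there is no honest short exact sequence $0\to\ker\phi_v\to C^\ast\Gamma\to V\to 0$ of Banach $\Gamma$-modules to feed into a long exact sequence. Promoting ``density plus vanishing with $C^\ast\Gamma$-coefficients'' to genuine vanishing with $V$-coefficients is where Hausdorffness of $H^{n+1}(\Gamma,C^\ast\Gamma)$ must earn its keep, and making this step rigorous seems to require a delicate interplay among the open mapping theorem, the finite-rank cochain spaces afforded by $FP_\infty(\bbQ)$, and an inverse-limit argument patching local primitives into a global one.
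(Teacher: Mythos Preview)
Your proposal misses the central device of the paper's proof: the formal Laplace operator $\Delta_k^A=d_k^\ast d_k+d_{k+1}d_{k+1}^\ast$ acting on the Hilbert $C^\ast$-module $P_k^A=A\otimes_{\bbC\Gamma}P_k$, where $A=C^\ast\Gamma$. The whole theorem is run through the equivalence (Lemma~\ref{lem:Acohom}): $\Delta_k^A$ is invertible as an $A$-linear map $\Leftrightarrow$ $H^k(\Gamma,A)=0$ and $H^{k+1}(\Gamma,A)$ is Hausdorff $\Leftrightarrow$ $H_k(\Gamma,A)=0$ and $H_{k-1}(\Gamma,A)$ is Hausdorff. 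Telescoping this for $k=1,\dots,n$ shows that your conditions (2) and (3) are each equivalent to ``$\Delta_k^A$ invertible for all $1\le k\le n$''. The implication to (1) is then immediate: if $\Delta_k^A$ is $A$-invertible, its inverse is a matrix over $A$, so $\Delta_k^V$ is invertible for every $A$-module $V$ (Lemma~\ref{lem:lapinv}), whence $H^k(\Gamma,V)=0$ for every unitary $V$. The reverse implication $(1)\Rightarrow(2)$ goes via ultrapowers (Lemma~\ref{lem:uptrick}): $[T_n]$ gives $\bar H^k(\Gamma,V)=0$ for all unitary $V$, the class of unitary $\Gamma$-modules is closed under ultrapowers, so $H^k(\Gamma,V)=0$ and $H^{k+1}(\Gamma,V)$ is Hausdorff for all such $V$, in particular for any faithful Hilbert representation of $A$, which forces $\Delta_k^A$ invertible.

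Against this, several of your steps would not work. Your $(1)\Rightarrow(2)$ embeds $C^\ast\Gamma$ in $B(H_u)$, but $B(H_u)$ is not a unitary $\Gamma$-module, so $[T_n]$ gives no vanishing there; and vanishing for a closed subspace does not follow from vanishing for the ambient space anyway. Your $(2)\Rightarrow(1)$ via the dense-image map $\phi_v\colon C^\ast\Gamma\to V$ runs into exactly the obstacle you name, and the paper bypasses it entirely by the Laplacian route. Your duality for $(2)\Leftrightarrow(3)$ is off by the dual space: Lemma~\ref{lem: duality reduced cohomology} matches Hausdorffness of $H^{n+1}(\Gamma,V^\ast)$ with that of $H_n(\Gamma,V)$, not $H_0$, and $C^\ast\Gamma$ is not self-dual; the paper instead observes that the homological and cohomological Laplacians are represented by the same matrix over $A$. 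Finally, the $(T_n)$ case is not a trivial variant: the splitting $C^\ast\Gamma=\bbC\oplus C_0^\ast\Gamma$ you invoke exists only when $\Gamma$ already has property~T (Lemma~\ref{lem:kproj}). The paper handles the non-T case separately by showing (via Lemma~\ref{lem:nonT}) that $H_0(\Gamma,C_0^\ast\Gamma)$ and $H^1(\Gamma,C_0^\ast\Gamma)$ are non-Hausdorff, so all three conditions fail simultaneously.
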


For modules over the maximal $C^\ast$-algebra we obtain the following result, which is also proved in~\S\ref{subsec: C-algebraic reformulation}. 

\begin{theorem}\label{thm:TnCor}
Assume $\Gamma$ satisfies the finiteness property $FP_\infty(\bbQ)$.
    If $\Gamma$ has property $[T_n]$, 
    then 
for every $C^*\Gamma$-module $V$ and for every $1\leq k \leq n$, $H^n(\Gamma,V)=H_k(\Gamma,V)=0$ and if $V$ is a Banach $C^*\Gamma$-module then $H^{n+1}(\Gamma,V)$ is Hausdorff.

If $\Gamma$ has property $(T_n)$, then the same holds, upon replacing $C^*\Gamma$ by $C_0^*\Gamma$. 
In particular, if $\Gamma$ has property $(T_n)$, then for every unitary $\Gamma$-module $V$, $H^{n+1}(\Gamma,V)$ is Hausdorff.
\end{theorem}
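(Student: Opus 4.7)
The strategy is to promote the $C^*\Gamma$-level conclusion of Theorem~\ref{thm:criteria} to arbitrary coefficients by exhibiting an explicit partial projective $C^*\Gamma$-resolution of $\bbC$ and then exploiting the Kazhdan projection. Using $FP_\infty(\bbQ)$, fix a resolution $P_\bullet\to\bbC$ by finitely generated free left $\bbC\Gamma$-modules $P_k=(\bbC\Gamma)^{m_k}$, and set $Q_\bullet:=C^*\Gamma\otimes_{\bbC\Gamma}P_\bullet$, a complex of finitely generated free left $C^*\Gamma$-modules whose homology equals $H_\bullet(\Gamma,C^*\Gamma)$. By Theorem~\ref{thm:criteria}(3), this homology vanishes for $1\le k\le n$ and is Hausdorff in degree $0$. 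The first key step is to upgrade Hausdorffness into the algebraic identification $H_0(\Gamma,C^*\Gamma)=\bbC$: writing $A_0\subset\bbC\Gamma$ for the augmentation ideal, $H_0(\Gamma,C^*\Gamma)$ is the algebraic quotient $C^*\Gamma/A_0\cdot C^*\Gamma$, and since $A_0$ is norm-dense in $C^*_0\Gamma$ the closure of $A_0\cdot C^*\Gamma$ equals $C^*_0\Gamma$; closedness then forces $A_0\cdot C^*\Gamma=C^*_0\Gamma$ and the quotient collapses to $C^*\Gamma/C^*_0\Gamma=\bbC$. Thus $Q_{n+1}\to\cdots\to Q_0\to\bbC\to 0$ is exact.

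Now $[T_n]$ entails property $(T_1)$, so the central Kazhdan projection $p\in C^*\Gamma$ produces the decomposition $C^*\Gamma=\bbC p\oplus C^*_0\Gamma$, exhibiting $\bbC$ as a projective $C^*\Gamma$-module. By an inductive bootstrap the partial resolution above splits into short exact sequences $0\to K_{k-1}\to Q_k\to K_k\to 0$ (with $K_0=\ker(Q_0\to\bbC)$), where each $K_k$ is a direct summand of the finitely generated free module $Q_k$ and hence projective. The extension-of-scalars adjunction gives $\operatorname{Hom}_{\bbC\Gamma}(P_\bullet,V)\cong\operatorname{Hom}_{C^*\Gamma}(Q_\bullet,V)\cong V^{m_\bullet}$, and the split decomposition of $Q_\bullet$ transports to a splitting of the cochain complex $V^{m_\bullet}$ in which the differential assumes the tautological shape $(\phi_1,\phi_2)\mapsto(\phi_2,0)$. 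From this presentation one reads off $H^k(\Gamma,V)=0$ for $1\le k\le n$ at once; and when $V$ is a Banach $C^*\Gamma$-module, $\operatorname{im}(d^n)$ is a closed direct summand of $V^{m_{n+1}}$ (the $Q_\bullet$-level splittings lift to Banach direct sums by the open mapping theorem), so $H^{n+1}(\Gamma,V)$ is Hausdorff. The homology assertions follow by the symmetric argument with $V\otimes_{C^*\Gamma}Q_\bullet$.

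The $(T_n)$ case with $C^*_0\Gamma$ in place of $C^*\Gamma$ is entirely parallel: the same Hausdorff-to-algebra step, now combined with the fact that $1-p$ is a unit for $C^*_0\Gamma$, forces $H_0(\Gamma,C^*_0\Gamma)=0$, so $C^*_0\Gamma\otimes_{\bbC\Gamma}P_\bullet$ is already split exact through degree $n$ and the direct-sum inspection delivers the desired vanishing and Hausdorffness for any $C^*_0\Gamma$-module. I anticipate the main obstacle to lie in the very first step—the passage from the analytic Hausdorffness statement to the algebraic identification $H_0=\bbC$ (or $0$)—since that is where the $C^*$-algebraic topology must meet the purely algebraic homological machinery; after this bridge is crossed, the remainder devolves to routine relative homological algebra.
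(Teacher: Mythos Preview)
Your argument is correct and takes a genuinely different route from the paper. The paper simply invokes Lemma~\ref{lem:basic}: once $[T_n]$ (resp.\ $(T_n)$) is translated via Theorem~\ref{thm:criteria} into the equivalent condition that the Laplace operators $\Delta_k^A$ are invertible over $A=C^\ast\Gamma$ (resp.\ $C^\ast_0\Gamma$), items (5), (5') and (7) of that lemma deliver algebraic vanishing for all $A$-modules and Hausdorffness in degree $n+1$ for Banach $A$-modules. The underlying mechanism is Hilbert $C^\ast$-module theory (Lemma~\ref{lem:Acohom}) together with Lemma~\ref{lem:lapinv} and the duality Lemma~\ref{lem: duality reduced cohomology}. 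Your approach instead extracts from Theorem~\ref{thm:criteria}(3) the algebraic identification $H_0(\Gamma,C^\ast\Gamma)=\bbC$, uses the Kazhdan projection to render $\bbC$ projective over $C^\ast\Gamma$, and then splits the induced partial resolution $Q_\bullet$ degree by degree; vanishing and Hausdorffness follow by inspection of the resulting block form of the differentials. This is closer in spirit to Lemma~\ref{lem: ring extension}, avoids the Hilbert module and Laplacian machinery entirely, and gives a cleaner explanation of why Hausdorffness propagates to degree $n+1$: the image of $d^n$ is the range of a $C^\ast\Gamma$-linear idempotent on $V^{m_{n+1}}$, hence closed. The paper's route, on the other hand, packages everything uniformly through the single spectral condition on $\Delta_k^A$.

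One omission: you do not treat the final ``In particular'' clause. For a unitary $\Gamma$-module $V$ with $V^\Gamma\neq 0$, $V$ is not a $C^\ast_0\Gamma$-module, so your $(T_n)$ argument does not apply directly. The paper handles this by writing $H^{n+1}(\Gamma,V)=H^{n+1}(\Gamma,(V^\Gamma)^\perp)\oplus H^{n+1}(\Gamma,\bbC)\otimes V^\Gamma$; the first summand is Hausdorff by the $C^\ast_0\Gamma$-case and the second because $H^{n+1}(\Gamma,\bbC)$ is finite-dimensional under $FP_\infty(\bbQ)$. Your indexing of the short exact sequences $0\to K_{k-1}\to Q_k\to K_k\to 0$ is also reversed relative to your definition of $K_0$, but this is cosmetic.
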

%\tableofcontents

\subsection*{Acknowledgments}
We thank Lubotzky for lots of advice, suggestions and encouragement. We thank Francesco Fournier Facio, Mikael de la Salle 
and Antonio L{\'o}pez Neumann
for answering questions (mentioned in the text). 
We thank Alon Dogon, David Fisher, Michael Glasner, Yuval Gorfine, Izhar Oppenheim, Yehuda Shalom and Stefan Witzel for insightful remarks over a first draft of this paper.
In particular we thank Pierre Pansu and Raz Slutsky for a very thorough reading of this first draft and the many comments they made.
Finally, we thank Saar Bader and Shaked Bader for the most delightful collaboration~\cite{BBBS} and their many inputs. 

In the initial phase of this project, the second author was supported by the DFG (Deutsche Forschungsgemeinschaft), project number 441426599.
The second author would also like to thank the Isaac Newton Institute for Mathematical Sciences, Cambridge, for support and hospitality during the programme \emph{Operators, Graphs, Groups} where work on this paper was undertaken. This work was supported by EPSRC grant no EP/K032208/1.

\section{Higher property T as an abstract property of groups} \label{sec: as abstract group property}

In this section we study higher property T as a property of abstract groups. We discuss the role of finiteness properties, give alternative characterizations using operator algebras, and discuss permanence properties under group-theoretic constructions. 

Unlike the next section which focuses on lattices in semisimple groups and mainly reviews recent results, this section contains new results (Theorems~\ref{thm:criteria},~\ref{thm:TnCor} and~\ref{thm:grpalg}), results that have only been stated in special cases in the literature before (Lemmas~\ref{lem:lapinv},~\ref{lem: duality reduced cohomology}, and~\ref{lem:basic}) and results that give a fresh perspective to known facts (for example, Corollary~\ref{cor: categorical epi and cokernel} and \S\ref{subsec: kazhdan proj}).

For the most part, we assume a mild finiteness property on the groups under consideration. See Convention~\ref{conv:ft} below. 
The role of this assumption is discussed in \S\ref{subsec:FP} and \S\ref{subsec: back to fin}.

To discuss higher property T as an abstract group property makes only sense if there are enough examples. In all non-trivial examples the verification of classical property~T for a group is more than just a mere example -- it is usually a theorem. The same is even more true for higher property T. Essentially, the only known non-trivial examples of groups with higher property T are lattices in semisimple groups as in Theorem~\ref{thm:higher} above. 

Another class of groups that has a chance to have higher property T are automorphism groups of free groups.
The group $\Aut(F_n)$ has property~T for $n\ge 4$. 
This was proved by Nitsche~\cite{nitsche} for $n=4$, Kaluba-Nowak-Ozawa for $n=5$~\cite{kaluba+nowak+ozawa} and  
by Kaluba-Kielak-Nowak for $n\geq 6$~\cite{kaluba+kielak+nowak}.
Further, Galatius showed that the rational cohomology of $\Aut(F_n)$ stabilizes to that of the symmetric group~$\Sigma_n$. As a corollary one obtains that $H^k(\Aut(F_n),\bbQ)=0$ for $n\geq 2k+2$~\cite{galatius}*{Corollary~1.2}. Finally, $\Aut(F_n)$ has property $FA_{\lfloor\frac{2n}{3}\rfloor}$ by~\cite{bridson-helly}. According to Theorem~\ref{thm:Farb2} this would be a consequence of higher property~T for Banach spaces, more precisely of property~$(T_{\lfloor\frac{2n}{3}\rfloor})_{L^1}$. 

Given the evidence above, it is natural to ask the following question.
\begin{question}
    Let $k\in\bbN$. 
    Is there an (explicit) $n_k\in\bbN$ such that $\Aut(F_n)$ has property $[T_k]$ for $n\geq n_k$? 
\end{question}

\subsection{Finiteness properties} \label{subsec:FP}
Property~T implies finite generation, but not finite presentation. An example due to Yves Cornulier is presented in \cite{bekka+harpe+valette}*{Theorem~3.4.2 on p.~172}. The same example shows that property $[T_2]$ does not imply finite presentation either, which we discuss next. 

\begin{example} \label{ex:nonfp}
Let $n\in \mathbb{N}$, and let $p$ be a prime. 
The group $\Sp_n(\mathbb{Z}[1/p]) \ltimes \mathbb{Z}[1/p]^{2n}$ is not finitely presented by~\cite{bekka+harpe+valette}*{Theorem~3.4.2 on p.~172}. Similarly, one shows that the locally compact group $\Sp_n(\mathbb{Q}_p) \ltimes \mathbb{Q}_p^{2n}$ is not compactly presented. In forthcoming work it will be shown that both groups have property $(T_{n-1})$. 
Moreover, the first group has a central extension that has $[T_2]$~(cf.~\cite{bekka+harpe+valette}*{Lemma~3.4.3 on p.~173}). 
\end{example}

We say that a group has the finiteness property $FP_n(\bbQ)$ if there is a projective $\bbQ\Gamma$-resolution of the trivial module~$\bbQ$ that is finitely generated in degrees up to $n$.
The finite generation of $\Gamma$ is equivalent to having the finiteness property $FP_1(\bbQ)$.
Finite presentation implies, but it is not implied by, the finiteness property $FP_2(\bbQ)$.
In fact, it is not implied by $FP_\infty(\bbQ)$, which means $FP_n(\bbQ)$ for all $n$.
The countable groups in Example~\ref{ex:nonfp} are $FP_\infty(\bbQ)$. So the following question posed by David Kazhdan to us remains open. 

\begin{question} \label{q:fin}
    Does $(T_n)$ imply $FP_n(\bbQ)$?
\end{question}

\begin{example}
Consider Theorem~\ref{thm:higher} in the case that the field~$F$ is of positive characteristic and $\Gamma$ is non-uniform lattice (which is automatic, unless $G$ is of type $A_n$).
Then $\Gamma$ (presumably) has $[T_{r-1}]$ (see Remark~\ref{rem: harder}), and it is of type $FP_{r-1}(\bbZ)$, in particular of type $FP_{r-1}(\bbQ)$, by the rank theorem of Bux-K\"ohl-Witzel~\cite{bux+koehl+witzel}. However, the group $\Gamma$ is not of type $FP_{r}(\bbZ)$ by a theorem of Bux-Wortman~\cite{bux+wortman}*{Theorem~1.2}. Very likely, the same holds with $\bbZ$ replaced by $\bbQ$.
\end{example}

\begin{convention} \label{conv:ft}
From now on, throughout this section, 
we assume that $\Gamma$ is a group that has the finiteness property $FP_\infty(\bbQ)$. For most purposes below, this assumption could be weakened to $FP_n(\bbQ)$ for $n$ high enough.
\end{convention}

\subsection{The role of the Laplace operators} \label{subsec: laplace operator}

Consider a based $\bbQ\Gamma$-resolution of $\bbQ$ by finitely generated free left $\bbQ\Gamma$-modules and let $P_\bullet$ be its tensor with $\bbC$,
\begin{equation}\label{eq: projective resolution} 0 \leftarrow \mathbb{C} \leftarrow P_0 \xleftarrow{d_1} P_1 \xleftarrow{d_2} P_2 \xleftarrow{d_3} \cdots  
\end{equation}
Each $P_n$ is equipped with a $\bbC\Gamma$-basis defined over $\bbQ$
and we assume as we may $P_0=\bbC\Gamma$.
We consider the cochain complex 
\[P^\bullet=\hom_{\bbC\Gamma}(P_\bullet, \bbC\Gamma),~ d^{n+1}=\hom(\_, d_{n+1})\colon P^n\to P^{n+1}.\] 
The group algebra $\bbC\Gamma$ is a $\bbC$-algebra with involution $\ast$ such that $(a\gamma)^\ast=\bar a\gamma^{-1}$ for $a\in\bbC$ and $\gamma\in\Gamma$.
Via the bases the boundary maps $d_\bullet$ are represented by matrices over~$\bbC\Gamma$. The $\ast$-operation extends to matrices over $\bbC\Gamma$ by taking the transpose and then applying the involution~$\ast$ to the entries. 
Via the matrix representation we obtain the \emph{formal adjoint} $d_n^*\colon P_{n-1}\to P_n$. 
The basis on $P_n$ induces an obvious isomorphism $P_n\xrightarrow{\cong}P^n$. 
The 
resulting diagram 
\begin{equation}\label{eq: adjoint and dual differentials}
\begin{tikzcd}
P_n \ar[d, "d_{n+1}^\ast"']\ar[r, "\cong"] & P^n\ar[d, "d^{n+1}"]\\
P_{n+1}\ar[r, "\cong"] & P^{n+1}
\end{tikzcd}
\end{equation}
commutes. 
    The \emph{(formal) Laplace operator} in degree~$n$ of the chain complex $P_\bullet$ is defined as 
    \begin{equation}\label{eq: laplace operator} \Delta_n\defq d_n^\ast d_n+d_{n+1}d_{n+1}^\ast\colon P_n\to P_n.
    \end{equation}
    It is a $\bbC\Gamma$-linear map. 
    Analogously, we define the Laplace operator $\Delta^n\colon P^n\to P^n$. As a consequence of~\eqref{eq: adjoint and dual differentials} we obtain the following. 
    
\begin{remark}\label{rem: matrix of Laplacian}
The representing square matrices over $\bbC\Gamma$ of $\Delta_n\colon P_n\to P_n$ and $\Delta^n\colon P^n\to P^n$ coincide.
\end{remark}

\begin{remark}\label{rem: chain homotopy Laplacian}
It is easy to see that $\Delta_\bullet\colon P_\bullet\to P_\bullet$ is a chain map. The maps 
$d_{\bullet+1}^\ast\colon P_\bullet\to P_{\bullet+1}$ form a chain homotopy $\Delta_\bullet\simeq 0$ of $\Delta_\bullet$ to the zero map. Similarly for $\Delta^\bullet$. 
\end{remark}

Let $V$ be a $\bbC\Gamma$-module. Note that we can turn every left $\bbC\Gamma$-module into a right $\bbC\Gamma$-module and vice versa through the involution on~$\bbC\Gamma$. 
The isomorphism $P_n\xrightarrow{\cong} P^n$ above induces an isomorphism of $\bbC$-vector spaces 
\[ P_n^V\defq V\otimes_{\bbC\Gamma} P\xrightarrow{\cong} P^n\otimes_{\bbC\Gamma} V\cong \hom_{\bbC\Gamma}\bigl(P_n, V\bigr)\qdef P^n_V.\]
We obtain a similar commutative diagram as in~\eqref{eq: adjoint and dual differentials} for $P_\bullet^V$ and $P^\bullet_V$. The homological and cohomological Laplace operators $\Delta^V_\bullet$ and $\Delta^\bullet_V$ on $P_\bullet^V$ and $P^\bullet_V$ are defined 
similarly with $d_n^\ast$ begin replaced by $(d_n^V)^\ast\defq \id_V\otimes d_n^\ast$. 

\begin{remark}\label{rem: involutive algebra extension}
Let $A$ be a $\bbC$-algebra with involution that is equipped with a $\ast$-homomorphism $\bbC\Gamma\to A$. Then $P_\bullet^A$ is a chain complex of left $A$-modules, and $P^\bullet_A$ is a cochain complex of right $A$-modules. Via the involution on $A$ we can view $P^\bullet_A$ as a cochain complex of left $A$-modules. Then the $\bbC$-isomorphism $P_n^A\xrightarrow{\cong} P^n_A$ becomes an $A$-linear isomorphism, and the Laplace operators are $A$-linear maps. Furthermore, Remark~\ref{rem: matrix of Laplacian} still applies to $\Delta_n^A\colon P_n^A\to P_n^A$ and $\Delta^n_A\colon P^n_A\to P^n_A$. 
\end{remark}

\begin{lemma} \label{lem:lapinv} Let $n\in\bbN$. 
    Let $A$ be a $\bbC$-algebra that is equipped with a  homo\-morphism $\bbC\Gamma\to A$. Let $V$ be a $\bbC\Gamma$-module, and let $W$ be an $A$-module. 
    Then the following statements hold. 
    \begin{enumerate}
        \item If $\Delta_n\colon P_n\to P_n$ is invertible as a $\bbC\Gamma$-linear map, then $H_n(\Gamma, V)=0$ and $H^n(\Gamma, V)=0$.  
        \item If $\Delta_n^V\colon P_n^V\to P_n^V$ is invertible as a $\bbC$-linear map, then $H_n(\Gamma, V)=0$. Similarly for $\Delta^n_V$. 
        \item Let $\Delta_n^A\colon P_n^A\to P_n^A$ be invertible as an $A$-linear map. Then $H_n(\Gamma, W)=0$. If, in addition, $\bbC\Gamma\to A$ is a $\ast$-homomorphism of $\bbC$-algebras with involutions, then also $H^n(\Gamma, W)=0$. 
    \end{enumerate}
\end{lemma}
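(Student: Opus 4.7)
The strategy is to deduce all three parts from a single algebraic observation: whenever the relevant Laplacian in degree~$n$ is invertible, the identity on $P_n$ itself admits a partial contracting homotopy. Concretely, I produce $\bbC\Gamma$-linear maps $H_n\colon P_n\to P_{n+1}$ and $H_{n-1}\colon P_{n-1}\to P_n$ with
\[ d_{n+1}\,H_n + H_{n-1}\,d_n \;=\; \id_{P_n}. \]
Once this identity is available, the three conclusions are formal: for a cycle $z\in P_n^V$ one has $z = d_{n+1}^V H_n^V(z)$, so $H_n(\Gamma,V)=0$; for a cocycle $\phi\in\hom_{\bbC\Gamma}(P_n,V)$, applying $\phi$ to both sides of the identity gives $\phi = d^n(\phi\circ H_{n-1})$, so $H^n(\Gamma,V)=0$.

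The natural candidates, suggested by the null-homotopy $h_n=d_{n+1}^*$ of $\Delta_\bullet\simeq 0$ from Remark~\ref{rem: chain homotopy Laplacian}, are
\[ H_n \;=\; d_{n+1}^*\,\Delta_n^{-1}, \qquad H_{n-1} \;=\; \Delta_n^{-1}\,d_n^*. \]
Both are $\bbC\Gamma$-linear because $\Delta_n^{-1}$ is. Substituting and using $\Delta_n = d_n^*d_n + d_{n+1}d_{n+1}^*$ one finds
\[ d_{n+1}H_n + H_{n-1}d_n \;=\; d_{n+1}d_{n+1}^*\,\Delta_n^{-1} + \Delta_n^{-1}\,d_n^*d_n, \]
which equals $\Delta_n\Delta_n^{-1}=\id_{P_n}$ precisely when $\Delta_n^{-1}$ commutes with $d_n^*d_n$.

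The key, and only non-formal, step is therefore the commutation $[\,d_n^*d_n,\; d_{n+1}d_{n+1}^*\,]=0$. This is transparent once noticed: by $d_nd_{n+1}=0$ and its formal adjoint $d_{n+1}^*d_n^* = (d_nd_{n+1})^* = 0$, both products $d_n^*d_n\cdot d_{n+1}d_{n+1}^*$ and $d_{n+1}d_{n+1}^*\cdot d_n^*d_n$ vanish outright, so they commute trivially. Hence $[\Delta_n^{-1},d_n^*d_n]=0$ and part~(1) is proved.

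For parts~(2) and~(3) I repeat exactly the same construction after tensoring. The vanishings $dd=0$ and $d^*d^*=0$ survive $\id_V\otimes(-)$ and $\id_A\otimes(-)$, so the partial contracting homotopy at the level of $P_\bullet^V$, respectively $P_\bullet^A$, is defined by the same formulas with $(\Delta_n^V)^{-1}$, respectively $(\Delta_n^A)^{-1}$, inserted in the middle; for part~(3) one further tensors the resulting identity over $A$ with an $A$-module $W$ to obtain $H_n(\Gamma,W)=0$. The extra $\ast$-hypothesis on $\bbC\Gamma\to A$ enters only on the cochain side of~(3): it is what allows one to pass between left and right $A$-modules through the involution, and hence to run the dual of the above argument through $\hom_A(-,W)$, yielding $H^n(\Gamma,W)=0$.
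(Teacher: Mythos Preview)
Your proof is correct and follows essentially the same route as the paper: both hinge on the commutation of $\Delta_n^{-1}$ with the summands $d_n^*d_n$ and $d_{n+1}d_{n+1}^*$, and then use this to exhibit any cycle as a boundary. Your packaging as a partial chain homotopy $d_{n+1}H_n+H_{n-1}d_n=\id$ is a clean way to handle homology and cohomology simultaneously.

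One small point worth noting: your proof of the commutation is actually a shade simpler than the paper's. The paper derives it from the chain-map identity $d_{n+1}d_{n+1}^*\Delta_n=d_{n+1}\Delta_{n+1}d_{n+1}^*=\Delta_n d_{n+1}d_{n+1}^*$, whereas you observe directly that the two summands $d_n^*d_n$ and $d_{n+1}d_{n+1}^*$ annihilate each other from both sides (because $d_nd_{n+1}=0$ and hence $d_{n+1}^*d_n^*=0$), so they commute trivially. Both arguments are immediate, but yours is the more transparent reason.
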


The point of Lemma~\ref{lem:lapinv} is that the hypothesis is for a single degree~$n$. If all Laplace operator are invertible, then we could deduce it immediately from Remark~\ref{rem: chain homotopy Laplacian}. 

\begin{proof}
All three statements are similar. We only prove the second and write $\Delta_\bullet$ short for $\Delta_\bullet^V$. The reason one obtains the cohomological consequence in the first and third statement is Remark~\ref{rem: matrix of Laplacian} and the last sentence in Remark~\ref{rem: involutive algebra extension}. 
We have 
\[ d_{n+1}d_{n+1}^\ast\Delta_{n}=
d_{n+1}\Delta_{n+1}d_{n+1}^\ast=
\Delta_{n}d_{n+1}d_{n+1}^\ast.\]
Thus 
$\Delta_{n}^{-1}d_{n+1}(d_{n+1})^\ast=d_{n+1}(d_{n+1})^\ast\Delta_{n}^{-1}$.
For an $n$-cycle $z$, we conclude that  
\[ z=\Delta_n^{-1}\bigl(d_n^\ast d_n+d_{n+1}d_{n+1}^\ast\bigr) z=\Delta_{n}^{-1}d_{n+1}d_{n+1}^\ast z=d_{n+1}d_{n+1}^\ast\Delta_{n}^{-1}z. \]
Thus $z$ a boundary.
\end{proof}

\begin{rem} \label{rem:complex}
    Since the bases for $P_\bullet$ are defined over $\bbQ$, so are also the maps $d_\bullet$, $d^\bullet$ and $\Delta_\bullet$, $\Delta^\bullet$.
    Note, in particular, that for a $\bbC\Gamma$-module $V$, taking (co)homology commutes with the complex conjugation.
\end{rem}

\subsection{Banach $\Gamma$-modules} \label{sec:BanM}

Banach spaces are assumed to be complex Banach spaces. 
A \emph{Banach $\Gamma$-module} is a Banach space on which $\Gamma$ acts continuously by isomorphisms.
If the action is by isometries, we speak of an \emph{isometric Banach $\Gamma$-module}.
The $\Gamma$-action on a Banach $\Gamma$-module extends to an action of the group algebra $\bbC \Gamma$.

The cohomology $H^\bullet(\Gamma,V)$ has a canonical topological vector space structure, which is typically non-Hausdorff~\cite{guichardet}. If $\Gamma$ is of type $FP_\infty(\bbQ)$, which we assume (see Convention~\ref{conv:ft}), and $P_\bullet$ is any resolution as in~\eqref{eq: adjoint and dual differentials}, then $\hom_{\bbC\Gamma}(P_\bullet, V)$ is a cochain complex of Banach spaces whose topology induces the one on $H^\bullet(\Gamma, V)$.

The closure $\overline{\{0\}}\subset  H^n(\Gamma,V)$  of $0$ 
is sometimes called the \emph{$n$-th cohomological torsion} of $V$\footnote{Not to be confused with the torsion in the integral cohomology groups!}. 
Quotientening by the cohomological torsion, we obtain the quotient space $\bar{H}^n(\Gamma,V)$. 
This is a Banach space called the \emph{$n$-th reduced cohomology} of $V$. It carries a canonical topology but not a canonical Banach norm. 
Similarly, we define the \emph{$n$-th homological torsion} and the \emph{$n$-th reduced homology}  $\bar{H}_n(\Gamma,V)$.

If $V$ is a Banach $\Gamma$-module, the Banach dual $V^*$ has a natural structure of a Banach $\Gamma$-module as well. 
We refer to the setting in \S\ref{subsec: laplace operator}. 
The cochain complex $P^\bullet_{V^\ast}$ is topologically dual to the chain complex $P_\bullet^V$.  
Similarly, $P_\bullet^{V^\ast}$  is topologically dual to $P^\bullet_V$. 

\begin{lemma}\label{lem: duality reduced cohomology}
    Let $V$ be a Banach $\Gamma$-module. 
    Then
    $H_n(\Gamma, V)$ is Hausdorff if and only if $H^{n+1}(\Gamma, V^\ast)$ is Hausdorff, and $H_n(\Gamma, V^\ast)$ is Hausdorff if and only if $H^{n+1}(\Gamma, V)$ is Hausdorff. 
    If $H^{n}(\Gamma,V^*)$ is Hausdorff then
    \[ H^n(\Gamma, V^\ast) \cong \Bigl(\bar H_n(\Gamma, V)\Bigr)^\ast \]
    and if $H_{n}(\Gamma,V^*)$ is Hausdorff then
    \[ H_n(\Gamma, V^\ast) \cong \Bigl(\bar H^n(\Gamma, V)\Bigr)^\ast. \]
\end{lemma}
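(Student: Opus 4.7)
The plan is to reduce everything to two classical tools from Banach space theory: Banach's closed range theorem and the Hahn--Banach identification of the dual of a Banach quotient with the annihilator of the subspace. Thanks to Convention~\ref{conv:ft} I may take the modules $P_n$ in~\eqref{eq: projective resolution} to be finitely generated free, so each $P_n^V$ is a finite direct sum of copies of the Banach space $V$ and, symmetrically, $P^n_{V^\ast}$ of copies of $V^\ast$; both $P_\bullet^V$ and $P^\bullet_{V^\ast}$ then live in the category of Banach spaces with continuous linear maps. The key preliminary observation I would record is that $P^\bullet_{V^\ast}$ is the \emph{topological} dual complex of $P_\bullet^V$: the natural pairing $P^n_{V^\ast}\otimes P_n^V\to\bbC$ is a perfect Banach duality and, in matrix form over $\bbC\Gamma$, the transpose-and-involution computation already recorded in Remark~\ref{rem: matrix of Laplacian} shows that the Banach adjoint of $d_{n+1}^V$ is $d^{n+1}$. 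Symmetrically, $P_\bullet^{V^\ast}$ is topologically dual to $P^\bullet_V$.

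With this in hand the two Hausdorff equivalences become immediate. Since $\ker d_n^V$ is closed in $P_n^V$, the group $H_n(\Gamma,V)$ is Hausdorff iff $\im d_{n+1}^V$ is closed in $P_n^V$, which by Banach's closed range theorem is equivalent to $\im(d_{n+1}^V)^\ast=\im d^{n+1}$ being closed in $P^{n+1}_{V^\ast}$, i.e.~to $H^{n+1}(\Gamma,V^\ast)$ being Hausdorff. The second equivalence is the same argument applied to the dual pair $(P^\bullet_V,P_\bullet^{V^\ast})$, using that $\im d^{n+1}$ closed is equivalent to $\im(d^{n+1})^\ast=\im d_{n+1}^{V^\ast}$ closed.

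For the first isomorphism I would assume $H^n(\Gamma,V^\ast)$ Hausdorff, so $\im d^n$ is closed in $P^n_{V^\ast}$. The closed range theorem applied to $d_n^V$ then yields both that $\im d_n^V$ is closed and the sharp annihilator identity $(\ker d_n^V)^\perp=\im d^n$. Standard Hahn--Banach duality identifies
\[
(\bar H_n(\Gamma,V))^\ast \;\cong\; \bigl(\overline{\im d_{n+1}^V}\bigr)^\perp \;\subset\; (\ker d_n^V)^\ast,
\]
and, writing $(\ker d_n^V)^\ast = P^n_{V^\ast}/(\ker d_n^V)^\perp$ via Hahn--Banach extension, a representative $\tilde\phi\in P^n_{V^\ast}$ lies in the subspace on the right exactly when $\tilde\phi\circ d_{n+1}^V=0$, i.e.~when $\tilde\phi\in\ker d^{n+1}$. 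Two such representatives differ by an element of $(\ker d_n^V)^\perp$, which by the above equals $\im d^n\subseteq\ker d^{n+1}$. The upshot is
\[
(\bar H_n(\Gamma,V))^\ast \;\cong\; \ker d^{n+1}/\im d^n \;=\; H^n(\Gamma,V^\ast).
\]
The remaining isomorphism follows by swapping the homological and cohomological roles in the same argument, now using the Hausdorff hypothesis on $H_n(\Gamma,V^\ast)$ to supply closedness of $\im d_{n+1}^{V^\ast}$ and hence the identity $(\ker d^{n+1})^\perp=\im d_{n+1}^{V^\ast}$.

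The main obstacle I expect is the innocent-looking identity $(\ker d_n^V)^\perp=\im d^n$: only the inclusion $\supseteq$ is automatic, and in general one merely has $(\ker T)^\perp\supseteq\overline{\im T^\ast}$, with equality holding precisely when $\im T^\ast$ is closed. The closedness of $\im d^n$, which is exactly the Hausdorff hypothesis we are given, is what upgrades this to equality via closed range. Once this point is in place, everything else is a careful but routine bookkeeping of which adjoint lives where, aided by Remark~\ref{rem: matrix of Laplacian}.
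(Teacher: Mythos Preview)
Your proposal is correct and follows essentially the same route as the paper's proof: both arguments hinge on the closed range theorem for the Hausdorff equivalences and on the identity $\im d^n=(\ker d_n^V)^\perp$ under the Hausdorff hypothesis, followed by standard annihilator/Hahn--Banach duality to identify $(\bar H_n)^\ast$ with $\ker d^{n+1}/\im d^n$. The paper derives $\im d^n=(\ker d_n^V)^\perp$ by explicitly factoring $d^n$ through $(\im d_n^V)^\ast\cong (\ker d_n^V)^\perp$ and then dualizes the short exact sequence $0\to\overline{\im d_{n+1}}\to\ker d_n\to\bar H_n\to 0$, whereas you invoke the closed range theorem directly and compute the dual of the quotient via annihilators; these are the same computation in different packaging. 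One minor point: the duality between $d_{n+1}^V$ and $d^{n+1}$ is not quite the content of Remark~\ref{rem: matrix of Laplacian} (which is about the Laplacian) but rather of the commutative diagram~\eqref{eq: adjoint and dual differentials} and the sentence just before the lemma stating that $P^\bullet_{V^\ast}$ is topologically dual to $P_\bullet^V$.
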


\begin{proof}
We write short $C_n=P_n^V$ and $C^n=P^n_{V^\ast}$. For the indexing of (co-)differentials we follow the convention in \S\ref{subsec: laplace operator}.  
By the closed range theorem a continuous linear map between Banach spaces has closed range if and only if its dual map has closed range. Applied to the differential~$d_{n+1}\colon C_{n+1}\to C_n$ this proves that $H_n(\Gamma, V)$ is Hausdorff if and only if $H^{n+1}(\Gamma, V^\ast)$ is Hausdorff. 

Assume that $H^{n}(\Gamma, V^\ast)$ is Hausdorff. Thus $H_{n-1}(\Gamma, V)$ is Hausdorff and 
$\im d_n\subset C_{n-1}$ is closed.  By the Hahn-Banach theorem, 
$C^{n-1} \to (\im d_n)^*$ is surjective.
The map $d^n$ factors as 
\[ C^{n-1} \twoheadrightarrow (\im d_n)^* \cong (C_n/\ker d_n)^* \cong (\ker d_n)^\perp \hookrightarrow C^n. \]
Thus $\im d^n=(\ker d_n)^\perp$ in $C^n$.
Without any assumption on being Hausdorff, we also have 
$\ker d^{n+1}=(\im d_{n+1})^\perp=(\overline{\im d_{n+1}})^\perp$.
Dualizing the short exact sequence
\[
0\to \overline{\im d_{n+1}}\to \ker d_n\to \bar H_n(\Gamma, V)\to 0
\]
yields the short exact sequence 
\[
0\to \Bigl(\bar H_n(\Gamma, V)\Bigr)^\ast\to (\ker d_n)^\ast\to (\overline{\im d_{n+1}})^\ast\to 0.
\]
and we obtain that 
\begin{align*} \Bigl(\bar H_n(\Gamma, V)\Bigr)^\ast&\cong (\ker d_n)^\ast/(\overline{\im d_{n+1}})^\ast \\
&\cong (C^n/(\ker d_n)^\perp)/(C^n/(\overline{\im d_{n+1}})^\perp)\\
&\cong (\overline{\im d_{n+1}})^\perp/(\ker d_n)^\perp\\
&= \ker d^{n+1}/\im d^n \\
&= H^n(\Gamma,V^*).
\end{align*}
The remaining isomorphisms are proved similarly.
\end{proof}

\begin{lemma} \label{lem:torhom}
    Let $V$ be a Banach $\Gamma$-module with $V^\Gamma$ finite dimensional and let $U<V$ be a closed sub-$\Gamma$-module. If $H^1(\Gamma,U)$ is not Hausdorff then also $H^1(\Gamma,V)$ is not Hausdorff.
\end{lemma}

\begin{proof}
If $H^1(\Gamma,V)$ is Hausdorff then the image under $d^1$ of $P^0_V\cong V$ in $P^1_V$ is closed, and isomorphic to $V/V^\Gamma$. Since $V^\Gamma$ is finite dimensional, the image of $U$ is closed in $V/V^\Gamma$, and we deduce that the image under $d^1$ of $P^0_U\cong U$ in $P^1_U$ is closed, thus $H^1(\Gamma,U)$ is Hausdorff.
\end{proof}

\subsection{Ultrapowers}

We fix a non-principal ultrafilter $\omega$ on $\bbN$.
For every Banach space $V$ we consider $\lim_\omega \|\cdot\|$ as a seminorm on the space $\ell^\infty(\bbN,V)$ of bounded sequences. After dividing the kernel of the seminorm, we obtain a norm $\|\cdot\|_\omega$ 
on the quotient space $V_\omega$,  which is easily seen to be complete. We call the resulting Banach space $V_\omega$ the \emph{ultrapower} of $V$.
If $T:U\to V$ is a bounded linear map between Banach spaces, naturally associated map $\ell^\infty(\bbN,U) \to \ell^\infty(\bbN,V)$ descends to a bounded linear map $T_\omega:U_\omega \to V_\omega$. 

Let us denote the category of Banach spaces with bounded linear maps as morphisms by~$\banach$. 

\begin{lemma} \label{lem:openmap}
    Taking ultrapowers yields a functor $\Omega\colon \banach\to\banach$, $\Omega(A)=A_\omega$. Further, if $0\to A\to B\to C\to 0$ is a short exact sequence of Banach spaces, then $0\to A_\omega\to B_\omega\to C_\omega\to 0$ is a short exact sequence. 
\end{lemma}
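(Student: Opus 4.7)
The plan is to verify two things: first that $\Omega$ really is a functor, and second that it preserves the exactness of short exact sequences. The key tool throughout is the open mapping theorem applied to the given morphisms.

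For functoriality, I would start with a bounded linear map $T\colon U\to V$ of Banach spaces and observe that the naive map on $\ell^\infty(\bbN,\_)$ sending $(u_n)_n$ to $(Tu_n)_n$ is bounded of operator norm at most $\|T\|$. It descends to the ultrapower because $\lim_\omega\|Tu_n\|\leq\|T\|\lim_\omega\|u_n\|$, so the kernel of the defining seminorm is mapped into itself. The resulting map $T_\omega\colon U_\omega\to V_\omega$ satisfies $\|T_\omega\|\leq\|T\|$, and the identities $\id_{\omega}=\id_{V_\omega}$ and $(ST)_\omega=S_\omega T_\omega$ are immediate from working at the level of sequences.

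For exactness, fix $0\to A\xrightarrow{i} B\xrightarrow{q} C\to 0$ in $\banach$. I would handle the three points in turn. Injectivity of $i_\omega$: by the open mapping theorem, the injection $i$ with closed range is a topological embedding, so there exists $c>0$ with $\|a\|\leq c\|i(a)\|$ for every $a\in A$; passing to the ultralimit shows that $(a_n)_n$ with $\lim_\omega\|i(a_n)\|=0$ satisfies $\lim_\omega\|a_n\|=0$. Surjectivity of $q_\omega$: again by the open mapping theorem, there is a constant $K>0$ such that every $c\in C$ admits a lift $b\in q^{-1}(c)$ with $\|b\|\leq K\|c\|$; choosing such lifts sequentially for any bounded $(c_n)_n$ yields a bounded $(b_n)_n$ whose class in $B_\omega$ maps to the class of $(c_n)_n$.

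Exactness in the middle is the step I expect to be the main technical obstacle, because it requires replacing a representative in its equivalence class. The inclusion $\im(i_\omega)\subseteq\ker(q_\omega)$ follows at once from $q\circ i=0$. For the converse, suppose a bounded sequence $(b_n)_n$ satisfies $\lim_\omega\|q(b_n)\|=0$. Using the same constant $K$ as above, pick $b_n'\in B$ with $q(b_n')=q(b_n)$ and $\|b_n'\|\leq K\|q(b_n)\|$. Then $(b_n')_n$ is bounded and defines the zero element of $B_\omega$ along $\omega$, so $(b_n)_n$ and $(b_n-b_n')_n$ represent the same class. By construction $b_n-b_n'\in\ker(q)=i(A)$, and the estimate from the first step shows that the unique preimages $a_n\in A$ form a bounded sequence; thus $[(b_n)_n]=i_\omega([(a_n)_n])$, completing the proof.
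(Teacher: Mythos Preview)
Your proof is correct and is precisely the argument the paper has in mind: the paper's proof consists of the single sentence ``This is a consequence of the open mapping theorem,'' and your proposal spells out exactly how that consequence works (embedding constant for $i_\omega$, bounded lifting constant for $q_\omega$, and the representative-correction for exactness in the middle).
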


\begin{proof}
    This is a consequence of the open mapping theorem. 
\end{proof}

\begin{remark}\label{rem: exact category}
An additive functor between abelian categories with the property in Lemma~\ref{lem:openmap} is called \emph{exact}. 
However, the category $\banach$ fails to be abelian since 
a bounded linear map with trivial kernel and trivial cokernel is not an isomorphism in general. Note that the (categorical) cokernel of $T\colon U\to V$ is $\coker(T:U\to V)=V/\overline{\im T}$. The category $\banach$ is still an exact category. See~\cite{buehler} for a concise introduction to exact categories. 

The meaning of an exact functor between exact categories is that it preserves admissible kernel-cokernel pairs, which in the case of $\banach$ reduces to the functor preserving (categorical) cokernels in~$\banach$. 
\end{remark}

In a strong sense and despite Lemma~\ref{lem:openmap}, the ultrapower functor is not exact as a functor of exact categories.   

\begin{lemma} \label{lem:up}
Let $T:U\to V$ be a bounded linear map of Banach spaces with non-closed image.
Then $T_\omega$ is neither injective
nor its image is dense.
Moreover, the inclusions $(\ker T)_\omega\subset \ker (T_\omega)$ and $\overline{\im (T_\omega)}\subset(\overline{\im T})_\omega$ are proper.
\end{lemma}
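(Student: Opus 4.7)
The plan is to exploit the fact that $T$ having non-closed image is equivalent, via the open mapping theorem, to the induced injection $\hat T\colon U/\ker T\to V$ failing to be bounded below; the corresponding image-side assertions are reduced to the same fact applied to the adjoint $T^\ast$, whose image is also non-closed by the closed range theorem.

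First I would produce a bounded sequence $u_n\in U$ with $\|u_n\|\leq 2$, $d(u_n,\ker T)=1$, and $\|Tu_n\|\to 0$, which exists because $\hat T$ is not bounded below. In $U_\omega$ the element $[u_n]$ satisfies $\|[u_n]\|_\omega\geq 1$ while $\|T_\omega[u_n]\|_\omega=\lim_\omega\|Tu_n\|=0$. If $[u_n]$ belonged to $(\ker T)_\omega$ it would be represented by a sequence in $\ker T$, forcing $\lim_\omega d(u_n,\ker T)=0$, a contradiction. Hence $[u_n]\in\ker(T_\omega)\setminus (\ker T)_\omega$, which simultaneously witnesses the non-injectivity of $T_\omega$ and the properness of the first inclusion.

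For the image side, I would apply the same principle to $T^\ast$: because $\im T^\ast$ is not closed, there exist $\phi_n\in V^\ast$ with $\|\phi_n\|\leq 2$, $d(\phi_n,\ker T^\ast)=1$, and $\|T^\ast\phi_n\|\to 0$. The formula $\Phi([v_n])\defq\lim_\omega\phi_n(v_n)$ defines a bounded linear functional on $V_\omega$, and $\Phi\circ T_\omega=0$ because $|\phi_n(Tu_n)|=|(T^\ast\phi_n)(u_n)|\leq\|T^\ast\phi_n\|\cdot\|u_n\|\to 0$ along $\omega$ for every bounded representative $(u_n)$. Using $\ker T^\ast=(\overline{\im T})^\perp$ together with the Hahn-Banach identification $V^\ast/\ker T^\ast\cong(\overline{\im T})^\ast$, the coset of $\phi_n$ restricts to a functional on $\overline{\im T}$ of norm $\geq 1$; hence there exist $w_n\in\overline{\im T}$ with $\|w_n\|\leq 1$ and $|\phi_n(w_n)|>1/2$. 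Then $[w_n]\in(\overline{\im T})_\omega$ while $|\Phi([w_n])|=\lim_\omega|\phi_n(w_n)|\geq 1/2$, and since $\Phi$ annihilates $\overline{\im T_\omega}$, the element $[w_n]$ cannot lie in this closure. The same $\Phi$ rules out $\overline{\im T_\omega}=V_\omega$.

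No step is really an obstacle: the argument is the symmetric pair of ``failure to be bounded below'' arguments for $T$ and $T^\ast$, glued by the closed range theorem. The only mild point is the observation that adjusting $\phi\in V^\ast$ by a summand in $\ker T^\ast$ does not change $T^\ast\phi$, so a coset of quotient-norm $1$ can be lifted to a representative of norm $\leq 2$ without sacrificing $\|T^\ast\phi_n\|\to 0$.
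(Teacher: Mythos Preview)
Your proof is correct and follows essentially the same strategy as the paper: apply the open mapping theorem to the induced injection to produce a nonzero element of $\ker(T_\omega)$, then dualize via the closed range theorem and push the resulting $[\phi_n]\in (V^\ast)_\omega$ through the natural map $(V^\ast)_\omega\to (V_\omega)^\ast$ (your $\Phi$ is exactly this image). The only organizational difference is that the paper first uses the exactness of the ultrapower functor (Lemma~\ref{lem:openmap}) to reduce to the case where $T$ is injective with dense image; in that case $(\ker T)_\omega=0$ and $(\overline{\im T})_\omega=V_\omega$, so the ``Moreover'' clauses collapse to the first two assertions, and the nonvanishing of $\Phi$ is immediate from the fact that $(V^\ast)_\omega\to (V_\omega)^\ast$ is an isometry---your explicit construction of the witnesses $w_n\in\overline{\im T}$ is then unnecessary. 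Your direct treatment avoids the reduction at the cost of this extra step; both are equally valid.
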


\begin{proof}
By Lemma~\ref{lem:openmap}, we easily reduce to the case where $T$ is injective and with a dense, non-closed, image. By the open mapping theorem there exists a sequence of unit vectors $u_i \in U$ with $\lim T(u_i)=0$. 
The element represented by $(u_i)$ in $U_\omega$ is in $\ker (T_\omega)$.
Note that also $T^*\colon V^*\to U^*$ is injective and with a dense, non-closed, image.
We thus can find a unit vector $\phi\in (V^*)_\omega$ that is in $\ker ((T^*)_\omega)$.
The image of $\phi$ under the natural map $(V^*)_\omega \to (V_\omega)^*$\footnote{This map is an isometry into, which is surjective iff $V$ super-reflexive \cite[Corollary~7.2]{Heinrich}.} vanishes on $\im (T_\omega)$. Hence also on $\overline{\im (T_\omega)}$.
\end{proof}

Let $F\colon \banach\to \mathsf{Abelian}$ be the forgetful functor to abelian groups. 
The functor $F$ preserves monomorphisms or kernels but does not preserve epimorphisms or cokernels (see Remark~\ref{rem: exact category}). In particular, $F$ is not an exact functor. This is one of the reasons that the homological algebra underlying cohomology with coefficients in Banach spaces is a bit more unpleasant than in a purely algebraic context. The ultrapower functor $\Omega$ is helpful because Lemmas~\ref{lem:openmap} and~\ref{lem:up} imply the following corollary. It says that we can express the preservation of epimorphisms and cokernels by~$F$  within~$\banach$, thus staying in the functional-analytic context. 

\begin{cor}\label{cor: categorical epi and cokernel}
    Let $f\colon A\to B$ be a (categorical) epimorphism in~$\banach$\footnote{This means that $f$ has dense image.}. Then $F(f)$ is an epimorphism   in~$\mathsf{Abelian}$ if and only if $\Omega(f)$ is an epimorphism in $\banach$. Similary, $F(\coker f)$ is the cokernel of $F(f)$ if and only if $\Omega(\coker f)$ is the cokernel of $\Omega(f)$. 
\end{cor}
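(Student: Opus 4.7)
The plan is to reduce both equivalences in the corollary to the single statement: \emph{given that $f\colon A\to B$ has dense image, $f$ is surjective if and only if $\Omega(f)$ has dense image}. First I would unpack the categorical language. Since $f$ is epi in $\banach$, its image is dense in $B$. The condition that $F(f)$ be epi in $\mathsf{Abelian}$ is that $f$ be surjective onto $B$, while the condition that $\Omega(f)$ be epi in $\banach$ is, by the footnote defining categorical epi in $\banach$, that $f_\omega$ have dense image in $B_\omega$. For the cokernels, note $\coker f = B/\overline{\im f} = 0$, so both $F(\coker f)$ and $\Omega(\coker f)$ vanish. In each of the two target categories a morphism has zero cokernel if and only if it is an epimorphism; hence $F(\coker f)$ is the cokernel of $F(f)$ if and only if $F(f)$ is epi in $\mathsf{Abelian}$, and likewise $\Omega(\coker f)$ is the cokernel of $\Omega(f)$ if and only if $\Omega(f)$ is epi in $\banach$. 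This reduces the cokernel assertion to the epimorphism assertion.

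For the forward direction of the key equivalence, suppose $f$ is surjective. Then $0 \to \ker f \to A \to B \to 0$ is a short exact sequence in $\banach$, and Lemma~\ref{lem:openmap} produces the short exact sequence $0\to(\ker f)_\omega \to A_\omega \to B_\omega \to 0$. In particular $\Omega(f) = f_\omega$ is surjective, so it has dense image. For the converse, I would argue by contrapositive. If $\im f$ is not closed in $B$, then Lemma~\ref{lem:up} applied to $f$ says that $\overline{\im f_\omega}$ is a proper subspace of $B_\omega$, so $f_\omega$ does not have dense image. Hence if $f_\omega$ does have dense image, then $\im f$ must be closed in $B$; combined with the density of $\im f$ given by $f$ being epi in $\banach$, this forces $f$ to be surjective.

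The main obstacle, I expect, is purely a matter of bookkeeping: one must keep track of the two distinct meanings of \emph{epimorphism} (set-theoretic surjection in $\mathsf{Abelian}$ versus dense-image map in $\banach$) and of \emph{cokernel} (quotient by $\im$ versus quotient by $\overline{\im}$). The analytic content is already packaged in Lemmas~\ref{lem:openmap} and~\ref{lem:up}; the corollary expresses the way the ultrapower functor $\Omega$ detects closedness of images via the properness assertion of Lemma~\ref{lem:up}, thereby aligning the $\banach$-categorical notions with the underlying set-theoretic ones.
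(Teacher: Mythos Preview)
Your proof is correct and follows exactly the route the paper intends: the paper simply states that the corollary is a consequence of Lemmas~\ref{lem:openmap} and~\ref{lem:up}, and your argument spells out that deduction in detail. Your reduction of the cokernel statement to the epimorphism statement via $\coker f = 0$ is the intended one, and your use of Lemma~\ref{lem:up} (non-closed image forces $f_\omega$ to have non-dense image) together with Lemma~\ref{lem:openmap} (surjectivity of $f$ passes to $f_\omega$) is precisely what is needed.
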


The ultrapower functor $\Omega$ restricts to a functor on Banach $\Gamma$-modules and extends to a functor on chain complexes in~$\mathsf{Ban}$. Let $P_\bullet$ be a resolution as in~\eqref{eq: projective resolution}. Since $\Omega$ commutes with direct finite sums, we obtain that  
\begin{equation}\label{eq: ultra identification} \bigl(P_\bullet^V\bigr)_\omega=\bigl(V\otimes_{\bbC\Gamma} P_\bullet\bigr)_\omega\cong V_\omega\otimes_{\bbC\Gamma}P_\bullet=P_\ast^{V_\omega}.
\end{equation}
Similarly, for the cochain complex.

\begin{lemma} \label{lem:uptrick}
    Let $n\in\bbN$. Let $\mathcal{V}$ be a class of Banach $\Gamma$-modules that is closed under taking ultrapowers. 
    \begin{enumerate}
    \item If $\bar H_n(\Gamma,V)=0$ for every $V\in \mathcal{V}$, then  $H_n(\Gamma,V)=0$ and $H_{n-1}(\Gamma,V)$ is Hausdorff for every $V\in \mathcal{V}$. 
    \item 
    If $\bar H^n(\Gamma,V)=0$ for every $V\in \mathcal{V}$, then  $H^n(\Gamma,V)=0$ and $H^{n+1}(\Gamma,V)$ is Hausdorff for every $V\in \mathcal{V}$. 
    \end{enumerate}
\end{lemma}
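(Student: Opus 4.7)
The plan is to exploit Lemma~\ref{lem:up} and the identification $(P_\bullet^V)_\omega\cong P_\bullet^{V_\omega}$ from~\eqref{eq: ultra identification} (and its cochain analogue). Since $\mathcal{V}$ is closed under ultrapowers, whenever $V\in\mathcal V$ the same hypothesis applies to $V_\omega$, and we are free to compute $\bar H_n(\Gamma,V_\omega)$ inside $(P_\bullet^V)_\omega$ through this identification.

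For part~(1), fix $V\in\mathcal{V}$ and write $d_\bullet$ for the boundaries in $P_\bullet^V$. The hypothesis $\bar H_n(\Gamma,V)=0$ amounts to the equality $\ker d_n=\overline{\im d_{n+1}}$. To promote this to unreduced vanishing, I would argue by contradiction and assume that $\im d_{n+1}$ is not closed. Applying Lemma~\ref{lem:up} to $d_{n+1}\colon P_{n+1}^V\to P_n^V$ gives the strict inclusion $\overline{\im (d_{n+1})_\omega}\subsetneq \bigl(\overline{\im d_{n+1}}\bigr)_\omega=(\ker d_n)_\omega\subseteq \ker (d_n)_\omega$. So there is an element of $\ker (d_n)_\omega$ representing a nonzero class in $\bar H_n(\Gamma,V_\omega)$, contradicting the hypothesis applied to $V_\omega\in\mathcal V$. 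Hence $\im d_{n+1}$ is closed, which combined with the reduced vanishing gives $H_n(\Gamma,V)=0$.

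For the Hausdorffness of $H_{n-1}(\Gamma,V)$, one must show that $\im d_n$ is closed. Again assume it is not. Lemma~\ref{lem:up} applied to $d_n\colon P_n^V\to P_{n-1}^V$ produces a strict inclusion $(\ker d_n)_\omega\subsetneq \ker(d_n)_\omega$, so pick $z\in\ker(d_n)_\omega\setminus(\ker d_n)_\omega$. Using the hypothesis for $V$ once more, $(\ker d_n)_\omega=\bigl(\overline{\im d_{n+1}}\bigr)_\omega$, and the latter always contains $\overline{\im(d_{n+1})_\omega}$. Therefore $z\notin\overline{\im(d_{n+1})_\omega}$, giving again a nonzero class in $\bar H_n(\Gamma,V_\omega)$ and contradicting the hypothesis. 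So $\im d_n$ is closed as required.

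Part~(2) proceeds by exactly the same mechanism applied to the cochain complex $P^\bullet_V$: the functor $\Omega$ commutes with $\hom_{\bbC\Gamma}(P_\bullet,\_)$ in the same way, and Lemma~\ref{lem:up} applied to the coboundaries $d^n$ and $d^{n+1}$ yields the two contradictions, upgrading $\bar H^n=0$ to $H^n=0$ together with $H^{n+1}$ being Hausdorff. The main obstacle I anticipate is purely bookkeeping: making sure to apply Lemma~\ref{lem:up} to the \emph{right} differential (to $d_{n+1}$ for the vanishing conclusion, to $d_n$ for the Hausdorff conclusion) and carefully tracking which inclusion between $(\ker T)_\omega$, $\ker T_\omega$, $\overline{\im T_\omega}$ and $(\overline{\im T})_\omega$ is needed so that it combines with the reduced vanishing hypothesis to produce a genuinely nonzero class in $\bar H_n(\Gamma,V_\omega)$.
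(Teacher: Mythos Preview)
Your proposal is correct and follows essentially the same approach as the paper: both apply Lemma~\ref{lem:up} to $d_{n+1}$ for the vanishing of $H_n$ and to $d_n$ for the Hausdorffness of $H_{n-1}$, deriving in each case a nonzero class in $\bar H_n(\Gamma,V_\omega)$ via the identification~\eqref{eq: ultra identification}. The only cosmetic difference is that in the Hausdorffness step the paper invokes the already-established $H_n(\Gamma,V_\omega)=0$ to get the contradiction, whereas you appeal directly to the hypothesis $\bar H_n(\Gamma,V_\omega)=0$; both are valid.
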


\begin{proof}
We only prove the first statement as the one for the second statement is similar. 
Let $V\in \mathcal{V}$.
If $H_{n}(\Gamma,V)\neq 0$, then it is not Hausdorff. Thus $d_{n+1}$ has a non-closed image. 
By Lemma~\ref{lem:up}, $\overline{\im (d_{n+1})_\omega}\subset (\overline{\im d_{n+1}})_\omega$ is a proper inclusion.  
In particular, $\overline{\im (d_{n+1})_\omega}\subset\ker ((d_{n})_\omega)$ is proper. In view of~\eqref{eq: ultra identification}, this contradicts $\bar H_n(\Gamma,V_\omega) \neq 0$.
Hence $H_{n}(\Gamma,V)= 0$.

If $H_{n-1}(\Gamma,V)$ is not Hausdorff, then $d_{n}$ has a non-closed image, and by Lemma~\ref{lem:up}, $(\ker d_n)_\omega\subset\ker ((d_n)_\omega)$ is proper. 
In particular, $\im ((d_{n+1})_\omega)\subset\ker ((d_{n})_\omega)$ is proper, contradicting $H_{n}(\Gamma,V_\omega)= 0$. 
Hence $H_{n-1}(\Gamma,V)$ is Hausdorff.
\end{proof}

For a Banach $\Gamma$-module $V$, an almost invariant sequence is a sequence of vectors $v_n\in V$ such that for every $g\in \Gamma$, $\lim_n (1-g)v_n=0$.
We end this subsection with the following observation, see ~\cite{badsau}*{Section~3}.

\begin{lemma}\label{lem: fixed points and ultrapower}
Let $V$ be a Banach $\Gamma$-module. If $V$ has an almost invariant sequence of unit vectors, then $V_\omega$ has non-zero $\Gamma$-invariant vectors.
\end{lemma}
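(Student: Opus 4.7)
My plan is to argue directly from the definitions, using functoriality of the ultrapower construction. Given an almost invariant sequence of unit vectors $(v_n)$ in $V$, I would consider the bounded sequence $(v_n)\in\ell^\infty(\bbN,V)$ and let $v\in V_\omega$ denote its image in the quotient. Since $\|v_n\|=1$ for all $n$, the definition of the ultrapower norm gives $\|v\|_\omega=\lim_\omega\|v_n\|=1$, so in particular $v\neq 0$.

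Next I would verify that $v$ is $\Gamma$-invariant. For any fixed $g\in\Gamma$, the action $g\colon V\to V$ is a bounded linear isomorphism, and by the functoriality established in Lemma~\ref{lem:openmap} (restricted from $\banach$ to Banach $\Gamma$-modules, as noted right before~\eqref{eq: ultra identification}) it induces a bounded linear map $g_\omega\colon V_\omega\to V_\omega$, acting on representatives by $g_\omega[(v_n)] = [(g v_n)]$. Thus the element $(1-g)v\in V_\omega$ is represented by the sequence $\bigl((1-g)v_n\bigr)_n$. By the almost-invariance hypothesis $\lim_n (1-g)v_n=0$, and since $\omega$ is non-principal and the ordinary limit exists, also $\lim_\omega \|(1-g)v_n\|=0$. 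Hence $\|(1-g)v\|_\omega=0$, so $g v = v$ in $V_\omega$. As $g\in\Gamma$ was arbitrary, $v$ is a non-zero $\Gamma$-invariant vector.

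I do not anticipate a serious obstacle here: the argument reduces to checking that the diagonal sequence embedding $V\to V_\omega$ combined with the componentwise action produces an invariant vector, which is essentially the defining property of the ultrapower. The only point that requires a moment of care is the passage from $\lim_n = 0$ to $\lim_\omega = 0$, and the fact that the $\Gamma$-action on $V_\omega$ is well-defined by the functoriality of $\Omega$; both are immediate from the setup in \S\ref{sec:BanM}.
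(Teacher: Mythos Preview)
Your proof is correct and follows exactly the same approach as the paper: take the class of the almost invariant sequence $(v_n)$ in $V_\omega$ and observe that it is a non-zero $\Gamma$-fixed vector. The paper's proof is a single sentence stating precisely this; you have simply spelled out the verification that $\|v\|_\omega=1$ and $\|(1-g)v\|_\omega=0$.
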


\begin{proof}
If $(v_n)$ is an almost invariant sequence of unit vectors in $V$, then the element in $V_\omega$ represented by $(v_n)$ is a non-trivial $\Gamma$-fixed vector.
\end{proof}

\subsection{Hilbert $C^\ast$-modules}

Let $A$ be a $C^\ast$-algebra. 
A \emph{Hilbert $A$-module} is an $A$-module $E$ with an $A$-valued inner product $\langle\_, \_\rangle\colon E\times E\to A$ such that $E$ is complete with respect to the ($\bbC$-valued) norm $\norm{x}=\norm{\langle x,x\rangle}_A^{1/2}$. We refer to the book~\cite{lance} as a background reference. 

The category of Hilbert $A$-modules with bounded $A$-linear maps as morphisms for an arbitrary $C^\ast$-algebra~$A$ is very similar to the special case~$A=\bbC$ which is just about Hilbert spaces and bounded operators. But there are some important differences. A closed $A$-submodule $F\subset E$ is \emph{complemented} if $E=F\oplus F^\perp$ where $F^\perp$ is defined in the obvious sense. Not every closed $A$-submodule is complemented. Further, 
not every bounded $A$-linear map has an adjoint. We usually work in the category of Hilbert $A$-modules with bounded and adjointable $A$-linear maps as morphisms. If $V, W$ are Hilbert $A$-modules, we denote the set of bounded adjointable maps from $V$ to~$W$ by $B(V,W)$. Furthermore, $B(V,V)$ is a $C^\ast$-algebra~\cite{lance}*{p.~8}. So a bounded adjointable  operator $f\colon V\to V$ is \emph{positive} if it is positive as an element of $B(V,V)$. This is implied by the positivity $\langle f(x), x\rangle\ge 0$ of the $A$-valued inner product by~\cite{lance}*{Lemma~4.1}. 

Let $E$ be a finitely generated projective $A$-module in the algebraic sense\footnote{In the case of a commutative $C^\ast$-algebra $C(X)$ such an $E$ corresponds to a finite-dimensional vector bundle over~$X$.}. 
Then $E$ embeds as a direct summand of some finitely generated free module $A^n$ and inherits the structure of a Hilbert $A$-module from that embedding. The Hilbert $A$-structure is independent of the choice up to unitary isomorphism. This follows from combining~\cite{lance}*{Corollary~3.3 and Proposition~3.8}. 
Furthermore, every $A$-linear map between finitely generated projective $A$-modules is bounded and adjointable. In case of finitely generated free $A$-modules the $A$-linear map~$f$ is represented by a matrix $M$, and the adjoint of~$f$ is represented by the matrix obtained from $M$ by taking the transpose and the element-wise adjoint. In the general case one uses~\cite{lance}*{Corollary~3.3} again. 

%In particular, if $P_\bullet$ is a chain complex of finitely generated projective $\bbC\Gamma$-modules, then $C^\ast\Gamma\otimes_{\bbC\Gamma} P_\bullet$ is a complex of Hilbert $C^\ast\Gamma$-modules with bounded and adjointable differentials. 
%Similarly for the $C^\ast$-algebra $C_0^\ast\Gamma$ provided $\Gamma$ has property~T. 

\begin{lemma} \label{lem:Acohom}
%Let $P_\bullet\to\bbC$ be a projective $\bbC\Gamma$-resolution of finite type. 
Let $A$ be a unital C*-algebra endowed with a $\ast$-homomorphism $\mathbb{C}\Gamma\to A$.
Let $\Delta_\bullet^A$ denote the Laplace operators as in~\eqref{eq: laplace operator} and Remark~\ref{rem: involutive algebra extension}. Let $n\in\bbN$. 
The following conditions are equivalent. 
\begin{enumerate}
    \item $H^n(\Gamma, A)=0$ and $H^{n+
    1}(\Gamma, A)$ is Hausdorff. 
    \item $H_n(\Gamma, A)=0$ and $H_{n-1}(\Gamma, A)$ is Hausdorff. 
    \item $\Delta_n^A$ is invertible as an $A$-linear map or, equivalently, as a bounded operator of Hilbert $A$-modules.  
\end{enumerate}
\end{lemma}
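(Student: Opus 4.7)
The plan is to reduce all three conditions to a single Hodge-type decomposition of $P_n^A$ as a Hilbert $A$-module, namely $P_n^A = \ker d_n^A \oplus (\ker d_n^A)^\perp$ with $\ker d_n^A = \im d_{n+1}^A$ and both images $\im d_n^A, \im d_{n+1}^A$ closed. The preliminary observation to fix is that since $\bbC\Gamma\to A$ is a $\ast$-homomorphism, the matrix $\ast$-operation used to define $(d_n^A)^\ast$ coincides with the Hilbert $A$-module adjoint on the free finitely generated Hilbert $A$-modules $P_n^A \cong P^n_A$. Under the identification of Remarks~\ref{rem: matrix of Laplacian}--\ref{rem: involutive algebra extension}, the cochain differential $d^{n+1}_A$ corresponds to the Hilbert module adjoint $(d_{n+1}^A)^\ast$, and $\Delta^n_A$ to $\Delta_n^A$.

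For (1)$\iff$(2), I would rewrite~(1) via this identification as: $\ker (d_{n+1}^A)^\ast = \im (d_n^A)^\ast$ in $P_n^A$ together with $\im (d_{n+1}^A)^\ast$ closed. By the closed range theorem for adjointable Hilbert $A$-module maps (\cite{lance}*{Theorem~3.2}), closedness of $\im T$ is equivalent to closedness of $\im T^\ast$, and under it $\im T^\ast = (\ker T)^\perp$. Taking orthogonal complements of $\ker (d_{n+1}^A)^\ast = (\im d_{n+1}^A)^\perp = (\ker d_n^A)^\perp$ yields $\im d_{n+1}^A = \ker d_n^A$ with all three modules closed, which is exactly~(2).

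The heart is (3)$\iff$(2) via Hodge decomposition. For (3)$\Rightarrow$(2), Lemma~\ref{lem:lapinv}(3) gives $H_n(\Gamma, A) = 0 = H^n(\Gamma, A)$; for the Hausdorff parts, I would set $\alpha \defq (d_n^A)^\ast d_n^A$ and $\beta \defq d_{n+1}^A (d_{n+1}^A)^\ast$. Both are positive self-adjoint in $B(P_n^A)$, and $\alpha\beta = \beta\alpha = 0$ thanks to $d_n^A d_{n+1}^A = 0$. Since $\alpha + \beta = \Delta_n^A$ is invertible with spectrum in some $[\epsilon, \infty)$, continuous functional calculus in the commutative $C^\ast$-subalgebra they generate shows that the supports of $\alpha$ and $\beta$ are disjoint clopen sets covering the whole Gelfand spectrum. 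Hence the range projections $p_\alpha, p_\beta$ are orthogonal with $p_\alpha + p_\beta = I$; the ranges $\im \alpha, \im \beta$ are closed; and $\alpha, \beta$ are invertible on $\im p_\alpha, \im p_\beta$ respectively. The chains $\im \alpha \subseteq \im (d_n^A)^\ast \subseteq (\ker d_n^A)^\perp = \im p_\alpha = \im \alpha$ and $\im \beta \subseteq \im d_{n+1}^A \subseteq \ker d_n^A = \im p_\beta = \im \beta$ then collapse to equalities, giving closedness of $\im d_n^A$ and $\im d_{n+1}^A = \ker d_n^A$, establishing~(2). For (2)$\Rightarrow$(3), the closed range theorem yields the orthogonal decomposition $P_n^A = \ker d_n^A \oplus (\ker d_n^A)^\perp$; on $\ker d_n^A = \im d_{n+1}^A$ one has $\alpha = 0$ and closed range of $d_{n+1}^A$ makes $\beta$ invertible there via the polar decomposition; on $(\ker d_n^A)^\perp$ one has $\beta = 0$ and closed range of $d_n^A$ makes $\alpha$ invertible; so $\Delta_n^A = \alpha + \beta$ is invertible on each summand.

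The equivalence between ``invertible as $A$-linear map'' and ``invertible as bounded adjointable operator'' is automatic on $P_n^A$, since any $A$-linear endomorphism of a finitely generated projective $A$-module is bounded and adjointable, with bounded adjointable inverse when it exists. The hardest step will be the careful verification of the Hilbert $C^\ast$-module fundamentals used throughout --- namely that closed range of an adjointable operator is equivalent to both complementability of the range and existence of a polar decomposition --- so that the orthogonality and invertibility arguments go through exactly as in the classical Hilbert space case.
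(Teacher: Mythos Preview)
Your proof is correct and rests on the same Hodge-theoretic backbone as the paper's: the closed range theorem for adjointable Hilbert $C^\ast$-module maps \cite{lance}*{Theorem~3.2} and the resulting orthogonal splitting of $P_n^A$. Two minor technical differences are worth noting. First, the paper handles $(1)\Leftrightarrow(3)$ by declaring it ``completely analogous'' to $(2)\Leftrightarrow(3)$ via Remark~\ref{rem: involutive algebra extension}, whereas you prove $(1)\Leftrightarrow(2)$ directly by dualising through the closed range theorem; your route is slightly more transparent but requires the extra step of checking that taking $\perp$ on both sides of $(\im d_{n+1})^\perp=(\ker d_n)^\perp$ recovers the submodules (which is fine here since both are closed ranges of adjointable maps, hence complementable). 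Second, for $(3)\Rightarrow(2)$ the paper obtains the splitting $\Delta_n = d_n^\ast d_n\vert_{\ker(d_n)^\perp}\oplus d_{n+1}d_{n+1}^\ast\vert_{\ker(d_{n+1}^\ast)^\perp}$ by directly invoking complementability of kernels from~\cite{lance}, while you produce the same splitting via continuous functional calculus on the commuting pair $\alpha,\beta$ with $\alpha\beta=0$ and $\alpha+\beta$ invertible. Your argument is a pleasant alternative: it makes the spectral mechanism behind the decomposition explicit and avoids citing complementability of kernels separately, at the cost of a short Gelfand-theory detour. Both arrive at the identical conclusion that $\im d_n$ and $\im d_{n+1}$ are closed with $\im d_{n+1}=\ker d_n$.
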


\begin{proof}
Ad $(3)\Rightarrow (2)$. 
We refer to the setting in \S\ref{subsec: laplace operator} but drop the superscript $A$ in the notation of the differentials. 
The vanishing of $H_n(\Gamma, A)$ 
follows from Lemma~\ref{lem:lapinv}. 
By~\cite{lance}*{Proposition~3.2} and its proof the subspaces $\ker(d_n)$ and $\ker(d_{n+1}^\ast)$ of $P_n^A$ are complementable. Further, $\ker(d_n)^\perp$ is the closure of $\im(d_n^\ast)$, and $\ker(d_{n+1}^\ast)^\perp$ is the closure of $\im(d_{n+1})$. Using that one shows the orthogonal decomposition 
\[\ker(d_n)^\perp\oplus \ker(d_{n+1}^\ast)^\perp=\ker(\Delta_n)^\perp=\{0\}^\perp.
\]
The operators $d_n^\ast d_n$ and $d_{n+1}d_{n+1}^\ast$ restrict to operators of the left and right summand, respectively. Hence \begin{equation}\label{eq: direct sum Laplacian} \Delta_n=d_n^\ast d_n\vert_{\ker(d_n)^\perp} \oplus d_{n+1}d_{n+1}^\ast\vert_{\ker(d_{n+1}^\ast)}.
\end{equation}
In particular, $d_n^\ast d_n\colon \ker(d_n)^\perp\to \ker(d_n)^\perp$ is invertible too. This implies that $d_n$ has a closed image. 

Ad $(2)\Rightarrow (3)$. 
First we show that $\Delta_n$ is injective. Since $\im(d_{n+1})=\ker(d_n)$ is closed, $\im(d_{n+1})$ is complementable by~\cite{lance}*{Proposition~3.2}. Further, $\ker(d_{n+1}^\ast)$ is the orthogonal complement of $\im(d_{n+1})$~\cite{lance}*{p.~23}. Therefore $\ker(d_n)\cap \ker(d_{n+1}^\ast)$ injects into $H_(\Gamma, A)$. Thus $\ker(d_n)\cap \ker(d_{n+1}^\ast)=0$. On the other hand, we have $\langle \Delta_n x, x\rangle=\norm{d_n x}^2+\norm{d_{n+1}^\ast x}^2$. This implies that $\Delta_n$ is a positive operator~\cite{lance}*{Lemma~4.1} from which we can conclude that $\Delta_n$ has a square root. Hence $\langle \Delta_n x, x\rangle=0$
 implies that $x=0$ and we obtain that \[\ker(\Delta_n)=\ker(d_n)\cap \ker(d_{n+1}^\ast)=\{0\}.\]
 As above we conclude from the injectivity of~$\Delta_n$ the direct sum decomposition~\eqref{eq: direct sum Laplacian}. Because of  $H_n(\Gamma, A)=0$ the differential $d_{n+1}$ has closed image which implies that the right summand in~\eqref{eq: direct sum Laplacian} is invertible. By assumption $d_n$ has a closed image which implies that also the left summand in~\eqref{eq: direct sum Laplacian} is invertible. Hence $\Delta_n$ is invertible. 

 Ad $(1)\Leftrightarrow (3)$.  The invertibility of the homological Laplace operator is equivalent to the invertibility of the cohomological Laplace operator. See Remark~\ref{rem: involutive algebra extension}. The proof of the equivalence $(1)\Leftrightarrow (3)$ is completely analogous to the one for $(2)\Leftrightarrow (3)$.
 \end{proof}   

\subsection{Unitary $\Gamma$-modules}

We use \emph{unitary $\Gamma$-module} as a synonym for \emph{unitary representation of $\Gamma$}.
Every unitary $\Gamma$-module extends naturally to a $*$-representation of the group algebra $\mathbb{C}\Gamma$ and vice versa.
In the following the Laplace operator is similarly defined as in \S\ref{subsec: laplace operator} with respect to a free  resolution $P_\bullet$ of finite type. 
The (homological) Laplace operator becomes a bounded operator $\Delta_n^V\colon P_n^V\to P_n^V$ between Hilbert spaces. Note that $P_n^V$ is a finite sum of copies of~$V$. 

\begin{lemma} \label{lem:Laplacian}
    Let $n\in\bbN$. Let $V$ be a unitary $\Gamma$-module. 
    Then  
    \[ \bar H^n(\Gamma,V)\cong \bar H_n(\Gamma,V)\cong \ker \Delta_n^V, \]
    and the following conditions are equivalent.
    \begin{enumerate}
        \item $H^n(\Gamma,V)=0$ and $H^{n+1}(\Gamma,V)$ is Hausdorff.
        \item $H_n(\Gamma,V)=0$ and $H_{n-1}(\Gamma,V)$ is Hausdorff.
        \item $\Delta_n^V$ is  invertible as a bounded operator. 
    \end{enumerate}
\end{lemma}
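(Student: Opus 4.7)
The plan is to exploit that, for a unitary $\Gamma$-module $V$, each $P_n^V$ is a (complex) Hilbert space and the differentials $d_n$ together with their Hilbert-space adjoints $d_n^\ast$ assemble into a standard Hilbert cochain complex. The key structural tool is the orthogonal Hodge-type decomposition
\[ P_n^V = \mathcal{H}_n \oplus \overline{\im d_n^\ast} \oplus \overline{\im d_{n+1}}, \]
where $\mathcal{H}_n = \ker d_n \cap \ker d_{n+1}^\ast$. To establish this, first identify $\mathcal{H}_n = \ker \Delta_n^V$ via the positivity identity $\langle \Delta_n^V x, x\rangle = \|d_n x\|^2 + \|d_{n+1}^\ast x\|^2$; then orthogonality of the three summands follows from $d_{n+1}^\ast d_n^\ast = (d_n d_{n+1})^\ast = 0$ (giving $\im d_n^\ast \subset \ker d_{n+1}^\ast$), $\im d_{n+1} \subset \ker d_n$, and the standard Hilbert-space identity $\overline{\im d_n^\ast} = (\ker d_n)^\perp$.

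From the decomposition, $\ker d_n = \mathcal{H}_n \oplus \overline{\im d_{n+1}}$ immediately yields $\bar H_n(\Gamma, V) \cong \mathcal{H}_n$. For the cohomological side, transport the computation to $P^\bullet_V$ using Remark~\ref{rem: matrix of Laplacian}, under which the coboundaries $d^\bullet$ correspond to the formal adjoints $d_\bullet^\ast$; then $\ker d_{n+1}^\ast = \mathcal{H}_n \oplus \overline{\im d_n^\ast}$ gives $\bar H^n(\Gamma, V) \cong \mathcal{H}_n$ as well.

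For the equivalence of the three conditions, decompose $\Delta_n^V$ along the orthogonal splitting as
\[ \Delta_n^V = 0 \oplus \bigl(d_n^\ast d_n\bigr)\bigm|_{\overline{\im d_n^\ast}} \oplus \bigl(d_{n+1} d_{n+1}^\ast\bigr)\bigm|_{\overline{\im d_{n+1}}}, \]
using $d_{n+1}^\ast\lvert_{\overline{\im d_n^\ast}} = 0$ and $d_n\lvert_{\overline{\im d_{n+1}}} = 0$. Invertibility of $\Delta_n^V$ is then equivalent to $\mathcal{H}_n = 0$ together with both restricted positive operators being bounded below, which in turn (by standard Hilbert-space arguments for positive self-adjoint operators) is equivalent to $d_n$ and $d_{n+1}$ having closed image. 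Combined, these conditions match exactly with $H_n(\Gamma,V) = 0$ and $H_{n-1}(\Gamma,V)$ Hausdorff, and dually (via Remark~\ref{rem: matrix of Laplacian}) with $H^n(\Gamma,V) = 0$ and $H^{n+1}(\Gamma,V)$ Hausdorff. The main technical point is bookkeeping the closed-range conditions across $d_n$, $d_n^\ast$, and $d_{n+1}$; the argument parallels Lemma~\ref{lem:Acohom} but is genuinely simpler because in a Hilbert space every closed subspace is complemented and every bounded operator is adjointable, so no complementability hypotheses enter.
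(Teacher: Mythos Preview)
Your proposal is correct and follows essentially the same Hodge-decomposition approach as the paper: both establish $\bar H_n\cong\bar H^n\cong\ker\Delta_n^V$ via the orthogonal splitting $P_n^V=\ker\Delta_n^V\oplus\overline{\im d_{n+1}}\oplus(\ker d_n)^\perp$, and then read off the equivalences from how $\Delta_n^V$ decomposes along these summands. The only cosmetic difference is that the paper invokes Lemma~\ref{lem:lapinv} and Lemma~\ref{lem: duality reduced cohomology} (together with $V\cong V^\ast$) for some of the implications, whereas you derive all three equivalences directly from the block form of $\Delta_n^V$; this is slightly more self-contained but not a different argument.
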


\begin{proof}
The first statement is well known. See, for example, \cite[Lemma 1.18]{Lueck} or \cite[Proposition 16.1]{Bader-Nowak}. The isomorphism $\bar H_n(\Gamma, V)\cong \ker\Delta_n^V$  follows from the orthogonal decompositions
\begin{align*} P_n^V&=\ker(d_n)\cap\im(d_{n+1})^\perp\oplus \overline{\im(d_{n+1})}\oplus\ker (d_n)^\perp\\
    &=\ker(d_n)\cap\ker(d_{n+1})\oplus \overline{\im(d_{n+1})}\oplus \ker (d_n)^\perp\\
    &=\ker(\Delta_n^V)\oplus \overline{\im(d_{n+1})}\oplus \ker (d_n)^\perp.
\end{align*}
Similarly for the cohomological version.    
The equivalence (1) $\Leftrightarrow$ (2) follows by Lemma~\ref{lem: duality reduced cohomology} and the fact that $V\cong V^\ast$ as $\bbC\Gamma$-modules. Note that if $V$ is a right $\bbC\Gamma$-module, then $V^\ast$ is a left $\bbC\Gamma$-module and vice versa. So in the isomorphism we implicitly use the involution on $\bbC\Gamma$ to regard $V$ as a left $\bbC\Gamma$-module. 
Lemma~\ref{lem:lapinv}(3) gives (3) $\Rightarrow$ (1) and (2), see Remark~\ref{rem: involutive algebra extension}.
On the other hand (2) implies that both maps $d_n$ and $d_{n+1}$ have closed ranges, so also their adjoints $d_n^*$ and $d_{n+1}^*$ have closed ranges, and so do the compositions $d_n^*d_n$ and $d_{n+1}d_{n+1}^*$.
Since the images of these maps are orthogonal, we get that also $\Delta_n$ has a closed image. Since $\Delta_n$ is a positive operator injective operator, we conclude that it is invertible, thus (2) $\Rightarrow$ (3).
Similarly, (1) $\Rightarrow$ (3).
See also see \cite[Proposition 16.5]{Bader-Nowak} for (1) $\Leftrightarrow$ (3).
\end{proof}

The ultrapower of a Hilbert space is a Hilbert space, and accordingly, the collection of unitary $\Gamma$-module is closed under taking ultrapowers. 
More than that, we have the following.

\begin{lemma} \label{lem:Aup}
    Let $A$ be a unital C*-algebra with a $\ast$-homomorphism  $\mathbb{C}\Gamma\to A$.
    Then every Hilbert $*$-representation of $A$ is a unitary $\Gamma$-module
and this collection of unitary $\Gamma$-modules is closed under ultrapowers.
\end{lemma}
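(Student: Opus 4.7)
The first assertion is formal. A Hilbert $*$-representation of $A$ is by definition a $*$-homomorphism $\pi\colon A\to B(H)$ for some Hilbert space $H$; composing with the given $\ast$-homomorphism $\bbC\Gamma\to A$ yields a $*$-homomorphism $\bbC\Gamma\to B(H)$. Since $\gamma^\ast\gamma=\gamma^{-1}\gamma=1$ in $\bbC\Gamma$ for every $\gamma\in\Gamma$, the image of each group element is a unitary, so $H$ is a unitary $\Gamma$-module in the ordinary sense.

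For the ultrapower assertion, the plan is to lift $\pi$ operator-wise. Given such a $\pi$, I would define $\pi_\omega\colon A\to B(H_\omega)$ by $\pi_\omega(a)\defq\pi(a)_\omega$, where $\pi(a)_\omega$ is obtained by applying the ultrapower functor $\Omega$ to the bounded operator $\pi(a)\colon H\to H$. The ultrapower $H_\omega$ is a Hilbert space, with inner product
\[ \langle [x_n],[y_n]\rangle_\omega=\lim_\omega \langle x_n,y_n\rangle, \]
and all the algebraic properties needed follow from functoriality of $\Omega$ together with a direct computation using this inner-product formula: linearity and unitality are obvious, multiplicativity $\pi(ab)_\omega=\pi(a)_\omega\pi(b)_\omega$ is functoriality, the norm bound $\norm{\pi(a)_\omega}\le\norm{\pi(a)}\le\norm{a}$ gives continuity, and the adjoint identity $(T^\ast)_\omega=(T_\omega)^\ast$ for $T\in B(H)$ is read off from the inner-product formula. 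This makes $\pi_\omega$ a Hilbert $*$-representation of $A$ on $H_\omega$.

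Finally, I would check that the resulting $\Gamma$-action on $H_\omega$ agrees with the one obtained by regarding $H$ as a unitary $\Gamma$-module and then applying the ultrapower functor on $\banach$ from Lemma~\ref{lem:openmap}. This is immediate, as both actions send $\gamma\in\Gamma$ to the operator $\pi(\gamma)_\omega$ on $H_\omega$. Hence $H_\omega$ belongs to the stated class.

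The only genuinely substantive step is the identity $(T^\ast)_\omega=(T_\omega)^\ast$; everything else is either functoriality of $\Omega$ or the formal observation that $*$-homomorphisms carry group elements to unitaries. Since this identity is a one-line computation from the explicit formula for the inner product on $H_\omega$, I do not anticipate any real obstacle.
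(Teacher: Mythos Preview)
Your proof is correct and follows essentially the same approach as the paper, which dispatches the lemma in a single sentence: ``This follows from the functoriality of the ultrapower.'' You have simply spelled out the details that the paper leaves implicit.
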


\begin{proof}
    This follows from the functoriality of the ultrapower.
\end{proof}

\begin{rem}
No continuity is a priori assumed for Hilbert $*$-representations of $A$, but it follows automatically from the $*$-homomorphism property. 
\end{rem}

\begin{lemma} \label{lem:basic}
    Let $A$ be a unital C*-algebra with a $\ast$-homomorphism  $\mathbb{C}\Gamma\to A$. 
    For every $n\in\bbN$, the following conditions are equivalent.
    \begin{enumerate}
        \item The Laplace operator $\Delta_n^A$ is invertible as a $A$-linear map.
        \item For every Hilbert $*$-representation $V$ of $A$, $\bar H^n(\Gamma,V)=0$.
        \item For every Hilbert $*$-representation $V$ of $A$, $H^n(\Gamma,V)=0$.
        \item For every Hilbert $*$-representation $V$ of $A$, $H^n(\Gamma,V)=0$ and $H^{n+1}(\Gamma, V)$ is Hausdorff.
        \item For every $A$-module $V$, $H^n(\Gamma,V)=0$.
        \item $H^n(\Gamma, A)=0$ and $H^{n+1}(\Gamma, A)$ is Hausdorff. 
    \end{enumerate}
    Further, they all imply the following.
    \begin{enumerate} \setcounter{enumi}{6}
        \item For every Banach $A$-module $V$, $H^{n+1}(\Gamma,V)$ is Hausdorff.
    \end{enumerate}
    Moreover, the conditions (1)-(6) are equivalent to (2') - (6') and they imply (7'), which are the analogous conditions where cohomology is replaced by homology and in (4'),(6') and (7') we also replace $n+1$ by $n-1$.
\end{lemma}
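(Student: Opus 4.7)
The plan is to establish a circle of implications $(1) \Rightarrow (5) \Rightarrow (3) \Rightarrow (2) \Rightarrow (4) \Rightarrow (1)$, treat $(1) \Leftrightarrow (6)$ separately via Lemma~\ref{lem:Acohom}, and then derive (7) as a bonus. The homological statements (2')--(6') are proved by the same pattern with $(1)$ as the common pivot; the appropriate index shift in (4'), (6'), (7') is forced by Lemma~\ref{lem:Acohom} and the homological half of Lemma~\ref{lem:Laplacian}.

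The easy directions are essentially bookkeeping. Lemma~\ref{lem:lapinv}(3) turns invertibility of $\Delta_n^A$ directly into vanishing of $H^n(\Gamma, W)$ for every $A$-module $W$, using that $\bbC\Gamma \to A$ is a $\ast$-homomorphism; this is $(1) \Rightarrow (5)$. The implications $(5) \Rightarrow (3)$ and $(3) \Rightarrow (2)$ are tautological (Hilbert $\ast$-representations are $A$-modules; vanishing of $H^n$ implies vanishing of $\bar H^n$). For $(2) \Rightarrow (4)$, the class of Hilbert $\ast$-representations of $A$ is closed under ultrapowers by Lemma~\ref{lem:Aup}, and Lemma~\ref{lem:uptrick}(2) upgrades vanishing of $\bar H^n$ on this class to simultaneous vanishing of $H^n$ and Hausdorffness of $H^{n+1}$. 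The equivalence $(1) \Leftrightarrow (6)$ is literally the equivalence $(1) \Leftrightarrow (3)$ in Lemma~\ref{lem:Acohom}.

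The substantive step is $(4) \Rightarrow (1)$. The idea is to witness invertibility of $\Delta_n^A$ through a faithful representation. Choose a faithful $\ast$-representation $\pi\colon A \hookrightarrow B(H)$, for instance the universal representation, so that $H$ becomes a Hilbert $\ast$-representation of $A$. By hypothesis (4) and Lemma~\ref{lem:Laplacian}, the Hilbert-space Laplacian $\Delta_n^H$ is invertible as a bounded operator on $P_n^H \cong H^{m_n}$. But $\Delta_n^A$ lives in $M_{m_n}(A)$, and its image under the induced faithful embedding $M_{m_n}(A) \hookrightarrow M_{m_n}(B(H)) = B(H^{m_n})$ is precisely $\Delta_n^H$. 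Since a unital $C^\ast$-algebra embedding preserves spectra, invertibility in $B(H^{m_n})$ pulls back to invertibility in $M_{m_n}(A)$, i.e.\ to (1). This is the only step that uses $C^\ast$-algebraic content beyond the lemmas in the previous subsections, and it is the main obstacle in the argument.

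For (7), apply $(1) \Rightarrow (5)$ to any Banach $A$-module $V$: algebraically $H^n(\Gamma, V) = 0$, hence $\bar H^n(\Gamma, V) = 0$. The class of Banach $A$-modules is closed under ultrapowers (the $A$-action transfers entrywise and remains bounded), so Lemma~\ref{lem:uptrick}(2) yields Hausdorffness of $H^{n+1}(\Gamma, V)$. The homological version is formally identical: $(1) \Rightarrow (5')$ by Lemma~\ref{lem:lapinv}(3), $(5') \Rightarrow (3') \Rightarrow (2')$ trivially, $(2') \Rightarrow (4')$ by Lemma~\ref{lem:uptrick}(1), $(4') \Rightarrow (1)$ by the same faithful representation argument combined with the homological direction of Lemma~\ref{lem:Laplacian}, $(1) \Leftrightarrow (6')$ by the $(2) \Leftrightarrow (3)$ part of Lemma~\ref{lem:Acohom}, and (7') from $(5')$ via the ultrapower trick once more.
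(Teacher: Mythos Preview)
Your argument is correct and follows essentially the same skeleton as the paper's proof. In particular, your explicit treatment of $(4)\Rightarrow(1)$ via a faithful representation and spectral permanence fills in exactly what the paper compresses into the phrase ``by Lemma~\ref{lem:Laplacian}, $(1)\Leftrightarrow(4)$'': one needs to pass from invertibility of $\Delta_n^V$ in $B(H^{m_n})$ back to invertibility of $\Delta_n^A$ in $M_{m_n}(A)$, and inverse-closedness of unital $C^\ast$-subalgebras is precisely the tool.

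The one genuine difference is the route to~(7). You invoke Lemma~\ref{lem:uptrick} on the class of Banach $A$-modules, after noting that this class is closed under ultrapowers. The paper instead uses Lemma~\ref{lem: duality reduced cohomology}: since $(1)$ gives $H_n(\Gamma,W)=0$ for every $A$-module $W$ (Lemma~\ref{lem:lapinv}(3)), one applies this to $W=V^\ast$ (which is again an $A$-module via the involution on $A$) and concludes that $H^{n+1}(\Gamma,V)$ is Hausdorff by duality. Both arguments are valid; the duality route is slightly more economical in that it avoids the verification that Banach $A$-modules are ultrapower-closed, while your route keeps the proof self-contained with the same ultrapower machinery already used for $(2)\Rightarrow(4)$.
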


\begin{proof}
We will only consider the cohomological versions, as the proof for homologies are similar.
By Lemma~\ref{lem:Acohom}, (1) $\Leftrightarrow$ (6)
and by Lemma~\ref{lem:Laplacian}, (1) $\Leftrightarrow$ (4).
By Lemma~\ref{lem:lapinv}, (1) $\Rightarrow$ (5).
By Lemma~\ref{lem: duality reduced cohomology}, (5) $\Rightarrow$ (7).
We conclude that (5) $\Rightarrow$ (4), thus (5) $\Leftrightarrow$ (1).
At this point we have 
(1) $\Leftrightarrow$ (4) $\Leftrightarrow$
(5) $\Leftrightarrow$
(6) $\Rightarrow$ (7)
and clearly (4) $\Rightarrow$ (3) $\Rightarrow$ (2).
By Lemma~\ref{lem:Aup} and Lemma~\ref{lem:uptrick}, (2) $\Rightarrow$ (4), thus (2) $\Leftrightarrow$ (3) $\Leftrightarrow$
(4). This finishes the proof.
\end{proof}

\subsection{Universal envelopes}

The forgetful functor from the category of von Neumann algebras to the category of $C^\ast$-algebras has a left adjoint called the \emph{von Neumann envelope functor}. By a result 
of Sherman-Takeda the von Neumann envelope of a $C^\ast$-algebra is its double 
dual (as a von Neumann algebra). See~\cite{takesaki}*{III.2}.

Every unitary $\Gamma$-module extends naturally to a Hilbert $*$-representation of the group algebra $\mathbb{C}\Gamma$ and vice versa.
Every Hilbert $*$-representation of the group algebra $\mathbb{C}\Gamma$ extends naturally to a $*$-representation of the maximal $C^\ast$-algebra $C^*\Gamma$ and vice versa. 
Every Hilbert $*$-representation of $C^*\Gamma$ extends further, by the Sherman–Takeda theorem, to a Hilbert normal $*$-representation of its von Neumann envelope, $W^*\Gamma=(C^*\Gamma)^{**}$ and vice versa.
We thus get four equivalent versions of the category of unitary $\Gamma$-modules.

\begin{rem} \label{rem:intermed}
    For every intermediate C*-algebra $C^*\Gamma\subset A\subset W^*\Gamma$,
    every unitary $\Gamma$-representations extends to a Hilbert $*$-module of $A$ -- but possibly in a non-unique way. 
    For example, a $*$-representation of $W^*\Gamma$ that is not normal restricts to a unitary $\Gamma$-representations, which in turn extends to a normal $*$-representation of $W^*\Gamma$.   
\end{rem}

By the double commutant theorem applied to the universal representation of $C^\ast\Gamma$  the von Neumann algebra $W^*\Gamma$ has an
entirely algebraic description as the endomorphism algebra of the forgetful functor from unitary $\Gamma$-modules to Hilbert spaces. 
Its center $Z^*\Gamma$ is thus identified as the endomorphisms of identity functor, that is the \emph{Bernstein center}, of the category of unitary $\Gamma$-modules.

\subsection{The augmentation ideals and the Kazhdan projection}
\label{subsec: kazhdan proj}
The trivial representation of $\Gamma$ extends to the \emph{augmentation homomorphism} $\epsilon:\mathbb{C}\Gamma\to \mathbb{C}$.
By the corresponding universal properties, $\epsilon$ extends to a C*-algebra homomorphism $C^*\Gamma\to \mathbb{C}$ and a von Neumann algebra homomorphism $W^*\Gamma\to \mathbb{C}$ denoted by the same symbol~$\epsilon$.
The corresponding kernels are denoted $C_0^*\Gamma\lhd C^*\Gamma$ and $W_0^*\Gamma\lhd W^*\Gamma$.

We identify all spaces above as subspaces of $W^*\Gamma$ and view them as $\Gamma$-modules through the action by left multiplication.

\begin{lemma} \label{lem:Winv}
    The subspace of $\Gamma$-invariants in the $\Gamma$-module $W_0^*\Gamma$ is zero. 
\end{lemma}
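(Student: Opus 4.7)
The plan is to show that any $\Gamma$-invariant element $x\in W_0^\ast\Gamma$ must satisfy the identity $a\cdot x=\epsilon(a)\,x$ for every $a\in W^\ast\Gamma$, from which $x=0$ follows by taking $a=x^\ast$ and using that $\epsilon(x)=0$.

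First I would establish the identity on the dense subalgebra $\mathbb{C}\Gamma$. If $\gamma\cdot x=x$ for every $\gamma\in\Gamma$, then for $a=\sum_i c_i\gamma_i\in\mathbb{C}\Gamma$ one computes directly
\[ a\cdot x=\sum_i c_i(\gamma_i\cdot x)=\Bigl(\sum_i c_i\Bigr)x=\epsilon(a)\,x.\]

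Next I would upgrade this identity from $\mathbb{C}\Gamma$ to all of $W^\ast\Gamma$ by weak-$\ast$ continuity. Since $W^\ast\Gamma=(C^\ast\Gamma)^{\ast\ast}$, the subalgebra $\mathbb{C}\Gamma$ is weak-$\ast$ dense in $W^\ast\Gamma$ (it is norm-dense in $C^\ast\Gamma$, which sits weak-$\ast$ densely in its bidual). Both sides of the desired identity are weak-$\ast$ continuous functions of~$a$: the map $a\mapsto a\cdot x$ is right multiplication by~$x$ in a von Neumann algebra, and the map $a\mapsto\epsilon(a)\,x$ is weak-$\ast$ continuous because $\epsilon\colon W^\ast\Gamma\to\bbC$ is a normal homomorphism (being the weak-$\ast$ extension of the augmentation on $C^\ast\Gamma$ to the bidual). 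Agreement on a weak-$\ast$ dense subalgebra therefore forces agreement on all of $W^\ast\Gamma$.

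Finally, plugging $a=x^\ast$ into the identity yields $x^\ast x=\epsilon(x^\ast)\,x=\overline{\epsilon(x)}\,x=0$, where we used $x\in W_0^\ast\Gamma=\ker\epsilon$. The C$^\ast$-identity $\norm{x^\ast x}=\norm{x}^2$ then gives $x=0$, which is the desired conclusion.

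The argument is essentially elementary once the correct identity is isolated; the only conceptual point to be careful about is the normality of the augmentation $\epsilon$ on the bidual, but this is automatic from the construction of $W^\ast\Gamma$ as $(C^\ast\Gamma)^{\ast\ast}$. No real obstacle is anticipated.
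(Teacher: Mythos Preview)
Your proof is correct and takes a genuinely different route from the paper's. The paper argues by choosing a faithful normal $*$-representation of $W_0^\ast\Gamma$; since every normal $*$-representation of $W_0^\ast\Gamma$ is a unitary $\Gamma$-representation with no non-trivial invariants, the image of a $\Gamma$-invariant element~$a$ lands in the invariant subspace $\{0\}$, forcing $a=0$ by faithfulness. Your argument is purely internal to the algebra: you upgrade the identity $ax=\epsilon(a)x$ from $\bbC\Gamma$ to all of $W^\ast\Gamma$ via weak-$\ast$ continuity and then kill $x$ with the $C^\ast$-identity. The paper's approach leans on the representation-theoretic picture of $W_0^\ast\Gamma$ set up just before the lemma, whereas yours needs only the normality of $\epsilon$ and avoids invoking any Hilbert space representation at all; in that sense it is more self-contained, though both proofs are very short.
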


\begin{proof}
Let $a\in (W_0^*\Gamma)^\Gamma$. Consider a faithful normal $*$-representation of $W_0^*\Gamma$, which we consider as a $\Gamma$-unitary representation with no non-trivial invariants. The image of the operator $a$ is in the $\Gamma$-invariants, hence $a=0$.
\end{proof}

The augmentation $\epsilon:\mathbb{C}\Gamma\to \mathbb{C}$ splits if and only if $\Gamma$ is finite, sending $1\in \mathbb{C}$ to the idempotent in $\mathbb{C}\Gamma$ defined by $\gamma \mapsto |\Gamma|^{-1}\cdot \gamma$ for $\gamma\in \Gamma$. 
Its extension to $W^*\Gamma$ splits for every group $\Gamma$.
The orthogonal projection on the $\Gamma$-invariants in each unitary representation forms an element in the Bernstein center that is called the \emph{Kazhdan projection} $z\in Z^*\Gamma<W^*\Gamma$. 
The Kazhdan projection is a central projection, 
and the map sending 1 to $z$ splits $\epsilon$.
Its image $W^*\Gamma z=\mathbb{C}z$ is a one dimensional ideal and $W_0^*\Gamma=W^*\Gamma(1-z)$ is its direct complement.
In particular, $W_0^*\Gamma$ is a von Neumann algebra itself endowed with a group homomorphism from $\Gamma$ to its unitary operators.
Every unitary representation of $\Gamma$ with no non-trivial invariants extends to a normal $*$-representation of $W_0^*\Gamma$ and vice versa.

Lemma~\ref{lem:Winv} implies that $(W^*\Gamma)^\Gamma=\mathbb{C}z$.
In view of $C_0^*\Gamma < W_0^*\Gamma$ we obtain that $(C_0^*\Gamma)^\Gamma=0$,
while $(C^*\Gamma)^\Gamma$ is either $0$ or $\mathbb{C}z$ depending on whether $z\in C^*\Gamma$. 

\begin{lemma}[{\cite{Akemann-Walter}*{Lemma 2}}] \label{lem:kproj}
A group $\Gamma$ has property T if and only if the Kazhdan projection $z$ is in $C^*\Gamma$.
\end{lemma}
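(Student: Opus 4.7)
The plan is to prove both implications by approximation arguments, exploiting that $\pi(z)$ is the orthogonal projection onto $V^\Gamma$ for every unitary representation $(\pi,V)$ and that $\epsilon(z)=1$.

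For the direction $z \in C^\ast\Gamma \Rightarrow$ property T, I would choose $z' \in \bbC\Gamma$ with $\|z - z'\|_{C^\ast\Gamma} < 1/4$, write $z' = \sum_{g \in F} a_g g$ for a finite set $F\subset\Gamma$, and set $C = \sum_g |a_g|$. The claim is that $(F, \delta)$ with $\delta < 1/(4C)$ is a Kazhdan pair. Indeed, for a unitary $\pi\colon \Gamma \to U(V)$ and an $(F, \delta)$-invariant unit vector $v$, a direct expansion gives
\[
\|\pi(z') v - \epsilon(z') v\| \;\leq\; \sum_{g \in F} |a_g|\, \|(1-\pi(g))v\| \;<\; C\delta \;<\; 1/4.
\]
Combining this with $|\epsilon(z') - 1| = |\epsilon(z - z')| \leq \|z-z'\|_{C^\ast\Gamma} < 1/4$ and $\|\pi(z)v - \pi(z')v\| < 1/4$, the triangle inequality yields $\|\pi(z)v\| > 1/4$. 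Hence $V^\Gamma \neq 0$, which is exactly property T by the Delorme–Guichardet characterization.

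For the converse, assume property T with Kazhdan pair $(F,\delta)$, $F=F^{-1}$, and consider the self-adjoint element
\[ h \;\defq\; \frac{1}{|F|}\sum_{g \in F}(1-g)^\ast(1-g) \;\in\; \bbC\Gamma. \]
In any unitary representation $\pi$ on $V$ one has $\langle \pi(h)v,v\rangle = \frac{1}{|F|}\sum_{g \in F}\|(1-\pi(g))v\|^2$, so $\ker \pi(h) = V^\Gamma$, and applying the Kazhdan pair to the restriction of $\pi$ to $(V^\Gamma)^\perp$ yields the uniform lower bound $\pi(h)|_{(V^\Gamma)^\perp} \geq (\delta^2/|F|)\cdot I$. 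This uniformity across all $\pi$ gives
\[ \spec_{C^\ast\Gamma}(h) \;\subset\; \{0\}\cup[\delta^2/|F|,\, \|h\|_{C^\ast\Gamma}]. \]
The characteristic function $\chi$ of $\{0\}$ is continuous on this disconnected compact set, so the continuous functional calculus in $C^\ast\Gamma$ produces an element $z' \defq \chi(h) \in C^\ast\Gamma$. Since continuous functional calculus commutes with $\ast$-homomorphisms, $\pi(z') = \chi(\pi(h))$ equals the orthogonal projection onto $V^\Gamma$ for every unitary $\pi$, identifying $z'$ with $z \in W^\ast\Gamma$ and thus placing $z$ in $C^\ast\Gamma$.

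The main obstacle is precisely the uniformity of the spectral gap of $h$ in the converse direction: this uniformity, which is the operator-algebraic content of property T, is what allows continuous functional calculus in $C^\ast\Gamma$ to isolate $\{0\}$ from the rest of the spectrum. A minor technical point to verify is that $\spec_{C^\ast\Gamma}(h)$ equals the closure of $\bigcup_\pi \spec(\pi(h))$, which follows from the universal property of $C^\ast\Gamma$: the universal representation is isometric, so it computes the $C^\ast$-spectrum, and being a direct sum of all cyclic representations, its spectrum is the closure of the union of the individual spectra.
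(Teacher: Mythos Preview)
Your proof is correct and is essentially the classical Akemann--Walter argument, but the paper takes a different route that is worth noting. For the direction $z\in C^\ast\Gamma\Rightarrow$ property~T, the paper does not approximate $z$ by a group-ring element; instead it observes that once $z\in C^\ast\Gamma$, the ideal $C_0^\ast\Gamma$ is a unital $C^\ast$-algebra whose Hilbert $\ast$-representations are exactly the unitary $\Gamma$-representations without invariant vectors, and then invokes the ultrapower machinery (Lemmas~\ref{lem:Aup} and~\ref{lem: fixed points and ultrapower}) to conclude that this class has no almost invariant vectors. For the converse, the paper argues abstractly that property~T isolates the trivial representation in the unitary dual, which forces a central projection in $C^\ast\Gamma$ that must coincide with~$z$. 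Your approach is more elementary and constructive (you actually exhibit $z$ as $\chi(h)$ via functional calculus), while the paper's approach is deliberately phrased through its ultrapower framework, which is precisely the template it reuses for the higher property~T criteria in Theorem~\ref{thm:criteria}.
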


Equivalently, $\Gamma$ has property T if and only if $\epsilon\colon C^*\Gamma\to \bbC$ splits.
This lemma is well known. As the proof is short, and for its importance in our discussion, we will prove it below.
Before, we make the following convention.

\begin{convention} \label{conv:kproj}
        Under the assumption $z\in C^*\Gamma$ we identify $C_0^*\Gamma\cong C^*\Gamma/\mathbb{C}z$, giving it the structure of a unital C*-algebra whose double dual is the von Neumann algebra $W_0^*\Gamma$. 
        Every unitary representation of $\Gamma$ which has no non-trivial invariants extends to a $*$-representation of $C_0^*\Gamma$ and vice versa.       
\end{convention}

\begin{proof}
If $\Gamma$ has property T, then the trivial representation is isolated in its unitary dual.  Hence it corresponds to a central projection in $C^*\Gamma$, 
which must be $z$, the unique invariant central projection in $W^*\Gamma$.
If $z\in C^*\Gamma$, then the class of unitary representations of $\Gamma$ that have no non-trivial invariants coincides with the class of $*$-representation of the unital algebra $C_0^*\Gamma$, which is closed under taking ultrapowers. By Lemma~\ref{lem:Aup} and
Lemma~\ref{lem: fixed points and ultrapower} such representations have no almost invariant vectors. It follows that $\Gamma$ has property~T.
\end{proof}

We conclude that $H^0(\Gamma,C^*\Gamma)$ detects property T.
We note that also $H_0(\Gamma,C^*\Gamma)$ and 
    $H^1(\Gamma,C^*\Gamma)$ detect property~T.
    Indeed, we will see in Lemma~\ref{lem:yesT} that if $\Gamma$ has property T then $H_0(\Gamma,C^*\Gamma)\cong \mathbb{C}$ and 
    $H^1(\Gamma,C^*\Gamma)=0$, contrasting the following lemma.

\begin{lemma} \label{lem:nonT}
    If $\Gamma$ does not have property~T, then  
    $H_0(\Gamma,C^*\Gamma)$ and 
    $H^1(\Gamma,C^*\Gamma)$ are non-Hausdorff.  
\end{lemma}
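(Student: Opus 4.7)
The plan is to reduce both non-Hausdorffness statements to the single observation that the degree-$0$ Laplace operator is not invertible in $C^*\Gamma$. By Convention~\ref{conv:ft} fix a finite generating set $s_1,\dots,s_k$ of $\Gamma$ and work with a free $\bbC\Gamma$-resolution with $P_0=\bbC\Gamma$ and $P_1=(\bbC\Gamma)^k$, $d_1(e_i)=1-s_i$. Then
\[ \Delta_0 = d_1 d_1^* = \sum_{i=1}^k(1-s_i)(1-s_i^{-1}) = \sum_{i=1}^k (2-s_i-s_i^{-1})\in \bbC\Gamma \]
satisfies $\epsilon(\Delta_0)=0$, so the trivial $*$-representation $C^*\Gamma\to\bbC$ puts $0$ in the spectrum of $\Delta_0$. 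Hence $\Delta_0$ is not invertible as an element of $C^*\Gamma$ (equivalently, as a $C^*\Gamma$-linear endomorphism of $P_0^{C^*\Gamma}\cong C^*\Gamma$).

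For the $H^1$ part: because $\Gamma$ does not have property~T, Lemma~\ref{lem:kproj} gives $z\notin C^*\Gamma$, so $(C^*\Gamma)^\Gamma=0$. This means exactly that $d_1^*:P_0^{C^*\Gamma}\to P_1^{C^*\Gamma}$ is injective as a map of Hilbert $C^*\Gamma$-modules. Hausdorffness of $H^1(\Gamma,C^*\Gamma)$ amounts to $\im(d_1^*)$ being closed in $P^1_{C^*\Gamma}$, and for an injective adjointable map of Hilbert $C^*\Gamma$-modules this is equivalent to being bounded below, i.e.\ to $\langle\Delta_0\xi,\xi\rangle=\|d_1^*\xi\|^2\geq\varepsilon\|\xi\|^2$ for some $\varepsilon>0$, which would force $\Delta_0$ to be invertible in $C^*\Gamma$. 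Since it is not, $H^1(\Gamma,C^*\Gamma)$ must be non-Hausdorff. Alternatively, one can read this off as the case $n=0$ of Lemma~\ref{lem:Acohom} (with $d_0=0$, so $\Delta_0^{C^*\Gamma}=d_1d_1^*$): the conjunction ``$H^0(\Gamma,C^*\Gamma)=0$ and $H^1(\Gamma,C^*\Gamma)$ Hausdorff'' is equivalent to $\Delta_0$ being invertible.

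Finally, Hausdorffness of $H_0(\Gamma,C^*\Gamma)$ is equivalent to $\im(d_1)\subset P_0^{C^*\Gamma}$ being closed. By the closed range theorem for adjointable operators between Hilbert $C^*$-modules, $d_1$ has closed range iff $d_1^*$ does; as the latter has just been shown to fail, $H_0(\Gamma,C^*\Gamma)$ is non-Hausdorff as well. The bulk of the argument is really the $H^1$ half, which itself is a formal consequence of $\epsilon(\Delta_0)=0$; the passage between $H^1$ and $H_0$ is a standalone duality via the Hilbert-module closed range theorem.
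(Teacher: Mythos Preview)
Your proof is correct and takes a genuinely different route from the paper's. Both proofs use that $(C^*\Gamma)^\Gamma=0$ when $\Gamma$ lacks property~T, but from there they diverge.

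For $H^1$, the paper argues by Banach duality (Lemma~\ref{lem: duality reduced cohomology}): Hausdorffness of $H^1(\Gamma,C^*\Gamma)$ would force $H_0(\Gamma,(C^*\Gamma)^*)=0$, and dualizing once more would give $H^0(\Gamma,W^*\Gamma)=0$, contradicting the existence of the Kazhdan projection $z\in (W^*\Gamma)^\Gamma$. You instead stay inside the Hilbert $C^*$-module framework: $\epsilon(\Delta_0)=0$ prevents $\Delta_0$ from being invertible in $C^*\Gamma$, so by Lemma~\ref{lem:Acohom} with $n=0$ the conjunction ``$H^0=0$ and $H^1$ Hausdorff'' fails. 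This is shorter and entirely internal to $C^*\Gamma$. (One small slip: your equation $\langle\Delta_0\xi,\xi\rangle=\|d_1^*\xi\|^2$ conflates the $A$-valued inner product with its norm, but the argument survives, and your alternative via Lemma~\ref{lem:Acohom} sidesteps this anyway.)

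For $H_0$, the contrast is sharper. The paper passes to the dual and shows $H^1(\Gamma,(C^*\Gamma)^*)$ is non-Hausdorff by a concrete construction: one takes almost invariant unit vectors in a representation with no invariants, forms their matrix coefficients in the Fourier--Stieltjes algebra, and uses the lemma of the centre to show these cannot converge in norm to the constant function~$1$. You instead invoke Lance's closed range theorem for adjointable operators between Hilbert $C^*$-modules: $d_1$ has closed range iff $d_1^*$ does, and you have just shown the latter fails. Your argument is again more economical; the paper's has the merit of making the link to almost invariant vectors explicit and not relying on the Hilbert-module closed range theorem.
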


\begin{rem} \label{rem:nonT}
If $\Gamma$ does not have property~T then Lemma~\ref{lem: duality reduced cohomology} implies that $H_0(\Gamma,W^*\Gamma)$ is non-Hausdorff and using Lemma~\ref{lem:torhom} we also get that $H^1(\Gamma,W^*\Gamma)$ is non-Hausdorff.
Using the long exact sequences associated with the maps $\epsilon$, one sees that $H_0$ and $H^1$ are non-Hausdorff also for $C_0^*\Gamma$ and $W_0^*\Gamma$ coefficients.
\end{rem}

\begin{proof}
If $H^1(\Gamma,C^*\Gamma)$ is Hausdorff, then Lemma~\ref{lem: duality reduced cohomology} implies that $H_0(\Gamma,(C^*\Gamma)^{*})$ is Hausdorff and isomorphic to 
$H^0(\Gamma,C^*\Gamma)^*=0$. By Lemma~\ref{lem: duality reduced cohomology}, $H^0(\Gamma,(C^*\Gamma)^{**})=H_0(\Gamma,(C^*\Gamma)^{*})^*=0$, which contradicts $z\in (W^*\Gamma)^\Gamma$.
This shows that $H^1(\Gamma,C^*\Gamma)$ is not Hausdorff.

Using Lemma~\ref{lem: duality reduced cohomology} we will show that $H_0(\Gamma,C^*\Gamma)$ is not Hausdorff by showing that $H^1(\Gamma,(C^*\Gamma)^*)$ is not Hausdorff.
The restriction to $\Gamma\subset C^*\Gamma$ gives an embedding $(C^*\Gamma)^* \hookrightarrow \ell^\infty(\Gamma)$ whose image is the Fourier-Stieltjes algebra of all matrix coefficients. By Eymard's theorem~\cite{eymard}, the state space of $C^*\Gamma$ is mapped onto the space $\text{PD}_1(\Gamma)$ of positive definite functions of norm~$1$ under the embedding. Further, the restriction of the weak*-topology from both spaces coincide on $\text{PD}_1(\Gamma)$.

Let $V$ be a unitary $\Gamma$-module $V$ with $V^\Gamma=0$ but with an almost invariant sequence of unit vectors $v_n$. 
The matrix coefficients $\phi_n$ of~$v_n$, given by $\phi_n(\gamma)=\langle \gamma v_n, v_n\rangle$, are a sequence of almost invariant unit vectors in $(C^*\Gamma)^*$. 
Suppose that the $\phi_n$ converge in norm to an invariant function, in this case the function constant~$1$. Then the real part satisfies $\Re\langle \gamma v_n,v_n\rangle > 1/2$ for all $\gamma\in \Gamma$ and $n\gg 1$. The $\Gamma$-action on the closed convex hull of the $\Gamma$-orbit of $v_n$ thus has a fixed point by the lemma of the centre. The previous estimate shows that it is non-zero, which contradicts $V^\Gamma=0$. So the sequence $\phi_n$ does not converge in norm to an invariant function. 
This implies that $H^1(\Gamma,(C^*\Gamma)^*)$ is not Hausdorff.
\end{proof}

\begin{rem}
    The injection $(C^*\Gamma)^* \hookrightarrow \ell^\infty(\Gamma)$ alluded to above shows that $((C^*\Gamma)^*)^\Gamma$ is one dimensional, spanned by $\epsilon$.
\end{rem}

We conclude this section with the following observation.

\begin{lemma} \label{lem:comp}
    Assume $\Gamma$ has property T.
    Then every $C^*\Gamma$ module $V$, the invariants submodule $V^\Gamma$ has a direct complement on which the $C^*\Gamma$ action factors via a unital action of $C^*_0\Gamma$.
    In particular, if $V^\Gamma=0$ then $V$ is a module for the unital algebra $C_0^*\Gamma$.
\end{lemma}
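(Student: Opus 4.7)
The plan is to use the Kazhdan projection $z\in C^*\Gamma$, which exists by Lemma~\ref{lem:kproj}. Since $z$ is the central orthogonal projection onto the $\Gamma$-invariants in every unitary representation of $\Gamma$ and the universal representation is faithful on $C^*\Gamma$, we have $\gamma z=z\gamma=z$ for every $\gamma\in\Gamma$; extending $\bbC$-linearly and invoking continuity of $\epsilon$ yields $az=za=\epsilon(a)z$ for every $a\in C^*\Gamma$. This gives a splitting of unital $C^*$-algebras
\[ C^*\Gamma=\bbC z\oplus(1-z)C^*\Gamma,\qquad (1-z)C^*\Gamma=C_0^*\Gamma, \]
with units $z$ and $1-z$ respectively, matching Convention~\ref{conv:kproj}.

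For any $C^*\Gamma$-module $V$, the central idempotent $z$ induces the $C^*\Gamma$-module decomposition $V=zV\oplus(1-z)V$. On the second summand the action of $C^*\Gamma$ factors through $(1-z)C^*\Gamma=C_0^*\Gamma$ unitally with unit $1-z$, which is exactly the structure asserted on the complement. It remains to identify $V^\Gamma$ with $zV$: the inclusion $zV\subseteq V^\Gamma$ is immediate from $\gamma z=z$, and for the reverse inclusion, $\Gamma$ acts as the identity on $V^\Gamma$, so the $\bbC\Gamma$-action on $V^\Gamma$ factors through $\epsilon$; extending by continuity to $C^*\Gamma$ yields $a\cdot v=\epsilon(a)v$ for every $v\in V^\Gamma$ and $a\in C^*\Gamma$. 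In particular $zv=\epsilon(z)v=v$, so $V^\Gamma\subseteq zV$. Thus $V^\Gamma=zV$ and $(1-z)V$ is the desired direct complement.

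The ``in particular'' clause is then immediate: $V^\Gamma=0$ forces $zV=0$, whence $V=(1-z)V$ is a unital $C_0^*\Gamma$-module via the quotient $C^*\Gamma\twoheadrightarrow C_0^*\Gamma$. The only subtle point is the continuity step above, which requires the $C^*\Gamma$-action on $V$ to be compatible with the norm topology, i.e.\ that $V$ be a Banach $C^*\Gamma$-module; this is the implicit setting here (and in the applications in Theorem~\ref{thm:TnCor}). Once that step is granted, everything else is a formal consequence of the central projection decomposition of $C^*\Gamma$.
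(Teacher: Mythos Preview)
Your approach is exactly the paper's: use the Kazhdan projection $z$ to split $V=zV\oplus(1-z)V$ and identify $V^\Gamma=zV$. The paper's proof is the one-liner ``We observe that $V^\Gamma=zV$ and consider its complement $(1-z)V$.''

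One correction, though: your claim that the inclusion $V^\Gamma\subseteq zV$ ``requires the $C^*\Gamma$-action on $V$ to be compatible with the norm topology'' is wrong, and as written your argument proves strictly less than the lemma, which is stated for arbitrary (algebraic) $C^*\Gamma$-modules. There is a purely algebraic argument avoiding any continuity in~$V$: property~T means the degree-$0$ Laplacian $\Delta_0\in\bbC_0\Gamma$ is invertible in the unital algebra $C_0^*\Gamma$, so there is $b\in C^*\Gamma$ with $b\Delta_0=1-z$. Since $\Delta_0$ is a finite $\bbC$-linear combination of elements $1-\gamma$, this exhibits $1-z$ in the left ideal of $C^*\Gamma$ generated by $\{\gamma-1:\gamma\in\Gamma\}$, and hence $(1-z)v=b\Delta_0 v=0$ for every $v\in V^\Gamma$ in any $C^*\Gamma$-module~$V$.
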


\begin{proof}
    We observe that $V^\Gamma=zV$ and consider its complement $(1-z)V$.
\end{proof}

\subsection{C*-algebraic reformulation of higher T}\label{subsec: C-algebraic reformulation}

Higher property~T (see~Definition~\ref{def:higherT}) is defined through the vanishing of group cohomology for coefficients that are unitary representations.
The following theorem (stated in the introduction as Theorem~\ref{thm:criteria}) is a reformulation using different types of coefficient modules.

\begin{theorem_o}[stated in the introduction as Theorem~\ref{thm:criteria}]
For $n>0$, the following are equivalent. 
\begin{enumerate}
    \item The group~$\Gamma$ has property $[T_n]$.
    \item $H^k(\Gamma,C^*\Gamma)=0$ for all $k\in\{1,\dots, n\}$ and $H^{n+1}(\Gamma, C^\ast\Gamma)$ is Hausdorff. 
    \item $H_k(\Gamma,C^*\Gamma)=0$ for all $k\in\{1,\dots, n\}$ and $H_0(\Gamma, C^\ast\Gamma)$ is Hausdorff. 
\end{enumerate}
A similar equivalence holds for $(T_n)$ with $C^\ast\Gamma$ replaced by $C_0^\ast\Gamma$.
\end{theorem_o}

\begin{rem} \label{rem:criteria}
    In the above theorem, the role of $C^*\Gamma$ 
    could be equivalently replaced by any intermediate C*-algebra $C^*\Gamma\subset A\subset W^*\Gamma$, and in particular by $W^*\Gamma$ itself.
    Similarly, the role of $C_0^*\Gamma$ 
    could be equivalently replaced by any intermediate C*-algebra $C_0^*\Gamma\subset A\subset W_0^*\Gamma$, and in particular by $W_0^*\Gamma$ itself.
\end{rem}

\begin{proof}
    For $[T_n]$ the theorem follows at once from Lemma~\ref{lem:basic} upon setting $A=C^*\Gamma$.
    We now consider the proof for $(T_n)$.
    Assume first that $\Gamma$ does not have $T$.
    Then, for every $n>0$, none of the conditions (1)-(3) is satisfied, by Lemma~\ref{lem:nonT} and Remark~\ref{rem:nonT}.
    We thus assume as we may that $\Gamma$ has T, and in view of Lemma~\ref{lem:kproj}, we apply Convention~\ref{conv:kproj}, thus considering $C_0^*\Gamma$ as a unital C*-algebra extending $\bbC\Gamma$,
    such that every unitary representation of $\Gamma$ which has no non-trivial invariants extends to a $*$-representation of $C_0^*\Gamma$ and vice versa.     
    The theorem now follows again from Lemma~\ref{lem:basic} upon setting $A=C_0^*\Gamma$.

    Next we explain Remark~\ref{rem:criteria} for an intermediate C*-algebra $C^*\Gamma\subset A\subset W^*\Gamma$.
    If $\Gamma$ does not have property~T, (2) and (3) are not satisfied~by Lemma~\ref{lem:torhom}, so we may assume that $\Gamma$ has property~T.
    The proof follows from Remark~\ref{rem:intermed} and its obvious $C^*_0\Gamma$ analogue.
\end{proof}

\begin{theorem_o}[stated in the introduction as Theorem~\ref{thm:TnCor}]
    If $\Gamma$ has property $[T_n]$, 
    then 
for every $C^*\Gamma$-module $V$ and for every $1\leq k \leq n$, $H^n(\Gamma,V)=H_k(\Gamma,V)=0$ and if $V$ is a Banach $C^*\Gamma$-module then $H^{n+1}(\Gamma,V)$ is Hausdorff.

If $\Gamma$ has property $(T_n)$, then the same holds for every $0\leq k \leq n$, upon replacing $C^*\Gamma$ by $C_0^*\Gamma$. 
In particular, if $\Gamma$ has property $(T_n)$, then for every unitary $\Gamma$-module $V$, $H^{n+1}(\Gamma,V)$ is Hausdorff.
\end{theorem_o}

\begin{proof}
The only part which does not follow immediately from Lemma~\ref{lem:basic}
is the last statement.
However, using the fact that $H^{n+1}(\Gamma,\bbC)$ is finite dimensional by Convention~\ref{conv:ft}, we get
\[ H^{n+1}(\Gamma,V)= H^{n+1}(\Gamma,(V^\Gamma)^\perp) \oplus H^{n+1}(\Gamma,\bbC)\otimes V^\Gamma.\]
This is Hausdorff provided $\Gamma$ has property $(T_n)$ and so the first summand vanishes.
\end{proof}

The following lemma contrasts Lemma~\ref{lem:nonT}.

\begin{lemma} \label{lem:yesT} 
If $\Gamma$ has property T then $H_0(\Gamma,C_0^*\Gamma)=0$,     $H_1(\Gamma,C_0^*\Gamma)=0$, $H^1(\Gamma,C_0^*\Gamma)=0$, $H_0(\Gamma,C^*\Gamma)\cong \mathbb{C}$, $H_1(\Gamma,C^*\Gamma)=0$ and 
    $H^1(\Gamma,C^*\Gamma)=0$.
\end{lemma}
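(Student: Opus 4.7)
The plan is to use the Kazhdan projection to split off the trivial summand, reduce everything to $C_0^*\Gamma$-coefficients, apply Theorem~\ref{thm:criteria} for $(T_1)=$~T, and finish with a small extra step to upgrade the Hausdorffness of $H_0(\Gamma, C_0^*\Gamma)$ to outright vanishing.

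Since $\Gamma$ has property~T, Lemma~\ref{lem:kproj} places the Kazhdan projection $z$ in $C^*\Gamma$, where it is a central projection. This yields a $C^*\Gamma$-bimodule (in particular $\Gamma$-module) decomposition
\[ C^*\Gamma \;=\; \bbC z \;\oplus\; C_0^*\Gamma, \]
with $\bbC z$ carrying the trivial $\Gamma$-action. All six (co)homology groups with $C^*\Gamma$-coefficients therefore split as direct sums of the corresponding groups with $\bbC$-coefficients and $C_0^*\Gamma$-coefficients. For the trivial coefficient one has $H^0(\Gamma,\bbC) = H_0(\Gamma,\bbC) = \bbC$, while $H^1(\Gamma,\bbC) = H_1(\Gamma,\bbC) = 0$ because property~T implies $\Gamma^{\mathrm{ab}}$ is finite. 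Hence the $C^*\Gamma$-statements in the lemma reduce to the $C_0^*\Gamma$-statements.

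By Convention~\ref{conv:kproj}, $C_0^*\Gamma$ is a unital $C^*$-algebra whose $*$-representations are exactly the unitary $\Gamma$-modules with no nonzero invariants, so property~T is the same as $(T_1)$. Applying Theorem~\ref{thm:criteria} with $n=1$ (equivalently, Lemma~\ref{lem:basic} for $A = C_0^*\Gamma$) then gives
\[ H^1(\Gamma, C_0^*\Gamma) \;=\; H_1(\Gamma, C_0^*\Gamma) \;=\; 0, \]
together with Hausdorffness of $H_0(\Gamma, C_0^*\Gamma)$.

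I expect the main (mild) obstacle to be upgrading this Hausdorffness to vanishing. The reduced group $\bar H_0(\Gamma, C_0^*\Gamma)$ is the quotient of $C_0^*\Gamma$ by the closure of the span of $\{(1-\gamma)a : \gamma\in\Gamma,\, a\in C_0^*\Gamma\}$. Since $z$ is the projection onto invariants in every unitary representation and $\gamma$ acts trivially there, one has the identity $\gamma z = z$ in $C^*\Gamma$; combined with the fact that $1-z$ is the unit of $C_0^*\Gamma$, this gives
\[ (1-\gamma)(1-z) \;=\; (1-z) - (\gamma - z) \;=\; 1 - \gamma, \]
so the span above contains the augmentation ideal $I(\Gamma) \subset \bbC\Gamma$. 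A standard approximation argument (correcting any $\bbC\Gamma$-approximant of $a \in C_0^*\Gamma = \ker\epsilon$ by a vanishing multiple of $1$) shows that $I(\Gamma)$ is dense in $C_0^*\Gamma$. Therefore $\bar H_0(\Gamma, C_0^*\Gamma) = 0$, and together with Hausdorffness this gives $H_0(\Gamma, C_0^*\Gamma) = 0$, completing the argument.
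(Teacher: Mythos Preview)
Your proof is correct and follows the same overall strategy as the paper, but you take a slightly longer route. The paper invokes Theorem~\ref{thm:TnCor} rather than Theorem~\ref{thm:criteria}: since property~T is simultaneously $(T_1)$ and $[T_1]$, the $[T_1]$-clause of Theorem~\ref{thm:TnCor} applied to the $C^*\Gamma$-module $C^*\Gamma$ gives $H^1(\Gamma,C^*\Gamma)=H_1(\Gamma,C^*\Gamma)=0$ directly (no need to split off $\bbC z$ and argue via finite abelianization), and the $(T_1)$-clause, whose range is $0\le k\le n$, gives $H_0(\Gamma,C_0^*\Gamma)=H_1(\Gamma,C_0^*\Gamma)=H^1(\Gamma,C_0^*\Gamma)=0$ immediately, without the separate density argument for $H_0$. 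The only thing left is $H_0(\Gamma,C^*\Gamma)\cong\bbC$, which follows from the splitting $C^*\Gamma=C_0^*\Gamma\oplus\bbC z$ exactly as you do.

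Your explicit argument for $H_0(\Gamma,C_0^*\Gamma)=0$ is perfectly valid and in fact unpacks what is hidden inside the $k=0$ case of Lemma~\ref{lem:basic} (condition~(2) there is automatic for $A=C_0^*\Gamma$ since its Hilbert $*$-representations have no invariants); the paper simply packages this into the citation of Theorem~\ref{thm:TnCor}.
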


\begin{proof}
  All equations but $H_0(\Gamma,C^*\Gamma)\cong \mathbb{C}$ follow immediately from Theorem~\ref{thm:TnCor}, and the latter follows from 
  $H_0(\Gamma,C_0^*\Gamma)=0$ given $C^*\Gamma=C_0^*\Gamma \oplus \bbC z$.
\end{proof}

\subsection{von Neumann algebras}\label{subsec: vN algebras}

We now turn to consider von Neumann algebras.
Given a group $\Gamma$, a von Neumann algebra $M$ and a group homomorphism $\Gamma \to \mathcal{U}(M)$ to the group of unitary elements of~$M$, we consider $M$ as a $\bbC\Gamma$-module with respect to the left action.
We call such modules \emph{$\Gamma$-unitary von Neumann algebras}. 
Every von Neumann algebra $M$ has a unique predual, which we denote $M_*$, and a one-parameter collection of $M$-bimodules, interpolating between $M_*$ and $M$ which we now consider.
For $1\leq p \leq \infty$, there is an associated \emph{non-commutative $L^p$-space} $L^p(M)$, where $L^\infty(M)\cong M$ and $L^1(M)\cong M_*$. For $1<p<\infty$, $L^p(M)$ is super-reflexive and $L^p(M)^* \cong L^q(M)$ where $1/p+1/q=1$.
In particular, $L^2(M)$ is a Hilbert space.
For more information, see\cite{Pisier-Xu}.
%The following theorem is a corollary of Theorem~\ref{thm:TnCor}.
%For the definition of the notion $FP_\infty(\bbQ)$, see \S\ref{subsec:FP}.

\begin{theorem} \label{thm:vNcriterion}
 %   Let $\Gamma$ be a group of type~$FP_\infty(\bbQ)$
  Let $M$ be a $\Gamma$-unitary von Neumann algebra.
   If $\Gamma$ has property $[T_n]$, then 
for every $0\le k\leq n$ and $1\leq p\leq \infty$, 
$H^k(\Gamma,L^p(M))=0$ and $H_k(\Gamma,L^p(M))=0$, 
and in particular $H^k(\Gamma, M)=0$ and $H_k(\Gamma,M)=0$.

If the group~$\Gamma$ has property $(T_n)$, then 
the inclusion $L^p(M)^\Gamma\subset L^p(\Gamma)$ induces isomorphisms in homology and cohomology for for every $0\le k\leq n$ and $1\leq p\leq \infty$.
Further, $L^p(M)^\Gamma=0$ for every $1\leq p\leq \infty$ if and only if $L^p(M)^\Gamma=0$ for some $1\leq p\leq \infty$, and in this case
$H^k(\Gamma,L^p(M))=0$ and $H_k(\Gamma,L^p(M))=0$,
for every $0\le k\leq n$ and $1\leq p\leq \infty$.
In particular, this is the case provided either $M^\Gamma=0$ or $(M_*)^\Gamma=0$.
\end{theorem}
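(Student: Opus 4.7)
The plan is to exhibit $L^p(M)$ as a Banach $C^*\Gamma$-module for every $1\le p\le\infty$ and then invoke Theorem~\ref{thm:TnCor}. The group homomorphism $\Gamma\to\mathcal{U}(M)$ extends linearly to a $*$-homomorphism $\bbC\Gamma\to M$, and, by universality of the maximal $C^*$-algebra, to a unital $*$-homomorphism $C^*\Gamma\to M$. Composing with the bounded left-multiplication action of $M$ on $L^p(M)$ equips $L^p(M)$ with the structure of a Banach $C^*\Gamma$-module, and the restricted $\Gamma$-action is isometric since multiplication by unitaries preserves the noncommutative $L^p$-norm.

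Under $[T_n]$, I would apply Theorem~\ref{thm:TnCor} directly to $V=L^p(M)$: this yields $H^k(\Gamma,L^p(M))=H_k(\Gamma,L^p(M))=0$ for all $1\le k\le n$, with the corresponding Hausdorffness in degree $n+1$. Under $(T_n)$, the group $\Gamma$ has classical property~T, so by Lemma~\ref{lem:kproj} the Kazhdan projection~$z$ lies in $C^*\Gamma$, and Lemma~\ref{lem:comp} supplies the canonical decomposition
\[L^p(M)=L^p(M)^\Gamma\oplus W, \qquad W\defq (1-z)L^p(M),\]
where $W$ is a unital $C_0^*\Gamma$-module with $W^\Gamma=zW=0$. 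Applying the $C_0^*\Gamma$-version of Theorem~\ref{thm:TnCor} to $W$ gives the vanishing of $H^k(\Gamma,W)$ and $H_k(\Gamma,W)$ for $1\le k\le n$. Transferring this through the direct-sum decomposition shows that the inclusion $L^p(M)^\Gamma\hookrightarrow L^p(M)$ induces isomorphisms in both $H^k$ and $H_k$ for $0\le k\le n$: trivially at $k=0$ (both sides coincide with $L^p(M)^\Gamma$), and via the vanishing on the complement $W$ for $k\ge 1$.

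For the $p$-independence of the condition $L^p(M)^\Gamma=0$, I observe that $z\in C^*\Gamma$ maps to a projection $p_z\in M$, and the identification $V^\Gamma=zV$ used inside the proof of Lemma~\ref{lem:comp} yields $L^p(M)^\Gamma=p_zL^p(M)$. Since the left $M$-module $L^p(M)$ is faithful for every $1\le p\le\infty$, the condition $p_zL^p(M)=0$ is equivalent to $p_z=0$, a condition that does not involve~$p$. Specializing to $p=\infty$ gives the implication $M^\Gamma=0\Rightarrow L^p(M)^\Gamma=0$ for all $p$, and specializing to $p=1$ gives the same from $(M_*)^\Gamma=0$; in either case the $k=0$ vanishing extends the earlier range to all $0\le k\le n$.

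The main technical point is the equality $V^\Gamma=zV$ for a Banach $C^*\Gamma$-module~$V$, which is what allows the Kazhdan projection, a priori defined through Hilbert representations, to detect invariants and produce a direct-sum decomposition in the Banach setting; this is already embedded in the proof of Lemma~\ref{lem:comp}. Once that identification and the faithfulness of $L^p(M)$ as an $M$-module are in hand, the rest of the argument reduces to formal bookkeeping around Theorem~\ref{thm:TnCor}.
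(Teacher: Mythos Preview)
Your argument is correct and follows the same overall architecture as the paper's: extend $\bbC\Gamma\to M$ to a unital $\ast$-homomorphism $C^\ast\Gamma\to M$, view each $L^p(M)$ as a Banach $C^\ast\Gamma$-module, and invoke Theorem~\ref{thm:TnCor} (together with Lemma~\ref{lem:comp} for the $(T_n)$ case). The direct-sum decomposition $L^p(M)=L^p(M)^\Gamma\oplus (1-z)L^p(M)$ and the subsequent bookkeeping match the paper exactly.

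The one place you genuinely diverge from the paper is the $p$-independence of the condition $L^p(M)^\Gamma=0$. The paper argues as follows: the case $p=\infty$ (i.e.\ $M^\Gamma=0$) is handled by Lemma~\ref{lem:comp}, which forces $C^\ast\Gamma\to M$ to factor through $C_0^\ast\Gamma$; for $1\le p<\infty$ one uses the Mazur map (as in~\cite{Olivier}) to transport fixed points and reduce to $p=1$, and then a duality argument passes from $(M_\ast)^\Gamma=0$ to $M$ being a $C_0^\ast\Gamma$-module. Your route is more direct: letting $p_z\in M$ denote the image of the Kazhdan projection, the identity $L^p(M)^\Gamma=p_z\cdot L^p(M)$ (from Lemma~\ref{lem:comp}) together with faithfulness of the left $M$-action on $L^p(M)$ shows that $L^p(M)^\Gamma=0$ is equivalent to $p_z=0$, a condition manifestly independent of~$p$. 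This bypasses the Mazur map entirely and also makes transparent that $p_z=0$ is exactly the condition that $C^\ast\Gamma\to M$ factor through $C_0^\ast\Gamma$. The only point that might merit a word of justification is the faithfulness of $L^p(M)$ as a left $M$-module for all $1\le p\le\infty$; this is standard in the theory of noncommutative $L^p$-spaces, but is worth flagging as the one non-formal input in your version.
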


\begin{proof}
We note that the group homomorphism $\Gamma \to \mathcal{U}(M)$ extends to a C*-algebra morphism $C^*\Gamma\to M$,
thus every $M$-module is a $C^*\Gamma$-module.
    The first part, regarding $[T_n]$ groups, follows from Theorem~\ref{thm:TnCor}. For the second part, regarding $(T_n)$ groups, use also Lemma~\ref{lem:comp}.
Note that if the map $C^*\Gamma\to M$ factors via $C_0^*\Gamma$ then all the relevant (co)homologies vanish. In particular, for every $p$, $L^p(M)^\Gamma=H^0(\Gamma,L^p(M))=0$.
By Lemma~\ref{lem:comp}, this is the case if $M^\Gamma=0$.
Assume now that for some $1\leq p< \infty$, $L^p(M)^\Gamma=0$.
By considering the Mazur map as in \cite{Olivier}, we assume as we may $p=1$, that is $(M_*)^\Gamma=0$. We conclude that $M_*$ is a $C_0^*\Gamma$-module, and by duality $M$ is a $C_0^*\Gamma$-module.
This concludes the proof.
\end{proof}

The fact that we consider here $\Gamma$-unitary von Neumann algebras is crucial. 
Consider, for example, the von Neumann algebra $\ell^\infty(\Gamma)=\ell^1(\Gamma)^*$, on which $\Gamma$ acts by automorphism, but it is not $\Gamma$-unitary.
Observe that for every infinite group $\Gamma$, 
$\ell^1(\Gamma)^\Gamma=0$ but $H^1(\Gamma,\ell^\infty (\Gamma))$ and $H_0(\Gamma,\ell^1 (\Gamma))$ are non-Hausdorff.
What can be said in general about homological properties of 
isometric actions on von Neumann algebras and the corresponding $L^p$-spaces?

\begin{conjecture}
    Let $\Gamma$ be a group with property $(T_n)$ and $M$ a von Neumann algebra such that the predual $M_\ast$ is endowed with an isometric linear action of $\Gamma$. 
    Then for $k\leq n$ and $1\leq p\leq 2$, the inclusion of the invariants yields an isomorphism in cohomology
    \[ H^k(\Gamma, L^p(M)^\Gamma) \cong H^k(\Gamma, L^p(M)). \]
    In particular, if $M_*^\Gamma=0$ then
    \[ H^k(\Gamma, L^p(M))=0. \]
\end{conjecture}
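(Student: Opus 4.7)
The plan is to reduce the case of an isometric $\Gamma$-action on the predual $M_*$ (equivalently, on $L^p(M)$ for $1\leq p\leq 2$) to the $\Gamma$-unitary setting of Theorem~\ref{thm:vNcriterion} by exploiting the rigidity of linear isometries of non-commutative $L^p$-spaces when $p\neq 2$.

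The case $p=2$ is immediate: $L^2(M)$ is a Hilbert space, and every isometric linear $\Gamma$-action on it is a unitary representation, to which Theorem~\ref{thm:TnCor} applies directly (the invariants $L^2(M)^\Gamma$ split off as an orthogonal summand).

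For $1\leq p<2$, I would proceed in three steps. First, invoke the structural theorems for surjective isometries of non-commutative $L^p$-spaces (Yeadon, Sherman, Raynaud--Watanabe), which express every such isometry of $L^p(M)$ in the form $x\mapsto w J(x)$ with $J$ a normal Jordan $\ast$-automorphism of~$M$ and $w$ a partial isometry satisfying an explicit compatibility with~$J$; alternatively, one can pass through the dual isometries of $M$ and apply Kadison's theorem. Applied to $\Gamma$, this yields a map $\gamma\mapsto (J_\gamma,w_\gamma)$ with a $1$-cocycle relation. Second, straighten the Jordan action into a genuine $\ast$-action on a larger von Neumann algebra: a normal Jordan $\ast$-automorphism decomposes, along a central projection, as a $\ast$-automorphism on one summand and a $\ast$-antiautomorphism on the other, so the doubled algebra $\widetilde M\defq M\oplus M^{\mathrm{op}}$ admits a canonical $\Gamma$-action by $\ast$-automorphisms (possibly swapping the two factors on the antiautomorphism part), and the cocycle~$w$ can be absorbed by passing to an inner twist. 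The algebra $\widetilde M$ is then $\Gamma$-unitary. Third, observe that $L^p(M)$ embeds as a $\Gamma$-equivariant direct summand of $L^p(\widetilde M)$; Theorem~\ref{thm:vNcriterion} applied to $\widetilde M$ gives $H^k(\Gamma,L^p(\widetilde M)^\Gamma)\cong H^k(\Gamma,L^p(\widetilde M))$, which restricts to the desired isomorphism $H^k(\Gamma,L^p(M)^\Gamma)\cong H^k(\Gamma,L^p(M))$. The ``in particular'' statement $M_*^\Gamma=0\Rightarrow H^k(\Gamma,L^p(M))=0$ follows once one checks that $L^1(M)^\Gamma=0$ forces $L^p(M)^\Gamma=0$ throughout the range, a standard interpolation/Mazur-map argument.

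The main obstacle is the second step: verifying that the Jordan-automorphism data and the partial-isometry cocycle assemble consistently into a genuine $\Gamma$-equivariant $\ast$-action on $\widetilde M$, and that the inclusion $L^p(M)\hookrightarrow L^p(\widetilde M)$ is $\Gamma$-equivariant for this action. A complementary, and possibly more tractable, route -- particularly for $p=1$ where Yeadon's theorem requires extra hypotheses on~$M$ -- would combine a BFGM-type Mazur-map argument with the non-commutative techniques of~\cite{BBBS} developed for the proof of Theorem~\ref{thm:noncom}, transferring cohomology vanishing from $p=2$ to the full range $1\leq p\leq 2$ without ever extracting an explicit Jordan action.
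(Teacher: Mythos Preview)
The statement you are addressing is a \emph{conjecture} in the paper, not a theorem; the paper offers no proof. Immediately after stating it, the authors record that even the degree~$1$ case is only known for $M=B(H)$ (by Marrakchi--de la Salle) and for commutative $M$ (by BFGM and BGM). So you are proposing a proof of an open problem, and your argument should be held to that standard.

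The central gap is in your second step. The doubling trick $\widetilde M=M\oplus M^{\mathrm{op}}$ turns a Jordan $\ast$-automorphism into a genuine $\ast$-automorphism (possibly composed with the flip), but it does \emph{not} make that automorphism inner. After doubling, $\Gamma$ acts on $\widetilde M$ by $\ast$-automorphisms and a unitary cocycle; this is strictly weaker than being $\Gamma$-unitary in the paper's sense, which requires a homomorphism $\Gamma\to\mathcal{U}(\widetilde M)$ with the module structure given by left multiplication. Theorem~\ref{thm:vNcriterion} is stated only for $\Gamma$-unitary algebras, and the paper explicitly warns that this hypothesis is essential: the paragraph following the theorem gives $\ell^\infty(\Gamma)$ with the shift action as an automorphism action that is not $\Gamma$-unitary and for which the conclusion fails. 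Your claim that ``the cocycle $w$ can be absorbed by passing to an inner twist'' does not address the outer part of the automorphism action at all. Passing further to a crossed product $\widetilde M\rtimes\Gamma$ would produce a $\Gamma$-unitary algebra, but then $L^p(M)$ no longer sits inside $L^p(\widetilde M\rtimes\Gamma)$ as a $\Gamma$-summand for the left-multiplication action, so the reduction breaks.

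Your alternative ``Mazur-map'' route is also problematic beyond degree~$1$. The Mazur map is nonlinear; in degree~$1$ it works because cohomology vanishing is equivalent to a fixed-point statement for affine isometric actions, and fixed points survive nonlinear equivariant homeomorphisms. No such reformulation exists in higher degrees, and the techniques of~\cite{BBBS} you invoke for $p=1$ (Theorem~\ref{thm:noncom}) rely on specific features of lattices in semisimple groups, not on property $(T_n)$ alone. In short, both routes would, if they worked, resolve a conjecture that is open even in degree~$1$ for general~$M$; the first route conflates automorphism actions with $\Gamma$-unitary ones, and the second lacks a higher-degree substitute for the affine-action picture.
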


This conjecture in degree 1 is due to Mimura.
Then it is known to hold for $M=B(H)$ and $1<p\leq 2$ by \cite[Theorem 3(1)]{Mikala} and for any $M$ commutative by \cite{BFGM} ($1<p \leq 2$) and \cite{BGM} ($p=1$).
If $p>2$, the analogous conjecture is known not to hold, already in degree 1 and for $M$ commutative on which $\Gamma$ acts by automorphisms.
However, we expect it to hold assuming the existence of an invariant state.

\begin{conjecture} \label{conj:tracial}
Let $\Gamma$ be a group with property $[T_n]$ and $M$ a von Neumann algebra. 
 Assume $\Gamma$ acts on $M$ by automorphisms, preserving a faithful normal state.
Then for every $0<k\leq n$ and $1\leq p < \infty$, $H^k(\Gamma,L^p(M))=0$.
\end{conjecture}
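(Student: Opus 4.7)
Step 1 (The $L^2$ case). Since $\varphi$ is $\Gamma$-invariant and faithful normal, the $\Gamma$-action on $M$ lifts canonically to a unitary representation on the GNS Hilbert space $L^2(M,\varphi)$ fixing the cyclic separating vector $\xi_\varphi$. Hence $L^2(M)$ is a unitary $\Gamma$-module, and property $[T_n]$ directly yields $H^k(\Gamma,L^2(M))=0$ for $1\leq k\leq n$ (cf.\ Theorem~\ref{thm:TnCor}). This handles the conjecture for $p=2$.

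Step 2 (Reduction to a tracial setting). Form the Haagerup continuous core $\widetilde M = M\rtimes_{\sigma^\varphi}\bbR$ with its canonical faithful normal semifinite trace $\hat\tau$. Invariance of $\varphi$ implies that the modular flow $\sigma^\varphi$ commutes with $\Gamma$, so the $\Gamma$-action extends (trivially on the $\bbR$-factor) to an action on $\widetilde M$ that preserves $\hat\tau$ and commutes with the dual scaling action $\theta\colon\bbR\acts\widetilde M$. Haagerup's realization of $L^p(M)$ as the space of $\hat\tau$-measurable operators affiliated to $\widetilde M$ satisfying $\theta_s(x)=e^{-s/p}x$ is $\Gamma$-equivariant, so the problem reduces to the action of $\Gamma$ on a semifinite algebra carrying a $\Gamma$-invariant trace---close to, but weaker than, the $\Gamma$-unitary setting of Theorem~\ref{thm:noncom}.

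Step 3 (Transfer from $L^2$ to $L^p$). For $1<p<\infty$, realize $L^p(M)$ as the Kosaki complex interpolation space $[M,M_*]_{1/p}$ determined by $\varphi$; the $\Gamma$-action is isometric on both endpoints. Combine the $L^2$-vanishing from Step~1 with a Mazur-map / analytic-continuation argument in the spirit of~\cite{BFGM} and~\cite{Oppenheim} for degree one, lifted to higher degrees along the lines of~\cite{BBBS}. The ultrapower trick of Lemma~\ref{lem:uptrick}, applicable because the class of noncommutative $L^p$-spaces is closed under ultraproducts (via Raynaud's construction), then converts reduced vanishing into genuine vanishing. The endpoint $p=1$ is obtained from the $p>1$ statement by duality (Lemma~\ref{lem: duality reduced cohomology}) together with the Hausdorffness output of the super-reflexive range.

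The core obstacle is Step~3: lifting the degree-one Mazur-map technique to higher degrees in the non-tracial (type III) setting. In degree one, the Mazur map sends an almost-fixed $L^p$-vector to an almost-fixed $L^2$-vector, reducing $L^p$-rigidity to $L^2$-rigidity by the defining property of $(T)$. In higher degrees one must instead exhibit cocycles $\Gamma^{k+1}\to L^p(M)$ as coboundaries of $(k-1)$-cochains, and the Mazur map does not intertwine the coboundary differential. A cocycle-superrigidity mechanism comparing $L^p$- and $L^2$-cohomology uniformly, adapted to the type III setting where $L^p(M)$ does not sit inside any single $L^p(\widetilde M)$, is the central technical challenge and is ultimately what keeps the statement in the realm of conjecture.
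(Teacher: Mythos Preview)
This statement is labeled as a \emph{conjecture} in the paper (Conjecture~\ref{conj:tracial}), and the paper offers no proof. The paper only remarks that the degree-one case for $2\le p<\infty$ is due to Marrakchi--de la Salle, and that the statement cannot be extended to $p=\infty$. There is thus no ``paper's own proof'' to compare against.

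You correctly recognize this: your final paragraph explicitly identifies Step~3 as the obstacle that ``keeps the statement in the realm of conjecture.'' Your Step~1 is indeed correct and complete---the $\Gamma$-invariance of the faithful normal state $\varphi$ makes the GNS representation on $L^2(M,\varphi)$ unitary, and property $[T_n]$ applies directly. Step~2 is a sensible structural reduction. But Step~3 is, as you acknowledge, the entire difficulty: the Mazur map is nonlinear and does not intertwine coboundary operators, so there is no known mechanism to transfer higher-degree cohomological vanishing from $L^2$ to $L^p$ in this generality. The interpolation and ultrapower ingredients you list are relevant tools, but none of them currently bridges this gap. Your proposal is therefore an honest outline of a plausible strategy together with a correct diagnosis of where it stalls, not a proof---which is exactly the status the paper assigns to the statement.
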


In degree 1 and for $2\leq p < \infty$, this conjecture is proved by Marrakchi and de la Salle, \cite[Theorem 3.(2)]{Mikala}.
Note that for the profinite completion $\hat{\Gamma}$, we have 
$H^1(\Gamma,L^\infty(\hat{\Gamma}))\neq 0$, so the conjecture could not be extended to $p=\infty$\footnote{We thank Mikael de la Salle for sharing this observation with us.}.

\subsection{Group algebra reformulation of higher T}\label{subsec: group algebra reformulation}

In this subsection we discuss group algebra criteria for higher property~T.
We fix a free resolution $P_\bullet$ as in \S\ref{subsec: laplace operator}. Recall that $P_0=\bbC\Gamma$. Then the Laplace operator $\Delta_0\colon P_0\to P_0$ is given by an element in the group algebra $\bbC\Gamma$. 
The motivation for our discussion is the celebrated Ozawa criterion.

\begin{theorem}[\cite{OzawaT}] \label{thm:Ozawa}
    The group $\Gamma$ has property T if and only if there exist $\epsilon > 0$ and elements $x_1,\dots,x_j\in \bbC\Gamma$ such that 
    \[ \Delta_0(\Delta_0 -\epsilon) = \sum_{i=0}^j x_i^*x_i.\]
\end{theorem}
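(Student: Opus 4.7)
The plan is to recognize the identity as encoding a spectral gap for $\Delta_0$ in every unitary representation, and to upgrade a topological positivity statement in $C^\ast\Gamma$ to an algebraic sum-of-squares identity by Hahn--Banach.

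For the \emph{easy} direction, suppose $\Delta_0(\Delta_0-\epsilon)=\sum_i x_i^\ast x_i$ in $\bbC\Gamma$. For any unitary $\Gamma$-representation $\pi$, extended to a $\ast$-representation of $\bbC\Gamma$, the right-hand side is a positive operator, so $\pi(\Delta_0)(\pi(\Delta_0)-\epsilon)\geq 0$. Since $\pi(\Delta_0)$ is self-adjoint and non-negative (being of the form $\pi(d_1)\pi(d_1)^\ast$), spectral calculus yields $\spec(\pi(\Delta_0))\subset\{0\}\cup[\epsilon,\infty)$. A direct computation using the standard form of $d_1$ in terms of generators shows $\ker\pi(\Delta_0)=V^\Gamma$; the resulting uniform gap above $0$ is the Kazhdan condition, giving property~T.

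For the \emph{hard} direction, assume $\Gamma$ has property~T. Lemma~\ref{lem:kproj} gives $z\in C^\ast\Gamma$, and the identification $\ker\Delta_0=V^\Gamma$, applied in the universal representation, shows that $\Delta_0$ is invertible on $C_0^\ast\Gamma=C^\ast\Gamma(1-z)$ with spectrum bounded below by some $\kappa>0$; combined with $\Delta_0 z=0$ this gives $\Delta_0^2\geq\kappa\Delta_0$ in $C^\ast\Gamma$. Equivalently, for every $\epsilon\leq\kappa$ the self-adjoint element $y:=\Delta_0(\Delta_0-\epsilon)\in\bbC\Gamma$ is positive in $C^\ast\Gamma$, and so lies in the norm closure of the convex cone $\Sigma^2:=\{\sum_i x_i^\ast x_i:x_i\in\bbC\Gamma\}$.

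It remains to upgrade $y\in\overline{\Sigma^2}$ to $y\in\Sigma^2$, which is the principal obstacle. The plan is Hahn--Banach: assuming $y\notin\Sigma^2$, one restricts to a finite-dimensional self-adjoint subspace $V\subset\bbC\Gamma$ containing $y$ and a sufficiently large degree-bounded truncation of $\Sigma^2$, where $\Sigma^2\cap V$ is a closed convex cone; separation yields a real-linear functional $\phi$ on $V$ with $\phi(y)<0$ and $\phi|_{\Sigma^2\cap V}\geq 0$. The form $(a,b)\mapsto\phi(b^\ast a)$ should then be positive semidefinite and $\Gamma$-invariant, and its GNS construction should produce a unitary $\Gamma$-representation $\pi_\phi$ with cyclic vector $\xi$ such that $\phi(\cdot)=\langle\pi_\phi(\cdot)\xi,\xi\rangle$; but then $\pi_\phi(y)\geq 0$ by the inequality already established under property~T, contradicting $\phi(y)<0$. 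The delicate step is producing $\pi_\phi$: $\phi$ must be extended from $V$ to a $\Gamma$-invariant positive form on all of $\bbC\Gamma$ whose associated representation is genuinely \emph{bounded}. Since $\Gamma$ is non-amenable one cannot average over $\Gamma$ directly; instead one takes a weak-$\ast$ limit of bounded averages in a suitable dual of $\bbC\Gamma$, exploiting the specific shape $y=\Delta_0(\Delta_0-\epsilon)$ so that $\Delta_0$ supplies the norm control needed to land in a state of $C^\ast\Gamma$ and close the loop.
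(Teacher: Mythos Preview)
The paper does not prove Theorem~\ref{thm:Ozawa}; it is cited from Ozawa's paper and serves only as motivation for the paper's own Theorem~\ref{thm:grpalg}. So there is no ``paper's proof'' to compare against directly, though the machinery the paper develops for Theorem~\ref{thm:grpalg} (Lemma~\ref{lem:inter} and Lemma~\ref{lem:invert}, resting on~\cite{Bader-Nowak}*{Proposition~9}) indicates what the correct approach is.

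Your easy direction is fine. For the hard direction you correctly reach $y:=\Delta_0(\Delta_0-\epsilon)\ge 0$ in $C^\ast\Gamma$, and the Hahn--Banach/GNS strategy is the right shape. But the resolution you propose for the ``delicate step'' is off target. You do \emph{not} need a $\Gamma$-invariant extension of~$\phi$, and boundedness of the GNS representation is automatic: since $\gamma^\ast\gamma=1$ in $\bbC\Gamma$, left multiplication by~$\gamma$ is isometric on the GNS pre-Hilbert space for \emph{any} positive functional. The genuine issue is extending the separating functional from the finite-dimensional slice~$V$ to a functional on all of $\bbC\Gamma$ that remains nonnegative on~$\Sigma^2$. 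This is handled not by averaging or by the special form of~$y$, but by the fact that $\bbC\Gamma$ is an \emph{archimedean} $\ast$-algebra with order unit~$1$: for each $a\in\bbC\Gamma$ there is $N$ with $N\cdot 1-a^\ast a\in\Sigma^2$ (again because group elements are unitaries). The ordered Hahn--Banach theorem then gives the positive extension, and the extended~$\phi$ is automatically a state on $C^\ast\Gamma$. This is exactly the content of~\cite{Bader-Nowak}*{Proposition~9}, which the paper invokes via Lemma~\ref{lem:inter}.

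There is a second gap you do not address: to separate strictly, $\phi(y)<0$, you need the truncated cone $\Sigma^2\cap V$ to be closed. This is nontrivial (the image of the PSD cone under a linear map need not be closed) and is part of Ozawa's argument. Without it, the separating functional might only give $\phi(y)\le 0$, which allows $\phi$ to be the trivial state and yields no contradiction. Note also that the paper's own machinery, applied at $k=0$, produces $\Delta_0(\Delta_0-\epsilon)\Delta_0\in\Sigma^2$ rather than $\Delta_0(\Delta_0-\epsilon)\in\Sigma^2$; the extra factor of~$\Delta_0$ is precisely how Lemma~\ref{lem:invert} absorbs the ``$+\delta$'' from the archimedean Positivstellensatz, and getting Ozawa's exact statement without that extra factor requires the additional closedness input.
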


A criterion of a similar spirit was given in \cite{Bader-Nowak} for $[T_n]$,
with a caveat - the implication (1) $\Rightarrow$ (7) of Lemma~\ref{lem:basic} was yet not observed at the time of its publication.
In retrospect, we do get the following.

\begin{theorem}[\cite{Bader-Nowak}]\label{thm: bader-nowak for bracket T}
  The group $\Gamma$ has property $[T_n]$ if and only if 
  for every $1\leq k \leq n$ there exist $\epsilon > 0$ and elements $x_1,\dots,x_j\in M_{m_k}(\bbC\Gamma)$ such that 
    \[ \Delta_k -\epsilon = \sum_{i=0}^j x_i^*x_i.\]  
\end{theorem}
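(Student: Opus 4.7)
The plan is to pass through the cohomological criterion of Theorem~\ref{thm:criteria}, translate it to a spectral statement about $\Delta_k$ in the maximal C*-algebra via Lemma~\ref{lem:Acohom}, and finally invoke the Ozawa-style sum-of-squares lemma that is the technical heart of~\cite{Bader-Nowak}. First, Theorem~\ref{thm:criteria} says that $[T_n]$ is equivalent to $H^k(\Gamma,C^\ast\Gamma)=0$ for $1\le k\le n$ together with $H^{n+1}(\Gamma,C^\ast\Gamma)$ being Hausdorff. Applying Lemma~\ref{lem:Acohom} with $A=C^\ast\Gamma$ to each $k\in\{1,\dots,n\}$ (using, for $k<n$, that a vanishing cohomology group is trivially Hausdorff), this is in turn equivalent to the invertibility of $\Delta_k^{C^\ast\Gamma}$ as a bounded adjointable operator on the Hilbert $C^\ast\Gamma$-module $P_k^{C^\ast\Gamma}$ for every $1\le k\le n$. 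By Remark~\ref{rem: involutive algebra extension}, $\Delta_k^{C^\ast\Gamma}$ is represented by the same matrix in $M_{m_k}(\bbC\Gamma)\subset M_{m_k}(C^\ast\Gamma)$ as the chain-level Laplacian $\Delta_k$, so the statement reduces to showing that the invertibility of $\Delta_k$ in $M_{m_k}(C^\ast\Gamma)$ is equivalent to the existence, for some $\epsilon>0$, of a finite decomposition $\Delta_k-\epsilon=\sum_i x_i^\ast x_i$ in $M_{m_k}(\bbC\Gamma)$.

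The backward direction is immediate: such a sum-of-squares identity forces $\Delta_k\ge \epsilon\cdot \mathbf{1}$ in $M_{m_k}(C^\ast\Gamma)$, so $\Delta_k$ is invertible there. For the forward direction, recall that $\Delta_k=d_k^\ast d_k+d_{k+1}d_{k+1}^\ast$ is a positive self-adjoint element of $M_{m_k}(C^\ast\Gamma)$; an invertible positive element of a unital C*-algebra is bounded below, so there exists $\epsilon_0>0$ with $\Delta_k\ge \epsilon_0$, hence $a\defq\Delta_k-\epsilon_0\in M_{m_k}(\bbC\Gamma)$ is self-adjoint and positive in $M_{m_k}(C^\ast\Gamma)$. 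One then applies the Ozawa-style lemma of~\cite{Bader-Nowak}: a self-adjoint element of $M_m(\bbC\Gamma)$ that is positive in $M_m(C^\ast\Gamma)$ is, after adding an arbitrarily small positive scalar $\delta>0$, a finite sum of squares in $M_m(\bbC\Gamma)$. Applying this to $a$ with $\delta<\epsilon_0$ yields $\Delta_k-(\epsilon_0-\delta)=\sum_i x_i^\ast x_i$, which is the claimed decomposition with $\epsilon\defq\epsilon_0-\delta>0$.

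The main obstacle is precisely this last Positivstellensatz-style lemma. The other steps follow cleanly from the C*-algebraic machinery assembled in the preceding subsections, and the spectral gap for $\Delta_k$ is immediate from invertibility and positivity of $\Delta_k$ as a self-adjoint element of the C*-algebra. The genuinely hard input is upgrading positivity in the C*-completion to a \emph{finite} sum-of-squares decomposition inside the purely algebraic group ring $M_m(\bbC\Gamma)$. The standard approach is to use continuous functional calculus to write a small perturbation $a+\delta/2$ as $b^\ast b$ for some $b\in M_m(C^\ast\Gamma)$, approximate $b$ by $b'\in M_m(\bbC\Gamma)$ via density of $\bbC\Gamma$ in $C^\ast\Gamma$, and then absorb the resulting error into the scalar buffer provided by $\delta/2$; turning this into an honestly finite (rather than norm-convergent) sum of squares is the subtle point.
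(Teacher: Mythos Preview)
Your proposal is correct and follows essentially the same route as the paper. The paper's proof is the two-line version: invoke Lemma~\ref{lem:basic} with $A=C^\ast\Gamma$ to get that $[T_n]$ is equivalent to the invertibility of each $\Delta_k$ in $M_{m_k}(C^\ast\Gamma)$, then cite \cite[Proposition~16]{Bader-Nowak} for the equivalence of that invertibility with the sum-of-squares decomposition. You take a slight detour through Theorem~\ref{thm:criteria} and Lemma~\ref{lem:Acohom} instead of using Lemma~\ref{lem:basic} directly, but since Theorem~\ref{thm:criteria} is itself proved via Lemma~\ref{lem:basic}, this is a harmless reshuffling.

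One minor correction to your heuristic for the Positivstellensatz: the ``approximate $b$ by $b'$ and absorb the error'' step you describe does not terminate on its own, because the leftover $a+\delta-b'^\ast b'$ is again merely positive in $M_m(C^\ast\Gamma)$, reproducing the original problem. The actual argument in \cite{Bader-Nowak} (and in Ozawa's original) uses that $M_m(\bbC\Gamma)$ is \emph{archimedean}---for every $x$ there is $N$ with $N-x^\ast x\in\Sigma^2$---and then appeals to a Kadison--Dubois/Schm\"udgen-type theorem to conclude that strict positivity in the C*-envelope forces membership in $\Sigma^2$. Since you correctly treat this as a black-box citation, it does not affect the validity of your argument.
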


Here and in the sequel the Laplace operators are regarded as elements in the corresponding matrix algebras over $\bbC\Gamma$ via the bases of the free resolution $P_\bullet$. The number $m_k$ is the rank of the free module $P_k$. 

\begin{proof}
By Lemma~\ref{lem:basic}, the group $\Gamma$ has $[T_n]$ if and only if $\Delta_k$ is invertible as a matrix over $C^*\Gamma$ for every $1\leq k\leq n$. Hence the theorem follows from~\cite[Proposition 16]{Bader-Nowak}.
\end{proof}

We note that Theorem~\ref{thm: bader-nowak for bracket T} implies the original criterion of Ozawa.

\begin{proof}[Proof of Theorem~\ref{thm:Ozawa}]
    We will prove the non-trivial direction. 
    Assume $\Gamma$ has property T, hence by Theorem~\ref{thm: bader-nowak for bracket T},
    there exist $\epsilon > 0$ and elements $x_1,\dots,x_j\in M_{m_1}(\bbC\Gamma)$ such that 
    \[ \Delta_1 -\epsilon = \sum_{i=0}^j x_i^*x_i.\] 
    Then 
    \[ \Delta_0^2 -\epsilon \Delta_0 =d_1^*(\Delta_1 -\epsilon)d_1=d_1^*(\sum_{i=0}^j x_i^*x_i)d_1=\sum_{i=0}^j (x_id_1)^*(x_id_1)\]
    and we are done by Lemma~\cite[Lemma 14]{Bader-Nowak}.
\end{proof}

The following is a criterion of a similar spirit for $(T_n)$.
We regard the algebra $\bbC\Gamma$ as a unital subalgebra of every matrix algebra over $\bbC\Gamma$, by considering scalar matrices.

\begin{theorem_o}[stated in the introduction as Theorem~\ref{thm:grpalg}]
     The group $\Gamma$ has property $(T_n)$ if and only if 
  for every $0\leq k \leq n$ there exists $\epsilon > 0$ and elements $x_1,\dots,x_j\in M_{m_k}(\bbC\Gamma)$ such that 
    \[ \Delta_0(\Delta_k -\epsilon)\Delta_0 = \sum_{i=0}^j x_i^*x_i.\]   
\end{theorem_o}

In the upcoming discussion, towards the proof of Theorem~\ref{thm:grpalg},
we will freely use the terminology introduced in~\cite{Bader-Nowak}. 

\begin{remark}
If we start with a free $\bbQ\Gamma$-resolution $P_\bullet$, then 
the Laplace operators $\Delta_k$ are represented by matrices over $\bbQ\Gamma$. 
In Theorems~\ref{thm:Ozawa} and~\ref{thm:grpalg}, the matrix entries of the elements $x_i$ can then be chosen to be in $\bbQ\Gamma$ (cf.~\cite{Bader-Nowak}*{Corollary 15}). 
\end{remark}

\begin{lemma} \label{lem:inter}
    Let $A$ be a unital $*$-algebra that is archimedean has a trivial infinitesimal ideal. In particular, $A$ injects into its C*-envelope $C=C^*(A)$~\cite{Bader-Nowak}*{\S2.2}.
    Then every intermediate $*$-algebra $A\subset B\subset C$ is also archimedean and has a trivial infinitesimal ideal. We may identify naturally $C^*(B)=C$. 
    
    In particular, the norm on $B$ induced by the order extends the norm on $A$. 
    The order on $B$ might differ from the order on $A$, but for $a\in A$,
    $a\geq_A 0$ implies $a \geq_B 0$ and $a\geq_B 0$ implies that for every $\epsilon>0$, $a+\epsilon \geq_A 0$.
\end{lemma}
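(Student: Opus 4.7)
The plan is to put on $B$ the order inherited from the ambient C*-algebra $C$, verify the two hypotheses for $B$ directly in this setting, and then use universal properties of the C*-envelope to identify $C^\ast(B)$ with $C$ and read off the comparison between the orders on $A$ and $B$.

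First, I declare $B_+\defq B\cap C_+$. Both being archimedean and having a trivial infinitesimal ideal pass from $C$ (which is a C*-algebra and thus enjoys both properties) to any unital $*$-subalgebra, so with this order $B$ is archimedean with trivial infinitesimal ideal. The resulting order norm on $B$ coincides with the restriction of the C*-norm from $C$, and so it extends the order norm on $A$, which equals the restriction of the C*-norm to $A$ by construction of the C*-envelope.

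Next I produce mutually inverse unital $*$-homomorphisms between $C^\ast(B)$ and $C$. The inclusion $B\hookrightarrow C$ is unital, $*$-preserving, and order-preserving by the definition of $B_+$, so the universal property of $C^\ast(B)$ yields $\psi\colon C^\ast(B)\to C$. Conversely, the composition $A\hookrightarrow B\hookrightarrow C^\ast(B)$ is order-preserving from $A$: if $a\geq_A 0$ then $a\in C_+$ because $A\hookrightarrow C=C^\ast(A)$ is order-preserving, so $a\in B_+$ and thus $a$ is positive in $C^\ast(B)$. The universal property of $C^\ast(A)$ then gives $\phi\colon C\to C^\ast(B)$. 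The composite $\psi\circ\phi$ restricts to the identity on $A$, and so equals $\id_C$ by continuity and density of $A$ in $C$; similarly $\phi\circ\psi=\id_{C^\ast(B)}$, using density of $B$ in $C^\ast(B)$. This yields the natural identification $C^\ast(B)=C$.

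It remains to compare the orders on $A$ and $B$ for $a\in A$. The implication $a\geq_A 0\Rightarrow a\geq_B 0$ was established above. For the reverse, suppose $a\geq_B 0$, i.e., $a\in C_+$. The characterization of the positive cone of the C*-envelope from~\cite{Bader-Nowak}*{\S2.2} describes $C_+$ as the archimedean closure of $A_+$, so for self-adjoint $a\in A$ one has $a\in C_+$ precisely when $a+\epsilon\in A_+$ for every $\epsilon>0$; this is exactly the required implication. The only non-trivial input in the entire proof is this description of $C_+$; everything else is standard manipulation with universal properties, which I expect to go through without issue.
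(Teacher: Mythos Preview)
Your proof is correct and rests on the same key input as the paper's, namely the description of $C_+$ as the archimedean closure of $A_+$ (Proposition~9 of \cite{Bader-Nowak}). The paper is much terser---it simply records $A_+\subseteq B_+\subseteq C_+$ and declares that the entire lemma reduces to its last sentence via that proposition---while you make explicit the choice $B_+=B\cap C_+$ and the universal-property identification $C^\ast(B)\cong C$; these are precisely the details the paper suppresses.
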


\begin{proof}
The entire lemma follows from its last sentence.
\cite[Proposition 9]{Bader-Nowak} gives the last sentence of the Lemma for $B=C$. In general, $A_+\subseteq B_+\subseteq C_+$ and the last sentence follows from the case $B=C$.
\end{proof}

\begin{lemma} \label{lem:invert}
    Let $A$ be a unital $*$-algebra that is archimedean and has a trivial infinitesimal ideal. Let $C=C^*(A)$ be its C*-envelope. 

    Let $e\in C_+$ be a central idempotent. Let $\pi$ be the $*$-morphism $A\hookrightarrow C \twoheadrightarrow eC$. Let $a_0\in A_+$ be an element such that $a_0e=a_0$ and $\pi(a_0)$ is invertible in $eC$. 
    
    Then $\pi(a)$ is invertible in $eC$ for every $a\in A_+$ if and only if there exists some  $\epsilon>0$ such that $a_0(a-\epsilon)a_0 \in A_+$.
    \end{lemma}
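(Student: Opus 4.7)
My plan is to handle the two implications separately, with most of the work going into the nontrivial direction ($\Rightarrow$). The basic idea is to exploit the invertibility of $\pi(a_0)$ in the corner $eC$ to transport positivity back and forth between $A$ and $eC$.

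For ($\Leftarrow$), I would apply the $*$-homomorphism $\pi$ to the hypothesis $a_0(a-\epsilon)a_0 \geq_A 0$. By the monotonicity half of Lemma~\ref{lem:inter} this gives $\pi(a_0)(\pi(a) - \epsilon e)\pi(a_0) \geq 0$ in $eC$; since $\pi(a_0)$ is positive and invertible in $eC$, conjugation by $\pi(a_0)^{-1}$ produces $\pi(a) \geq \epsilon e$, so $\pi(a)$ has spectrum in $[\epsilon,\infty)$ and is invertible in $eC$.

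For ($\Rightarrow$), I would run a Hahn--Banach separation argument. From $\pi(a) \in (eC)_+$ invertible, continuous functional calculus yields some $\delta > 0$ with $\pi(a) \geq \delta e$. Fix $\epsilon \in (0,\delta)$ and suppose for contradiction that $a_0(a-\epsilon)a_0 \notin A_+$. The archimedean ordered $*$-algebra framework of \cite{Bader-Nowak} supplies a state $\phi$ on $A$ strictly separating this element from $A_+$, i.e.\
\[ \phi(a_0 a a_0) < \epsilon\,\phi(a_0^2), \]
and one has $\phi(a_0^2) > 0$ (else Cauchy--Schwarz gives $\phi(a_0 x) = 0$ for all $x \in A$, collapsing the strict inequality). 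The key move is to \emph{compress} $\phi$ by $a_0$: the functional $\psi(x) \defq \phi(a_0 x a_0)/\phi(a_0^2)$ is a state on $A$, and extending $\phi$ to a state $\tilde\phi$ on $C$ (again using the archimedean + trivial-infinitesimal-ideal hypothesis) induces an extension $\tilde\psi$ to $C$. The identity $a_0 e = a_0$ gives $\tilde\psi(e) = 1$, so $\tilde\psi$ is supported on $e$ and restricts to a state on $eC$. Comparing two bounds on $\tilde\psi(\pi(a)) = \psi(a)$,
\[ \epsilon > \psi(a) = \tilde\psi(\pi(a)) \geq \delta\,\tilde\psi(e) = \delta, \]
contradicts $\epsilon < \delta$.

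The main obstacle will be the conversion of the $C$-positivity of $a_0(a-\delta)a_0$ (which pops out directly from the spectral gap $\pi(a) \geq \delta e$) into $A$-positivity of the slightly-shifted element $a_0(a-\epsilon)a_0$. Lemma~\ref{lem:inter} alone only allows one to absorb multiples of $1_A$ into $A_+$, whereas the gap between these two expressions is a multiple of $a_0^2$, an element that is \emph{not} bounded below by any positive scalar in $A$ when $e \ne 1_C$. The compression trick $\psi(\cdot) = \phi(a_0 \cdot a_0)/\phi(a_0^2)$ is what resolves this obstruction, by relocating the whole argument into $eC$, where $\pi(a_0)$ is invertible and the spectral gap is directly usable, sidestepping the $(1-e)$-defect entirely. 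The two technical items I would still need to spell out -- that $A_+$ is closed in a topology allowing Hahn--Banach to produce a separating state, and that such a state extends from $A$ to $C$ -- are standard consequences of the assumptions on $A$ made in \cite{Bader-Nowak}.
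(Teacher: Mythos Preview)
Your $(\Leftarrow)$ direction is fine and matches the paper's argument.

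Your $(\Rightarrow)$ direction has a genuine gap, and it lies precisely where you flagged ``the main obstacle''. The claim that ``$a_0(a-\epsilon)a_0 \notin A_+$ supplies a separating state $\phi$'' is \emph{not} a standard consequence of the archimedean and trivial-infinitesimal-ideal hypotheses. In this framework $A_+$ is the cone of sums of hermitian squares, and what states detect is membership in $C_+\cap A$, not in $A_+$: the Kadison--Dubois/abstract Positivstellensatz statement here is that $\phi(x)\ge 0$ for all states iff $x+\eta\in A_+$ for every $\eta>0$, i.e.\ iff $x\in C_+$. These cones are typically different --- that gap is exactly why Ozawa's criterion is a theorem and not a tautology. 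So your compression argument, even if carried out perfectly, would end at $a_0(a-\epsilon)a_0\in C_+\cap A$, which is strictly weaker than the desired $a_0(a-\epsilon)a_0\in A_+$. No amount of slack between $\epsilon$ and $\delta$ fixes this, because the defect $(\delta-\epsilon)a_0^2$ is not bounded below by any positive scalar in $A$ (as $a_0$ is not invertible there).

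The paper sidesteps this by a short algebraic trick rather than functional analysis. One takes $\epsilon$ with $\pi(a-2\epsilon)\in eC_+$, so $ae-2\epsilon e\in C_+$. Passing to the intermediate $*$-algebra $B$ generated by $A$ and $e$ and invoking Lemma~\ref{lem:inter}, one gets $ae-2\epsilon e+\epsilon\in B_+$ (here the added $\epsilon$ is a genuine scalar multiple of $1$, so Lemma~\ref{lem:inter} applies directly). The key observation is then purely algebraic: $a_0B_+a_0\subseteq A_+$, because for $b=a_1+a_2e\in B$ one has $a_0b^*ba_0=\bigl((a_1+a_2)a_0\bigr)^*\bigl((a_1+a_2)a_0\bigr)\in A_+$, using $ea_0=a_0$. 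Conjugating gives $a_0(ae-2\epsilon e+\epsilon)a_0=a_0(a-\epsilon)a_0\in A_+$. The whole point is that conjugation by $a_0$ collapses $e$ to $1$ and lands one back in sums of squares over $A$ --- this is what your state-based approach cannot see.
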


\begin{proof} We have the following equivalences.
    \[ \text{$\pi(a)$ is invertible}\Leftrightarrow \exists_{\epsilon>0}~\pi(a-\epsilon) \in eC_+\Leftrightarrow \exists_{\epsilon>0}~\pi(a_0)\pi(a-\epsilon)\pi(a_0) \in eC_+\]
If for some $\epsilon>0$, $a_0(a-\epsilon)a_0 \in A_+$ then $\pi(a_0)\pi(a-\epsilon)\pi(a_0) \in eC_+$, and $\pi(a)$ is invertible.
Assume $\pi(a)$ is invertible. Let $\epsilon>0$ be such that $\pi(a-2\epsilon) \in eC_+$.
Then $ae-2\epsilon e\in eC_+ \subset C_+$.
Set $B=A + \bbC e$. Lemma~\ref{lem:inter} implies that 
$ae-2\epsilon e + \epsilon \in B_+$.
Observing that $a_0B_+a_0 \subseteq A_+$ we obtain that 
$a_0(a-\epsilon)a_0 \in A_+$.
\end{proof}

\begin{proof}[Proof of Theorem~\ref{thm:grpalg}]
First note that 
$\Delta_0(\Delta_0 -\epsilon)\Delta_0 \in \bbC\Gamma_+$
implies that every unitary $\Gamma$-representation $V$ with $V^\Gamma=0$ has a spectral gap and $\Gamma$ has property~T.
So we may and will assume that $\Gamma$ has property~T.
We denote by $z$ the Kazhdan projection in $C^*\Gamma$ and set $e=1-z$.
Then $\Delta_0e=\Delta_0$ and it is invertible in $C_0^*\Gamma=eC^*\Gamma$.
By Lemma~\ref{lem:basic} $\Gamma$ has $(T_n)$ if and only if $\Delta_k$ is invertible as a matrix over $C_0^*\Gamma$ for every $1\leq k\leq n$. 
Thus the theorem follows from Lemma~\ref{lem:invert} with regard to the following setting: We take $A=M_{m_k}(\bbC\Gamma)$, identify $C^*(A)=M_{m_k}(C^*\Gamma)$, view $C^*\Gamma$ as a unital subalgebra of $C^*(A)$ and $e$ as a central idempotent of $C^*(A)$. Moreover, we take  $a_0=\Delta_0$ and $a=\Delta_k$. Compare~\cite[\S2.5]{Bader-Nowak}. 
\end{proof}

\begin{rem}
    For a $*$-algebra $A$ and a $*$-ideal $I\lhd A$ such that $A/I$ has a trivial infinitesimal ideal, we have $I_+=A_+\cap I$.
    Taking $A=M_{m_k}(\bbC\Gamma)$ and $I=M_{m_k}(\bbC_0\Gamma)$,
    it follows that the $x_i$'s in Theorem~\ref{thm:grpalg} could (in fact, must) be taken in $M_{m_k}(\bbC_0\Gamma)$.
    Similarly, the $x_i$'s in Theorem~\ref{thm:Ozawa}
    could (in fact, must) be taken in $\bbC_0\Gamma$.    
\end{rem}

\subsection{Permanence properties}

\subsubsection{Products}
The product of a group with property $[T_n]$ and a group with property $[T_{m}]$, does not have property $[T_{n+m}]$ in general. Similarly for $(T_n)$. However, we have the following easy fact.  
    
\begin{lemma} \label{lem:subadd}
    For $n,m\geq 1$, if $\Gamma$ has property $(T_n)$ and $\Lambda$ has property $(T_m)$, then \[H^k(\Gamma\times \Lambda, V)=0\] for $k\in\{0,\dots, n+m+1\}$ and for every unitary $\Gamma\times \Lambda$-representation $V$ with $V^\Gamma=V^\Lambda=0$. 
\end{lemma}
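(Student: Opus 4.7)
The plan is to reduce to a vanishing statement for reduced cohomology via the ultrapower trick, and then compute that reduced cohomology with Hodge theory on a tensor-product resolution.

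Write $G=\Gamma\times\Lambda$. Since $n,m\ge 1$, both $\Gamma$ and $\Lambda$ have classical property~T, so by Lemma~\ref{lem: fixed points and ultrapower} the class $\mathcal{V}$ of unitary $G$-modules $V$ with $V^\Gamma=V^\Lambda=0$ is closed under ultrapowers. By Lemma~\ref{lem:uptrick}, it therefore suffices to prove $\bar H^k(G,V)=0$ for every $V\in\mathcal{V}$ and every $k\le n+m+1$.

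To compute $\bar H^k(G,V)$, I would choose free finitely generated $\bbQ\Gamma$- and $\bbQ\Lambda$-resolutions $P_\bullet\to\bbC$ and $Q_\bullet\to\bbC$ (available by Convention~\ref{conv:ft}; write $m_p$ and $m'_q$ for their ranks) and form the tensor-product resolution $R_\bullet=P_\bullet\otimes_\bbC Q_\bullet$ of $\bbC$ as a $\bbC G$-module, equipped with the standard signed total differential $d^R=d^P+(-1)^{p}d^Q$. The Hilbert chain complex $R_\bullet^V=V\otimes_{\bbC G}R_\bullet$ is bigraded with summands $V^{m_p m'_q}$. Because $d^P$ and $d^Q$ act on disjoint indices through the commuting $\bbC\Gamma$- and $\bbC\Lambda$-actions on $V$, a direct Koszul-sign calculation shows that all cross terms in the total Laplacian cancel and that it preserves the bigrading, splitting as
\[ \Delta^R_k\bigr|_{V^{m_p m'_q}}\;=\;\Delta^\Gamma_p\;+\;\Delta^\Lambda_q, \]
where $\Delta^\Gamma_p$ and $\Delta^\Lambda_q$ are the Laplacians of the two resolutions acting on the respective tensor factors. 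These are commuting positive bounded operators, so their joint kernel is the intersection of their kernels; combining with Lemma~\ref{lem:Laplacian}, applied first to $G$ on the resolution $R_\bullet$ and then iteratively to $\Lambda$ with coefficient $\bar H^p(\Gamma,V)$, yields the K\"unneth-type identification
\[ \bar H^k(G,V)\;\cong\;\bigoplus_{p+q=k}\bar H^q\bigl(\Lambda,\,\bar H^p(\Gamma,V)\bigr). \]

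It remains to see every summand vanishes when $p+q\le n+m+1$. If $p\le n$, then $V^\Gamma=0$ together with $(T_n)$ for $\Gamma$ forces $H^p(\Gamma,V)=0$, hence $\bar H^p(\Gamma,V)=0$. If $p\ge n+1$, then $\bar H^p(\Gamma,V)$ is a closed $\Lambda$-invariant Hilbert subspace of $V^{m_p}$, hence a unitary $\Lambda$-representation, and
\[ \bar H^p(\Gamma,V)^\Lambda\;\subseteq\;(V^{m_p})^\Lambda\;=\;0. \]
Property $(T_m)$ for $\Lambda$ then gives $H^q(\Lambda,\bar H^p(\Gamma,V))=0$ for $0\le q\le m$, so the reduced cohomology vanishes there as well. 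A surviving summand would require both $p\ge n+1$ and $q\ge m+1$, which is incompatible with $p+q\le n+m+1$.

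The main technical step is the splitting $\Delta^R=\Delta^\Gamma+\Delta^\Lambda$ on each bigraded piece: the four cross terms (of the form $d^P(d^Q)^\ast$, $(d^P)^\ast d^Q$, and their transposes) all shift bidegree, and they pair up and cancel thanks to the Koszul signs in $d^R$ together with the commutation of $d^P$ and $d^Q$ as operators on $V^{m_pm'_q}$, which in turn rests on the commutation of $\bbC\Gamma$ with $\bbC\Lambda$ on $V$. The rest of the argument is a bigrade-by-bigrade application of $(T_n)$ and $(T_m)$.
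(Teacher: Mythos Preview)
Your proof is correct, but it follows a genuinely different route from the paper's.

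The paper argues directly with the Hochschild--Serre spectral sequence for $1\to\Gamma\to\Gamma\times\Lambda\to\Lambda\to 1$, with $E_2^{p,q}=H^p(\Gamma,H^q(\Lambda,V))$. For $p+q\le n+m+1$ one has $p\le n$ or $q\le m$. When $q\le m$, vanishing is immediate from $(T_m)$ and $V^\Lambda=0$. When $p\le n$, the paper exploits the operator-algebraic reformulation: since $\Gamma$ has property~T and $V^\Gamma=0$, the $\Gamma$-action on $V$ extends to a unital $C_0^\ast\Gamma$-action, which is inherited (purely algebraically) by $H^q(\Lambda,V)$; then Theorem~\ref{thm:criteria} together with Lemma~\ref{lem: ring extension} forces $H^p(\Gamma,H^q(\Lambda,V))=0$. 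No reduction to reduced cohomology or Hodge theory is needed, and the potentially non-Hausdorff nature of $H^q(\Lambda,V)$ is irrelevant because the argument is module-theoretic.

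Your approach instead first passes to reduced cohomology via the ultrapower trick (legitimate since $n,m\ge1$ gives property~T for both factors, so the class $\{V:V^\Gamma=V^\Lambda=0\}$ is ultrapower-closed), and then computes $\bar H^\ast$ by Hodge theory on the tensor-product resolution. The splitting $\Delta^R\big|_{(p,q)}=\Delta^\Gamma_p+\Delta^\Lambda_q$ and the resulting K\"unneth decomposition $\bar H^k(G,V)\cong\bigoplus_{p+q=k}\bar H^q(\Lambda,\bar H^p(\Gamma,V))$ are correct; the sign cancellation you describe is exactly what makes the total Laplacian bigraded. Your vanishing argument for each summand is then the straightforward one, using that $\bar H^p(\Gamma,V)$ is an honest unitary $\Lambda$-module with no invariants.

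What each buys: the paper's proof showcases the $C_0^\ast\Gamma$-module viewpoint developed in \S\ref{subsec: kazhdan proj}--\S\ref{subsec: C-algebraic reformulation} and handles unreduced cohomology in one stroke. Your proof is more self-contained (it uses only Lemmas~\ref{lem:uptrick}, \ref{lem: fixed points and ultrapower}, and~\ref{lem:Laplacian}) and gives an explicit harmonic-form K\"unneth formula, at the cost of the extra ultrapower step.
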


\begin{proof}
 The Hochschild-Serre spectral sequence for the extension $1\to \Gamma\to \Gamma\times\Lambda\to \Lambda\to 1$
has the $E^2$-term 
\[ E^{p,q}_2=H^p\bigl(\Gamma, H^q(\Lambda, V)\bigr).\]
If $0\le p+q\le n+m+1$, then $p\le n$ or $q\le m$. Say $p\le n$. The natural $C_0^\ast(\Gamma)$-action on $V$, which we have by the assumption $V^\Gamma=0$,  yields a $C_0^\ast\Gamma$-action on $H^q(\Lambda, V)$ extending the $\Gamma$-action. By applying Theorem~\ref{thm:criteria} and Lemma~\ref{lem: ring extension} to the $C_0^\ast\Gamma$-module $H^q(\Lambda, V)$ we obtain that $E_2^{p,q}=0$. If $q\le m$, then $E_2^{p,q}=0$ is immediately clear. 
So in the range $p+q\le n+m+1$ the spectral sequence collapses. 
\end{proof}

Considering both left and right multiplication, $C^*\Gamma$ and $C_0^*\Gamma$ could be seen as $\Gamma\times \Gamma$-modules.
The following observation follows from the above discussion.

\begin{lemma}
    If $\Gamma$ has $[T_n]$ then for every $j\leq 2n$, $H^j(\Gamma\times \Gamma,C^*\Gamma)=0$.
    A similar statement holds if $\Gamma$ has $(T_n)$ and $C^*\Gamma$ is replaced by $C_0^*\Gamma$.
\end{lemma}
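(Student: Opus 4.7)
The plan is to run the Hochschild--Serre spectral sequence for the direct product extension $1 \to \Gamma \to \Gamma \times \Gamma \to \Gamma \to 1$, viewing $C^*\Gamma$ as a $(\Gamma \times \Gamma)$-module through left multiplication by the first factor and right-inverse multiplication by the second. This gives
\[ E_2^{p,q} = H^p\bigl(\Gamma, H^q(\Gamma, C^*\Gamma)\bigr) \Longrightarrow H^{p+q}(\Gamma \times \Gamma, C^*\Gamma), \]
where the inner cohomology uses the left-multiplication action and the residual outer $\Gamma$-action on each row comes from right multiplication. The goal is to verify that enough entries on the $E_2$ page vanish in the range $p+q \leq 2n$.

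The essential conceptual input is the first step: because right multiplication by $C^*\Gamma$ on itself commutes with left multiplication by $\Gamma$, each row $H^q(\Gamma, C^*\Gamma)$ inherits a natural $C^*\Gamma$-module structure, and the residual outer $\Gamma$-action is precisely the restriction of this module structure to $\Gamma \subset C^*\Gamma$. This upgrades the rows from abelian groups to $C^*\Gamma$-modules and makes the $C^\ast$-algebraic results of \S\ref{subsec: C-algebraic reformulation} applicable row-by-row, not just degree-by-degree.

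The second step is the row-wise sweep of the $E_2$ page. Theorem~\ref{thm:criteria} gives $H^q(\Gamma, C^*\Gamma) = 0$ for $1 \leq q \leq n$, killing those rows. For $q \geq n+1$, Theorem~\ref{thm:TnCor} applied to the Banach $C^*\Gamma$-module $H^q(\Gamma, C^*\Gamma)$ yields $E_2^{p,q} = H^p(\Gamma, H^q(\Gamma, C^*\Gamma)) = 0$ for $1 \leq p \leq n$. The $q = 0$ row requires separate treatment using $(C^*\Gamma)^\Gamma = \mathbb{C}z$ with trivial residual $\Gamma$-action (since $z$ is central in $W^*\Gamma$), so that $E_2^{p,0} = H^p(\Gamma, \mathbb{C}) = 0$ for $1 \leq p \leq n$ by the defining property of $[T_n]$. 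For the $(T_n)$ variant with $C_0^*\Gamma$, the argument is cleaner: $(C_0^*\Gamma)^\Gamma = 0$ eliminates the $q = 0$ row outright, and the remaining analysis runs with $C_0^*\Gamma$ everywhere in place of $C^*\Gamma$, using Theorem~\ref{thm:criteria} and the $C_0^*\Gamma$-form of Theorem~\ref{thm:TnCor}.

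The main obstacle I expect is the verification that the residual outer $\Gamma$-action on $H^q(\Gamma, C^*\Gamma)$ genuinely coincides with the $\Gamma$-action coming from the canonical inclusion $\Gamma \hookrightarrow C^*\Gamma$ under the induced $C^*\Gamma$-module structure on $H^q$. This is what legitimizes applying Theorem~\ref{thm:TnCor} uniformly across all rows, and it is the only place in the argument that requires genuine care; once it is established, the rest is routine spectral-sequence bookkeeping.
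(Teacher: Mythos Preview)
Your approach via the Hochschild--Serre spectral sequence is exactly what the paper intends (it refers back to the proof of Lemma~\ref{lem:subadd}), and your $(T_n)/C_0^*\Gamma$ case goes through cleanly. One minor slip: $H^q(\Gamma, C^*\Gamma)$ need not be Hausdorff for $q>n$, so calling it a ``Banach'' module is inaccurate --- but this is harmless, since Theorem~\ref{thm:TnCor} applies to arbitrary $C^*\Gamma$-modules.

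There is, however, a genuine gap in the $[T_n]/C^*\Gamma$ case. Your sweep covers $E_2^{p,0}$ only for $1\le p\le n$ and, for $q\ge n+1$, covers $E_2^{p,q}$ only for $1\le p\le n$. So for $n+1\le j\le 2n$ the edge terms $E_2^{j,0}\cong H^j(\Gamma,\bbC)$ and $E_2^{0,j}\supseteq H^j(\Gamma,\bbC)$ remain. These cannot be disposed of: the splitting $C^*\Gamma=\bbC z\oplus C_0^*\Gamma$ is $\Gamma\times\Gamma$-equivariant with both factors acting trivially on $\bbC z$ (since $z$ is central and $\gamma z=z$), so $H^j(\Gamma\times\Gamma,\bbC)$ is a direct summand of $H^j(\Gamma\times\Gamma,C^*\Gamma)$, and by K\"unneth this contains $H^j(\Gamma,\bbC)^{\oplus 2}$ for $n<j\le 2n$. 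Nothing in $[T_n]$ forces $H^j(\Gamma,\bbC)$ to vanish for $j>n$; the paper's own example $H^2(\Sp_n(\bbZ),\bbC)\ne 0$ (for a group with $[T_1]$) already witnesses this. So the $[T_n]$ half of the lemma, as literally stated in the paper, appears to be incorrect --- your argument is sound for the $(T_n)$ part and for $1\le j\le n$ in the $[T_n]$ part, but the range $n<j\le 2n$ cannot be salvaged without replacing $C^*\Gamma$ by $C_0^*\Gamma$.
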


Lemma~\ref{lem:subadd} is incorrect for $n=m=0$. Indeed, every group $\Gamma$ is $(T_0)$, so taking any $\Gamma$ which does not have T and taking $\Lambda$ trivial leads to a contradiction.
However, we have the following.

\begin{lemma} \label{lem:subadd0}
    Lemma~\ref{lem:subadd} still holds for $n=0$ and $m\geq 1$.
\end{lemma}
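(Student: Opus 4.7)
The plan is to mimic the Hochschild-Serre argument from the proof of Lemma~\ref{lem:subadd}, observe that its strip-vanishing input survives even when $n=0$, and then dispose of the single surviving obstruction by a direct Laplace-operator argument that transports the hypothesis $V^\Gamma=0$ through a finite direct sum of copies of~$V$.

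Concretely, I would set up the spectral sequence $E_2^{p,q}=H^p(\Gamma, H^q(\Lambda, V))\Rightarrow H^{p+q}(\Gamma\times\Lambda, V)$ for the split extension $1\to\Lambda\to\Gamma\times\Lambda\to\Gamma\to 1$. Combining $V^\Lambda=0$ with $(T_m)$ for $\Lambda$, Theorem~\ref{thm:TnCor} gives $H^q(\Lambda,V)=0$ for $0\le q\le m$; so the whole strip $q\le m$ on $E_2$ vanishes, and $H^k(\Gamma\times\Lambda,V)=0$ for $k\le m$ follows immediately. On the diagonal $p+q=m+1$, only $E_2^{0,m+1}=H^{m+1}(\Lambda,V)^\Gamma$ survives, and a quick check shows it supports no non-trivial differentials on any page (outgoing $d_r$ lands in $E_r^{r,m-r+2}$, which sits in the vanishing strip for $r\ge 2$, while incoming $d_r$ originates in negative columns). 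Hence $E_\infty^{0,m+1}=E_2^{0,m+1}$ and $H^{m+1}(\Gamma\times\Lambda,V)\cong H^{m+1}(\Lambda,V)^\Gamma$, so everything reduces to showing the latter vanishes.

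For that step, Theorem~\ref{thm:TnCor} also provides that $H^{m+1}(\Lambda,V)$ is Hausdorff, hence equal to $\bar H^{m+1}(\Lambda,V)$, which by Lemma~\ref{lem:Laplacian} is $\ker\Delta_{m+1}^V\subset P_{m+1}^V$. Using a free $\bbC\Lambda$-resolution $P_\bullet$ with $P_{m+1}$ of finite rank $N$, the module $P_{m+1}^V=V\otimes_{\bbC\Lambda}P_{m+1}$ carries a natural $\Gamma$-action through its first tensor factor (well-defined because $\Gamma$ and $\Lambda$ commute inside $\Gamma\times\Lambda$), under which it is $\Gamma$-equivariantly identified with $V^{\oplus N}$. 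Since $\Delta_{m+1}^V$ is assembled from $\bbC\Lambda$-linear boundary maps and their formal adjoints, it commutes with this $\Gamma$-action, so $\ker\Delta_{m+1}^V$ is a $\Gamma$-invariant subspace of $V^{\oplus N}$; the hypothesis $V^\Gamma=0$ then forces $(\ker\Delta_{m+1}^V)^\Gamma=0$.

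The step I expect to be the main conceptual hurdle is precisely where the proof of Lemma~\ref{lem:subadd} breaks down: there, the $C_0^*\Gamma$-module structure on $H^q(\Lambda,V)$ combined with Theorem~\ref{thm:criteria} kills $H^p(\Gamma,H^q(\Lambda,V))$ for $p\le n$, but this tool is unavailable for $n=0$. The key idea that rescues the argument is realizing the reduced cohomology, via the Laplace operator, as a $\Gamma$-invariant subspace of a finite direct sum of copies of~$V$, so that $V^\Gamma=0$ can be used on the $\Lambda$-side of the extension rather than, as in Lemma~\ref{lem:subadd}, on the $\Gamma$-side.
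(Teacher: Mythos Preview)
Your argument is correct. Both you and the paper reduce to showing $H^{m+1}(\Lambda,V)^\Gamma=0$ after noting that $H^{m+1}(\Lambda,V)$ is Hausdorff by Theorem~\ref{thm:TnCor}, but from there you take a different path.

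You identify the Hausdorff cohomology with the harmonic space $\ker\Delta_{m+1}^V\subset P_{m+1}^V\cong V^{\oplus N}$, observe that this identification and the $\Gamma$-action commute (since $\Gamma$ acts only through the $V$-factor and the $\bbC\Lambda$-linear differentials and their formal adjoints commute with this), and conclude from $(V^{\oplus N})^\Gamma=0$. This is the clean orthogonal-complement argument available for unitary modules, and it is entirely valid here.

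The paper instead passes through the short exact sequence of unitary $\Gamma$-representations
\[
0\to B^{m+1}(\Lambda,V)\to Z^{m+1}(\Lambda,V)\to H^{m+1}(\Lambda,V)\to 0
\]
and the associated long exact sequence in \emph{bounded} cohomology, killing $H_b^0(\Gamma,Z^{m+1})$ via the inclusion $Z^{m+1}\hookrightarrow C^{m+1}\cong V^{\oplus N}$ and killing $H_b^1(\Gamma,B^{m+1})$ by the fixed-point theorem for bounded affine isometric actions on Hilbert spaces. This is heavier machinery for the statement at hand, but it is chosen deliberately: later in the paper (see the sketch of proof of Theorem~\ref{thm:noncom}) the same lemma is needed for $L^1$-type coefficients, where no harmonic projection is available and one must instead invoke the $L$-embedded fixed-point theorem of~\cite{BGM} at the $H_b^1$ step. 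Your Laplace-operator route is simpler and more direct for unitary $V$, while the paper's bounded-cohomology route is written so as to template the Banach-space generalization.
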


Note that if $\Gamma$ does not have property~T then $C_0^*\Gamma$ is not a unital algebra.

\begin{proof}
Arguing as in the proof of Lemma~\ref{lem:subadd}, the non-trivial statement to verify is that $H^0\bigl(\Gamma, H^{m+1}(\Lambda, V)\bigr)=0$.
We consider a free $\bbC\Lambda$-resolution $P_\bullet$ of $\bbC$ of finite type. Then $C^\bullet\defq \hom_{\bbC\Lambda}(P_\bullet, V)$ is a cochain complex of Hilbert spaces with 
$H^\bullet(\Lambda, V) = H^\bullet(C^\bullet)$.
Upon the identification of $C^k$ with $V^{m_k}$ as discussed in \S\ref{subsec: laplace operator},
each has the structure of a unitary $\Gamma$-representation.
By the assumption that $V^\Gamma=0$, we get $(C^k)^\Gamma=0$.
Each cocycle space $Z^{k+1}(\Lambda, V)\subset C^{k+1}$ is a $\Gamma$-invariant closed subspace, hence a unitary $\Gamma$-subrepresentation.
By Theorem~\ref{thm:TnCor}, $H^{m+1}(\Lambda,V)$ is Hausdorff, 
so the coboundary space $B^{m+1}(\Lambda, V)\subset C^{m+1}$
is a $\Gamma$-invariant closed subspace, hence unitary $\Gamma$-subrepresentation of $Z^{k+1}(\Lambda, V)$,
and accordingly, $H^{m+1}(\Lambda,V)$ has the structure of unitary $\Gamma$-representation.
We thus may consider the short exact sequence of 
unitary $\Gamma$-representations,
\[ 0 \to B^{m+1}(\Lambda,V) \to Z^{m+1}(\Lambda,V) \to H^{m+1}(\Lambda,V) \to 0\]
and its associated long exact sequence in bounded cohomology~\cite{frigerio}*{Section~1.5}.
This long exact sequence starts as 
\[ \cdots \to H_b^0(\Gamma,Z^{m+1}(\Lambda,V)) \to H_b^0(\Gamma,H^{m+1}(\Lambda,V)) \to H^1_b(\Gamma,B^{m+1}(\Lambda,V)) \to \cdots \]
It suffices to show that $H_b^0(\Gamma,Z^{m+1}(\Lambda,V))=0$ and $H^1_b(\Gamma, B^{m+1}(\Lambda,V))=0$.
To this end, we note that $H_b^0(\Gamma,Z^{m+1}(\Lambda,V))=H^0(\Gamma,Z^{m+1}(\Lambda,V))$ injects into $H^0(\Gamma,C^{m+1})=0$ and $H^1_b(\Gamma, B^{m+1}(\Lambda,V))=0$ because any bounded affine isometric action on a Hilbert space has a fixed point. 
\end{proof}

\subsubsection{Quotients}

Classical property T passes to quotients.
This follows immediately from the fact that every unitary representation of a quotient group yields a unitary representation of the original group, and so does every almost invariant sequence of unit vectors. 

\begin{rem}
    While property T passes to quotients, this is not the case with the higher analogues. For example, uniform lattices in the exceptional simple Lie group $F_4^{(-20)}$ have $(T_3)$ (see Theorem~\ref{thm:F4}) but 
Fournier-Facio has shown~\cite{fournierfacio} that they admit quotients that do not satisfy $(T_2)$.
\end{rem}

\subsubsection{Subgroups of finite index}

If $\Lambda$ is a finite index subgroup of $\Gamma$ or, more generally, $\Lambda$ is a cocompact lattice in a locally compact group~$G$, then the Shapiro lemma implies that property $[T_n]$ and property $(T_n)$ pass from $\Gamma$ or $G$ to $\Lambda$. 

The other direction is less obvious. The following result is~\cite{badsau}*{Theorem~6.21}.

\begin{theorem}
Property $[T_n]$ passes from a locally compact group to every of its cocompact lattices. In particular, property $[T_n]$ passes to subgroups of finite index. 
\end{theorem}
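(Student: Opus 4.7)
The plan is to deduce the theorem from Shapiro's lemma combined with unitary induction. Let $G$ be a locally compact group with property $[T_n]$ and let $\Lambda<G$ be a cocompact lattice. Given an arbitrary unitary $\Lambda$-representation $W$, I would form the unitarily induced representation $V=\mathrm{Ind}_\Lambda^G W$, realized for instance as the Hilbert space of measurable $\Lambda$-equivariant sections $f\colon G\to W$ that are square-integrable on the compact quotient $\Lambda\backslash G$, with $G$ acting by right translation. Cocompactness is crucial here: it is precisely the hypothesis that makes this $L^2$-induction yield a genuine unitary $G$-representation without any further restriction on $W$.

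Next, the central tool is Shapiro's lemma in the framework of continuous cohomology of locally compact groups: there should be a natural isomorphism
\[
H^j(\Lambda,W)\cong H_c^j(G,V).
\]
This would be established by comparing a relatively injective resolution of $\mathbb{C}$ as a $G$-module with the bar resolution for $\Lambda$, and using the induction-restriction adjunction; the $\mathrm{Ind}_\Lambda^G(-)$ functor sends (relatively) injective $\Lambda$-modules to (relatively) injective $G$-modules, and on the level of invariants it reduces to $\mathrm{Hom}_{\Lambda}(\mathbb{C},-)$ on the $\Lambda$-side via evaluation at the identity coset. Cocompactness, together with the fact that $\Lambda\backslash G$ carries a finite invariant measure, ensures that the relevant integrations over $\Lambda\backslash G$ are continuous and that the isomorphism holds at the chain level.

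Finally, I would conclude by invoking the hypothesis: since $G$ has $[T_n]$, we have $H_c^j(G,V)=0$ for every $1\le j\le n$ and every unitary $G$-representation $V$. Combining with the Shapiro isomorphism, $H^j(\Lambda,W)=0$ for every unitary $\Lambda$-representation $W$ and every $1\le j\le n$, which is exactly property $[T_n]$ for $\Lambda$. The case of finite-index subgroups $\Lambda<\Gamma$ of a discrete group is the special case $G=\Gamma$, where induction degenerates to a finite direct sum $\mathrm{Ind}_\Lambda^\Gamma W\cong\bigoplus_{\Gamma/\Lambda}W$ (as a vector space) and Shapiro's lemma reduces to the classical one.

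The main obstacle is the careful functional-analytic setup underlying Shapiro's lemma in continuous cohomology for locally compact groups: one needs to verify that $\mathrm{Ind}_\Lambda^G$ transports relatively injective resolutions correctly and that the induced cochain complex computes $H_c^\bullet(G,V)$. This is standard material (cf.\ Guichardet's monograph on continuous cohomology), but the verification is technical; it uses cocompactness decisively both for the unitarity of induction and for turning the adjunction into an honest chain-level isomorphism. Non-cocompact lattices fall outside the reach of this argument precisely because $L^2$-induction need not land in the unitary dual.
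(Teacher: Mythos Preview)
Your argument is correct for the statement as literally printed, and it is exactly the Shapiro-lemma route that the paper itself invokes in the sentence immediately preceding the theorem: ``the Shapiro lemma implies that property $[T_n]$ and property $(T_n)$ pass from $\Gamma$ or $G$ to $\Lambda$.'' So as a proof of the displayed statement there is nothing to add.

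That said, you should be aware of an apparent inconsistency in the paper's framing. Right after saying that the $G\Rightarrow\Lambda$ direction follows from Shapiro, the paper writes ``The other direction is less obvious'' and then records the theorem, citing \cite{badsau}*{Theorem~6.21}. Yet the theorem as printed restates the \emph{same} Shapiro direction. Almost certainly this is a typo and the cited result is meant to assert the converse, namely that $[T_n]$ passes from a cocompact lattice $\Lambda$ up to the ambient locally compact group $G$ (equivalently, from a finite-index subgroup to the overgroup). Your proof does not touch that direction. If you want to supply it, the standard trick is: given a unitary $G$-representation $V$, observe that $V$ is a $G$-direct summand of $\mathrm{Ind}_\Lambda^G(V|_\Lambda)\cong L^2(G/\Lambda)\otimes V$ via the constant functions in $L^2(G/\Lambda)$, then apply Shapiro and $[T_n]$ for $\Lambda$ to kill $H_c^j(G,\mathrm{Ind}_\Lambda^G(V|_\Lambda))\cong H^j(\Lambda,V|_\Lambda)$, hence its summand $H_c^j(G,V)$.
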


By same result from~\cite{badsau} property $(T_n)$ for a group~$G$ implies that the cohomology of every subgroup of~$G$ of finite index with coefficients in a weakly mixing unitary representation vanishes in degrees up to~$n$. But is less clear that property $(T_n)$ passes to subgroups of finite index in general. 

\subsection{Back to the finiteness assumption}
\label{subsec: back to fin}
We conclude this section by coming back to the finiteness property assumption $FP_\infty(\bbQ)$ which we assumed throughout.
In particular, the equivalence of the many properties in 
Theorem~\ref{thm:criteria} and the deduction of Theorem~\ref{thm:TnCor} were proved under this assumption.
However, we note that the proof of the following lemma does not need any finiteness assumption.

\begin{lemma}\label{lem: ring extension}
   Let $A$ be a unital algebra extending $\mathbb{C}\Gamma$
    and fix an integer $n$.
   If $H_k(\Gamma,A)=0$ for every $0\le k\leq n$, then $H_k(\Gamma,V)=0$ and $H^k(\Gamma, V)=0$ for every $A$-module $V$ and for every $0\le k\leq n$. 
\end{lemma}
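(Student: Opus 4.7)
The plan is to fix a free $\bbC\Gamma$-resolution $P_\bullet\to\bbC$ of the trivial module (the bar resolution is available without any finiteness assumption on~$\Gamma$) and, by extension of scalars along $\bbC\Gamma\hookrightarrow A$, form the complex $Q_\bullet$ of free $A$-modules whose homology is $H_\bullet(\Gamma, A)$. For any $A$-module~$V$, the standard Hom-tensor adjunction for this extension identifies
\[ Q_\bullet\otimes_A V \cong P_\bullet\otimes_{\bbC\Gamma} V\qquad\text{and}\qquad \hom_A(Q_\bullet, V)\cong \hom_{\bbC\Gamma}(P_\bullet, V),\]
whose (co)homologies compute $H_\bullet(\Gamma, V)$ and $H^\bullet(\Gamma, V)$, respectively. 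The hypothesis then says that $Q_\bullet$ is exact in degrees $0,\dots,n$.

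Set $Z_k=\ker(d_k\colon Q_k\to Q_{k-1})$, with the convention $Z_0=Q_0$. Exactness of $Q_\bullet$ in the given range yields short exact sequences of $A$-modules
\[ 0\to Z_{k+1}\to Q_{k+1}\to Z_k\to 0\qquad (0\le k\le n).\]
I would then argue by induction on~$k$ that each $Z_k$ is projective over~$A$ for $0\le k\le n+1$: the base case $Z_0=Q_0$ is free, and for the inductive step the sequence above has projective quotient $Z_k$, so it splits and exhibits $Z_{k+1}$ as a direct summand of the free module $Q_{k+1}$. In particular each short exact sequence in the list is split.

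Applying $-\otimes_A V$ and $\hom_A(-, V)$ to these split sequences and splicing the results back together would then show that both $Q_\bullet\otimes_A V$ and $\hom_A(Q_\bullet, V)$ are exact in degrees $0\le k\le n$, yielding the desired vanishing of $H_k(\Gamma, V)$ and $H^k(\Gamma, V)$ in this range. The only delicate point I foresee is bookkeeping of left/right module conventions on $A$ and $P_\bullet$ needed to make sense of the two incarnations of $Q_\bullet$ (one tensored on the left, one on the right) simultaneously; this is the usual game one plays for group rings via the antipode $\gamma\mapsto\gamma^{-1}$, and is not a substantive obstacle to the argument.
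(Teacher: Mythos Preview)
Your proposal is correct and follows essentially the same route as the paper. The paper forms $A\otimes_{\bbC\Gamma}P_\bullet$, observes it is a projective $A$-complex exact up to degree~$n$, and then invokes the general fact that such a complex is chain homotopy equivalent to a projective $A$-complex vanishing in degrees $0,\dots,n$; your inductive argument that the cycle modules $Z_k$ are projective and the short exact sequences split is exactly the hands-on proof of that homotopy equivalence (one is splitting off the contractible summands $Z_k\xrightarrow{\id}Z_k$), after which both proofs conclude via the same Hom--tensor adjunction.
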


\begin{proof}
The projective $A$-chain complex $A\otimes_{\bbC\Gamma} P_\bullet$ is exact up to degree~$n$. Thus, it is homotopy equivalent to a projective $A$-chain complex $Q_\bullet$ with $Q_0=\dots=Q_n=0$. Now use the homotopy equivalences $V\otimes_{\bbC\Gamma}P_\bullet\simeq V\otimes_A Q_\bullet$ and $\hom_{\bbC\Gamma}(P_\bullet, V)\simeq \hom_A(Q_\bullet, V)$.
 \end{proof}

Assume now that $\Gamma$ is finitely generated, but do not assume any further finiteness properties.
Note that Lemma~\ref{lem:nonT} and Remark~\ref{rem:nonT} still hold, thus we have that $H_k(\Gamma,C_0^*\Gamma)=0$ implies that $\Gamma$ has property T, thus the C*-algebra $C_0^*\Gamma$ is unital
 by Lemma~\ref{lem:kproj} and Convention~\ref{conv:kproj}.
 It follows from Lemma~\ref{lem: ring extension} that under merely finite generation assumption, 
 the property of having for every $k\leq n$, 
 $H_k(\Gamma,C_0^*\Gamma)=0$, implies $(T_n)$ and the entire variety  
of properties listed in Theorem~\ref{thm:criteria} and Theorem~\ref{thm:TnCor}.
This property is equivalent to $(T_n)$ under $FP_\infty(\bbQ)$,
but this equivalence is not known in general, which brings us back to 
Question~\ref{q:fin}.

\section{Lattices in semisimple groups}
\label{sec: lattices in semisimple groups}

In this section we turn to lattices in semisimple groups, which are our main source of examples for higher property~T. 

In \S\ref{sec:proof} we sketch the proofs of the main results about higher property~T of lattices in semisimple groups. 
We first revisit the proof of Theorem~\ref{thm:higher} for unitary coefficients. Then we focus on possible generalizations with Banach space coefficients. This leads to 
Theorem~\ref{thm:noncom} and, conditional on a spectral gap conjecture for certain rank~$1$ subgroups (Conjecture~\ref{conj:ai}), to a general higher property~T statement for super‑reflexive Banach coefficients. See Theorem~\ref{thm:mainBBBS}.

We then survey several related “below‑rank phenomena”: In~\S\ref{subsec: cohomological below-rank} we discuss cohomological below-rank phenomena as vanishing and non‑vanishing results for $L^p$-cohomology and torsion and homology growth. In~\S\ref{sec:geo} we discuss geometric below-rank phenomena as polynomial filling functions and Shapiro‑type results for non‑uniform lattices, Farb’s property FA$_n$ and Zimmer‑style rigidity, as well as uniform waist inequalities and high‑dimensional expansion. 

Finally, in \S\ref{sec:sd} we isolate low‑degree consequences in degrees 1 and 2, organizing a web of conjectures around two spectral gap conjectures (Conjecture~\ref{conj:SG} and~\ref{conj:SSG}) and its implications for property $\tau$, coamenable subgroups, invariant random subgroup and character rigidity, and stability and approximation problems.

The writing style in~\S\ref{sec: lattices in semisimple groups} is more of a survey, highlighting many conjectures and their interactions. 

\subsection{From Theorem~\ref{thm:higher} towards Conjecture~\ref{conj:SR} and Theorem~\ref{thm:noncom}} \label{sec:proof}

Theorem~\ref{thm:higher} was proved by the two authors of the present paper  in~\cite{badsau}.
The following is a rough sketch of the proof in~\cite{badsau}.

\begin{proof}[Sketch of the proof of Theorem~\ref{thm:higher}]
The proof goes along the following steps:
     \begin{enumerate}
     \item Use results from representation theory by 
     Zuckerman \cite{Zuckerman} and Borel-Wallach \cite{Borel-Wallach}
     (see also Vogan-Zuckerman~\cite{vogan+zuckerman}) saying that the cohomology of~$G$ vanishes in degrees below the rank for non-trivial irreducible unitary representations. 
     \item Every unitary representation has a direct integral decomposition into irreducible unitary representations. The direct integral decomposition is compatible with reduced cohomology by~\cite{Blanc}*{Th\'eor\`eme~7.2}.      
     As a consequence, one obtains the vanishing of the reduced cohomology of~$G$ for all unitary representations without invariant vectors in degrees below the rank. 
     \item By Shapiro Lemma, we get a similar vanishing result of the reduced cohomology for a cocompact lattice $\Gamma$ in $G$.
     \item By Lemma~\ref{lem:uptrick} the same vanishing holds for the (ordinary) cohomology of~$\Gamma$.
     \item Using Shapiro Lemma again, we deduce that such vanishing holds for the ordinary cohomology of~$G$.
     \item Finally, to deduce that a similar vanishing result applies to an arbitrary (maybe non-cocompact) lattice, a replacement for the Shapiro lemma is developed in~\cite{badsau} based on recent advances in geometric group theory. See the discussion after Theorem~\ref{thm: leuzinger+young}. This implies the same vanishing result for all lattices in $G$.
 \end{enumerate}
 This concludes the sketch of the proof of Theorem~\ref{thm:higher}. 
\end{proof}

\begin{rem} \label{rem:zigzag}
    In steps (1) and (2) we discussed the cohomology of $G$, then we \emph{zigged} to a cocompact lattice $\Gamma$ in (3) and (4) and \emph{zagged} back to $G$ in (5). Finally we \emph{zigged} to a lattice again in (6).
    This is due to the fact that Lemma~\ref{lem:uptrick} could not be applied to $G$ directly - not only that it requires the standing finiteness property assumption \ref{conv:ft}, but also the ultrapower of a continuous $G$-representation is a $G$-representation which is not continuous anymore.
\end{rem}

In this subsection we discuss generalizations -- both established and conjectural ones -- of Theorem~\ref{thm:higher} where we significantly extend the possible coefficients from Hilbert spaces to more general Banach spaces.  

We will start with the following remarkable theorem in degree~$1$, which was conjectured in~\cite{BFGM}. It was  proved recently by Oppenheim~\cite{Oppenheim} and de Laat--de la Salle~\cite{Sa-Le-at};  over a non-archimedean field it was proved earlier by Lafforgue~\cite{Lafforgue}.

\begin{theorem}[BFGM conjecture] \label{thm:Opp}
    Consider an isometric linear representation of a simple group $G$ of higher rank
on a super-reflexive Banach space $V$ with $V^G=0$.
Then $H^1(G,V)=0$.
The same holds for lattices in $G$.
\end{theorem}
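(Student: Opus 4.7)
The plan is to rephrase $H^1(G,V)=0$ as a fixed point statement and then establish boundedness of orbits via the higher rank algebraic structure. Recall that $H^1(G,V)=0$ is equivalent to the assertion that every continuous affine isometric action of~$G$ on~$V$ whose linear part is the given representation admits a fixed point; equivalently, every continuous $1$-cocycle $b\colon G\to V$ is a coboundary. Since $V$ is super-reflexive, by Enflo's renorming theorem we may replace the norm on~$V$ by a uniformly convex equivalent norm without changing the isometric classification of linear actions up to bi-Lipschitz cocycle equivalence. In a uniformly convex Banach space, any non-empty bounded closed convex invariant set has a unique circumcenter (the center of its smallest enclosing ball), and such a circumcenter is necessarily fixed. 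Thus the task reduces to showing that the orbit $\{b(g):g\in G\}\subset V$ is bounded, or equivalently that $b$ is bounded on a generating subset of~$G$ with uniform constant.

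The next step is to propagate boundedness from a distinguished subgroup. Since $G$ is simple of higher rank, it contains subgroups of the form $\SL_2(F)\ltimes F^2$ with $F$ the ground field and the standard $\SL_2$-action on $F^2$. The key technical input is a Banach version of Kazhdan's relative property~$(T)$ for the pair $(\SL_2(F)\ltimes F^2,F^2)$: for any isometric linear representation of this semidirect product on a super-reflexive Banach space~$W$ with no $F^2$-invariant vector, there is a uniform spectral gap depending only on the modulus of uniform convexity of~$W$. This is precisely the ingredient established by Lafforgue over non-archimedean local fields and, in the archimedean case, by Oppenheim and independently by de Laat--de la Salle. Granting it, a standard cocycle-averaging argument shows that $b|_{F^2}$ is bounded, and since the $\SL_2$-action on $F^2\setminus\{0\}$ has unbounded orbits, $b|_{F^2}$ must in fact be a coboundary; after translating the origin in the affine action, we may assume $b|_{F^2}\equiv 0$.

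To extend this to all of~$G$, I would invoke bounded generation by unipotent root subgroups. Every root subgroup in a higher rank simple group lies inside some copy of $\SL_2(F)\ltimes F^2$ of the preceding type, so the previous paragraph yields that $b$ vanishes on each root subgroup after a suitable translation; bounded generation of $G$ by its root subgroups (classical in the split case and extended by Tavgen and others in general) then yields that $b$ has bounded orbit on~$G$, giving a fixed point by the first paragraph. The passage to a lattice $\Gamma<G$ goes via Shapiro's lemma: the induced module $\operatorname{Ind}_\Gamma^G V$, realized for example on $L^2(G/\Gamma;V)$ in the cocompact case, is again super-reflexive, carries an isometric linear $G$-action, has no $G$-invariants since $V^\Gamma=0$, and satisfies $H^1(\Gamma,V)\cong H^1(G,\operatorname{Ind}_\Gamma^G V)$. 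A refined induction module dealing with cusp geometry, as in~\cite{badsau}, handles the non-uniform case.

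The principal obstacle is the Banach relative property~$(T)$ invoked in the second paragraph. The Hilbert-space proof relies on Mackey-type direct integral decompositions of unitary representations of~$F^2$, tools that have no counterpart for general super-reflexive Banach spaces. Over non-archimedean fields Lafforgue bypassed this via a Bruhat--Tits building argument, while the archimedean proofs proceed either through a delicate Garland-style spectral estimate on a continuous higher-dimensional analogue (Oppenheim) or through heat-kernel and renorming estimates on the Lie algebra (de Laat--de la Salle). The crucial and novel point in all these approaches is the \emph{uniformity} of the spectral gap across the entire class of super-reflexive representations, controlled only by the modulus of uniform convexity; establishing such uniformity, rather than a space-by-space gap, is where the real work lies.
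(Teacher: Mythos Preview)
The paper does not give its own proof of this theorem; it is stated as a result from the literature and attributed to Oppenheim, de Laat--de la Salle, and (in the non-archimedean case) Lafforgue. So there is no in-paper argument to compare against, only the external references.

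Your outline is a reasonable heuristic reduction, and you are honest in the final paragraph that the real content is deferred to the cited works. A couple of points are worth flagging. First, the renorming step is not as clean as you present it: passing to an equivalent uniformly convex norm destroys the isometry of the action, so the circumcenter argument does not apply directly. One needs instead the (known but non-trivial) fact that bounded cocycles for isometric actions on super-reflexive spaces are coboundaries, which is not literally the circumcenter lemma. Second, and more substantially, the ``Banach relative property~$(T)$ for $(\SL_2(F)\ltimes F^2,F^2)$'' that you invoke is not really an isolated lemma that the cited papers establish en route; rather, each of those papers attacks the full fixed-point property for $G$ by a different global mechanism (Lafforgue via strong Banach property~$(T)$ on the building, Oppenheim via angle/\.Zuk-type spectral criteria, de Laat--de la Salle via analytic estimates). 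Your bounded-generation-by-root-groups bootstrap is plausible for the ambient group, but framing the hard input as a relative~$(T)$ statement for this specific pair somewhat misrepresents where the difficulty is resolved in the actual proofs. The lattice step via induction and the Shapiro-type replacement from \cite{badsau} is correctly identified.
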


In attempting a generalization to higher degrees, Conjecture~\ref{conj:ai} below was formulated in a recent joint work with Saar Bader and Shaked Bader~\cite{BBBS}.

Every simple root of the root system of $G$ gives rise to a conjugacy class of rank~$1$ simple subgroups of $G$.
We call these the \emph{standard rank~$1$  subgroups}.

\begin{conjecture}[\cite{BBBS}] \label{conj:ai}
For every isometric linear representation of a simple group~$G$ of higher rank on a super-reflexive Banach space~$V$, 
every standard rank 1 subgroup of~$G$ acts with no almost invariant vectors. 
\end{conjecture}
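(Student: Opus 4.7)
The plan is to convert the almost-invariance question into a pointwise rigidity question via ultrapowers, then propagate $H$-invariance to $G$-invariance using the root-space contractions available in higher rank. Since super-reflexivity passes to ultrapowers and $G$ still acts isometrically on $V_\omega$, Lemma~\ref{lem: fixed points and ultrapower} reduces Conjecture~\ref{conj:ai} to the rigidity claim that every super-reflexive $G$-representation $W$ satisfies $W^H \subseteq W^G$ for each standard rank 1 subgroup~$H$---equivalently, an $H$-fixed vector in a super-reflexive $G$-representation with $W^G=0$ must vanish.

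Second, I would exploit that $\mathrm{rank}(G)\geq 2$. Writing $H = H_\alpha$ for a simple root~$\alpha$, pick a second simple root~$\beta$ and a torus element $a$ with $\beta(a)>0$; conjugation by $a^{-n}$ contracts the root subgroup $U_\beta$ to the identity. The classical Mautner argument, applied to a nonzero $H$-fixed vector $\xi\in W$, would produce a weak accumulation point $\eta$ of $(a^{-n}\xi)$ that is $H$-fixed (by continuity of the $H$-action along the orbit) and $U_\beta$-fixed (since $a^{n}u a^{-n}\to e$ for $u\in U_\beta$). Iterating over all simple roots then forces $\eta$ to be invariant under the subgroup generated by all root subgroups, namely $G$ itself, contradicting $W^G=0$.

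The principal obstacle is executing this Mautner step in a merely super-reflexive Banach space. In a Hilbert space one uses orthogonal projection onto $\overline{\langle a\rangle}$-invariants to produce a canonical, nonzero weak limit; in a general super-reflexive space one only has Alaoglu compactness of bounded orbits in the weak topology together with a uniformly convex renorming, and it requires care to verify that the weak limit $\eta$ is nonzero, still $H$-fixed, and actually $U_\beta$-fixed rather than merely asymptotically so. I would attack this by combining a barycenter construction on the weakly compact orbit closure with the angle-and-energy machinery developed by Oppenheim in his proof of Theorem~\ref{thm:Opp}, exploiting the vanishing of reduced $1$-cohomology of $G$ on the ambient representation as a quantitative input.

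A parallel strategy I would pursue is to contradict Theorem~\ref{thm:Opp} directly. From an $H$-almost-invariant sequence $(v_n)$ in $W$, the assignments $g\mapsto gv_n-v_n$ are inner $1$-cocycles that are uniformly small on $H$; after quotienting by $W^G$ and rescaling, a suitable limit should yield a nontrivial reduced class in $H^1(G,W_\omega)$ trivial on $H$, contradicting Theorem~\ref{thm:Opp}. The difficulty is extracting uniformity in the full $G$-action from mere $H$-almost-invariance of $(v_n)$, which seems to require an averaging step over an amenable action on a quotient of $G/P$ that is not obviously available in higher rank; this is where I expect the technical heart of any unconditional proof to lie.
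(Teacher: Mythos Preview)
The statement you are attacking is recorded in the paper as an open \emph{conjecture}, not a theorem; there is no proof in the paper to compare against. Theorem~\ref{thm:mainBBBS} deduces Conjecture~\ref{conj:SR} \emph{conditionally} on Conjecture~\ref{conj:ai}, and Proposition~\ref{prop:list} lists the classes of spaces (Hilbert spaces under any equivalent norm, commutative and non-commutative $L^p$ for $1<p<\infty$) for which the conjecture is presently known. In each of those cases the verification is by \emph{comparison to the unitary case}: for Hilbert coefficients one uses relative property~T of a subgroup $\SL_2(\bbR)\ltimes E$ to produce an $E$-fixed vector and then invokes Howe--Moore; the other cases are transferred from the Hilbert case via the Mazur map or via unitarization over amenable subgroups. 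No direct Mautner-type argument on the Banach space is carried out.

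Your outline has the right architecture, and the ultrapower reduction to $W^H\subset W^G$ is the natural opening move, but you have correctly located the two places where the argument breaks, and neither is currently surmountable. First, after passing to $V_\omega$ the $G$-action is isometric but no longer continuous (this is exactly the point of Remark~\ref{rem:zigzag}), so the contraction $a^nua^{-n}\to e$ has no grip on the representation; the paper's zig-zag through lattices exists precisely to dodge this, but the standard rank~1 subgroup $H$ is not a lattice, so that detour is unavailable here. Second, even granting continuity, no Howe--Moore or Mautner phenomenon is known for general super-reflexive coefficients: your proposed barycenter on a weak orbit closure need not be nonzero, and there is no mechanism forcing the weak limit to retain $H$-invariance while acquiring $U_\beta$-invariance without a Hilbert-type projection. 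Your alternative route through Theorem~\ref{thm:Opp} is not circular but is stuck at the same point: that theorem gives $G$-spectral gap on $V$ (hence $V_\omega^G=0$), but to extract a nontrivial class in $H^1(G,V_\omega)$ you would again need to apply Theorem~\ref{thm:Opp} to $V_\omega$, which fails for lack of continuity, and the cocycles $g\mapsto gv_n-v_n$ you build are coboundaries, so their limit in reduced cohomology is zero unless you can first promote $H$-almost-invariance to $G$-level information---which is the content of the conjecture itself.
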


The main work of~\cite{BBBS} lies in proving the following theorem, which relates Conjecture~\ref{conj:ai} and  Conjecture~\ref{conj:SR}.

\begin{theorem}[\cite{BBBS}] \label{thm:mainBBBS}
Let $\mathbf{G}$ be a simple algebraic group of rank $r$ over a characteristic 0 local field $F$ and let $\Gamma<G=\mathbf{G}(F)$ be a lattice. 
   Let $V$ be a super-reflexive Banach space and $\Gamma\to B(V)$ be a linear isometric representation with no invariants, that is $V^\Gamma=0$.
   If Conjecture~\ref{conj:ai} is true then for every $j<r$, 
   \[ H^j(\Gamma,V)=0. \]

In short:
Conjecture~\ref{conj:ai} implies Conjecture~\ref{conj:SR}.
\end{theorem}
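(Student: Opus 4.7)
My plan is to follow the template of the proof of Theorem~\ref{thm:higher} sketched above, replacing the representation-theoretic vanishing input (Zuckerman, Borel--Wallach) by the hypothesis Conjecture~\ref{conj:ai}. In outline: first, establish the vanishing for a cocompact lattice $\Gamma_0 < G$ acting on the symmetric space or Bruhat--Tits building $X$ of $G$; second, transfer to $G$ itself via a Shapiro-type argument; third, extend to an arbitrary (possibly non-cocompact) lattice $\Gamma < G$ using the Leuzinger--Young filling technology developed in \cite{badsau}. The zig-zag between $G$ and $\Gamma_0$ would use Lemma~\ref{lem:uptrick}, for which the super-reflexivity of $V$ is crucial: the ultrapower $V_\omega$ is again a super-reflexive isometric Banach $\Gamma$-module, so reduced vanishing can be upgraded to unreduced vanishing together with Hausdorffness one degree higher.

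The heart of the matter is the vanishing for $\Gamma_0$. The approach would be a Garland/Mayer--Vietoris argument on $X$: cover $X$ by stars of vertices (in the non-archimedean case) or by suitable parabolic tube neighborhoods (in the archimedean case), and run the associated spectral sequence converging to $H^\bullet(\Gamma_0, V)$. The $E_1$-terms are cohomologies of cell stabilizers, which are (conjugates of) subgroups generated by sub-root-systems. For cells of sufficiently high codimension these stabilizers contain standard rank-$1$ subgroups $L < G$, and Conjecture~\ref{conj:ai} says that $V|_L$ has no almost invariant vectors. Combined with Theorem~\ref{thm:Opp}, this yields $\bar H^1(L, V) = 0$ for each such $L$. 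A careful bookkeeping --- presumably by induction on the rank, exploiting that Levi subgroups of $G$ are themselves semisimple of lower rank to which Conjecture~\ref{conj:SR} can be fed back inductively --- should propagate these degree-$1$ vanishings to the desired vanishing of $H^j(\Gamma_0, V)$ for all $j < r$.

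The main obstacle, I expect, is twofold. On the homological-algebra side, making the Garland-type spectral sequence work cleanly in the super-reflexive Banach category requires keeping track of closed images and Hausdorff reduced cohomology at every page; this is exactly what Lemma~\ref{lem:uptrick} is set up to do, but its interaction with a multi-page spectral sequence, where each differential can produce non-Hausdorff pathologies, requires delicate control. On the geometric side, extending from cocompact $\Gamma_0$ to a general lattice $\Gamma$ requires a quantitative filling/coarse Shapiro statement for super-reflexive $V$-valued cochains; the techniques of \cite{badsau} were designed with isometric unitary coefficients in mind, and adapting them to isometric actions on super-reflexive Banach spaces --- without losing control over the dependence of the filling constants on $V$ --- is the step most likely to demand essentially new ideas, and is presumably where the bulk of the work in \cite{BBBS} lies.
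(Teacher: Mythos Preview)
Your outline captures the right high-level ingredients (induction on the rank via Levi subgroups, an ultrapower/Lemma~\ref{lem:uptrick} step relying on super-reflexivity, and the Leuzinger--Young Shapiro machinery for non-uniform lattices), but two concrete choices diverge from the paper and at least one of them is a genuine gap.

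\textbf{The complex.} You propose to run a Garland-type spectral sequence over the building~$X$ (or the symmetric space). The paper does not do this: it uses the \emph{opposition complex}, whose $k$-simplices are pairs of opposite $k$-simplices in the Tits building. The point is that stabilizers of simplices in the opposition complex are \emph{Levi subgroups} $L=MA$ of semisimple rank $r-k-1$, whereas stabilizers in the building are parabolics $P=MAN$. With parabolics you inherit the unipotent radical~$N$, and controlling $H^\ast(P,V)$ in the super-reflexive Banach category is not reduced to the inductive hypothesis for a lower-rank semisimple group. With Levi stabilizers, the inductive step splits cleanly: Proposition~\ref{prop:induction} handles the semisimple part~$M$, and the central torus~$A$ contributes an extra degree of vanishing via Proposition~\ref{prop:center} (a version of Lemma~\ref{lem:subadd0}). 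Your sketch does not isolate this torus step; without it the induction does not close up, since one needs vanishing for the Levi one degree \emph{above} what the semisimple part alone gives.

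\textbf{The zig-zag.} You propose to start with a cocompact lattice~$\Gamma_0$ and transfer to~$G$ and then to arbitrary~$\Gamma$. The paper does the opposite: it needs an arithmetic lattice of the \emph{same split rank} as~$G$, hence non-cocompact, so that the opposition complex of the lattice has the full dimension~$r-1$. A cocompact lattice is anisotropic and has no nontrivial opposition complex over~$\bbQ$. So the Shapiro replacement from~\cite{badsau} is used already in the core inductive argument, not only at the final step. Your placement of the ``main obstacle'' in adapting~\cite{badsau} to super-reflexive coefficients is correct in spirit, but it enters earlier and more essentially than you suggest.

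Finally, the role of Conjecture~\ref{conj:ai}: you invoke it together with Theorem~\ref{thm:Opp} to get $\bar H^1(L,V)=0$. In the paper it is primarily the mechanism that guarantees the hypothesis of Proposition~\ref{prop:induction} (no almost invariants for each simple factor of the Levi) survives the ultrapower step, and hence gives Hausdorffness of $H^r$.
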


\begin{remark}\label{rem: proof strategy}
If one tries to apply the same proof strategy as for Theorem~\ref{thm:higher} sketched above, the first and second step break down. There is no decomposition into irreducibles in the world of Banach space representations. 
Note that the first and second step are easy and quite standard in the unitary case. 
The innovation of the proof of Theorem~\ref{thm:higher} lies mainly in steps (4), (5) and (6), which remain indispensable in the proof of Theorem~\ref{thm:mainBBBS}. 
The major innovation of~\cite{BBBS}, in which we prove Theorem~\ref{thm:mainBBBS} and its consequences Theorem~\ref{thm:noncom} and Corollary~\ref{cor:com}, is that we can avoid the results from Vogan-Zuckerman.
\end{remark}

Next we sketch the main ideas behind the proof of Theorem~\ref{thm:mainBBBS}. For more details we direct the reader to the forthcoming paper~\cite{BBBS}.

\begin{proof}[Sketch of the proof of Theorem~\ref{thm:mainBBBS}]

The proof goes by induction on the rank, where the result for a group follows from the corresponding results for the semisimple parts of its Levi subgroups.
As Conjecture~\ref{conj:SR} is formulated for simple groups and Conjecture~\ref{conj:semisimple} is not quite what we need, we will formulate another version, which will be amenable to a proof by induction.

\begin{prop} \label{prop:induction}
    We assume Conjecture~\ref{conj:ai}.
    Let $S$ be a semisimple group of rank~$r$. Let $S$ act by linear isometries on a super-reflexive Banach space~$V$ such that every simple factor has no almost invariant vectors.     
    Then $H^j(S,V)=0$ for every $j<r$ and $H^r(S,V)$ is Hausdorff.
\end{prop}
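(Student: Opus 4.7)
The plan is to prove Proposition~\ref{prop:induction} by induction on the rank~$r$, handling separately the case where $S$ is simple and the case where $S$ decomposes non-trivially as a product of semisimple factors.

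The base case $r=1$ amounts to showing $V^S = 0$ and that $H^1(S,V)$ is Hausdorff. The first equality is immediate, since absence of almost invariant vectors is strictly stronger than absence of invariant vectors. For the Hausdorff property in degree~$1$, I would argue that spectral gap for an isometric action on a super-reflexive Banach space forces the coboundary operator $V \to Z^1(S,V)$ to have closed range, so $H^1(S,V) = \bar H^1(S,V)$ is Banach. This is the super-reflexive analogue of the well-known statement for unitary representations and can be established by a direct ultrapower argument analogous to Lemma~\ref{lem:uptrick}, using Lemma~\ref{lem: fixed points and ultrapower} to convert an obstruction to closedness into an almost invariant sequence in some ultrapower of $V$.

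For the inductive step, suppose the proposition holds below rank~$r$. In the case $S = S_1 \times S_2$ with $r_i = \operatorname{rank}(S_i) \geq 1$ and $r_1 + r_2 = r$, I would apply the Hochschild-Serre spectral sequence
\[ E_2^{p,q} = H^p\bigl(S_2, H^q(S_1, V)\bigr) \Longrightarrow H^{p+q}(S,V). \]
The induction hypothesis applied to $S_1$ kills $E_2^{p,q}$ for $q < r_1$ and leaves $H^{r_1}(S_1, V)$ as a Hausdorff Banach $S_2$-module. Because $H^q(S_1, V)$ is a closed subquotient of a complex of $V$-valued bounded cochains on $S_1$, and the commuting $S_2$-action is pointwise, the no-almost-invariant-vectors property for each simple factor of $S_2$ is inherited by each $H^q(S_1, V)$ from $V$ itself (any almost invariant sequence would induce one in $V$ through evaluation). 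A second application of the inductive hypothesis to $S_2$ with these coefficients, followed by collating the spectral sequence, then yields $H^j(S,V)=0$ for $j<r$ and $H^r(S,V)$ Hausdorff.

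The case of $S$ simple of rank $r \geq 2$ is the delicate one, and this is where Conjecture~\ref{conj:ai} enters. I would pick a maximal parabolic subgroup $P = L \ltimes U$ of $S$ with Levi decomposition $L = L_{ss} \cdot Z(L)$, where $L_{ss}$ is semisimple of rank $r-1$. The plan has three components: (i) relate $H^*(S,V)$ to $H^*(P, \_)$ via a Shapiro-type comparison built from the compact (or compactifiable) homogeneous space $S/P$; (ii) apply the Hochschild-Serre spectral sequence to the extension $1 \to U \to P \to L \to 1$, reducing the problem to computing $H^*(L_{ss}, H^*(U, V))$ (with the central torus $Z(L)$ contributing only in controlled degrees); and (iii) apply the inductive hypothesis to $L_{ss}$. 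The role of Conjecture~\ref{conj:ai} is precisely to guarantee the no-almost-invariant-vectors hypothesis for the simple factors of $L_{ss}$: higher-rank simple factors of $L_{ss}$ inherit this from $V$ as before, while rank-$1$ simple factors of $L_{ss}$ are exactly the standard rank-$1$ subgroups of $S$, where Conjecture~\ref{conj:ai} supplies the missing input.

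The main obstacle will be executing the spectral sequence arguments rigorously in the Banach (rather than Hilbert) setting: one must track closed ranges carefully, maintain Hausdorff reduced cohomology through filtrations, and verify that no-almost-invariant-vectors passes through $V$-valued cochain functors, parabolic induction, and the unipotent-radical cohomology $H^*(U,V)$. A secondary technical issue is the Shapiro-type comparison between $H^*(S,V)$ and $H^*(P,V)$, which in the archimedean case requires an isometric Banach analogue of classical induction, and the analysis of the amenable unipotent factor $U$ in that isometric framework.
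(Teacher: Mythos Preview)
Your overall plan---induction on the rank, with the induction step feeding the representation to smaller-rank semisimple groups coming from the structure theory---matches the paper's in spirit, but the mechanism you propose for the simple case is genuinely different and runs into obstacles that the paper's argument is specifically designed to avoid.

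The paper does \emph{not} pass through a single maximal parabolic $P=L\ltimes U$ and a Shapiro/Hochschild--Serre comparison. Instead it uses the $S$-action on the \emph{opposition complex} of the Tits building: a simplicial complex of dimension $r-1$ which is $(r-2)$-connected, and whose simplex stabilizers are \emph{Levi subgroups} (reductive, semisimple rank $r-k-1$), not parabolics. This is important for two reasons. First, there is no unipotent radical to integrate out, so the step you flag as a ``secondary technical issue''---computing $H^*(U,V)$ in the Banach setting and then controlling the $L$-action on it---never arises. Second, your ``Shapiro-type comparison built from $S/P$'' is not a standard statement (you are not inducing $V$ from $P$), whereas the connectivity of the opposition complex gives directly a cohomological induction relating $H^*(S,V)$ to the cohomology of the Levi stabilizers, to which the companion Proposition~\ref{prop:center} applies. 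Your remark that ``the central torus $Z(L)$ contributes only in controlled degrees'' is exactly what Proposition~\ref{prop:center} makes precise, via a version of Lemma~\ref{lem:subadd0}.

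There is also a gap in your inheritance argument. You claim that the no-almost-invariant-vectors condition for a simple factor passes to $H^q(S_1,V)$ because ``any almost invariant sequence would induce one in $V$ through evaluation''. This fails: spectral gap is not inherited by Banach subquotients in general. In the unitary case the paper routes around this via the $C^*_0\Gamma$-module structure (Lemma~\ref{lem:subadd}), but in the super-reflexive case the paper instead invokes a Howe--Moore type theorem to verify the hypothesis of Proposition~\ref{prop:center} at each stabilizer. Conjecture~\ref{conj:ai} then enters not where you place it, but in the \emph{ultrapower} step used to obtain Hausdorffness at degree~$r$: one needs the no-almost-invariants condition to survive passage to $V_\omega$, and since continuity is lost there, Howe--Moore is unavailable and Conjecture~\ref{conj:ai} is the substitute. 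Finally, because ultrapowers destroy continuity, the paper cannot run the whole argument on $S$; it zig-zags to a non-uniform arithmetic lattice of full split rank (so that its opposition complex still has dimension $r-1$), using the polynomial-cohomology Shapiro replacement from \S\ref{subsec:poly}. None of this appears in your outline.
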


Applying this Proposition~\ref{prop:induction} to $S$ simple gives Theorem~\ref{thm:mainBBBS}.
Conversely, if there are no rank~$1$ factors in~$S$, Theorem~\ref{thm:mainBBBS} implies Proposition~\ref{prop:induction} using a version of Lemma~\ref{lem:subadd}.
If rank~$1$ factors do appear, we need be smarter.

\begin{prop} \label{prop:center}
    We assume Conjecture~\ref{conj:ai}. 
    Let~$L$ be a reductive group with semisimple part~$S$ of rank~$r$.
    Let $L$ act by linear isometries on a super-reflexive Banach space $V$ 
    such that the action of every simple factor of~$S$ has no almost invariant vectors in~$V$.     
    
    If the center of $L$ has no non-trivial invariant vectors, then $H^r(L,V)=0$.
\end{prop}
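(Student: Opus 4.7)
The plan is to apply the Hochschild--Serre spectral sequence to the extension $1\to S\to L\to L/S\to 1$, where $S$ is the semisimple part of $L$ and $L/S\cong Z(L)/(Z(L)\cap S)$ is abelian. Applying Proposition~\ref{prop:induction} to $S$ yields $H^q(S,V)=0$ for every $q<r$ and shows that $H^r(S,V)$ is Hausdorff, hence a super-reflexive Banach $L/S$-module. Consequently $E_2^{p,q}=H^p(L/S,H^q(S,V))$ vanishes whenever $0\le q<r$, so on the antidiagonal $p+q=r$ only $E_2^{0,r}=H^r(S,V)^{L/S}$ can be non-zero; moreover no non-trivial differential can touch this entry since its possible sources and targets all involve $H^{j}(S,V)$ with $j<r$. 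Noting that the $S$-action on $H^r(S,V)$ is trivial and $L=Z(L)\cdot S$, so that $L/S$-invariance is equivalent to $Z(L)$-invariance, one obtains
\[ H^r(L,V)\;\cong\;H^r(S,V)^{Z(L)}. \]

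This reduces the proposition to showing $H^r(S,V)^{Z(L)}=0$. Since $Z(L)$ centralizes $S$, its action on $H^r(S,V)$ is the one induced from the coefficient action on $V$. The strategy is to exhibit a closed one-parameter subgroup $A\subset Z(L)$ already acting on $V$ without fixed vectors, and then to show $H^r(S,V)^A=0$; this suffices since $H^r(S,V)^{Z(L)}\subset H^r(S,V)^A$. Existence of such an $A$ will follow from a genericity argument in the Lie algebra of $Z(L)^0$: the hypothesis $V^{Z(L)}=0$ excludes the trivial character from the ``spectral support'' of the abelian $Z(L)$-action, so a generic $X\in\mathrm{Lie}(Z(L)^0)$ will have $V^A=0$ for $A=\exp(\bbR X)$ (with a minor additional argument in the non-archimedean case using an unbounded central element).

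The main obstacle is then showing $H^r(S,V)^A=0$ under the assumption $V^A=0$. My plan is a mean-ergodic averaging of cocycles: given a representative $c$ of a class in $H^r(S,V)^A$, for each $t\in A$ one has $t\cdot c - c = d b_t$ in the cochain complex, and the isometric $A$-action on the super-reflexive space $V$ satisfies the mean ergodic theorem, so that $\frac{1}{T}\int_0^T t\cdot c\,dt\to 0$ pointwise in $V$, while $\frac{1}{T}\int_0^T t\cdot c\,dt = c + d\bigl(\frac{1}{T}\int_0^T b_t\,dt\bigr)$ still represents $[c]$. Hence $[c]=0$. The delicate point is upgrading pointwise convergence in $V$ to convergence of the averaged cocycle in the relevant cochain topology for continuous cohomology of the locally compact group $S$; the cleanest way will likely be to reduce to a lattice in $S$ via an ultrapower-based ``zigzag'' in the spirit of Remark~\ref{rem:zigzag}, so that one can work in the discrete $FP_\infty$ setting where the cochain space is essentially a finite sum of copies of $V$ and pointwise convergence becomes norm convergence.
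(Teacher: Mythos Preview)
Your spectral sequence reduction is correct and is exactly how the paper proceeds: by a version of Lemma~\ref{lem:subadd0} (with $\Lambda=S$ and $\Gamma=Z(L)$), Proposition~\ref{prop:induction} kills all terms on the antidiagonal $p+q=r$ except $E_2^{0,r}=H^r(S,V)^{Z(L)}$, and the Hausdorffness of $H^r(S,V)$ is the crucial input that makes this space a genuine Banach $Z(L)$-module.

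The gap is in the last step. Your reduction to a one-parameter subgroup $A\subset Z(L)$ with $V^A=0$ relies on a ``spectral support'' argument that is only available for unitary representations. For a general super-reflexive $V$ there is no character decomposition of an abelian isometric action, so the genericity claim has no content; and even in the Hilbert case $V^{Z(L)}=0$ need not imply $V^{Z(L)^0}=0$ when the center is disconnected. This is not a cosmetic issue: the whole point of Proposition~\ref{prop:center} is the super-reflexive (non-Hilbert) case.

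The paper avoids this by never passing to a subgroup. Following the proof of Lemma~\ref{lem:subadd0}, one uses the short exact sequence of $Z(L)$-Banach modules
\[
0\to B^r(S,V)\to Z^r(S,V)\to H^r(S,V)\to 0
\]
and the associated long exact sequence in bounded cohomology. One has $H^0_b(Z(L),Z^r)\hookrightarrow (C^r)^{Z(L)}=0$ since $V^{Z(L)}=0$, and $H^1_b(Z(L),B^r)=0$ because $B^r$ is a closed subspace of a finite sum of copies of $V$, hence reflexive, and every bounded affine isometric action on a reflexive space has a fixed point (Ryll--Nardzewski). This immediately gives $H^r(S,V)^{Z(L)}=0$.

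Your mean ergodic idea can be salvaged, but only by dropping the one-parameter reduction: average over F{\o}lner sets in the full amenable group $Z(L)$. Then $c_T\defq |F_T|^{-1}\int_{F_T} g\cdot c\,dg$ satisfies $c_T-c=\ |F_T|^{-1}\int_{F_T}(g\cdot c-c)\,dg\in B^r$ (the integrand lies in the closed subspace $B^r$, no choice of primitives needed), while $c_T\to 0$ in the cochain norm by the mean ergodic theorem for amenable isometric actions on reflexive spaces and $V^{Z(L)}=0$. Since $H^r$ is Hausdorff, $[c]=0$. This is correct, but it is essentially the fixed-point argument in disguise, and it still uses reflexivity of the cochain space in the same way. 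The paper's bounded-cohomology phrasing is cleaner and makes transparent what has to be replaced in the $p=1$ case (where $B^r$ is no longer reflexive and one invokes the L-embedded fixed point theorem of~\cite{BGM} instead).
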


Proposition~\ref{prop:center} follows from Proposition~\ref{prop:induction} using a version of Lemma~\ref{lem:subadd0}, upon letting $\Lambda=S$ and letting $\Gamma$ be the center of $L$. 

For the induction proof of Proposition~\ref{prop:induction}, we use the $G$-action on the \emph{opposition complex} associated with the Tits building of $G$ -- a sufficiently highly connected simplicial complex with stabilizer subgroups to which we can apply the inductive hypothesis. 

The $n$-simplices of the opposition complex consist of ordered pairs of opposite $n$-simplices in the Tits building. 
The dimension of the opposition complex is thus the same as the one of the Tits building, namely~$r-1$. There is an analogue of the Solomon-Tits theorem for the opposition complex saying that it is 
$(r-2)$-connected~\cite{heydebreck}. Further, the stabilizers of the $k$-dimensional simplices with respect to the natural $G$-action on the opposition complex are Levi subgroups with semisimple rank~$r-k-1$.
The proof of the vanishing of cohomology part of Proposition~\ref{prop:induction} follows by mathematical induction using a cohomological induction argument.
Finally, the Hausdorffness of the cohomology at the rank follows by an ultrapower argument.
It is here, in taking an ultrapower, that Conjecture~\ref{conj:ai} is used to show that we gain no further invariants in the limiting process.

There is a delicate issue that we have to address at this point.
While forming the induction argument alluded to above, we have to guarantee that the conditions of Proposition~\ref{prop:center} are satisfied when applied to stabilizers of the opposition complex. This is done using a suitable version of the Howe-Moore theorem, which is at our disposal when working with groups over local fields. But when performing the ultrapower argument and when acting on the opposition complex, continuity breaks and we better work with countable subgroups.
We thus need to zig-zag between these two setups.
We do this in a way similar to the proof of Theorem~\ref{thm:higher}; see Remark~\ref{rem:zigzag}.
However, in order for the opposition complex associated with a lattice to have the needed dimension $r-1$, we need the lattice to be an arithmetic group of the same split rank as $G$, and in particular, it could not be a cocompact lattice. 
We thus use in an essential way the subtle version of the Shapiro Lemma which was used in step (6) of the proof of Theorem~\ref{thm:higher}.
With such a zig-zag argument we conclude the proof of Proposition~\ref{prop:center}, and this concludes the sketch of the proof of Theorem~\ref{thm:mainBBBS}. 
\end{proof}

To conclude, we have achieved a conditional proof of Conjecture~\ref{conj:SR} based on Conjecture~\ref{conj:ai}. Next we want to discuss important cases of Conjecture~\ref{conj:SR} that we can prove unconditionally. 

To this end, 
we regard the statement about~$V$ in  Conjecture~\ref{conj:ai}, which is about all super-reflexive Banach spaces, as a property of any specific space $V$ or any class of such spaces.

\begin{prop} \label{prop:list}
    The following classes of spaces are known to satisfy Conjecture~\ref{conj:ai}:
    \begin{itemize}
        \item Hilbert spaces with the Hilbert norm.
        \item Hilbert spaces with an  arbitrary equivalent norm.
        \item $L^p$-spaces for  $1<p<\infty$.
        \item non-commutative $L^p$- spaces for $1<p<\infty$.
    \end{itemize}
\end{prop}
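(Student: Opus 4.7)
The plan is to reduce all four cases to the spectral gap statement in the standard Hilbert setting, using isometry classifications of $L^p$-type spaces (Lamperti and Yeadon) together with Mazur maps to transport almost invariance between $L^p$ and $L^2$. In every case the conclusion will be read as: for a standard rank 1 subgroup $H<G$, every $H$-almost invariant sequence of unit vectors in $V$ converges in norm to $V^H$.

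For a Hilbert space $V$ with its inner-product norm carrying a unitary $G$-representation, one first reduces to $V^G=0$, in which case $V^H=0$ as well by the Howe--Moore theorem. Decomposing $V$ as a direct integral of nontrivial irreducible unitary $G$-representations and invoking the uniform pointwise bounds on matrix coefficients of higher rank simple groups due to Cowling, Howe--Tan and Oh, there is a $p=p_G<\infty$ so that every nontrivial irreducible unitary $G$-representation has matrix coefficients in $L^p(G)$ with $G$-uniform constants. Restricting to the standard rank 1 subgroup~$H$ keeps the coefficients in $L^q(H)$ for some finite~$q$, so each fiber representation $\pi_y|_H$ is tempered. Since $H$ is a non-amenable rank 1 simple group, its regular representation has a positive Kazhdan constant which tempered representations inherit; integrating this uniform bound over the direct integral yields the desired spectral gap for $V|_H$.

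The case of a Hilbert space with an arbitrary $G$-invariant equivalent norm follows at once, because any two such norms induce bilipschitz metrics, so almost $H$-invariance and distance to $V^H$ are preserved up to constants, and the first bullet applies. For commutative $L^p(X,\mu)$ with $1<p<\infty$, Lamperti's theorem classifies surjective linear isometries as a measure space isomorphism composed with multiplication by a unimodular weight, forcing the isometric $G$-action to arise from an action on $X$ which naturally yields an associated unitary $G$-representation on $L^2(X,\mu)$. The Mazur map $M_p(f)=\operatorname{sgn}(f)|f|^{p/2}$ is a $G$-equivariant uniform homeomorphism between the unit spheres of $L^p$ and $L^2$, whose quantitative Hölder estimates (exploited in~\cite{BFGM}) transport $H$-almost invariant sequences between $L^p$ and $L^2$ in both directions, reducing this case to the Hilbert one. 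For non-commutative $L^p(M)$, the same strategy runs in parallel using Yeadon's classification of surjective isometries of non-commutative $L^p$ (producing a normal isometric $G$-action on $M$, and thereby a unitary $G$-action on $L^2(M)$) together with the non-commutative Mazur map of Raynaud and Ricard, whose quantitative modulus of continuity has been refined by Haagerup--Junge--Xu, de la Salle and Marrakchi--de la Salle.

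The main obstacle I anticipate is the uniform, quantitative control required throughout: uniformity of the Cowling--Howe--Tan--Oh exponent across the direct integral decomposition in the Hilbert step, and uniform modulus-of-continuity estimates for the (possibly non-commutative) Mazur map on unit spheres to preserve almost invariance in the transfer steps. Both ingredients are available in the literature, but weaving them into a single uniform lower bound for the standard rank 1 subgroup $H$ is the technical core of the proposition.
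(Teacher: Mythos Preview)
Your treatment of the $L^p$ and non-commutative $L^p$ cases via Lamperti/Yeadon and the Mazur map is essentially what the paper intends (it cites \cite{BFGM} and \cite{Olivier} for exactly this comparison), so those bullets are fine.

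The argument for a Hilbert space with an arbitrary equivalent norm has a genuine gap. You write that ``any two such norms induce bilipschitz metrics \ldots\ and the first bullet applies.'' But the first bullet concerns a \emph{unitary} $G$-action with respect to the Hilbert inner product, whereas here $G$ acts isometrically only for the new norm $\|\cdot\|'$. With respect to the original Hilbert norm the action is merely uniformly bounded, not unitary, so nothing you proved in the first bullet (direct integral decomposition, Oh's bounds, temperedness) is available. Bilipschitz equivalence transports almost invariance between the two norms, but it does not manufacture a unitary structure invariant under~$G$. The paper handles this case by a completely different idea due to Shalom: one unitarizes the restriction of the action to suitable amenable subgroups and uses this to run the relative property~T argument; this is carried out in the forthcoming work of Glasner--Gorfine.

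Your Hilbert-norm argument also has a gap, though a subtler one. From uniform $L^p(G)$-integrability of matrix coefficients (Cowling, Howe--Tan, Oh) you assert that restriction to the rank~1 subgroup $H$ gives matrix coefficients in some $L^q(H)$ and hence that $\pi_y|_H$ is tempered. Neither step is automatic: $L^p(G)$ decay does not in general restrict to $L^q(H)$ decay for a lower-dimensional subgroup, and $L^q$ for some finite $q$ is not the same as tempered ($L^{2+\epsilon}$). The paper avoids this entirely. It observes that each standard rank~1 subgroup $H\cong\SL_2$ sits inside a subgroup $H\ltimes E$ with $E$ a nontrivial finite-dimensional $H$-module; the pair $(H\ltimes E, E)$ has relative property~T, so $H$-almost invariant vectors produce $E$-invariant vectors, and Howe--Moore then forces these to be $G$-invariant, contradicting $V^G=0$. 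This is both shorter and sidesteps all quantitative matrix-coefficient issues.
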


For unitary representations, the conjecture follows by considering a subgroup of $G$ of the form $\SL_2(\mathbb{R})\ltimes E$, where $E$ is a finite dimensional representation of $\SL_2(\mathbb{R})$, and using its relative property T to find an $E$-fixed point, which gives a contradiction by the Howe-Moore theorem.
For an equivalent norm on a Hilbert space, following an idea by Shalom, one may use unitarization with respect to amenable subgroups.
This was carried out by Glasner and Gorfine and will appear in their forthcoming work~\cite{GG}.
For the other spaces in the list above, the proof is by comparison to the unitary case, see~\cite{BFGM} and~\cite{Olivier}.
We expect that the class above could be enlarged dramatically by the method of interpolation.

As a direct corollary of the proof of Theorem~\ref{thm:mainBBBS} and Proposition~\ref{prop:list} we obtain the following. 

\begin{theorem}[\cite{BBBS}]\label{thm: conj for list of spaces}
    Conjecture~\ref{conj:SR} holds for the list of spaces in Proposition~\ref{prop:list}.
\end{theorem}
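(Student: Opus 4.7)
The plan is to apply Theorem~\ref{thm:mainBBBS} separately for each of the four classes listed in Proposition~\ref{prop:list}. Since Theorem~\ref{thm:mainBBBS} establishes that Conjecture~\ref{conj:ai} implies Conjecture~\ref{conj:SR}, and Proposition~\ref{prop:list} supplies Conjecture~\ref{conj:ai} unconditionally on the four classes, the desired statement follows by pure formal implication. The real content lies in checking that the implication in Theorem~\ref{thm:mainBBBS} is witnessed class by class, that is, that the hypothesis on rank~$1$ subgroups is used only for spaces belonging to the same class as the target~$V$.

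First, I would reread the proof sketch of Theorem~\ref{thm:mainBBBS} and isolate the auxiliary Banach spaces constructed along the induction: ultrapowers of $V$, restrictions of the ambient representation to Levi subgroups and their semisimple parts, and the closed invariant subspaces and quotient modules that arise in the cohomological induction along the opposition complex. For the argument to restrict to a fixed class $\mathcal{V}$ of Banach spaces, one needs $\mathcal{V}$ to be closed under these operations. Hilbert spaces (with the Hilbert norm, or with any equivalent norm, up to equivalence of norms) are trivially closed under them; classical $L^p$-spaces with $1 < p < \infty$ are stable under ultrapowers, closed subspaces, and quotients; for the noncommutative $L^p$-spaces one invokes Raynaud's theorem that their ultrapowers are again noncommutative $L^p$-spaces, together with the standard stability of this class under closed isometric subspaces and quotients.

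Second, with these closure properties recorded, I would fix $V$ in one of the four classes and let $\mathcal{V}$ be that class. Proposition~\ref{prop:list} then applies uniformly to every space that arises in the proof of Theorem~\ref{thm:mainBBBS} starting from $V$, so each ultrapower step and each inductive invocation of the absence of almost invariant vectors under standard rank~$1$ subgroups is legitimate within $\mathcal{V}$. Feeding this into the argument of Theorem~\ref{thm:mainBBBS} yields
\[ H^j(\Gamma, V) = 0 \quad \text{for every } j < r, \]
which is exactly Conjecture~\ref{conj:SR} for $V$.

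The main obstacle, such as it is, is the bookkeeping of the auxiliary spaces and the verification that the noncommutative $L^p$ class is closed under all operations appearing in the proof; once this is established, the deduction is essentially tautological. No new analytic input beyond Theorem~\ref{thm:mainBBBS} and the four cases of Proposition~\ref{prop:list} is required.
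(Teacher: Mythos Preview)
Your approach matches the paper's: it presents Theorem~\ref{thm: conj for list of spaces} as a direct corollary of the \emph{proof} of Theorem~\ref{thm:mainBBBS} together with Proposition~\ref{prop:list}, and your reading of that sentence---that one must track through the argument and check that Conjecture~\ref{conj:ai} is only invoked for spaces in the same class---is exactly right.

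One correction to your bookkeeping: the claim that classical $L^p$-spaces are ``stable under \ldots\ closed subspaces, and quotients'' is false for $p\neq 2$; a closed subspace of $L^p$ need not be isometric to any $L^p$. Fortunately this does not matter. Looking at the proof sketch, Conjecture~\ref{conj:ai} is invoked only at the ultrapower step (to prevent new invariants from appearing in the limit), and the inductive applications of Propositions~\ref{prop:induction} and~\ref{prop:center} to Levi subgroups act on the \emph{same} space~$V$ restricted to subgroups. The auxiliary spaces such as $B^{r}(S,V)$ do appear, but what is needed there is a fixed-point theorem for bounded affine isometric actions (Ryll--Nardzewski in the super-reflexive case), not Conjecture~\ref{conj:ai}. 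So the only closure property you actually need is stability under ultrapowers, which does hold for all four classes as you note.
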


The theorem contains the statements of Theorem~\ref{thm:noncom} and its Corollary~\ref{cor:com} apart from the edge case~$p=1$. We now repeat Theorem~\ref{thm:noncom} from the introduction for convenience. 

\begin{theorem_o}[stated in the introduction as Theorem\ref{thm:noncom}]
Let $\Gamma$ be a lattice in a simple Lie group $G$ of rank $r$ and $M$ a von Neumann algebra. Then for $k\leq r-1$ and $1\leq p < \infty$, for every 
linear isometric action of $\Gamma$ on $L^p(M)$
the inclusion $L^p(M)^\Gamma\hookrightarrow L^p(M)$ induces isomorphisms
    \[ H^k(\Gamma, L^p(M)^\Gamma) \cong H^k(\Gamma,L^p(M)). \]
    Moreover, $H^r(\Gamma,L^p(M))$ is Hausdorff.
    For $p=\infty$ this holds under the extra assumption that $M$ is $\Gamma$-unitary.
    Similar statements hold for $G$.
\end{theorem_o}

\begin{proof}[Sketch of the proof of Theorem~\ref{thm:noncom}]
  Because of Theorem~\ref{thm: conj for list of spaces} and the remark thereafter it remains to deal with the case~$p=1$, which is in large parts like the proof of Theorem~\ref{thm:mainBBBS} but needs extra care as the coefficients are not super-reflexive.  
  
  However, the obvious analogue of Conjecture~\ref{conj:ai} does hold, due to \cite{Olivier}. Translating the proof of Theorem~\ref{thm:mainBBBS} goes rather smoothly with only one crucial difference - 
when proving Proposition~\ref{prop:center} using a version of Lemma~\ref{lem:subadd0}, in showing that $H^1_b(C, B^r(S,V))=0$ we cannot use the fact that the space $B^r(S,V)$ is super-reflexive. Here~$V$ denotes the coefficients in question. Rather, we prove that this space is L-embedded and we invoke the fixed point theorem of~\cite{BGM} to conclude the required cohomological vanishing.
\end{proof}

\subsection{Cohomological below-rank phenomena}\label{subsec: cohomological below-rank}

\subsubsection{Borel's stability and $K$-theory}\label{subsec: Borel stability}

Borel’s stability theorem asserts that, for a fixed degree, the rational cohomology of arithmetic groups such as $\SL_n(\bbZ)$ stabilizes as $n\to\infty$, and coincides with the corresponding stable cohomology of the associated real Lie group. Specifically, Borel proves the following result for $\SL_n$; similar results hold for other classical groups.

\begin{theorem}[Borel~\cite{borel-stable}]\label{thm:borel-stable}
For every integer $i\ge 0$, the maps induced by the standard inclusions
The restriction map 
\[H^i_c\bigl(\SL_n(\bbR),\bbC\bigr)\to H^i\bigl(\SL_n(\bbZ),\bbC\bigr)\]
is an isomorphism for $0\le i<(n-1)/4$. 
\end{theorem}

Via this result, Borel identified the stable rational cohomology as an exterior algebra on explicit generators in odd degrees, now known as the \emph{Borel classes}. This result plays a central role in the computation of rational algebraic $K$-theory of rings of integers: via the comparison between group cohomology and algebraic $K$-theory, Borel’s stability allows one to determine the ranks of $K_i(\bbZ)\otimes \bbQ$ and, more generally, of $K_i(\calO_F)\otimes \bbQ$ for number fields~$F$. 

Our Theorem~\ref{thm:semisimple Lie without T} from~\cite{badsau} significantly improves Borel's stability range and allows for arbitrary unitary coefficients. 

This yields new computations of the low-degree cohomology of arithmetic groups; it does not yield new computations of algebraic $K$-theory as the slope of the stability range does not matter here. For the trivial representation and congruence lattices, Li-Sun independently provide a similar improvement of Borel's stability range~\cite{Li-Sun}*{Theorem~1.8}. Li-Sun's paper relies heavily on the work of Franke~\cite{Franke}, which plays no role in our approach. 

The following result of Borel-Yang was the key to their solution of the rank conjecture in algebraic K-theory. The paper by Borel-Yang is heavily based on the work of Blasius-Franke-Grunewald~\cite{blasius+franke+grunewald}, which itself relies on Franke's work~\cite{Franke}.

\begin{theorem}[Borel-Yang~\cite{Borel-Yang}] \label{thm:Borel-Yang}
The restriction map 
\[ H^\ast_c\bigl(\SL_n(\bbR),\bbC\bigr)\to H^\ast\bigl(\SL_n(\bbQ),\bbC\bigr)\]
is an isomorphism in all degrees. More generally, if $k$ is a number field and $\mathbf{G}$ is a connected, simply connected, almost simple $k$-algebraic group, then the restriction map 
\[ H_c^*\bigl(\mathbf{G}(k\otimes\bbR),\bbC\bigr)\to H^*\bigl(\mathbf{G}(k),\bbC\bigr) \]
is an isomorphism in all degrees. 
\end{theorem}

The correct generalization of Borel-Yang's theorem to unitary coefficients is Theorem~\ref{thm:adelic} in~\S\ref{subsec:poly} and needs the adelic framework. 

We refer to~\cite{badsau}*{Theorems F and G} for further generalizations of Borel-Yang's theorem. 
In fact, the entire unitary cohomology theory of $\mathbf{G}(k)$ is determined by that of $\mathbf{G}(\mathbb{A}(k))$, at least when the $k$-rank of $\bfG$ is at least 2.  Although $\bfG(k)$ is not of type I and a complete understanding of its unitary dual is out of reach, Theorem~7.9 and Corollary~7.10 of~\cite{badsau} determine the cohomological unitary dual of $\bfG(k)$ completely.

Also these results from~\cite{badsau} are proved using the strategy outlined in \S\ref{sec:proof}, with a heavy reliance on the recent advances in geometric group theory discussed in~\S\ref{subsec:poly}. We think it would be very interesting to understand better the connection of our methods to Franke's work~\cite{Franke}. 

\subsubsection{Gromov's conjecture on $L^p$-cohomology} \label{sec:LP}

In this subsection we focus on specific Banach representations of a group~$\Gamma$, namely the regular ones, $L^p(\Gamma)$, for $p\in [1,\infty)$. The resulting (continuous) group cohomology with coefficients in $L^p(\Gamma)$ is called the \emph{$L^p$-cohomology} of~$\Gamma$ -- a subject that owes a lot to the work of Pansu and Gromov, see for instance \cite{Pansu} and \cite{Gromov93}*{\S8}. 

We remark that for $p=2$ the $L^p$-cohomology is a module over the von Neumann algebra $L\Gamma$ and the availability of Murray-von Neumann dimension is a valuable tool in this case.
However, we are interested in arbitrary $p$ and it was shown in \cite{Monodlp} that a reasonable dimension theory is not expected for $p>2$, so our discussion below is dimension free~\footnote{Nonetheless, note that an entropy-type dimension for sofic groups and for $p<2$ was suggested in~\cites{Hayes1,Hayes2}.}.

The first $L^p$-cohomology is well studied. See~\cites{pansu-cohom, BP, Yu, Nica, BFGM, cornulier+tessera-cohom_banach, cornulier+tessera-contracting}.
For every $p$, the first $L^p$-cohomology  is Hausdorff if and only if $\Gamma$ is non-amenable.
If $\Gamma$ has property~T and $p\in [1,2]$, then $H^1(\Gamma, L^p(\Gamma))=0$ by \cite{BFGM} (for $p>1$) and \cite{BGM} (for $p=1$).
A similar statement for property~T groups in general fails for $p>2$.
%and the failure might be understood as a sign of a less robust form of property~T. 
Pansu proved that a lattice in a Lie group of rank~$1$ has $H^1(\Gamma, L^p(\Gamma))\ne 0$ for sufficiently large~$p$~\cite{pansu-cohom}. In particular, property $[T_2]$, which is satisfied by $\Sp(n, 1)$ and its cocompact lattices, is not robust enough to ensure that the $L^p$-cohomology vanishes for every $p\in [1,\infty)$. 
Later, Bourdon-Pajot generalized Pansu's result to all non-elementary Gromov-hyperbolic groups~\cite{BP}, see also~\cite{Yu, Nica}. 
The situation for a lattice in higher rank simple Lie groups is different: There we have $H^1(\Gamma, L^p(\Gamma))=0$ for every $p\in [1,\infty)$ by an unpublished result of Pansu and~\cite[Theorem~B]{BFGM}.  

%It might be interesting to investigate the class of groups with property~T whose $L^p$-cohomology vanishes for every $p\in [1,\infty)$ further. Is there an interesting robust form of property~T that implies that a group belongs to this class? To obtain at least one example of a property~T group $\Gamma$ in this class that is not a higher rank lattice, take $\Gamma=\SL_3(\bbZ)\times \Lambda$ where $\Lambda$ is an arbitrary property~T group: Then the $E_2^{pq}$-term of the Lyndon-Hochschild-Serre spectral sequence for $1\to \Lambda\to\Gamma\to \SL_3(\bbZ)$ vanishes in the range $p+q=1$ since $H^1(\SL_3(\bbZ), L^p(\Gamma))=0$ by~\citelist{\cite{BFGM}*{Theorem~B}\cite{BGM}}, hence $E_2^{01}=0$, and, obviously, $E_2^{10}=0$.    

The $L^p$-cohomology in higher degrees and for $p\ne 2$ is much less understood. 
Using Lemma~\ref{lem:subadd}, one easily sees that for a product of finite type non-amenable groups, $\Gamma=\Gamma_1\times\cdots \Gamma_n$, the $L^p$-cohomology vanishes in every degree $j<n$. It is also not hard to see that in degree $n$ it equals the $L^p$-tensor product of the first $L^p$-cohomologies of its factors.
See \cite[p.~252]{Gromov93} for a de Rham theoretic proof.
This fact motivated Gromov to conjecture a cohomological vanishing below-rank for symmetric spaces and buildings, see \cite[p.~253]{Gromov93}. 
Moreover, Gromov conjectured that at the rank the $L^p$-cohomology is Hausdorff and that it doesn't vanish for $p$ large enough, in fact for all $p>1$ in the building case.
This conjecture is now fully confirmed.

\begin{theorem}[Gromov's conjecture] \label{thm:Gromov}
Let $G$ be a non-compact semisimple group over a local field $F$ with finite center and of rank $r$. Then:
\begin{enumerate}
\item $H_c^i(G, L^p(G))=0$ for every $0\le i < r$ and $p\in [1,\infty)$; 
\item $H_c^r(G, L^p(G))$ is Hausdorff for every $p\in [1,\infty)$ and it doesn't vanish for every sufficiently large~$p$, and for every $p$ if $F$ is non-archimedean;
\item  $H_c^i(G, L^p(G))=0$ for every $r< i$ and sufficiently large~$p$, and for every $p$ if $F$ is non-archimedean. 
\end{enumerate}    
Moreover, similar statements hold for all irreducible lattices in $G$, with the possible exceptions for non-uniform lattices of the non-vanishing at the rank in (2) and vanishing above the rank and large $p$ in (3).
\end{theorem}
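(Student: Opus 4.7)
The plan is to separate the three assertions and attack each with its appropriate tool; the machinery of the earlier sections directly addresses the below-rank and at-rank parts, whereas the remaining parts require analytic input from the geometry of the symmetric space or building.

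\emph{Vanishing below the rank and Hausdorffness at the rank (part (1) and the Hausdorffness clause of (2)).} For a lattice $\Gamma$ in a simple $G$, I would invoke Corollary~\ref{cor:com} directly for the regular representation $V = L^p(\Gamma)$, which has no non-trivial $\Gamma$-invariants since $\Gamma$ is infinite. This gives $H^i(\Gamma, L^p(\Gamma)) = 0$ for $i \le r-1$ and Hausdorffness of $H^r(\Gamma, L^p(\Gamma))$ at once; the analogous statements for $G$ itself are covered by the ``similar statements hold for $G$'' clause of Theorem~\ref{thm:noncom}. For semisimple $G = G_1 \times \cdots \times G_k$ with ranks $r_i$ summing to $r$, a Hochschild-Serre argument along each factor combined with induction reduces to the simple case; the inductive step works because the intermediate cohomology modules inherit an isometric representation of the remaining factor with vanishing invariants. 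The irreducible lattice versions follow by Shapiro's lemma in the uniform case and by the refined lattice-to-group machinery of~\cite{badsau} in the non-uniform case.

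\emph{Non-vanishing at the rank for large $p$ (part (2)).} This lies outside the scope of higher property T and requires an explicit construction. In the non-archimedean case, the Bruhat-Tits building $\mathcal{B}$ has simplicial dimension exactly~$r$, and one produces non-trivial top-degree $L^p$-cocycles directly from the combinatorics of the spherical building at infinity, for every $p \ge 1$. In the archimedean case, I would combine the Borel-Wallach and Zuckerman-Vogan description of $(\mathfrak{g}, K)$-cohomology in the middle degree with its realization as $L^2$-harmonic forms on the symmetric space $X = G/K$; for $p$ sufficiently large, the known decay of these harmonic forms along flats is slow enough that they represent non-trivial $L^p$-classes.

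\emph{Vanishing above the rank for large $p$ (part (3)).} In the non-archimedean case this is automatic since the building has dimension $r$. In the archimedean case, the plan is to establish an $L^p$-Poincar\'e lemma on rank-$r$ flats: using a chain homotopy built from radial averaging in flat directions, one shows that above the rank the induced $L^p$-operators are bounded for $p$ large enough, the key point being that the polynomial volume growth of flats offsets the exponential volume growth of $X$ once the degree exceeds $r$.

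The hard part will be the archimedean side of parts (2) and (3), which demand quantitative $L^p$-decay estimates for harmonic forms and a quantitative $L^p$-Poincar\'e lemma on a nonpositively curved space of much larger dimension than the rank; neither reduces to the abstract higher property T machinery used for the below-rank and at-rank vanishing. A secondary obstacle is the non-uniform lattice exceptions in (2) and (3): transferring the construction or the vanishing through the cusps demands separate analysis, presumably via a Borel-Serre-type compactification, to account for classes pushed in from or killed by the cusp neighborhoods.
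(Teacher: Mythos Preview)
Your treatment of part~(1) and the Hausdorffness clause of~(2) matches the paper exactly: both invoke Corollary~\ref{cor:com} (equivalently the ``similar statements hold for $G$'' clause of Theorem~\ref{thm:noncom}) for the group and its lattices, with the Shapiro lemma and $L^p$-induction handling cocompact lattices. The non-archimedean vanishing in~(3) via cohomological dimension of the building is also exactly what the paper says.

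For the remaining pieces---non-vanishing at the rank and archimedean vanishing above the rank---the paper does not attempt an independent argument; it simply cites external work. The archimedean cases of both the non-vanishing in~(2) and the vanishing in~(3) are attributed to Bourdon--R\'emy~\cite{BR23}, and the non-archimedean non-vanishing in~(2) to~\cite{Lopez}. Your proposed routes for the archimedean side (harmonic-form decay for non-vanishing; a Poincar\'e-lemma-on-flats chain homotopy for vanishing) differ in spirit from what Bourdon--R\'emy actually do: they pass through the Iwasawa decomposition $G=KAN$, use quasi-isometry invariance of $L^p$-cohomology to reduce to the solvable group $AN$, and work there. Your harmonic-form idea for non-vanishing is plausible in outline, but the ``$L^p$-Poincar\'e lemma on flats'' sketch for vanishing above the rank is too vague to assess---the symmetric space has dimension much larger than $r$, and it is unclear how averaging only along flat directions would kill all cohomology in degrees $r+1,\dots,\dim X$.

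In short: where the paper actually proves something, your outline is correct and essentially identical; where the paper cites others, you correctly flagged those parts as ``the hard part'' but proposed approaches that diverge from the cited literature and would require substantial new work to carry out.
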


\begin{proof}
Over archimedean fields, both the non-vanishing in Statement~(2) and the vanishing in Statement~(3) are due to Bourdon-R\'emy~\cite{BR23}*{Theorem~A}.
Over non-archi\-medean fields, the non-vanishing in Statement~(2) is due to \cite[Corollary 1.2]{Lopez} and the vanishing in Statement~(3) follows by cohomological dimension.
In both cases, and both for the groups and their lattices, Statement~(1) and the Hausdorffness in Statement~(2) is due to \cite{BBBS}, being a special case of Corollary~\ref{cor:com}. 
For cocompact lattices, the full result follows by Shapiro Lemma and $L^p$-induction.
\end{proof}

\begin{conjecture}
    Theorem~\ref{thm:Gromov} holds also for non-uniform irreducible lattices.
\end{conjecture}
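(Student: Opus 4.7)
The parts of Theorem~\ref{thm:Gromov} not already subsumed by Corollary~\ref{cor:com} are the following two statements for a non-uniform irreducible lattice $\Gamma < G$: (A) non-vanishing of $H^r(\Gamma, L^p(\Gamma))$ for all sufficiently large~$p$ (and every $p\in[1,\infty)$ in the non-archimedean case), and (B) the vanishing $H^i(\Gamma, L^p(\Gamma))=0$ for every $i>r$ and sufficiently large~$p$ (and every $p$ non-archimedean). The overall strategy is to reduce both statements to the known case of~$G$ via an appropriate \emph{non-uniform Shapiro lemma} for $L^p$-cohomology, in the same spirit as the zig-zag device used in step~(6) of the proof sketch of Theorem~\ref{thm:higher}.

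The first task is to set up this non-uniform Shapiro lemma: to prove that for every $p\in[1,\infty)$ the restriction map yields an isomorphism
\[ H_c^j(G, L^p(G)) \cong H^j(\Gamma, L^p(\Gamma)) \qquad \text{for every } j. \]
For cocompact lattices this is immediate from $L^p$-induction, while for general lattices one needs to replace the absent compactness of $\Gamma\backslash G$ by a geometric substitute. Following~\cite{badsau}, the substitute is the Leuzinger--Young-type polynomial filling estimate for arithmetic lattices (discussed around Theorem~\ref{thm: leuzinger+young} in the paper): it provides the decay control in the cusps needed to identify $L^p(G)$, viewed as a $\Gamma$-module through the restriction of the regular representation, with $L^p$-induction of $L^p(\Gamma)$ in all degrees where filling is polynomial. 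Given this identification, the whole spectral picture of Theorem~\ref{thm:Gromov} for~$G$ transfers verbatim to~$\Gamma$.

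The second task is cosmetic once the Shapiro lemma is in place: (A) follows because the non-vanishing classes constructed by Bourdon--R\'emy~\cite{BR23} in the archimedean case and by L\'opez~\cite{Lopez} in the non-archimedean case are produced inside $H_c^r(G, L^p(G))$ and therefore, by the isomorphism, give non-zero classes in $H^r(\Gamma, L^p(\Gamma))$. Similarly (B) follows from the corresponding vanishing on the $G$-side (which in the non-archimedean case is a cohomological dimension statement for the Bruhat--Tits building, and in the archimedean case is the large-$p$ result of~\cite{BR23}).

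The main obstacle is clearly the non-uniform Shapiro isomorphism itself. The delicate point is that for $p<\infty$ the $L^p$-induction functor is not automatically well-defined for non-uniform lattices, since a fundamental domain in $\Gamma\backslash G$ has infinite diameter and the naive induction may produce non-integrable sections. One must verify that the chain complexes computing the two sides are quasi-isometrically compatible in each degree up to $r$, using polynomial filling to absorb cuspidal contributions, and that the top degree~$r$ and higher degrees inherit Hausdorffness (already known by Corollary~\ref{cor:com}) together with genuine (non-)vanishing inherited from~$G$. Handling the $S$-arithmetic situation uniformly will moreover require the $S$-arithmetic filling extension alluded to in Conjecture~\ref{conj: S-arithmetic filling}, which for non-archimedean $F$ is presently the bottleneck.
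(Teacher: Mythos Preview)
The statement you are trying to prove is presented in the paper as a \emph{conjecture}, not a theorem; there is no proof in the paper to compare your attempt against. The paper explicitly lists the non-vanishing at the rank in~(2) and the vanishing above the rank in~(3) for non-uniform lattices as open, and immediately afterwards records this as a conjecture.

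Your proposal is therefore not a proof but a proof strategy, and it has a genuine structural gap. You correctly identify the two missing pieces (A) and (B), both of which live in degrees $j\ge r$. Your plan is to transfer the known results for $G$ to $\Gamma$ via a non-uniform Shapiro isomorphism, justified by the polynomial filling machinery of~\cite{badsau} and Leuzinger--Young. But Theorem~\ref{thm: leuzinger+young} only gives polynomial filling \emph{below} the rank, and the paper explicitly notes that in the degree of the rank the filling function of a non-uniform lattice is \emph{exponential}. Consequently the comparison $H_{pol}^k\cong H^k$ and the resulting Shapiro-type isomorphism are only available for $k\le r-1$ --- precisely the range already handled by Corollary~\ref{cor:com} --- and are unavailable at degree $r$ and above, which are exactly the degrees where (A) and (B) live. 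So the very mechanism you invoke is known to fail in the range you need it. Your own final paragraph half-acknowledges this (``quasi-isometrically compatible in each degree up to $r$''), but even ``up to $r$'' overshoots what polynomial filling delivers, and you offer no substitute for degrees $\ge r$. This is why the statement remains a conjecture.
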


Next, we consider $L^p(G)$ as a $G\times G$ module, for both the left and the right actions.
Then the semisimple version of Corollary~\ref{cor:com} gives the following.

\begin{theorem}
    Let $G$ be a non-compact semisimple group over a local field $F$ with finite center and of rank $r$. Then for every $p\in [1,\infty)$, $H_c^i(G\times G, L^p(G))=0$ for every $0\le i < 2r$ and it is Hausdorff for $i=2r$.
    A similar result holds for all irreducible lattices in $G$.
\end{theorem}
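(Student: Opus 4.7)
The plan is to apply, to the semisimple group $S\defq G\times G$ of rank $2r$ acting on $V\defq L^p(G)$ by left and right translations, the semisimple (not necessarily simple) analogue of Corollary~\ref{cor:com}. For $p\in(1,\infty)$ this analogue is exactly Proposition~\ref{prop:induction}, which is unconditional for $L^p$-spaces by Proposition~\ref{prop:list} and Theorem~\ref{thm: conj for list of spaces}. For $p=1$ the super-reflexive framework breaks down, and one instead runs the same inductive scheme as in the proof of Theorem~\ref{thm:noncom}, replacing the super-reflexive fixed-point step by the L-embedded fixed-point argument of~\cite{BGM}; this is legitimate because $L^1(G)$ is L-embedded as the predual of the von Neumann algebra $L^\infty(G)$, and the analogue of Conjecture~\ref{conj:ai} for $L^1$-spaces holds by~\cite{Olivier}.

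The required hypothesis is that every simple factor of $G\times G$ acts on $L^p(G)$ without almost invariant unit vectors. Writing $G=G_1\times\cdots\times G_k$ as a product of simple non-compact factors, the simple factors of $G\times G$ are the left and right copies of each $G_j$. The left copy of $G_j$ acts on $L^p(G)$ by translation in the $j$-th coordinate only, and this restriction is isometrically isomorphic to an $L^p$-direct integral of copies of the left regular representation $L^p(G_j)$. Since $G_j$ is non-amenable, Reiter's condition fails on $L^p(G_j)$ for every $p\in[1,\infty)$, so no almost invariant unit vectors exist; the right copies are handled symmetrically. Feeding this into the semisimple analogue above yields $H^i_c(G\times G,L^p(G))=0$ for $i<2r$ and Hausdorffness at $i=2r$.

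For the lattice claim, let $\Gamma$ be an irreducible lattice in $G$, so that $\Gamma\times\Gamma$ is a (non-irreducible) lattice in $G\times G$ of rank $2r$. The inductive argument of~\cite{BBBS} carries over to this setting: the induction on rank uses the action on the opposition complex of $G\times G$, whose stabilizers are Levi subgroups that split as products of Levi subgroups of $G$ and therefore satisfy the per-factor spectral gap inherited from the simple case handled above. For cocompact $\Gamma$ one may alternatively first obtain the $G\times G$ result and then transfer it along Shapiro's lemma; for non-uniform $\Gamma$ one invokes the refined Shapiro-type machinery of~\cite{badsau} used in step~(6) of the proof of Theorem~\ref{thm:higher}. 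The step I expect to be the main obstacle is the $p=1$ case, where one must carefully port the L-embedded fixed-point detour from the simple to the semisimple setting through the opposition-complex induction, but this is already essentially accomplished in~\cite{BBBS} for simple $G$ and only needs to be applied to the product group $G\times G$ once the per-factor spectral gap has been verified.
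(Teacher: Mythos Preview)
Your proposal is correct and follows essentially the same approach as the paper. The paper's proof is a one-liner---``the semisimple version of Corollary~\ref{cor:com} gives the following''---and you have unpacked exactly what that semisimple version amounts to: apply Proposition~\ref{prop:induction} (unconditional for $L^p$-spaces by Proposition~\ref{prop:list}) to the rank-$2r$ semisimple group $G\times G$ acting on $L^p(G)$, after verifying that each simple factor acts without almost invariant vectors via non-amenability, and handle $p=1$ through the L-embedded route of Theorem~\ref{thm:noncom}.
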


\begin{example}
    For $\Gamma$ hyperbolic group, following the work of Bourdon-Pajot \cite{BP}, Nica constructed for every $p$ large enough a non-trivial element in $H^1(\Gamma,H^1(\Gamma,\ell^p\Gamma))$, see \cite[Theorem 8]{Nica}.
Using the Lyndon-Hochschild-Serre spectral sequence
we have the identification  $H^1(\Gamma,H^1(\Gamma,\ell^p\Gamma))=H^2(\Gamma\times \Gamma,\ell^p\Gamma)$ and we interpret the Nica cocycle accordingly.
In particular, we get that $H^2(\Gamma\times \Gamma,\ell^p\Gamma)\neq 0$.
\end{example}

\begin{conjecture}
       Let $G$ be a non-compact simple group over a local field $F$ with finite center and of rank $r$. Then for every $p$ large enough $H_c^{2r}(G\times G, L^p(G))\neq 0$.
    A similar result holds for all lattices in $G$.
\end{conjecture}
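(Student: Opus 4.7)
The plan is to combine the Lyndon--Hochschild--Serre spectral sequence for the split extension $1 \to G \to G \times G \to G \to 1$ with Theorem~\ref{thm:Gromov}. Writing $G_1$ for the normal copy and $G_2$ for the quotient copy of $G$, note that $G_1$ acts on $L^p(G)$ by left translation while the residual $G_2$-action on $H^t_c(G_1, L^p(G))$ is induced by right translation. By Theorem~\ref{thm:Gromov}, for sufficiently large $p$ (and for every $p$ if $F$ is non-archimedean) one has $H^t_c(G, L^p(G)) = 0$ for $t \neq r$, while $H^r_c(G, L^p(G))$ is a non-zero Hausdorff topological $G$-module. The $E_2$-page therefore collapses onto the single row $t = r$, yielding
\[ H^{2r}_c(G \times G, L^p(G)) \;\cong\; H^r_c\bigl(G,\, W_p\bigr), \quad W_p \defq H^r_c(G, L^p(G)). \]
Thus the conjecture reduces to establishing that $H^r_c(G, W_p) \neq 0$ for $p$ large.

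The main task is to describe $W_p$ precisely enough as a $G$-module to detect a non-trivial class in $H^r_c(G, W_p)$. The non-vanishing classes produced in~\cite{BR23} in the archimedean case and in~\cite{Lopez} in the non-archimedean case are constructed from boundary data governed by the Furstenberg boundary $G/P$, where $P$ is a minimal parabolic subgroup. The plan is to refine these constructions to show that $W_p$ contains a non-trivial closed $G$-submodule, or more realistically fits into a short exact sequence with terms closely related to boundary representations of the form $L^p(G/P)$ (possibly twisted by a character of $P$). Once this is in place, one computes $H^r_c(G, L^p(G/P))$ via $L^p$-Shapiro for induction from $P$ (which is well-behaved because $G/P$ is compact), identifying it with $H^r_c(P, \bbC)$; the latter is non-zero because $P$ contains an $r$-dimensional split torus. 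Transporting this non-vanishing back to $W_p$ through the long exact sequence of its filtration would finish the proof for $G$.

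A complementary, more geometric route is to construct explicit $(2r)$-cocycles on $G \times G$ with values in $L^p(G)$, in the spirit of Nica's boundary cocycles for Gromov hyperbolic groups~\cite{Nica}. One would use the $(G\times G)$-action on a product of Furstenberg-type boundaries and an iterated boundary-integral construction; non-triviality could be checked by pairing against an explicit class in a dual cohomology, with $p$ chosen large enough to guarantee both the integrability of the cocycle and the convergence of the pairing. For lattices, the spectral sequence argument carries over verbatim to $\Gamma \times \Gamma$ once one knows non-vanishing of $H^r(\Gamma, \ell^p(\Gamma))$ at the rank; for cocompact lattices this follows from Shapiro and $L^p$-induction, and for non-uniform lattices one has to invoke the refined Shapiro-type machinery developed in~\cite{badsau} (step~(6) in the proof of Theorem~\ref{thm:higher}).

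The hard part, I expect, will be step two: although Theorem~\ref{thm:Gromov} supplies $W_p \neq 0$, the literature does not explicitly record the $G$-module structure of $H^r_c(G, L^p(G))$ as a boundary representation, and isolating a boundary piece whose $H^r_c$ is visibly non-zero may demand a genuine extension of the Bourdon--R\'emy and L\'opez constructions. A secondary technical subtlety is that $W_p$ is a Banach $G$-module rather than a smooth one, so care must be taken that the spectral sequence degeneration and the long-exact-sequence manipulations are carried out in a category where Hausdorffness and the vanishing of torsion are preserved --- a point for which the framework of Lemmas~\ref{lem:uptrick} and~\ref{lem: duality reduced cohomology} should be adequate.
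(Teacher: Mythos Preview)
The statement you are attempting to prove is a \emph{Conjecture} in the paper, not a theorem. The paper offers no proof; it records the statement as open and motivates it by the preceding Example, which treats the rank~$1$ (hyperbolic) case using Nica's construction and the Lyndon--Hochschild--Serre identification $H^1(\Gamma,H^1(\Gamma,\ell^p\Gamma))\cong H^2(\Gamma\times\Gamma,\ell^p\Gamma)$. So there is nothing to compare your proposal against.

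That said, your spectral-sequence reduction to $H^r_c(G,W_p)$ with $W_p=H^r_c(G,L^p(G))$ is exactly the natural extrapolation of the paper's motivating Example from rank~$1$ to higher rank. You have also correctly located the genuine obstruction: Theorem~\ref{thm:Gromov} gives $W_p\neq 0$ and Hausdorff, but the $G$-module structure of $W_p$ is not determined by the cited literature, and without it one cannot compute $H^r_c(G,W_p)$. Your suggestion that $W_p$ should be governed by boundary data on $G/P$ is plausible heuristics, but extracting a closed submodule or a filtration with computable pieces is precisely the missing ingredient that makes this a conjecture rather than a theorem. The alternative ``explicit cocycle'' route you sketch is likewise speculative: Nica's construction relies on the Gromov boundary and visual metrics, which have no direct analogue in higher rank. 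In short, your plan is the expected one, and your own diagnosis of where it stalls is accurate; that stall is why the paper states this as a conjecture.
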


\subsubsection{Torsion growth} \label{sec:other}

The first $\ell^2$-Betti number of a property~T group~$\Gamma$ vanishes. 
If $\Gamma$ is, in addition, residually finite and $(\Gamma_i)_{i\in\bbN}$ is a residual chain, then L\"uck's approximation theorem implies that the sequence of Betti numbers $b_1(\Gamma_i)=\dim_\bbQ H_1(\Gamma_i;\bbQ)$ grows sublinearly in the index $[\Gamma:\Gamma_i]$. 

A promising attempt to study asymptotic homology growth via dynamical means originates from the work of Ab\'ert-Nikolov~\cite{abert+nikolov}. They show that the rank gradient of $\Gamma$ along a residual chain~$(\Gamma_i)$ is equal to the cost of the measured orbit equivalence of the natural action of~$\Gamma$ on the projective limit~$\varprojlim \Gamma/\Gamma_i$. In particular, $\limsup_{i\to\infty} b_1(\Gamma; k)/[\Gamma:\Gamma_i]$ is bounded from above by the cost of $\Gamma\acts\varprojlim \Gamma/\Gamma_i$ minus~$1$ for every field~$k$, including finite fields $k=\bbF_p$. 

How does this connect to property~T? Hutchcroft-Pete~\cite{hutchcroft+pete} show that every countable infinite property~T group admits a free measure-preserving action with cost~$1$. If Gaboriau's fixed price conjecture 
would hold
for property~T groups, the cost would be independent
of the specific free measure-preserving action. When compared with the projective limit action for a residually finite property~T group~$\Gamma$, we would obtain that $\lim_{i\to\infty} b_1(\Gamma; k)/[\Gamma:\Gamma_i]=0$. The fixed price conjecture remains open in the generality of property~T groups. 
\begin{question}
Let $\Gamma$ be a residually finite group, and let $(\Gamma_i)_{i\in\bbN}$ be a residual chain. If $\Gamma$ has property~T, does $\Gamma$ have vanishing rank gradient and does 
\[\lim_{i \to\infty}
  \frac{\dim_{\bbF_p} H_1(\Gamma_i;\bbF_p)}
       {[\Gamma:\Gamma_i]}=0\]
hold? If $\Gamma$ has property~$(T_n)$ or $[T_n]$, does the corresponding $\bbF_p$-homology gradient vanish up to degree~$n$? For the homology gradient over a field of characteristic zero, this follows immediately from L\"uck's approximation theorem provided~$\Gamma$ satisfies suitable finiteness conditions, e.g.~being of type $F_{n+1}$. 
\end{question}

For higher-rank lattices we know more. 
Ab\'ert-Gelander-Nikolov~\cite{abert+gelander+nikolov} show that right-angled lattices in higher-rank simple Lie groups have vanishing rank and first $\bbF_p$-homology gradient. This includes all lattices of $\bbQ$-rank at least~$2$ but also some uniform lattices. Ab\'ert-Bergeron-Fraczyk-Gaboriau~\cite{abert+bergeron+fraczyk+gaboriau} show that the $\bbF_p$-homology and homology torsion gradients of a higher-rank lattice vanish in a range below the rational rank of~$\Gamma$. Then Fraczyk-Mellick-Wilkens~\cite{fraczyk+mellick+wilkens} showed that all higher rank lattices satisfy Gaboriau's fixed price conjecture and thus the expected vanishing rank and homology gradients in degree~$1$ hold. 

The following is the most far reaching conjecture that subsumes all the above, as well the idea of Benjamini-Schramm continuity of homology from~\cite{7samurai}. It was formulated in~\cite{li+loeh+moraschini+sauer+uschold}*{Conjecture~1.15}.  
    
\begin{conjecture}\label{conj: torsion cohomology conjecture}
Let $G$ be a semisimple Lie group with finite center and without compact factors. Let $r$ be the real rank of~$G$. 
Let $(\Gamma_i)_{i\in \bbN}$ be a sequence of irreducible lattices in~$G$ whose covolumes tend to infinity. Then 
\[\lim_{i \to\infty}
  \frac{\dim_{\bbF_p} H_n(\Gamma_i;\bbF_p)}
       {\vol(G/\Gamma_i)}=0
       \text{ and }
       \lim_{i \to\infty}
  \frac{\log \# \tors H_n(\Gamma_i;\bbZ)}
       {\vol(G/\Gamma_i)}=0
\]
for every~$n\in \{0,\dots, r-1\}$.
\end{conjecture}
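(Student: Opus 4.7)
My proposed strategy combines Benjamini--Schramm convergence, higher property~T for the ambient group $G$, and a Cheeger--M\"uller analysis of torsion.

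First, I would show that, after passing to a subsequence if needed, $(\Gamma_i)$ Benjamini--Schramm converges to the symmetric space $X = G/K$ in the sense of~\cite{7samurai}. For higher-rank $G$ this is essentially automatic when $\vol(G/\Gamma_i)\to\infty$: the Kazhdan--Margulis theorem together with the Margulis lemma force the injectivity radius of $\Gamma_i\backslash X$ to grow on a set of full relative volume. This step reduces asymptotic invariants of the sequence to invariants of the limit, and it allows local spectral statements coming from $G$ to be transferred, with uniform control, to the finite covers.

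Second, I would attempt to produce a uniform spectral gap for the degree-$n$ combinatorial Laplacian on $\bbF_p$-cochains of $\Gamma_i\backslash X$ for $n<r$. The starting point is higher property~T of $G$ (Theorem~\ref{thm:higher}) and its sum-of-squares reformulation (Theorem~\ref{thm:grpalg}): for each $\Gamma_i$ there exist $\epsilon>0$ and $x_\ell\in M_{m_k}(\bbQ\Gamma_i)$, with $0\le k\le r-1$, satisfying $\Delta_0(\Delta_k-\epsilon)\Delta_0=\sum x_\ell^\ast x_\ell$. The crux is to derive such a certificate intrinsically from $G$, for instance from a compactly supported $G$-invariant operator on the de Rham complex of $X$ coming from a Weitzenb\"ock-type formula combined with the Vogan--Zuckerman vanishing, so that it descends to every $\Gamma_i\backslash X$ with a common $\epsilon$. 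Being defined over $\bbQ$, such a certificate survives reduction modulo $p$ and then bounds $\dim_{\bbF_p}H_n(\Gamma_i;\bbF_p)$ linearly in the volume with a multiplicative constant that becomes arbitrarily small on the BS-generic part, which proves the first half of the conjecture.

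Third, for the torsion statement I would follow a Bergeron--Venkatesh / Cheeger--M\"uller analysis. In the range $n<r$ the $L^2$-analytic torsion density of $X$ vanishes, because the relevant unitary representations of $G$ contribute no $L^2$-cohomology by the same Vogan--Zuckerman input that underlies Theorem~\ref{thm:higher}. Combined with BS-continuity of the normalised spectral counting function, and with the uniform spectral gap on $n$-Laplacians away from zero from the previous step controlling the small-eigenvalue contribution to Reidemeister torsion, this yields $\log\#\tors H_n(\Gamma_i;\bbZ)=o(\vol(G/\Gamma_i))$.

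The hard part, and the only place where the proposal is not already built from existing technology, is the second step: converting the qualitative $C^\ast$-algebraic spectral gap furnished by $(T_{r-1})$ into an \emph{effective} and integrally meaningful certificate that is uniform along the sequence. Theorem~\ref{thm:grpalg} produces the right algebraic object, but its existence is non-constructive and a priori depends on the specific lattice: nothing in the proof controls $\epsilon$, nor the sizes or supports of the $x_\ell$, by data intrinsic to $G$. Closing this gap seems to require either an effective, $G$-intrinsic version of Theorem~\ref{thm:grpalg}, or a direct higher-dimensional analogue of Selberg's $3/16$ theorem for combinatorial Laplacians on arithmetic covers. Without such an effective bridge the plan recovers the $\bbQ$-Betti gradient and the known partial cases cited after the conjecture, but full Conjecture~\ref{conj: torsion cohomology conjecture} remains genuinely out of reach.
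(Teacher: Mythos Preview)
The statement is an open conjecture in the paper; no proof is given. After stating it, the authors only offer two ``wildly speculative attempts'' at an attack, both via measured group theory: a higher-dimensional fixed price conjecture together with a higher analogue of the Hutchcroft--Pete theorem, and a fixed price property for the measured embedding dimension and volume of Li--L\"oh--Moraschini--Sauer--Uschold. Your strategy is thus not to be compared with a proof but with those speculations, and it takes a quite different route (Benjamini--Schramm convergence plus an effective sum-of-squares certificate plus Cheeger--M\"uller). You are honest that the proposal does not close, and you correctly isolate the main obstruction: Theorem~\ref{thm:grpalg} is existential and lattice-by-lattice, with no mechanism to make $\epsilon$ or the $x_\ell$ uniform over a sequence of non-commensurable lattices.

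There are, however, further gaps beyond the one you flag. First, a sum-of-squares identity over $\bbQ\Gamma$ carries no spectral content after reduction modulo~$p$: there is no positivity over $\bbF_p$, so the equation $\Delta_0(\Delta_k-\epsilon)\Delta_0=\sum x_\ell^\ast x_\ell$ reduced mod~$p$ says nothing about the $\bbF_p$-Laplacian, and the bridge from step two to the $\bbF_p$-homology gradient breaks. Second, Cheeger--M\"uller is a theorem about closed manifolds, but the conjecture allows non-uniform $\Gamma_i$, for which $\Gamma_i\backslash X$ is non-compact; extending the analytic torsion technology there is itself a substantial open problem. Third, $L^2$-analytic torsion is a single alternating invariant over all degrees, not something that ``vanishes in the range $n<r$''; isolating a degree-wise contribution is not straightforward. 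Finally, even granting a uniform gap for a combinatorial Laplacian, what controls the small-eigenvalue term in Cheeger--M\"uller is the Hodge Laplacian on differential forms, and relating the two uniformly along the sequence is not addressed.
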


Here are two wildly speculative attempts to tackle this conjecture. 
First, one might try to establish a fixed price conjecture for higher property~T and the higher-dimensional cost, which is ongoing work of Ab\'ert-Gaboriau-Tanushevski, and a higher-dimensional analogue of the theorem of Pete-Hutchcroft. We do not expect that higher property~T is sufficient but that specific facts about lattices are needed. However, one might speculate how far just higher property~T carries. 
Second, one might try to establish a fixed price property for the measured embedding dimension and measured embedding volume defined by Li-L\"oh-Moraschini-Sauer-Uschold~\cite{li+loeh+moraschini+sauer+uschold} and use their bounds~\cite{li+loeh+moraschini+sauer+uschold}*{Theorem~1.2} for the homology (torsion) gradients.

\subsection{Geometric below-rank phenomena} \label{sec:geo}

\subsubsection{Polynomial cohomology and polynomiality of filling function below the rank} \label{subsec:poly}

The proofs of Theorems~\ref{thm:higher} and~\ref{thm:mainBBBS} for non-uniform lattices have to solve the following problem. How to transfer cohomological information about the ambient Lie group~$G$ to the lattice and back in the absence of a Shapiro lemma that generally is only available for uniform lattices?

The idea in~\cite{badsau} to circumvent this difficulty is to use \emph{polynomial cohomology} as an auxiliary cohomology. Polynomial cohomology $H_{pol}^\ast(\Gamma, V)$ is defined using cochains in the bar complex that satisfy a polynomial growth constraint with respect to the word length. 
The comparison of polynomial and usual cohomology is via geometric group theory. 

The $d$-th \emph{filling function} $F_d^\Gamma(n)$ at $n>0$ of a group~$\Gamma$ bounds the minimal (combinatorial) volume of a $d$-chain bounding a $(d-1)$-cycle of volume at most~$n$ in a classifying space of finite type.  
The following is proved in~\cite{badsau}*{Proposition~6.14}. 

\begin{theorem}
Let $\Gamma$ be a group whose homological filling functions $F_1^\Gamma, \dots, F_d^\Gamma$ are polynomial. Then the comparison map \[H_{pol}^k(\Gamma, V)\to H^k(\Gamma, V)\] 
is an isomorphism for $0\le k\le d$. 
\end{theorem}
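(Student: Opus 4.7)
The plan is to construct, using the polynomial filling hypothesis, a $\bbZ\Gamma$-linear chain homotopy equivalence between the inhomogeneous bar resolution $B_\bullet$ and a finite-type cellular resolution whose components are polynomially controlled in the word length. Fix a classifying space $B\Gamma$ with finitely many cells in each degree up to~$d+1$, let $C_\bullet\defq C_\bullet(\widetilde{B\Gamma})$ be its cellular chain complex viewed as a free $\bbZ\Gamma$-resolution of $\bbZ$, and fix $\bbZ\Gamma$-bases of the $C_k$ coming from cell representatives. Write $[g_1|\cdots|g_k]$ for the standard $\bbZ\Gamma$-generators of $B_k$.

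Inductively build $\bbZ\Gamma$-linear augmentation-preserving chain maps $\phi\colon B_\bullet\to C_\bullet$ and $\psi\colon C_\bullet\to B_\bullet$, together with a chain homotopy $\psi\phi\simeq\id_B$ in degrees $\le d$, satisfying the following control: when $\phi_k[g_1|\cdots|g_k]$ is expanded as $\sum_i n_i(\gamma_i\cdot c_i)$ in the cellular basis, both $\sum_i|n_i|$ and $\max_i|\gamma_i|$ are bounded by a polynomial in $|g_1|+\cdots+|g_k|$, and the same kind of bound holds for the homotopy. The map $\psi$ is essentially tautological, as the cellular boundaries are already expressed over $\bbZ\Gamma$ in a controlled way. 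For $\phi$, degree~$1$ uses a choice of geodesic word for each $g\in\Gamma$; inductively, $\phi_{k-1}(d[g_1|\cdots|g_k])$ is a $(k-1)$-cycle in $C_{k-1}$ of polynomial volume and polynomial diameter, which we fill by an element $\phi_k[g_1|\cdots|g_k]\in C_k$ of polynomial volume, using that $F_k^\Gamma$ is polynomial for $1\le k\le d$. The chain homotopy is built the same way. Upon dualizing, the chain maps and the homotopy restrict to maps between polynomial cochain complexes on $B_\bullet$ (because polynomial control on the basis expansion translates to sending polynomial cochains to polynomial cochains) and the full cochain complex on $C_\bullet$ (where, being of finite type, every cochain is trivially ``polynomial''). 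The identity $\psi\phi\simeq\id_B$ becomes, dually, a cochain homotopy between the identity on $\hom_{\bbZ\Gamma}(B_\bullet,V)$ and $\phi^\ast\psi^\ast$, whose image consists of polynomial cochains.

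A diagram chase then gives that the comparison map $H^k_{\mathrm{pol}}(\Gamma,V)\to H^k(\Gamma,V)$ is an isomorphism for $0\le k\le d$: surjectivity, because every cocycle is cohomologous to its polynomial image under $\phi^\ast\psi^\ast$ via the dual of the chain homotopy; injectivity, because a polynomial cocycle that cobounds in the full complex cobounds polynomially via the same device. The hard part is the polynomial control in the inductive construction of $\phi$ and of the homotopy. The function $F_k^\Gamma$ bounds the \emph{volume} of a filling of a $(k-1)$-cycle of given volume, but we need to simultaneously control the \emph{location} of the filling (the word lengths of the supporting coset representatives $\gamma_i$), not just its volume. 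This upgrade is forced by $\bbZ\Gamma$-equivariance, together with the observation that the cycle being filled is already concentrated in a region of polynomial word-length diameter by the inductive hypothesis; keeping careful bookkeeping of both the volumetric and diametric polynomials across all degrees $\le d$ is the technical heart of the argument.
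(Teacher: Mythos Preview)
The paper does not supply its own proof of this theorem; it merely cites \cite{badsau}*{Proposition~6.14}. Your proposal is the standard argument and is essentially what is carried out there: compare the bar resolution to a finite-type cellular resolution via $\bbZ\Gamma$-chain maps with polynomial control, and use that on the finite-type side every cochain is trivially polynomial.

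One small clarification is worth making. You say the chain homotopy $H\colon B_\bullet\to B_{\bullet+1}$ is ``built the same way'' as $\phi$, i.e.\ via the filling functions. Strictly speaking this is not how $H$ is produced: the inductive step for $H_k$ requires filling a $k$-cycle in $B_k$, not in $C_k$, and the filling functions $F_k^\Gamma$ refer to $C_\bullet$. What one actually uses for $H$ is the explicit contracting homotopy of the bar resolution, $s(\gamma[g_1|\cdots|g_k])=[\gamma|g_1|\cdots|g_k]$, applied to the cycle $(\id-\psi\phi-H_{k-1}\partial)[g_1|\cdots|g_k]$. The point is that this cycle already has polynomial volume and polynomial word-length control \emph{because} $\phi$ (and inductively $H_{k-1}$) do, and $s$ preserves volume while only shifting the $\bbZ\Gamma$-coefficient into the first slot of the tuple. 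So the polynomial control on $H$ is inherited from that on $\phi$, which in turn is where the hypothesis on $F_1^\Gamma,\dots,F_d^\Gamma$ enters. Your diameter-control remark for $\phi$ is correct: a minimal-volume filling of a cycle supported in a ball of radius $R$ is supported in a ball of radius $R$ plus a constant times the filling volume, by the bounded-geometry connectedness argument.

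With this adjustment your outline is complete and matches the argument in the cited reference.
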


The following deep result by Leuzinger-Young~\cite{leuzinger+young} allows us to apply the previous theorem. 

\begin{theorem}[Leuzinger-Young]\label{thm: leuzinger+young}
The filling functions of an irreducible lattice in a connected semisimple Lie group~$G$ with finite center and without compact factors are polynomial below the rank of~$G$.  
\end{theorem}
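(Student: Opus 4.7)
The plan is to split into two cases according to whether $\Gamma$ is cocompact. In the cocompact case, the Milnor-Schwarz lemma yields a quasi-isometry between $\Gamma$ with a word metric and the model space $X=G/K$ (respectively $X$ times the Bruhat-Tits buildings of the non-archimedean factors in the $S$-arithmetic setting). Since $X$ is a product of $\mathrm{CAT}(0)$ spaces, any spherical cycle can be filled by a geodesic Euclidean cone on a basepoint, which gives Euclidean (in particular polynomial) filling functions in all degrees. Polynomial fillings below a given degree are a quasi-isometry invariant among groups of suitable finiteness type, so the conclusion transfers to $\Gamma$.

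For a non-uniform lattice the argument is substantially more delicate, and this is what I would focus on. Using the reduction theory of Borel and Harish-Chandra, I would decompose $\Gamma\backslash X$, up to quasi-isometry, into a compact ``thick core'' together with finitely many cuspidal ends indexed by $\Gamma$-conjugacy classes of proper rational parabolic subgroups $P<G$. Each cuspidal end is modeled on a horoball centered at the point at infinity associated to $P$, whose level horospheres carry a Langlands-type fibration with base a flat of dimension equal to the $\mathbb{Q}$-rank of $P$ and with unipotent fibers. Inside such a cuspidal region, horospherical retraction towards the cusp produces Euclidean-type cones on $(k-1)$-cycles whenever $k$ is strictly below the ambient rank $r$: in that subcritical range enough Levi directions remain along which the cycle can be pushed deep into the cusp at polynomially bounded cost.

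The main obstacle, and the technical heart of the argument, is to coordinate these local cusp fillings into a single global filling. A $(k-1)$-cycle in $\Gamma$ may travel in and out of several distinct cuspidal regions, so the chain produced in each horoball must match the chain produced in the thick core along the interface between them without blowing up the volume. My plan here is to run a combinatorial Morse theory argument on an equivariant thickening of a Siegel domain: slice this thickening by an exhaustion function adapted to the rational Langlands decomposition, and inductively fill slice by slice, using the polynomial fillings in the thick part, the horospherical fillings in each cusp, and bounded-displacement homotopies to match the resulting chains across shared boundaries. The hypothesis $k<r$ is essential throughout, as it is precisely what guarantees at least one remaining Levi direction in every rational parabolic along which the Euclidean coning can be carried out uniformly; at $k=r$ this room disappears and the filling functions are expected to jump to exponential, which is the sharp boundary captured by the theorem.
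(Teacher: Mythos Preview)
The paper does not supply a proof of this statement; it is quoted as a ``deep result'' of Leuzinger and Young~\cite{leuzinger+young} and used as a black box. So there is no in-paper argument to compare your sketch against.

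That said, your non-uniform sketch contains a genuine misdirection. A non-uniform lattice $\Gamma$ is not quasi-isometric to $\Gamma\backslash X$ but, via the \v{S}varc--Milnor lemma, to the horoball complement $X_0\subset X$ on which $\Gamma$ acts properly cocompactly. Filling functions of $\Gamma$ are therefore filling functions of $X_0$, and the difficulty is precisely that $X_0$ is no longer $\mathrm{CAT}(0)$: the naive Euclidean cone on a cycle will typically protrude into the removed horoballs. Your plan to push cycles \emph{into} the cusps goes the wrong way. Chains that enter the horoballs are not admissible, and deep in a cusp the word metric on $\Gamma$ diverges from the Riemannian metric (this is exactly where the exponential Dehn function at degree~$r$ comes from). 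The actual Leuzinger--Young argument does the opposite: one cones in $X$ and then pushes the resulting chain \emph{out of} the horoballs back into $X_0$ at controlled cost, and the heart of the matter is to show this outward retraction can be done with polynomial volume distortion in degrees below~$r$. The hypothesis $k<r$ enters not through ``remaining Levi directions for Euclidean coning'' but through the existence of enough flat directions \emph{parallel to the horospheres} along which one can slide the chain to evade the horoballs.
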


In contrast, in the degree of the rank, the filling function of a non-uniform lattice is exponential. The filling functions of a uniform lattice are polynomial in all degrees. 

Morally, a kind of Shapiro lemma for the polynomial cohomology with unitary or Banach space coefficients is proved in~\cite{badsau}. Via the two previous theorems, we obtain a suitable Shapiro lemma for non-uniform lattices below the rank. 

In the S-arithmetic case and in positive characteristic, the equivalent of Theorem~\ref{thm: leuzinger+young} is not available. Bestvina-Eskin-Wortman~\cite{bestvina+eskin+wortman} show polynomiality of non-uniform lattices in S-arithmetic groups not below the $S$-rank but up to a degree bounded by the number of primes in~$S$. Further, Sauer-Weis~\cite{sauer+weis} show polynomiality below the rank (instead of the $S$-rank). Depending on the number of primes in~$S$, one or the other result might be stronger. 

\begin{conjecture}\label{conj: S-arithmetic filling}
    The analogous statement of Theorem~\ref{thm: leuzinger+young} holds in the S-arithmetic case in arbitrary characteristic. 
\end{conjecture}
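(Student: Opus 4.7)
The plan is to adapt the Leuzinger--Young strategy to the $S$-arithmetic setting by combining the horospherical filling techniques available on archimedean factors with their non-archimedean counterparts developed by Bestvina--Eskin--Wortman and Sauer--Weis. I would consider the diagonal $\Gamma$-action on the product $X = \prod_{s\in S} X_s$, where $X_s$ is the symmetric space attached to $\mathbf{G}_s(F_s)$ when $F_s$ is archimedean and the Bruhat--Tits building otherwise; by construction the dimension of a maximal flat in $X$ equals the $S$-rank $r$. After passing to a torsion-free finite-index subgroup, filling in $\Gamma$ is equivalent (in the relevant range of degrees, where $\Gamma$ is of type $FP$) to equivariant filling in $X$, so the problem becomes geometric on $X$.

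The main geometric input would be a $\Gamma$-equivariant decomposition of $X$ into a cocompact thick part and finitely many $\Gamma$-orbits of cusp regions indexed by proper $\mathbb{F}$-parabolic subgroups of $G$, where $\mathbb{F}$ is the defining global field. In characteristic zero this is classical reduction theory of Borel--Harish-Chandra; in positive characteristic one has to lean on the reduction theory of Harder and Behr together with the refinements of Bux--Wortman. On top of this decomposition I would build a horospherical retraction that deformation retracts each cusp onto a lower-dimensional spine: archimedean factors carry the original Leuzinger--Young retraction, while non-archimedean factors carry sector-based retractions along Busemann functions on the building. The technical heart of the argument is to synchronize these retractions across factors so that the product retraction is $\Gamma$-equivariant and, for each simultaneous $\mathbb{F}$-parabolic, the resulting stable directions add up correctly. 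Once such a retraction exists, a $(k-1)$-cycle with $k<r$ can be pushed into the thick part and filled there by a standard coning construction, yielding polynomial volume bounds with exponent controlled by the rank.

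The main obstacle, in my view, lies in the positive characteristic regime. There the unipotent radicals of $\mathbb{F}$-parabolic subgroups can fail to be finitely generated, so the Margulis-type lemmas and Lipschitz deformations underlying the smooth Leuzinger--Young construction are not directly available; one expects them to be replaced by purely combinatorial Bruhat--Tits substitutes, but a uniform framework that simultaneously handles mixed archimedean, non-archimedean of characteristic zero, and positive-characteristic factors is not yet in the literature. A secondary difficulty is ensuring that the factorwise synchronization survives the bounded-word-length estimates needed to keep the filling polynomial rather than merely recursive; this is precisely the point at which the Sauer--Weis construction is the natural starting point, and its extension to the full $S$-rank, rather than to the rank of a single factor, is what would ultimately close the gap.
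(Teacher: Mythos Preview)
The statement you are attempting to prove is labeled as a \emph{conjecture} in the paper, not a theorem; the paper provides no proof and treats it as an open problem. In the surrounding text the authors explicitly note that the analogue of the Leuzinger--Young theorem is \emph{not} available in the $S$-arithmetic and positive-characteristic settings, and they record the partial results of Bestvina--Eskin--Wortman (polynomial filling up to roughly the number of primes in $S$) and Sauer--Weis (polynomial filling below the rank of a single factor rather than the full $S$-rank) as the current state of the art. The conjecture is then used only conditionally, as a hypothesis that would feed into the Shapiro-type machinery of~\cite{badsau} and~\cite{BBBS}.

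Your proposal is therefore not a proof but a research outline, and in fairness you acknowledge as much in your final two paragraphs. The strategy you describe --- synchronizing archimedean horospherical retractions with Bruhat--Tits sector retractions over a reduction-theoretic cusp decomposition --- is a reasonable line of attack and is consistent with what the cited partial results do. But the obstacles you flag are genuine and unresolved: in positive characteristic the unipotent radicals are not finitely generated, the smooth deformation estimates underpinning Leuzinger--Young have no known combinatorial replacement at the required generality, and the synchronization across mixed factors needed to reach the full $S$-rank (rather than the weaker bounds of Bestvina--Eskin--Wortman or Sauer--Weis) is precisely the missing ingredient. Until those gaps are closed, what you have written is a plausible plan, not a proof; the paper itself does not claim more.
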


From the techniques in~\cite{badsau} and \cite{BBBS} one can conclude the following. 

\begin{theorem}
Conjecture~\ref{conj: S-arithmetic filling} implies the validity of  Conjecture~\ref{conj:semisimple} for S-arithmetic lattices with property T
and unitary representations, and for general super-reflexive representations, assuming also Conjecture~\ref{conj:ai}.
\end{theorem}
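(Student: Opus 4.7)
The plan is to mimic step (6) in the sketch of Theorem~\ref{thm:higher}, using polynomial cohomology as a bridge between an $S$-arithmetic lattice $\Gamma$ and the ambient $S$-semisimple group $G$, while invoking the BBBS machinery on the $G$-side. Throughout, let $\Gamma<G$ be an irreducible $S$-arithmetic lattice of rank $r$, and let $V$ be either a unitary $\Gamma$-module (under property~T of $G$) or a super-reflexive isometric Banach $\Gamma$-module.

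First, I would use Conjecture~\ref{conj: S-arithmetic filling}: it says the filling functions $F_1^\Gamma,\dots,F_{r-1}^\Gamma$ are polynomial. Applying the comparison theorem from~\cite{badsau} (Proposition~6.14), this yields canonical isomorphisms $H^k_{\pol}(\Gamma,V)\cong H^k(\Gamma,V)$ for all $k<r$, so from now on we may work entirely on the polynomial side.

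Second, I would invoke the \emph{polynomial Shapiro lemma} developed in~\cite{badsau}, suitably adapted to the $S$-arithmetic (possibly positive characteristic) setting. The polynomial induction attaches to $V$ a Banach $G$-module $W$ (a space of polynomially controlled functions $G\to V$) such that $H^k_{\pol}(\Gamma,V)\cong H^k_{\pol,c}(G,W)$ for $k<r$, and then that $H^k_{\pol,c}(G,W)\cong H^k_c(G,W)$ since $G$ has polynomial filling in all degrees on the group side. This is the step that requires the polynomial machinery to function in arbitrary residue characteristic; nothing in its formulation is specific to $F$ being archimedean, because it only relies on word-length estimates and polynomial growth of cocycles, so the extension should be largely formal once Conjecture~\ref{conj: S-arithmetic filling} is available.

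Third, on the $G$-side I would split into the two cases stated. For the unitary case with $G$ of property~T, use \cite[Theorem~D]{badsau} adapted to $S$-semisimple groups: this produces a $G$-invariant subspace $W_0\subset W$ on which the action of $\Gamma$ extends to $G$, with $H^j_c(G,W)\cong H^j_c(G,W_0)$ for $j<r$. For the super-reflexive case, under the additional hypothesis Conjecture~\ref{conj:ai}, use Proposition~\ref{prop:induction} of~\cite{BBBS} (the induction on rank via the opposition complex and its Levi stabilizers), adapted to the $S$-semisimple setting, to obtain the analogous structural isomorphism for $W$. Pulling $W_0$ back to a subspace $V_0\subset V$ via evaluation at the identity, and concatenating with the polynomial Shapiro isomorphism of the previous step and the polynomial-vs-ordinary comparison of the first step, one obtains the desired chain $H^j(\Gamma,V)\cong H^j(\Gamma,V_0)\cong H^j(G,V_0)$ for $j<r$.

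The hard part will be Step~2: setting up the polynomial Shapiro lemma rigorously in the $S$-arithmetic, positive-characteristic setting, especially making sure the polynomial induction behaves well for totally disconnected factors of $G$ and for continuity of the Banach module structure on $W$; and secondly verifying that the opposition-complex induction of~\cite{BBBS} transports to the $S$-semisimple situation, which requires the appropriate Howe-Moore type statement and the Solomon-Tits connectivity of the opposition complex of a product building. Once these infrastructural points are in place, the zig-zag argument between $\Gamma$ and $G$ is essentially the same as in Remark~\ref{rem:zigzag}.
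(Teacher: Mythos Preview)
Your proposal is correct and matches the paper's approach. The paper itself does not give a proof of this theorem; it simply states that the result follows ``from the techniques in~\cite{badsau} and \cite{BBBS}.'' Your three-step outline---polynomial-versus-ordinary comparison via Conjecture~\ref{conj: S-arithmetic filling}, the polynomial Shapiro machinery of~\cite{badsau}, and the $G$-side vanishing from~\cite[Theorem~D]{badsau} (unitary, property~T) or the opposition-complex induction of~\cite{BBBS} (super-reflexive, assuming Conjecture~\ref{conj:ai})---is precisely the intended expansion of that sentence, and your identification of the polynomial Shapiro lemma in the $S$-arithmetic setting as the main technical point is apt.
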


One could try to generalize Harder's method in~\cite{harder} to prove Theorem~\ref{def:higherT} in positive characteristic. See also Remark~\ref{rem: harder}. The following theorem shows an alternative route to Theorem~\ref{thm:higher} in positive characteristic, assuming Conjecture~\ref{conj: S-arithmetic filling}. We refer to our forthcoming work~\cite{BBBS} for details. 

\begin{theorem}
Conjecture~\ref{conj: S-arithmetic filling} in the arithmetic case and for positive characteristic implies the validity of Theorem~\ref{thm:higher} for positive characteristic. 
\end{theorem}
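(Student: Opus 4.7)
The plan is to transport the six-step proof of Theorem~\ref{thm:higher} sketched in \S\ref{sec:proof} to positive characteristic, isolating which ingredients are characteristic-independent and which have to be replaced using the filling conjecture. Steps (1)--(5) of that outline concern only the ambient non-archimedean group $G=\mathbf{G}(F)$ and its cocompact lattices: the Garland-style vanishing of $H_c^j(G,V)$ below the rank for unitary $V$ with $V^G=0$ (Dymara--Januszkiewicz, Grinbaum-Reizis--Oppenheim) works geometrically on the Bruhat--Tits building $\mathcal{B}$, while Shapiro's lemma and the ultrapower trick (Lemma~\ref{lem:uptrick}) only use formal features of cocompact induction. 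None of these ingredients sees the characteristic of $F$. So for uniform lattices in positive characteristic Theorem~\ref{thm:higher} already follows without invoking any conjecture.

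The content of the proposed implication therefore lies in step~(6), the extension to non-uniform lattices; in positive characteristic this is essentially the only case of interest, since all lattices of rank $\ge 2$ are non-uniform (outside type $A_n$, and even there one wants the full statement). I would recall the polynomial cohomology $H^*_{pol}(\Gamma,V)$ of~\cite{badsau} and the two pillars that make it useful. First, the comparison theorem: if $F_1^\Gamma,\dots,F_d^\Gamma$ are polynomial then the canonical map $H^k_{pol}(\Gamma,V)\to H^k(\Gamma,V)$ is an isomorphism for $0\leq k\leq d$. Second, a polynomial-coefficient Shapiro lemma identifying $H^*_{pol}(\Gamma,V)$ with the continuous $G$-cohomology via polynomially controlled coset representatives built from a Siegel-type fundamental domain. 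Conjecture~\ref{conj: S-arithmetic filling} gives precisely the hypothesis of the comparison theorem in the range $k\leq r-1$. Chaining the two,
\[ H^k(\Gamma,V)\;\cong\; H^k_{pol}(\Gamma,V)\;\cong\; H^k_c(G,V)\;=\;0\qquad (0\le k\le r-1), \]
yields property $[T_{r-1}]$ for every arithmetic lattice $\Gamma$, and the unitary-representation version of Lemma~\ref{lem:uptrick} upgrades this vanishing to the Hausdorffness needed at degree $r$.

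The main obstacle I anticipate is verifying that the polynomial Shapiro lemma of~\cite{badsau} survives the transition to positive characteristic without modification. In characteristic zero it is formulated using a Siegel fundamental domain in the symmetric space $G/K$, and relies on polynomial bounds between the Riemannian distance and the word length on~$\Gamma$. In positive characteristic one must replace $G/K$ by $\mathcal{B}$ and the Siegel domain by an arithmetically defined fundamental domain in $G$ against the action on $\mathcal{B}$; the necessary polynomial comparisons between word length on $\Gamma$ and building distance should flow from the same geometric-group-theoretic machinery that would underpin a proof of Conjecture~\ref{conj: S-arithmetic filling} itself, but one has to check that the bar-cocycle polynomial growth condition and its compatibility with $\Gamma$-to-$G$ induction are preserved. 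A secondary but lighter issue is to confirm that the $G$-side vanishing $H^j_c(G,V)=0$ for $j<r$ is available over function-field local fields in the generality of all semisimple $\mathbf{G}(F)$, which should follow from the building-theoretic Garland argument uniformly across characteristics.
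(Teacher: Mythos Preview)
The paper does not actually prove this theorem; it simply states it and writes ``We refer to our forthcoming work~\cite{BBBS} for details.'' Your outline is essentially the intended one: the polynomial-cohomology machinery of~\cite{badsau} described in \S\ref{subsec:poly} reduces the passage from $G$ to a non-uniform lattice to the polynomiality of filling functions below the rank, which is precisely what Conjecture~\ref{conj: S-arithmetic filling} supplies; on the $G$-side, the building-theoretic Garland argument of Dymara--Januszkiewicz and Grinbaum-Reizis--Oppenheim is insensitive to the characteristic of~$F$.

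One clarification is worth making. You invoke steps (3)--(5) of the sketch, the zig-zag through a cocompact lattice, as if it were needed and available. In the non-archimedean setting the building-theoretic argument already yields \emph{ordinary} (not merely reduced) vanishing $H^j_c(G,V)=0$ for $j<r$, so the zig-zag is unnecessary; and this is fortunate, since in positive characteristic cocompact lattices typically do not exist (outside type~$A_n$), so you could not execute the zig-zag even if you wanted to. Your argument thus simplifies: one goes directly from $[T_{r-1}]$ for~$G$ to the lattice via the polynomial Shapiro lemma. The obstacles you flag --- transporting the polynomial Shapiro lemma to the building setting and checking the needed distance-versus-word-length comparisons --- are exactly the points deferred to~\cite{BBBS}.
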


Regarding the property T assumption, see the discussion in \S\ref{sec:deg1} below.
Finally, we remark that in the case where the number of inverted primes in~$S$ goes to infinity the range gap between the theorems of Leuzinger-Young and Bestvina-Eskin-Wortman becomes irrelevant. In the limiting, adelic case one can prove, for example, the following clean statement~\cite{badsau}, which  generalizes work of Borel and Yang. 

\begin{theorem} \label{thm:adelic}
Let $k$ be a number field and $\mathbb{A}(k)$ the ring of adeles of~$k$.
Let $\mathbf{G}$ be a connected, simply connected, almost simple $k$-algebraic group
and let $V$ be a unitary representation of the adelic group $\mathbf{G}(\mathbb{A}(k))$.
Then the restriction map
\[ \res:H_c^*(\mathbf{G}(\mathbb{A}(k)),V)\to H^*(\mathbf{G}(k),V) \]
is an isomorphism in all degrees.
\end{theorem}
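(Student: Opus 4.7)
The plan is to realize both sides as filtered colimits over finite sets of places and to apply the higher-property-$T$ machinery of \cite{badsau} level by level. For $S$ a finite set of places of $k$ containing all archimedean ones, set $G_S=\prod_{v\in S}\mathbf{G}(k_v)$, $K^S=\prod_{v\notin S}\mathbf{G}(\mathcal{O}_v)$ (a compact open subgroup of the finite adeles outside $S$), and $\Gamma_S=\mathbf{G}(\mathcal{O}_S)$. Strong approximation for simply connected almost simple $\mathbf{G}$ identifies $\Gamma_S=\mathbf{G}(k)\cap(G_S\times K^S)$, and one has the filtered unions $\mathbf{G}(\mathbb{A}(k))=\bigcup_S G_S\times K^S$ and $\mathbf{G}(k)=\bigcup_S\Gamma_S$. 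Since $\mathbf{G}$ is isotropic at cofinitely many places, $\operatorname{rank}(G_S)\to\infty$ as $S$ grows.

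Fix a degree $j$ and choose $S$ with $\operatorname{rank}(G_S)>j$. Because $K^S$ is compact and acts unitarily on $V$, continuous cohomology is concentrated on $K^S$-invariants and a K\"unneth-style argument gives $H^j_c(G_S\times K^S,V)\cong H^j_c(G_S,V^{K^S})$; moreover $V^{K^S}$ is $\Gamma_S$-stable through the diagonal embedding, so the restriction factors as
\[
H^j_c(G_S, V^{K^S})\longrightarrow H^j(\Gamma_S,V^{K^S})\longrightarrow H^j(\Gamma_S,V).
\]
The $S$-arithmetic lattice $\Gamma_S<G_S$ sits in a semisimple property-$T$ group in the range of interest, so the unitary semisimple case of Conjecture~\ref{conj:semisimple}, established in \cite{badsau}*{Theorem~D} and extended to the $S$-arithmetic setting via the polynomial filling bounds of Leuzinger--Young/Bestvina--Eskin--Wortman (Conjecture~\ref{conj: S-arithmetic filling} in characteristic zero), makes the first arrow an isomorphism whenever $j<\operatorname{rank}(G_S)$.

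The last step is passage to the limit. Smooth vectors are dense in any continuous unitary representation of a locally compact group, so $V=\overline{V_{\mathrm{sm}}}$ with $V_{\mathrm{sm}}=\bigcup_S V^{K^S}$. Using that $\mathbf{G}(k)$ is of type $FP_\infty(\mathbb{Q})$ (Borel--Serre), group cohomology commutes with filtered colimits of coefficient modules, giving $\varinjlim_S H^j(\Gamma_S,V^{K^S})\cong H^j(\mathbf{G}(k),V_{\mathrm{sm}})$, and similarly on the adelic side $\varinjlim_S H^j_c(G_S,V^{K^S})\cong H^j_c(\mathbf{G}(\mathbb{A}(k)),V_{\mathrm{sm}})$. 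The hard part that I expect to be the main obstacle is then the comparison between cohomology with coefficients in the dense smooth subspace $V_{\mathrm{sm}}$ and in the full unitary representation $V$: this does not follow formally from density, and has to be argued either by invoking the polynomial Shapiro lemma of \cite{badsau} directly for the non-uniform lattice $\mathbf{G}(k)<\mathbf{G}(\mathbb{A}(k))$, so as to bypass the smooth/continuous discrepancy, or by a careful Mittag-Leffler/ultrapower argument checking that the transition $V^{K^S}\hookrightarrow V^{K^{S'}}$ produces no spurious reduced cohomology in the limit. A secondary technical point is treating groups $\mathbf{G}$ that are anisotropic at all archimedean places, where strong approximation must be phrased with respect to a non-archimedean isotropic place.
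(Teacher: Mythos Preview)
The paper does not prove this theorem in the text; it is cited from \cite{badsau}, and the only hint given is the sentence immediately preceding the statement: in the adelic limit the number of places $|S|$ is unbounded, so the Bestvina--Eskin--Wortman bound on filling functions (polynomiality up to a degree controlled by $|S|$) already suffices, and the open Conjecture~\ref{conj: S-arithmetic filling} is not needed. Your outline is broadly aligned with this, but two steps are inaccurate.

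First, you invoke Conjecture~\ref{conj: S-arithmetic filling} as if it were established in characteristic zero. The paper says explicitly that it is open even there; Leuzinger--Young applies only to real semisimple groups, and Bestvina--Eskin--Wortman gives polynomiality only up to a degree bounded by $|S|$, not up to the full $S$-rank. The fix is precisely the paper's point: for a fixed degree $j$, enlarge $S$ until $|S|$ (not merely $\operatorname{rank}(G_S)$) exceeds the BEW threshold for degree~$j$, and then the \emph{known} filling bound is enough for the polynomial Shapiro lemma of~\cite{badsau}.

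Second, your claim that $\mathbf{G}(k)$ is of type $FP_\infty(\mathbb{Q})$ by Borel--Serre is false. Borel--Serre concerns the $S$-arithmetic groups $\Gamma_S$, not $\mathbf{G}(k)$ itself; in general $\mathbf{G}(k)$ is not even finitely generated (take $\SL_2(\mathbb{Q})$). So the step ``cohomology of $\mathbf{G}(k)$ commutes with filtered colimits of coefficients because the group is $FP_\infty$'' does not go through as written, and with it your level-by-level colimit argument is incomplete.

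Your own second suggested route---applying the polynomial Shapiro machinery of~\cite{badsau} directly to the lattice $\mathbf{G}(k)<\mathbf{G}(\mathbb{A}(k))$ rather than level by level---is exactly what the paper's hint is pointing at, and it sidesteps both issues above as well as the smooth/continuous coefficient discrepancy you flagged.
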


Finally, we would like to mention a very recent development in the study of polynomial cohomology: L{\'o}pez Neumann and Paucar~\cite{neumann+paucar} used polynomial cohomology and ideas in~\cite{badsau} to prove interesting new invariance results for Betti numbers of nilpotent groups in the setting of quantitative measure equivalence. Further, they develop techniques to deal with unitary induction for non-uniform lattices in Lie groups of rank~$1$. See also the remark below Theorem~\ref{thm:F4}.  

\subsubsection{Farb's property FA$_n$ and speculations on the Zimmer program} \label{sec:Farb}
A group satisfies Serre's property FA if each of its actions on a tree has a fixed point.
It is well known that property T groups satisfy Serre's property FA.
In \cite{farb2009group} Farb makes the following generalization.

\begin{defn}
    A group $\Gamma$ has property FA$_n$ if any
cellular $\Gamma$-action on any $n$-dimensional CAT(0) cell complex has a global fixed point.
\end{defn}

Equivalently, $\Gamma$ has FA$_n$ if it cannot be written non-trivially as a non-positively curved $n$-dimensional complex
of groups.
Farb proved that various non-uniform lattices satisfy property FA$_{r-1}$, where $r$ is their rank,
and he asked whether every lattice in a simple Lie group of rank $r$ satisfies property FA$_{r-1}$.
Recently Fraczyk and Lowe \cite{fraczyklowe} showed that cocompact lattices in $\SL_n(\mathbb{R})$ satisfy property FA$_{\lfloor\frac{n}{8}\rfloor-1}$
and cocompact lattices in $F_4^{(-20)}$
satisfy property FA$_2$.
These are the first results concerning property FA$_n$ for cocompact lattices and $n>1$.
In the forthcoming paper~\cite{BBBS} we prove Farb's conjecture.

\begin{theorem}[Farb's conjecture, \cite{BBBS}] \label{thm:Farb}
   Lattices in simple groups of rank $r$ satisfy FA$_{r-1}$. 
\end{theorem}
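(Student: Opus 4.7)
The plan is to deduce Theorem~\ref{thm:Farb} from two results already in the paper. First, Corollary~\ref{cor:com} applied with $p=1$ tells us that $\Gamma$, being a lattice in a simple group of rank $r$, has property $(T_{r-1})_{L^1}$: for every isometric linear action of $\Gamma$ on $L^1(X)$ with $L^1(X)^\Gamma=0$, the cohomology $H^k(\Gamma,L^1(X))$ vanishes for all $k\le r-1$. Second, I would invoke the general principle referenced as Theorem~\ref{thm:Farb2} that property $(T_n)_{L^1}$ implies Farb's property $FA_n$. Combining these at $n=r-1$ gives Theorem~\ref{thm:Farb} at once. Under this decomposition the present theorem really is the geometric-analytic payoff of the $p=1$ case of Theorem~\ref{thm:noncom}, and its non-trivial content is concentrated in the implication $(T_n)_{L^1}\Rightarrow FA_n$.

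I would prove that implication by contrapositive. Given a cellular $\Gamma$-action on an $n$-dimensional CAT(0) cell complex $Y$ with no global fixed point, the aim is to manufacture a non-trivial class in $H^n(\Gamma, V)$ for a Banach $\Gamma$-module $V$ isomorphic to a summand of $\ell^1(Y^{(n)})$ satisfying $V^\Gamma=0$, contradicting $(T_n)_{L^1}$. Concretely, fix a vertex $y_0\in Y$; because any CAT(0) cell complex is contractible, for every $(n+1)$-tuple $(g_0,\dots,g_n)\in\Gamma^{n+1}$ one can select an $n$-chain in $Y$ filling the simplex with vertices $g_iy_0$ (of dimension at most $n=\dim Y$, so no higher-dimensional filling is available or required). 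The $\ell^1$-norm of these fillings turns the assignment into an inhomogeneous cocycle $c\in C^n(\Gamma,\ell^1(Y^{(n)}))$ whose class $[c]$ is the obstruction to constructing an equivariant barycentric map $\Gamma\to Y$. If $[c]$ were a coboundary, the associated trivialization would yield a bounded $\Gamma$-orbit in $Y$; the Bruhat--Tits circumcenter construction for bounded subsets of CAT(0) spaces would then yield a $\Gamma$-fixed point, contradicting the hypothesis.

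The hard part will be justifying that the obstruction genuinely lives in degree exactly $n$ and cutting the coefficient module $\ell^1(Y^{(n)})$ down to a suitable invariant-free summand of the form $L^1(X)$ to which Corollary~\ref{cor:com} actually applies. Several subtleties demand care: $Y$ need not be locally finite nor $\Gamma$-cocompact, so a finite-type $\ell^1$-resolution of $\bbR$ by cellular chains of $Y$ is unavailable, forcing one to work inside the inhomogeneous bar complex with appropriate $\ell^1$-integrability conditions; invariant $\Gamma$-orbits of $n$-cells must be split off before invoking the vanishing statement; and the reduction from a coboundary of $[c]$ to a bounded orbit must be made genuinely equivariant in order to feed into the Bruhat--Tits circumcenter. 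Packaging this geometric obstruction theory so that it matches precisely the analytic hypothesis $(T_{r-1})_{L^1}$ supplied by Corollary~\ref{cor:com} is where the main effort goes.
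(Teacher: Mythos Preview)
Your reduction of Theorem~\ref{thm:Farb} to Corollary~\ref{cor:com} (case $p=1$) together with Theorem~\ref{thm:Farb2} is exactly the paper's proof: the sentence ``This theorem follows from the following general theorem, together with the case $p=1$ of Corollary~\ref{cor:com}'' is all the paper offers, and your first paragraph reproduces it verbatim.

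Where you diverge from the paper is in your attempted sketch of Theorem~\ref{thm:Farb2}, which is a separate result (attributed to~\cite{BBBS}) and not part of the proof of Theorem~\ref{thm:Farb}. The paper describes that proof as ``diagram chasing over the double complex whose $p,q$-coordinate is the space of maps from $\Gamma^p$ to the space of $q$-chains of the given cellular complex, along with their $\ell^1$-completions.'' This is a spectral-sequence-style argument that genuinely uses the vanishing of $H^i(\Gamma,\ell^1(Y^{(q)}))$ for \emph{all} $i<n$ and \emph{all} $q$, and its conclusion is that the action has a \emph{finite orbit} (the fixed point then coming from the barycenter in the CAT(0) setting). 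Your sketch instead tries to isolate a single obstruction class in $H^n(\Gamma,\ell^1(Y^{(n)}))$ and to pass from a coboundary to a \emph{bounded} orbit, invoking Bruhat--Tits. Two gaps: (i) you cannot ``fill the simplex with vertices $g_iy_0$'' by an $n$-chain before you have inductively constructed its $(n-1)$-skeleton inside $Y$, and that inductive construction is precisely where the lower-degree vanishing enters --- so the obstruction does not sit in a single degree as you present it; (ii) the step ``$[c]$ is a coboundary $\Rightarrow$ bounded orbit'' is not explained, and it is not clear how a primitive in $C^{n-1}(\Gamma,\ell^1(Y^{(n)}))$ produces a bounded $\Gamma$-orbit of points in $Y$. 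The double-complex approach avoids both issues by working with all cellular degrees simultaneously and extracting a finite orbit rather than a bounded one.
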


This theorem follows from the following general theorem, together with the case $p=1$ of Corollary~\ref{cor:com}.

\begin{theorem}[\cite{BBBS}] \label{thm:Farb2}
Let $\Gamma$ be a group and fix $n\in \mathbb{N}$.
Assume that for every action of $\Gamma$ on a set $X$ with no finite orbits, for every $i< n$, $H^i(\Gamma,\ell^1(X))=0$.
Then any cellular action of $\Gamma$ on a contractible cellular complex of dimension less than $n$ has a finite orbit.
In particular, $(T_n)_{L^1}$ implies FA$_n$.
\end{theorem}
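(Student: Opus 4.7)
The plan is to argue by contradiction, using the cellular $\ell^1$-chain complex of $Y$ as a Banach $\Gamma$-module resolution of the trivial coefficients and then dimension-shifting via long exact sequences. Assume that $\Gamma$ acts cellularly on a contractible cellular complex $Y$ of dimension $d<n$ without a finite orbit, so every $\Gamma$-orbit on the set $X_k$ of $k$-cells is infinite for $0\le k\le d$. The standing hypothesis then gives $H^i(\Gamma,\ell^1(X_k))=0$ for all $0\le i<n$ and $0\le k\le d$. Passing to a barycentric subdivision if needed so that the cellular boundary operators are bounded in $\ell^1$-norm, I would form the complex of Banach $\Gamma$-modules
\[
0\to\ell^1(X_d)\xrightarrow{\partial_d}\cdots\xrightarrow{\partial_1}\ell^1(X_0)\xrightarrow{\varepsilon}\mathbb{C}\to 0, \qquad(\ast)
\]
which is at least algebraically exact on finitely supported chains by contractibility of $Y$.

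Granting that $(\ast)$ is exact in the Banach sense, I would split it into short exact sequences $0\to Z_k\to\ell^1(X_k)\to Z_{k-1}\to 0$ (with $Z_{-1}=\mathbb{C}$ and $Z_d=0$) and apply the associated long exact sequences of cohomology. Using $H^i(\Gamma,\ell^1(X_k))=0$ for $i<n$ together with $\ell^1(X_k)^\Gamma=0$, repeated dimension-shifts then yield
\[
\mathbb{C}=H^0(\Gamma,\mathbb{C})\cong H^1(\Gamma,Z_0)\cong\cdots\cong H^d(\Gamma,\ell^1(X_d))=0,
\]
the last vanishing again by hypothesis since $d<n$. This contradicts $\mathbb{C}\neq 0$ and completes the argument.

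The hardest step will be Banach-exactness of $(\ast)$: the $\ell^1$-completion of an algebraically exact chain complex need not be exact because images can fail to be closed (a standard obstruction already visible for $\mathbb{Z}\curvearrowright\mathbb{R}$). I expect this to be bypassed either by carrying out the dimension-shift with reduced cohomology throughout and invoking Lemma~\ref{lem:uptrick} to absorb the torsion arising from non-closed images, or by constructing a nontrivial class directly: compose a $\Gamma$-equivariant chain map $B_\bullet(\Gamma)\to C_\bullet(Y)$ (from the bar resolution, which exists by contractibility of $Y$) with the tautological inclusion $C_d(Y;\mathbb{C})\hookrightarrow\ell^1(X_d)$ to obtain a $d$-cocycle in $C^d(\Gamma;\ell^1(X_d))$, and show that any trivialisation of it would produce a $\Gamma$-invariant averaging and hence a finite orbit, contradicting the assumption.

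For the ``in particular'' clause: by Corollary~\ref{cor:com}, property $(T_n)_{L^1}$ gives $H^i(\Gamma,L^1(X))=0$ for every $i\le n$ whenever $L^1(X)^\Gamma=0$. For a discrete $\Gamma$-set $X$ without a finite orbit, $\ell^1(X)^\Gamma=0$ since any $\Gamma$-invariant $\ell^1$-function is constant on each infinite orbit and hence zero. Thus the hypothesis of the main theorem is satisfied with $n+1$ in place of $n$, and applying the main assertion with that shift yields finite orbits on every contractible complex of dimension $\le n$, which is exactly FA$_n$.
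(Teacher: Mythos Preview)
Your primary strategy—splitting the $\ell^1$-completed cellular chain complex $(\ast)$ into short exact sequences and dimension-shifting—does not work, and you correctly sense this. The sequence $(\ast)$ is genuinely \emph{not} exact, even algebraically. Take $Y=\bbR$ with the standard CW structure: then $\partial_1\colon\ell^1(\bbZ)\to\ell^1(\bbZ)$ sends $(a_k)$ to $(a_{k-1}-a_k)$, and the sequence $b_k=1/k^2$ for $k\ge 1$, $b_0=-\sum_{k\ge 1}1/k^2$, $b_k=0$ for $k<0$ lies in $\ker\varepsilon$ but has no $\ell^1$-primitive (the required primitive decays like $1/k$). So the dimension-shifting chain $H^0(\Gamma,\bbC)\cong\cdots\cong H^d(\Gamma,\ell^1(X_d))$ cannot be set up as written. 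Your workaround~1 via reduced cohomology and Lemma~\ref{lem:uptrick} does not help here: that lemma upgrades reduced vanishing to unreduced vanishing for a class closed under ultrapowers, whereas your obstruction is non-exactness of the \emph{coefficient} sequence, not non-Hausdorffness of a cohomology group.

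Your workaround~2 is the right idea, and it is exactly the paper's approach. The paper phrases it as ``diagram chasing over the double complex whose $p,q$-coordinate is the space of maps from $\Gamma^p$ to the space of $q$-chains of the given cellular complex, along with their $\ell^1$-completions.'' Concretely: use contractibility of $Y$ to build a $\Gamma$-chain map $\varphi_\bullet\colon B_\bullet(\Gamma)\to C_\bullet(Y)$ into the \emph{uncompleted} cellular chains (this is where exactness is available). Since $\dim Y=d$, the element $\iota\circ\varphi_d\in\hom_{\bbC\Gamma}(B_d,\ell^1(X_d))$ is a $d$-cocycle; the hypothesis kills it, yielding $\eta_d$ with $\delta\eta_d=\iota\circ\varphi_d$. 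Then $\iota\circ\varphi_{d-1}-\partial_d^Y\circ\eta_d$ is a $(d-1)$-cocycle in $\ell^1(X_{d-1})$, and one descends inductively. At degree~$0$ one obtains a $\Gamma$-invariant element of $\ell^1(X_0)$, which must vanish since $X_0$ has no finite orbits; but applying the augmentation $\varepsilon$ yields $\varepsilon\circ\varphi_0=\varepsilon_B\ne 0$ while all correction terms $\partial_1^Y\circ(\cdots)$ die under~$\varepsilon$. This is the contradiction. The crux is precisely the interplay between the uncompleted chains (where the resolution lives) and the $\ell^1$-completions (where the vanishing hypothesis applies)—exactly what the paper signals.

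Your deduction of the ``in particular'' clause is fine: $(T_n)_{L^1}$ gives vanishing for $i\le n$, so the main clause applies with $n+1$ and yields a finite orbit on any contractible complex of dimension~$\le n$; a CAT(0) complex is contractible, and the circumcenter of a finite orbit in a CAT(0) space is a global fixed point, whence FA$_n$. You should make that last CAT(0) step explicit.
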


The proof of this theorem is by a certain diagram chasing over the double complex whose $p,q$-coordinate is the space of maps from $\Gamma^p$ to the space of $q$-chains of the given cellular complex, along with their $\ell^1$-completions.
We note that Fraczyk and Lowe conjectured that $(T_n)$ implies FA$_n$.

Theorems~\ref{thm:Farb} and \ref{thm:Farb2} fit Zimmer's philosophy whose best-known representative is the following.

\begin{conjecture}[Zimmer's Conjecture]
Every diffeomorphic action of a lattice in a simple group of rank $r$ on a compact manifold of dimension less than $r$ factors via a finite quotient.
\end{conjecture}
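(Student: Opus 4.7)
The plan is to transport the strategy behind Theorem~\ref{thm:Farb2} -- that higher cohomological vanishing forces low-dimensional actions to stabilize -- from the cellular setting to the smooth one, with Zimmer's cocycle superrigidity as the bridge. First I would try to produce a $\Gamma$-invariant probability measure $\mu$ on~$M$. The natural $\Gamma$-action on $\Prob(M)\subset C(M)^\ast$ is isometric for the total variation norm, so the cohomological vanishing of Corollary~\ref{cor:com} applied to $L^p(M,\nu)$ for a quasi-invariant stationary measure~$\nu$, combined with the Mazur map linking $L^1$ and $L^p$, is one mechanism that one hopes will force the stationary measure to be invariant below the rank -- in direct parallel to how Theorem~\ref{thm:Farb2} extracts a finite orbit from a cellular action of low dimension.

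With such an invariant $\mu$ in hand, I would invoke Zimmer's cocycle superrigidity for the derivative cocycle $\alpha\colon\Gamma\times M\to\GL_d(\bbR)$. Since $d=\dim M<r$, the Zariski closure of~$\alpha$ has no higher-rank simple factor, so superrigidity identifies~$\alpha$ as measurably cohomologous to a cocycle into a compact subgroup of~$\GL_d(\bbR)$; in particular all Lyapunov exponents of the action vanish. I would then try to upgrade this measurable statement to the existence of a $\Gamma$-invariant smooth Riemannian metric~$g$ on~$M$, by averaging and elliptic regularization in the spirit of the Brown-Fisher-Hurtado treatment of the cocompact case. The action then factors through the compact Lie group $\Isom(M,g)$, and Margulis' arithmeticity forces the image of $\Gamma$ in any compact Lie group to be finite.

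The main obstacle -- and the reason the conjecture has resisted a purely cohomological attack -- is the very first step. The coefficient modules naturally associated to a measure action on~$M$ are of $L^\infty$/$L^1$ type with only an automorphism (not unitary) $\Gamma$-action, and the remark after Conjecture~\ref{conj:tracial} warns that the relevant cohomological vanishing may fail precisely in the degrees one wishes to use. Bypassing this difficulty -- either by combining the machinery of Section~\ref{sec: as abstract group property} with subexponential growth estimates for the derivative cocycle, or by building directly an auxiliary CAT$(0)$ complex of dimension less than~$r$ on which $\Gamma$ acts and to which Theorem~\ref{thm:Farb2} can be applied -- appears to be the real heart of the matter, and is where one should expect the principal new ideas to be needed.
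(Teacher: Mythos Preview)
The paper does not prove this statement. It is listed as a \emph{conjecture} and the paper only remarks that it ``is now proved in all cases, due to the work of Brown, Fisher, Hurtado and others''. There is therefore no proof in the paper to compare against; your proposal should be measured against the Brown--Fisher--Hurtado argument, not against anything in this manuscript.

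Your proposal is, by your own admission, not a proof but a strategy outline with an identified gap in the very first step. Two remarks are in order. First, the mechanism you suggest for producing an invariant probability measure---applying Corollary~\ref{cor:com} to $L^p(M,\nu)$ for a stationary $\nu$---does not do what you want: the $\Gamma$-action on $L^p(M,\nu)$ coming from a quasi-invariant (non-invariant) measure is by automorphisms, not isometries, so Corollary~\ref{cor:com} does not apply, and there is no evident way to extract invariance of $\nu$ from cohomological vanishing below the rank. This is exactly the $L^\infty/L^1$ obstruction you flag. Second, even granting an invariant measure and vanishing Lyapunov exponents, the passage from ``measurably cohomologous to a compact cocycle'' to ``there exists a smooth invariant Riemannian metric'' is not an averaging-and-regularization matter; bridging that gap is the core of the Brown--Fisher--Hurtado work and uses Lafforgue's strong property~T together with Ratner theory and a delicate analysis of the suspension space, none of which are cohomological in the sense of this paper. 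So the obstacle you identify is real, and the known resolution goes through substantially different ideas than the ones you sketch.
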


This conjecture is now proved in all cases, due to the work of Brown, Fisher, Hurtado and others.
In view of the above, we make the following.

\begin{conjecture} \label{conj:TZimmer}
    Every diffeomorphic action of a property $(T_n)_{L^1}$ group on a compact manifold of dimension less than or equal $n$ factors via a finite quotient.
\end{conjecture}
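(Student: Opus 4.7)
My plan is to parallel the Brown--Fisher--Hurtado proof of Zimmer's conjecture, with Corollary~\ref{cor:com} providing the cohomological input that higher rank supplies in their setting. I would proceed in three stages. First, I would produce a $\Gamma$-invariant smooth probability measure $\mu$ on $M$. Fixing a smooth background measure $\mu_0$, the diffeomorphism action of $\Gamma$ is isometric on $L^1(M,\mu_0)$, and the existence of an invariant probability density should follow from the vanishing of $H^1(\Gamma, L^1_0(M,\mu_0))$, where $L^1_0$ denotes integral-zero functions, via a standard affine cocycle argument; this vanishing is supplied by $(T_n)_{L^1}$ already in degree one.

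Second, I would upgrade to a $\Gamma$-invariant Riemannian metric on $M$. Given the invariant measure, the $\Gamma$-action on $L^1$-sections of $\mathrm{Sym}^2 T^\ast M$ is isometric, and since $\dim M \leq n$ all relevant degrees fall into the vanishing range of Corollary~\ref{cor:com}. An invariant $L^1$-section of the cone of positive-definite bilinear forms should thereby exist; the delicate point is to promote this to a smooth invariant metric, for which I would employ suspension and regularity arguments of Zimmer-rigidity type.

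Third, with a smooth $\Gamma$-invariant metric $g$ in hand, the action factors through the compact Lie group $\mathrm{Isom}(M,g)$, and one needs to rule out infinite image homomorphisms of $\Gamma$ into compact Lie groups of dimension bounded in terms of $n$. This would presumably exploit a cohomological substitute for Margulis-style arithmeticity arguments, making use of the higher-degree vanishing given by $(T_n)_{L^1}$ applied to the natural function spaces on the candidate compact image.

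\textbf{Main obstacle.} I expect the decisive difficulty to lie at the boundary between the second and third stages. The smoothing step of the second stage is essentially the Zimmer rigidity program compressed into a single move, and it is not at all clear that the qualitative cohomological vanishing of $(T_n)_{L^1}$ suffices to replace the quantitative subexponential-growth-of-derivatives input used in Brown--Fisher--Hurtado. Moreover, even granting a smooth invariant metric, the passage from compact image to finite image for higher-rank lattices is ultimately arithmetic; for a general $(T_n)_{L^1}$ group the conjecture, if true, may well demand further cohomological input, or additional hypotheses such as residual finiteness, beyond what the property supplies on its own.
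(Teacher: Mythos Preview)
The statement you are attempting to prove is stated in the paper as a \emph{conjecture}, not a theorem; the paper offers no proof and does not claim one. It is introduced with the words ``In view of the above, we make the following'' immediately after recalling Zimmer's conjecture, and it appears in Figure~\ref{fig1} with a snake arrow, which the caption glosses as ``conjectural.'' So there is nothing in the paper to compare your proposal against.

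As a strategy toward the open problem, your outline has real gaps even at the first step. You invoke $(T_n)_{L^1}$ to get $H^1(\Gamma, L^1_0(M,\mu_0))=0$, but the property as formulated in Corollary~\ref{cor:com} only yields vanishing when $L^1_0(M,\mu_0)^\Gamma=0$; you have not argued this, and in fact a nonzero invariant vector there would already give you an invariant signed measure, so the logic is close to circular. More seriously, your second and third stages import wholesale the machinery of Brown--Fisher--Hurtado and Margulis arithmeticity, both of which rely in essential ways on the ambient semisimple group (strong property~T, exponential mixing, superrigidity for cocycles, arithmeticity). None of these tools is available for an abstract $(T_n)_{L^1}$ group, and you yourself identify this as the ``main obstacle.'' That is an honest assessment: as things stand, your proposal is a wish list rather than a proof sketch, and the paper's authors evidently share the view that the statement is out of reach, since they pose it as a conjecture.
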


\subsubsection{Waist inequalities below the rank} \label{subsec:waist}

Waist inequalities predict 
the existence of a fiber with 
a uniform lower complexity bound 
for a family of maps to some Euclidean spaces, or more generally, manifolds. 
Gromov's famous waist inequality~\cite{gromov-waist} for the sphere $S^n$ says that the maximal volume of a fiber of a (generic) map from $S^n$ to $d$-dimensional Euclidean space is at least the $(n-d)$-dimensional volume of an equator sphere $S^{n-d}$. The focus of the following discussion is less on explicit lower complexity bounds but allows families of maps where the domain manifold varies. A typical example is to consider all finite covers of a fixed manifold in the domain. 
In this setting waist inequalities can be understood as Riemannian versions of higher-dimensional topological expanders. 

In this subsection we present a conjectural below-rank phenomenon for higher rank lattices and relate it to higher property~T. 
A differential-geometric approach to waist inequalities was developed by Fraczyk and Lowe~\cite{fraczyklowe}. Our account is influenced by the connection to higher property~T. 
See Figure~\ref{fig1} for a diagram illustrating much of the content of this subsection and the previous one.

\begin{figure}[ht] 
\centering
\begin{tikzcd}[scale=0.8]
\text{Zimmer Conjecture} & &\mathrm{FA}_n &\\
&{{[}\mathrm{T}_n{]}}_{L^1}\ar[d,Rightarrow]
\ar[ddl, decorate, decoration={snake, amplitude=0.3mm, segment length=1.5mm}, ->, bend right, "\text{Conj~\ref{conj: most general waist conj}}"' blue]
%\ar[ddd, to path={decorate{ -- ++(-2.5,0)   
%                -- ++(0,-4.7)   
%                -- ++(0.9,0)}}, decoration={snake, amplitude=0.3mm, segment length=1.5mm}]
\ar[r,Rightarrow]  
                & (\mathrm{T}_n)_{L^1} \ar[ull, decorate, decoration={snake, amplitude=0.3mm, segment length=1.5mm}, ->, bend right=8, "\text{Conj \ref{conj:TZimmer}}"' blue] \ar[u, 
                Rightarrow,"\text{Thm~\ref{thm:Farb2}}"' blue]
                \ar[d, Rightarrow,"\text{Thm~\ref{thm: higher prop T implies coboundary expansion}}" blue] \\
&\shortstack{\text{$\bbR$-coboundary} \\ \text{expansion and}\\\text{vanishing $H^\ast({\_},\bbR)$}\\\text{in degrees up to~$n$}}\ar[r, Rightarrow]\ar[d, dotted, Rightarrow, "\text{for $n=1$}" blue]      & \shortstack{\text{$\bbR$-coboundary} \\ \text{expansion}\\\text{up to~$n$}}\ar[d, Leftarrow]
%\ar[d, decorate, decoration={snake, amplitude=0.3mm, segment length=1.5mm}, ->,bend left=35] \\
\ar[d, decorate, decoration={snake, amplitude=0.3mm, segment length=1.5mm}, ->, bend right=65, "\shortstack{\text{\footnotesize{using}} \\\text{\scriptsize{Conj~\ref{conj: torsion cohomology conjecture}?}}}"' blue, near end] \\
\shortstack{\text{Waist inequalities}\\ \text{in codimension} \\ n+1}&\shortstack{\text{$\bbZ$-cosystolic} \\ \text{expansion}\\\text{up to~$n$}}\ar[d, Rightarrow]\ar[r, Rightarrow] \ar[l, Rightarrow,"\text{Thm~\ref{thm: topological overlap}}"' blue]         & \shortstack{\text{$\bbZ$-coboundary} \\ \text{expansion}\\\text{up to~$n$}}\ar[d, Rightarrow, dotted, "\shortstack{\text{\scriptsize{assume}} \\ \text{\scriptsize{$H^\ast({\_},\bbZ)=0$}}}" blue] \\
%& & & \\
&\shortstack{\text{$\bbF_2$-cosystolic} \\ \text{expansion}\\\text{up to~$n-1$}}\ar[r, Rightarrow] \ar[d, Rightarrow, "\text{Thm~\ref{thm: topological overlap}}" blue]    & \shortstack{\text{$\bbF_2$-coboundary} \\ \text{expansion}\\\text{up to~$n-1$}} \\
  &    \shortstack{\text{Topological overlap property}\\ \text{in dimension~$n$}} & &
\end{tikzcd}
\caption{Snake arrows are conjectural. Dotted arrows are conditional.}
\label{fig: geometric implications} \label{fig1}
\end{figure}
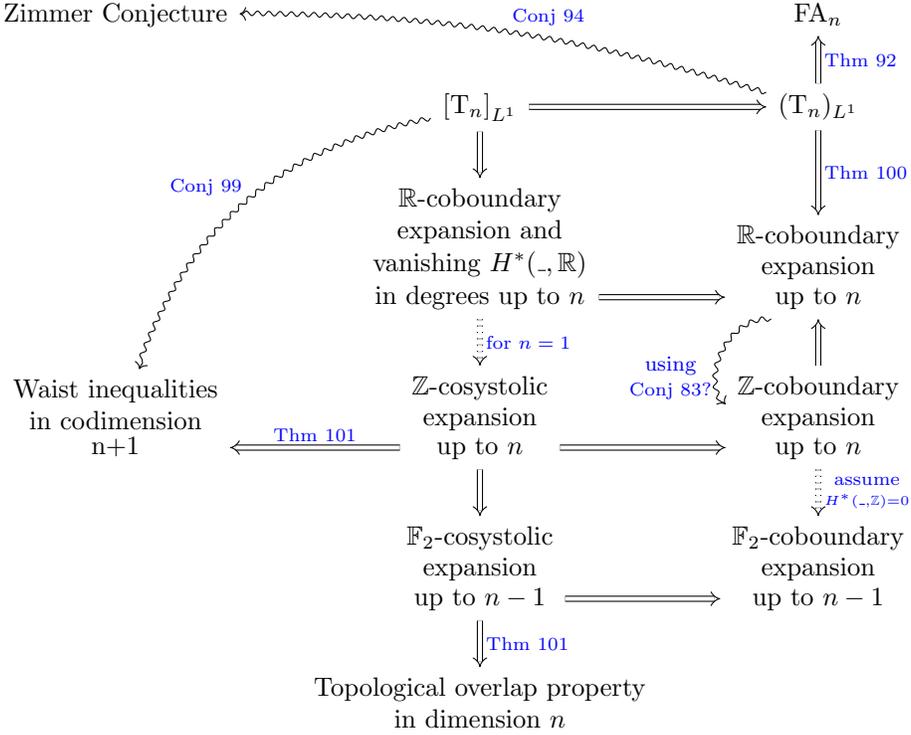

\begin{defn}
A family $\mathcal{F}$ of   finite volume Riemannian manifolds of the same dimension satisfies a \emph{uniform waist inequality in codimension~$d$}, where $d$ is smaller than the dimension, if there is a constant~$C_{\mathcal{F}}>0$ such that for every $M\in \mathcal{F}$ and for every smooth map $f\colon M\to \bbR^d$ there is a  regular value $x\in \bbR^d$ such that the $(n-d)$-dimensional volume of the fiber of~$x$ satisfies 
\[ \vol_{n-d}\bigl(f^{-1}(\{x\})\bigr)\ge C_{\mathcal{F}}\cdot\vol(M).\]
\end{defn}

Let us put this in contrast with the notion of topological expander. A family $\calF$ of simplicial complexes is a $d$-dimensional \emph{topological expander} if there is $C_\calF>0$ such that for every $X\in\calF$ and every continuous map $f\colon X\to\bbR^d$ there is a fiber $f^{-1}(\{x\})$ that meets at least a $C_\calF$-fraction of all  $d$-simplices in~$X$. 

\begin{conjecture}\label{conj: waist conjecture for lattice}
 Let $G$ be a simple Lie group of higher rank $d\ge 2$ and with finite center. Let $X=G/K$ be the associated symmetric space. Then the family of finite volume locally symmetric spaces with universal cover~$X$ satisfies a uniform waist inequality in codimensions $1,\dots, d$. 
\end{conjecture}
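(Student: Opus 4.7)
The plan is to derive the uniform Riemannian waist inequality from the higher property~T of lattices in~$G$ via the chain of reductions traced (conditionally) in Figure~\ref{fig1}, running through combinatorial high-dimensional expansion.

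First, by Theorem~\ref{thm:higher}, every lattice $\Gamma<G$ has property $(T_{d-1})$, and Corollary~\ref{cor:com} with $p=1$ upgrades this to property $(T_{d-1})_{L^1}$, uniformly in~$\Gamma$. This cohomological vanishing is the starting fuel. Second, I would translate $(T_{d-1})_{L^1}$ into an $\bbR$-coboundary expansion statement, in the range $0,\dots,d-1$, for a suitable triangulation of the locally symmetric space $M_\Gamma \defq \Gamma\backslash X$, via Theorem~\ref{thm: higher prop T implies coboundary expansion}. The delicate point here is \emph{uniformity}: one needs $\Gamma$-equivariant triangulations of~$X$ with bounded local combinatorics across the entire family, so that the expansion constant does not degenerate with the covolume of~$\Gamma$. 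A natural candidate is to triangulate a precompact model of a fundamental domain in a bounded-geometry fashion, using the thick-thin decomposition of Leuzinger-Young flavor (cf.~Theorem~\ref{thm: leuzinger+young}) to handle cusps uniformly in the non-uniform case.

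Third, I would bootstrap from $\bbR$-coboundary expansion to $\bbZ$-cosystolic expansion up to dimension~$d-1$. This is the conditional implication in Figure~\ref{fig1}, depending on a uniform version of Conjecture~\ref{conj: torsion cohomology conjecture} asserting that integer torsion in homology degrees below the rank does not grow fast enough along the family to obstruct expansion. Once this is in hand, Theorem~\ref{thm: topological overlap} yields the topological overlap property in dimensions $1,\dots, d$ for the triangulated $M_\Gamma$. Passing from this combinatorial overlap to the smooth waist statement of the conjecture is then a Gromov-style argument, transferring the combinatorial volume of a fiber in the triangulation to the $(n-d)$-dimensional Riemannian volume of the corresponding preimage — uniformly, thanks to the bounded local geometry of the triangulations arranged in step two.

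The main obstacle is Step three: the passage from $\bbR$-coefficient expansion to $\bbZ$-coefficient expansion, uniformly across all lattices. This is precisely the snake-arrow implication in Figure~\ref{fig1}, and it is tightly coupled to torsion homology growth for higher-rank lattices, where the results of Ab\'ert-Bergeron-Fraczyk-Gaboriau and Fraczyk-Mellick-Wilkens provide only partial progress below the rank. A secondary difficulty is arranging the geometric-to-combinatorial transfer uniformly for non-uniform lattices, where the thin parts contribute both to the combinatorics of the triangulation and to the potential location of minimal fibers in the waist argument; here one might hope to bypass the combinatorial route entirely and imitate the direct differential-geometric strategy of Fraczyk-Lowe, fed by $L^1$-cohomological vanishing from Corollary~\ref{cor:com} in place of the $L^2$-input they use.
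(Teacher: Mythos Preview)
The statement is labeled a \emph{conjecture} in the paper and is explicitly left open: there is no proof in the paper to compare against. What the paper provides is exactly the conditional roadmap in Figure~\ref{fig1} that you have reproduced --- $(T_{d-1})_{L^1}$ via Corollary~\ref{cor:com}, then $\bbR$-coboundary expansion via Theorem~\ref{thm: higher prop T implies coboundary expansion}, then the (open) passage to $\bbZ$-cosystolic expansion, then waist via Theorem~\ref{thm: topological overlap}(2). So your proposal is not an alternative argument but a faithful paraphrase of the paper's own conjectural strategy, and you correctly identify the $\bbR\to\bbZ$ step as the principal gap (the snake arrow in Figure~\ref{fig1}).

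Beyond that acknowledged gap, your outline glosses over two further obstacles that the paper does not resolve either. First, Theorem~\ref{thm: higher prop T implies coboundary expansion} yields a uniform expansion constant only for the family of \emph{finite covers of a single fixed} compact complex, whereas the conjecture ranges over all lattices in~$G$, which are in general pairwise incommensurable; your appeal to ``bounded local combinatorics across the entire family'' is a wish, not a result. Second, both Theorem~\ref{thm: higher prop T implies coboundary expansion} and Theorem~\ref{thm: topological overlap}(2) are stated for compact complexes and closed manifolds, so the non-uniform (finite-volume, non-compact) case is not covered by any of the tools you invoke; the thick--thin hand-wave does not substitute for an actual argument. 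In short: your plan is the paper's plan, and the paper regards it as open.
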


Using geometric measure theory and the theory of minimal surfaces, the following result was proved by  Fraczyk and Lowe~\cite{fraczyklowe}.

\begin{theorem}
The family of compact locally symmetric spaces whose universal cover is the octonionic hyperbolic plane satisfies a uniform waist inequality in codimension~$2$.
\end{theorem}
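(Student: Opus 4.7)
The plan is to leverage the exceptional fact that cocompact lattices $\Gamma$ in $F_4^{(-20)}$ satisfy property~$[T_2]$ (as noted in the remark preceding Theorem~\ref{thm:Farb}), together with geometric measure theory on the octonionic hyperbolic plane~$X$. Applying Theorem~\ref{thm:Farb2} (in the strengthened $L^1$-version one would obtain from an $F_4^{(-20)}$-specific analogue of Corollary~\ref{cor:com}) yields for $\Gamma$ the property $FA_2$ and, more quantitatively, a coboundary/cosystolic expansion estimate up to degree~$2$ on any $\Gamma$-equivariant $CW$-structure of~$X$. This cohomological rigidity below dimension~$2$ is the fuel for the geometric waist inequality: uniform expansion in codimension~$2$ on the universal cover descends to a uniform expansion statement on every quotient $M = \Gamma \backslash X$ with constants independent of the particular lattice.

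To convert the cohomological input into a Riemannian waist statement I would argue as follows. For a smooth $f \colon M \to \bbR^2$ view the fibers as a measurable family of integral $14$-currents; by coarea, their total mass is controlled by $\mathrm{Lip}(f)^2\cdot \vol(M)$. If every regular fiber had mass bounded by $\epsilon\cdot\vol(M)$, then replacing each slice by an area-minimizer in its relative homology class (which exists by standard Federer--Fleming compactness and is well-behaved because of the negative curvature of~$X$) yields a measurable family of minimal codimension-$2$ currents whose integral over~$\bbR^2$ represents a non-trivial cycle detecting the fundamental class of~$M$. A uniform codimension-$2$ filling inequality on $X$, equivariantly descended from the expansion estimate of Step~$1$, forces such a cycle to have total mass $\gtrsim \vol(M)$, contradicting the assumption for $\epsilon$ sufficiently small. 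This yields the existence of one fiber of mass $\gtrsim \vol(M)$ with a constant depending only on $F_4^{(-20)}$.

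The hard part is the first step: extracting a usable $L^1$-style expansion estimate from $[T_2]$ in a setting where $F_4^{(-20)}$ has Lie-theoretic rank~$1$, so that the general Corollary~\ref{cor:com} does not directly apply. One must exploit the specific representation theory that makes $F_4^{(-20)}$ the unique rank-one family with higher property~T, presumably via the Vogan--Zuckerman classification of unitary representations with non-vanishing $(\mathfrak{g},K)$-cohomology for this exceptional group, combined with a Mazur-map-style transfer between $L^2$ and $L^1$ coefficients along the lines of~\cite{Olivier}. A secondary difficulty is the passage between a discrete coboundary expansion estimate and the continuous waist statement, which requires geometric measure theory in a $16$-dimensional ambient manifold where the singular set of codimension-$2$ area-minimizers can be non-trivial; this is where the minimal-surface techniques used by Fraczyk and Lowe in~\cite{fraczyklowe} are presumably indispensable.
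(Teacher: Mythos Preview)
The paper does not give its own proof of this statement; it attributes the result to Fr\k{a}czyk--Lowe~\cite{fraczyklowe} and says only that their proof uses ``geometric measure theory and the theory of minimal surfaces.'' In the paper's narrative the result is explicitly contrasted with the property-T route: the very next theorem (Theorem~\ref{thm: prop T and waist}, about finite covers of a fixed property-T manifold) is introduced as being proved ``with completely different methods.'' So there is no cohomological argument in the paper to compare against; the theorem is cited, not proved.

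Your proposed route through $[T_2]$ has two genuine gaps. First, $[T_2]$ for lattices in $F_4^{(-20)}$ is a Hilbert-space statement, and no transfer to $L^1$ coefficients is established: $F_4^{(-20)}$ has real rank~$1$, so Corollary~\ref{cor:com} and Theorem~\ref{thm:noncom} give nothing here, and there is no known Mazur-map argument producing $(T_2)_{L^1}$ from $[T_2]$ in this setting. You correctly flag this as ``the hard part,'' but it is not a technicality---without it the cohomological input you want simply does not exist. Second, and more structurally, a codimension-$2$ waist inequality requires $\bbZ$-valued cosystolic expansion in degrees~$0$ and~$1$ (Theorem~\ref{thm: topological overlap}(2)), not in degree~$2$; the relevant input is ordinary property~T, not $[T_2]$. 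The paper's passage from property~T to codimension-$2$ waist (Theorem~\ref{thm: prop T and waist}) works, but the expansion constant it produces depends on a chosen base manifold and the result applies only to its finite covers. Your proposal does not explain how to obtain a single constant valid across \emph{all} cocompact lattices in $F_4^{(-20)}$, including non-commensurable ones, which is exactly what the Fr\k{a}czyk--Lowe theorem asserts.

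The actual Fr\k{a}czyk--Lowe argument bypasses higher property~T entirely and works directly with the Riemannian geometry of the universal cover (area-minimizing currents, monotonicity, thick-thin decomposition), which is why the constants are uniform over the whole family. Your middle paragraph gestures in that direction but is not an argument: the coarea bound involves $\mathrm{Lip}(f)$, which is not controlled, and the step ``a uniform codimension-$2$ filling inequality on $X$ \ldots\ forces such a cycle to have total mass $\gtrsim \vol(M)$'' is precisely the substance of the Fr\k{a}czyk--Lowe paper, not something that follows from anything you have set up.
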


With completely different methods, the authors prove the following result. 

\begin{theorem}[\cite{BS-Waist}]\label{thm: prop T and waist}
The family of finite covers of a connected closed Riemannian manifold whose fundamental group has property~T satisfies a uniform waist inequality~\footnote{The allowed family of maps has to be restricted to real-analytic maps.} in codimension~$2$. 
\end{theorem}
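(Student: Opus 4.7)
The plan is to reduce the uniform waist inequality to a uniform $\ell^1$-coboundary expansion statement on the family $\{M_i\}$ of finite covers and then extract that expansion from property~T of $\Gamma=\pi_1(M)$. Concretely, we aim for a constant $K>0$ depending only on $M$ such that for every finite cover $M_i$ and every integer $1$-cochain $\alpha$ on a comparison cell structure of $M_i$, the $\ell^1$-distance of $\alpha$ from the coboundary space is at most $K\cdot\|\delta\alpha\|_{\ell^1}$.

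The cohomological input combines two ingredients. First, the fixed-point theorem of~\cite{BGM} gives, for any property~T group $\Gamma$, the vanishing $H^1(\Gamma,\ell^1(X))=0$ for every $\Gamma$-set $X$ with no finite orbits; specializing to $X=\Gamma/\Gamma_i$ yields first $\ell^1$-cohomology vanishing on each cover. Second, Ozawa's sum-of-squares identity for the group-ring Laplacian (Theorem~\ref{thm:Ozawa}) lifts this from a cover-by-cover vanishing to a single $\bbC\Gamma$-level spectral inequality that transfers, through the BGM bounded-cocycle machinery, to uniform constants simultaneously across all $\ell^1(\Gamma/\Gamma_i)$. Since finite-index subgroups of a property~T group have finite abelianization, $H^1(\pi_1(M_i),\bbZ)=0$ automatically, so the uniform $\ell^1$-coboundary estimate already yields uniform $\bbZ$-cosystolic expansion in degree~$1$ along the family.

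For the geometric conversion, given a real-analytic $f\colon M_i\to\bbR^2$, refine a triangulation of $\bbR^2$ and pull back to obtain a cellular decomposition of $M_i$ compatible with $f$; real-analyticity ensures that generic fibers $f^{-1}(x)$ are subanalytic sets with well-defined $(n-2)$-dimensional volumes and that the critical stratification is tame. The coarea formula then identifies the fiber-volume distribution with the $\ell^1$-norm of a natural $1$-cochain $\alpha_f$, and the cosystolic estimate of the previous paragraph forces some generic fiber to carry a definite proportion of $\vol(M_i)$, producing the desired constant~$C$.

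The main obstacle I expect is uniformity. Property~T delivers cohomological vanishing representation-by-representation, but the waist constant must be independent of $i$ and of $f$ simultaneously. The Ozawa-type algebraic identity in $\bbC\Gamma$ is precisely what supplies this uniformity on the algebraic side; the delicate part is tracking effective constants through the BGM transfer to $\ell^1$-coefficients, across the coarea step, and onto subanalytic fibers, without losing them. A secondary difficulty, and seemingly the reason for the real-analyticity hypothesis, is that subanalytic geometry appears to be exactly the regularity needed to convert a combinatorial coboundary bound into a Riemannian volume bound without sacrificing uniformity; relaxing to smooth $f$ would require either approximation or a genuinely $C^\infty$ stratification argument.
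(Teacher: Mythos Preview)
Your overall architecture---property~T $\Rightarrow$ $\ell^1$-coboundary expansion $\Rightarrow$ $\bbZ$-cosystolic expansion $\Rightarrow$ waist inequality---matches the paper's, but two of the three arrows are not justified as you have them, and a third ingredient is misattributed.

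\textbf{The $\bbR\to\bbZ$ passage.} BGM and property~T give you $\bbR$-valued (equivalently, $\ell^1$-valued) coboundary expansion: for a real $1$-cochain $\alpha$ there is a real primitive with controlled norm. You then assert that, since $H^1(\pi_1(M_i),\bbZ)=0$, ``the uniform $\ell^1$-coboundary estimate already yields uniform $\bbZ$-cosystolic expansion.'' This does not follow. Vanishing of $H^1(\,\cdot\,,\bbZ)$ tells you every integral $2$-coboundary has \emph{some} integral primitive, but gives no control on its $\ell^1$-norm; a small real primitive need not round to a small integral one. The paper closes this gap by a specific structural fact: the degree~$1$ coboundary map of a simplicial complex is represented by a \emph{totally unimodular} matrix, which forces the integral and real filling constants to coincide (see the discussion around~\cite{BS-Waist}*{Theorems~2.8 and~2.12}). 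This is a genuine ingredient you are missing, and it is also the reason the argument currently works only in degrees~$\le 1$.

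\textbf{The geometric conversion.} You propose to pull back a triangulation of $\bbR^2$ along $f$ to get an $f$-adapted cell structure on $M_i$ and then read off a $1$-cochain $\alpha_f$. But your coboundary-expansion constant was established for the \emph{fixed} triangulation lifted from $M$; on an $f$-dependent cell structure you have no uniform expansion constant, and uniformity in $f$ is exactly what the waist inequality requires. The paper works the other way around: it keeps the lifted triangulation fixed, proves uniform $\bbZ$-cosystolic expansion on that fixed complex, and then invokes a general Gromov-type theorem (Theorem~\ref{thm: topological overlap}(2)) which converts $\bbZ$-cosystolic expansion on a fixed triangulation into a uniform codimension-$d$ waist inequality for all maps at once. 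The real-analyticity restriction enters in the proof of \emph{that} black-box step, not via a coarea or subanalytic-fiber argument tailored to each $f$.

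\textbf{Source of uniformity.} Ozawa's sum-of-squares identity (Theorem~\ref{thm:Ozawa}) is a statement about the Hilbert-space Laplacian and gives uniform constants over \emph{unitary} representations; it says nothing directly about $\ell^1$-coefficients. The uniformity you need is exactly property~$(T_1)_{L^1}$, which for property~T groups is supplied by~\cite{BGM}, and the paper packages this as Theorem~\ref{thm: higher prop T implies coboundary expansion}. (Also, as stated, your invocation of BGM for $X=\Gamma/\Gamma_i$ is off: that set is finite, so the ``no finite orbits'' hypothesis fails; one works instead with $\ell^1_0(\Gamma/\Gamma_i)$, the kernel of the invariant functional.)
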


In a very recent preprint, Cameron Gates Rudd~\cite{rudd} uses ideas from~\cite{BS-Waist} to prove that a residually finite $3$-dimensional Poincar\'e duality group cannot be K\"ahler. 

In view of the last theorem, one is tempted to make the following conjecture, which seems even more ambitious than  Conjecture~\ref{conj: waist conjecture for lattice}. 

\begin{conjecture}\label{conj: most general waist conj}
Let $M$ be a closed Riemannian manifold such that the fundamental group of~$M$ satisfies~$(T_n)_{L^1}$ and $M$ is $(n-1)$-connected. Does the family of finite covers of~$M$ satisfy a uniform waist inequality in codimension~$n$?     
\end{conjecture}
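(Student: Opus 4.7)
The plan is to follow the chain of implications laid out in Figure~\ref{fig1}, converting $(T_n)_{L^1}$ into a uniform coboundary/cosystolic expansion statement for the tower of finite covers of~$M$ and then applying a Riemannian analogue of the topological overlap theorem to extract a genuine waist.

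First, I would invoke Theorem~\ref{thm: higher prop T implies coboundary expansion} (the top-right arrow in Figure~\ref{fig1}) to produce, from $(T_n)_{L^1}$, a uniform $\bbR$-coboundary expansion estimate up to degree~$n$ for the simplicial cochain complexes of a fixed smooth triangulation of~$M$ lifted to every finite cover $\widetilde M\to M$. The $(n-1)$-connectedness hypothesis on~$M$ is what makes this transfer go through: it ensures that the simplicial cochain complex of~$\widetilde M$ agrees, up to degree~$n$, with the bar complex of $\pi_1(M)$, so that the $\ell^1$-cohomological vanishing handed over by Theorem~\ref{thm:Farb2} (applied to coset spaces $\pi_1(M)/\Lambda$ of finite-index subgroups) becomes a statement about the actual cochain complex of~$\widetilde M$, with filling constant independent of~$\Lambda$.

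Second, I would pass from the real estimate to an integral one, transforming $\bbR$-coboundary expansion up to degree~$n$ into $\bbZ$-cosystolic expansion up to degree~$n-1$. This is the content of the conditional arrow labelled in Figure~\ref{fig1}, and it requires uniformly controlling the integral torsion of $H^{\le n}(\widetilde M;\bbZ)$ along the tower, of the kind predicted by Conjecture~\ref{conj: torsion cohomology conjecture}. Granted such torsion control, a standard rounding argument upgrades the real filling inequality to an integral one. With uniform $\bbZ$-cosystolic expansion at hand, one then applies a Riemannian version of the topological overlap theorem (Theorem~\ref{thm: topological overlap}), adapted to smooth maps $\widetilde M\to\bbR^n$ as in Theorem~\ref{thm: prop T and waist}: after refining the triangulation, a combinatorial overlap for the dual simplicial map is upgraded by a coarea/slicing computation to a genuine fiber of Riemannian volume $\gtrsim\vol(\widetilde M)$.

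The main obstacle is precisely the integer-versus-real dichotomy in the second step. A uniform bound on the torsion of the integral cohomology along the whole tower of finite covers is not available in the stated generality, and even in the arithmetic lattice case it is the content of Conjecture~\ref{conj: torsion cohomology conjecture}. A secondary difficulty is that the smoothing argument of \cite{BS-Waist} uses the regularity theory of two-dimensional minimal surfaces; extending it to codimension $n>2$ will presumably require replacing the minimal-surface input by a combinatorial refinement-and-slicing scheme, together with a Sobolev-type regularization of~$f$ that preserves the uniform constants under refinement. If either of these obstacles cannot be overcome in full generality, one could still hope to settle the conjecture under the additional assumption $[T_n]_{L^1}$, as suggested by the direct snake arrow in Figure~\ref{fig1}, where the bracketed higher property~T might allow one to bypass the passage through integer expansion.
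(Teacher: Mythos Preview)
The statement you are trying to prove is a \emph{conjecture}, not a theorem; the paper does not supply a proof and explicitly marks the relevant arrow in Figure~\ref{fig1} as conjectural (a snake arrow). So there is no ``paper's own proof'' to compare against.

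That said, your outline is not a proof either, and you essentially acknowledge this: you reduce the problem to two ingredients that are themselves open. The first, passing from $\bbR$-coboundary expansion to $\bbZ$-cosystolic expansion, is exactly the gap the paper isolates in the paragraph following Theorem~\ref{thm: topological overlap} and in the subsequent Question; the paper's only tool here (total unimodularity, \cite[Theorem~2.8]{BS-Waist}) works in degrees $0$ and $1$, and your appeal to Conjecture~\ref{conj: torsion cohomology conjecture} is speculative in the same way the paper's own remark is. The second, extending the smoothing/overlap-to-waist step beyond codimension~$2$, is likewise flagged by the paper only implicitly. One minor correction: your justification for invoking Theorem~\ref{thm: higher prop T implies coboundary expansion} via the bar complex is not how the paper (or \cite{BS-Waist}) sets things up---the argument there works directly with the cellular cochain complex of the cover and the induced $\ell^1$-representation on cellular chains, not by identifying cochains with the bar complex of $\pi_1(M)$.

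In short: your strategy matches the paper's heuristic roadmap, and the obstacles you name are the same ones the paper names. But neither you nor the paper closes them, which is why the statement remains a conjecture.
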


The proof of Theorem~\ref{thm: prop T and waist} relies on the concept of \emph{coboundary expansion} with respect to cellular $\ell^1$-norm in the simplicial or cellular cochain complexes -- a concept first studied by Linial-Meshulam. 
We call a CW-complex $X$ an \emph{$\bbR$-valued $\epsilon$-coboundary expander} in degree~$k$ if for every cochain  $g$ in the image of the differential $d\colon C^k(X;\bbR)\to C^{k+1}(X;\bbR)$ there is $f\in C^k(X;\bbR)$ such that $d(f)=g$ and $\Vert g\Vert \ge \epsilon\cdot \Vert f\Vert$. Obvious variations yield the definitions of \emph{$\bbZ$-valued or $\bbF_2$-valued $\epsilon$-coboundary expander}. 
A $\bbZ$-valued $\epsilon$-coboundary expander in degree~$k$ such that each integral $k$-cocycle that is non-zero in cohomology has norm at least~$\mu$ is called a \emph{$\bbZ$-valued $(\epsilon, \mu)$-cosystolic expander}. Compare with~\cite{evra+kaufman}*{Definition~1.7}. 

For the proof of Theorem~\ref{thm: prop T and waist} one fixes a triangulation of the base manifold~$M$ and endows every element of the family $\calF$ consisting of the finite covers of~$M$ with the lifted triangulations. 
The first step is to apply the following theorem to every triangulated manifold in~$\calF$ and for $k\in\{0,1\}$. 
The case $k\in\{0,1\}$ is proved in~\cite{BS-Waist} but there is a similar proof in arbitrary degrees. 

\begin{theorem}\label{thm: higher prop T implies coboundary expansion}
Let $X$ be a compact $(n-1)$-connected CW-complex such that 
its fundamental group satisfies property $(T_n)_{L^1}$. Then there is an $\epsilon>0$ so that every finite cover $\bar X$ of~$X$ is a $\bbR$-valued $\epsilon$-coboundary expander in degrees~$0,\dots, n$. 
\end{theorem}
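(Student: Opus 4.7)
The plan is to argue by contradiction using an ultraproduct construction, in the spirit of Lemma~\ref{lem:uptrick}. Suppose the conclusion fails. By pigeonhole there is a fixed degree $k\in\{0,\dots,n\}$ and a sequence of finite covers $\bar X_i\to X$, corresponding to finite-index subgroups $\Gamma_i\le\Gamma:=\pi_1(X)$, together with cochains $f_i\in C^k(\bar X_i;\bbR)$ chosen to realize the $\ell^1$-norm minimum within $f_i+\ker d^k_i$ and normalized so that $\|f_i\|_1=1$, with coboundaries $g_i:=d^kf_i$ satisfying $\|g_i\|_1\to 0$. The minimality gives $\|f_i-dh\|_1\ge 1$ for every $h\in C^{k-1}(\bar X_i;\bbR)$.

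Next I would identify each $C^*(\bar X_i;\bbR)$ with a group cochain complex. Fixing a CW-structure on $X$ and lifting to $\tilde X$, the cellular chain complex $C_*(\tilde X;\bbZ)\cong(\bbZ\Gamma)^{I_*}$, where $I_*$ indexes the cells of~$X$, is a partial free $\bbZ\Gamma$-resolution of~$\bbZ$ by the connectivity hypothesis, sufficient to compute $H^*(\Gamma,W)$ in the relevant degrees. For $W=\ell^1(\Gamma/\Gamma_i)$ with its permutation $\Gamma$-action, the cochain complex $\hom_{\bbZ\Gamma}(C_*(\tilde X),\ell^1(\Gamma/\Gamma_i))$ coincides with $C^*(\bar X_i;\bbR)$ as a Banach cochain complex (with cellular $\ell^1$-norm), and the differentials are given by a single matrix with entries in $\bbR\Gamma$ that is independent of~$i$. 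Form the ultraproduct $V:=\prod_\omega\ell^1(\Gamma/\Gamma_i)$ along a non-principal ultrafilter~$\omega$; by the Dacunha-Castelle theorem, $V$ is an $L^1$-space with a $\Gamma$-isometric action, and the same matrix of coefficients yields a cochain complex $(V^{I_*},d^*_\omega)$ computing $H^*(\Gamma,V)$ in the relevant degrees.

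The element $f_\omega:=[(f_i)]\in V^{I_k}$ satisfies $\|f_\omega\|_V=1$, $d^k_\omega f_\omega=0$, and $\|f_\omega-dh\|_V\ge 1$ for every $h\in V^{I_{k-1}}$, so $f_\omega$ defines a non-trivial class of norm at least one in $\bar H^k(\Gamma,V)$. Invoking $(T_n)_{L^1}$ yields the isomorphism $H^k(\Gamma,V^\Gamma)\xrightarrow{\cong}H^k(\Gamma,V)$ induced by inclusion, together with Hausdorffness of $H^k(\Gamma,V)$. Since $\Gamma$ has property~T (implied by $(T_n)_{L^1}$ via Theorem~\ref{thm:Opp}), the $L^1$-spectral gap result of Bader-Gelander-Monod~\cite{BGM} forces almost-$\Gamma$-invariant unit vectors in $L^1(\Gamma/\Gamma_i)$ to be close to genuine invariants (the constant functions), so $V^\Gamma$ is one-dimensional, spanned by the ultraclass~$\eta$ of normalized indicator functions. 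Hence $[f_\omega]$ corresponds under the isomorphism to a class $[\tilde c]\otimes\eta$ with $\tilde c\in Z^k(X;\bbR)$ a cellular cocycle.

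The hardest step is the extraction of a contradiction with the minimality of $\|f_\omega\|_V=1$. When $[\tilde c]=0$ in $H^k(X;\bbR)$ -- which holds automatically in the range of~$k$ where the connectivity of~$X$ forces $H^k(X;\bbR)=0$ -- we get $\tilde c=d\bar h$ on~$X$, hence $\tilde c\otimes\eta=d(\bar h\otimes\eta)\in\im d^{k-1}_\omega$ and $[f_\omega]=0$, directly contradicting the positive distance of $f_\omega$ from $\im d^{k-1}_\omega$. In the remaining case $[\tilde c]\ne 0$, one exploits the finite-dimensionality of $H^k(X;\bbR)$: construct the genuine cocycles $\tilde f_i:=p_i^*\tilde c/[\Gamma:\Gamma_i]\in C^k(\bar X_i;\bbR)$, of uniformly bounded $\ell^1$-norm, representing $[f_\omega]$ in the ultraproduct, and combine with the asymptotic identity $f_i=\tilde f_i+dh_i+o(1)$ for suitable $h_i$. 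A quantitative form of the $(T_n)_{L^1}$ isomorphism then matches the ultraproduct $V$-norm of $[\tilde c]\otimes\eta$ with the cellular $\ell^1$-norm on the covers, yielding the desired contradiction with $\|f_i\|_1=1$ as $[\Gamma:\Gamma_i]\to\infty$. This last step -- reconciling the abstract ultraproduct $L^1$-norm with the combinatorial $\ell^1$-norms on finite covers -- is the core technical obstacle.
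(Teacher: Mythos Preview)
The paper does not prove this theorem; it cites~\cite{BS-Waist} for degrees $k\in\{0,1\}$ and simply asserts that ``there is a similar proof in arbitrary degrees.'' Your ultraproduct strategy is the natural one and is presumably what is intended. But you have misidentified the hard step. The case $[\tilde c]\neq 0$ that you flag as the ``core technical obstacle'' dissolves once you exploit the full strength of your own setup: you chose $f_i$ minimal in $f_i+\ker d^k_i$ (not merely in $f_i+\im d^{k-1}_i$), so in the ultraproduct $f_\omega$ lies at distance $\ge 1$ from $\prod_\omega\ker d^k_i$, not just from $\im d^{k-1}_\omega$. Granting $V^\Gamma=\bbR\eta$, the unreduced isomorphism from $(T_n)_{L^1}$ gives $f_\omega=\tilde c\otimes\eta+d\psi$ with $\tilde c\in Z^k(X;\bbR)$, and this already forces $f_\omega\in\prod_\omega\ker d^k_i$: indeed $d\psi=[(d\psi_i)]$ with each $d\psi_i\in\ker d^k_i$, and $\tilde c\otimes\eta=[(p_i^*\tilde c/[\Gamma:\Gamma_i])]$ where each $p_i^*\tilde c\in Z^k(\bar X_i;\bbR)$ is a \emph{genuine} cocycle because $\eta_i$ is exactly $\Gamma$-invariant and pullback commutes with~$d$. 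No case split and no ``reconciling of norms'' is needed; the contradiction is immediate.

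The genuine gap lies earlier, in your identification $V^\Gamma=\bbR\eta$. Your justification---``property~T follows from $(T_n)_{L^1}$ via Theorem~\ref{thm:Opp}, then apply~\cite{BGM}''---is incorrect: Theorem~\ref{thm:Opp} is a theorem about simple higher-rank groups, not an abstract implication, and nowhere in the paper is it shown that $(T_n)_{L^1}$ forces classical property~T. The claim $V^\Gamma=\bbR\eta$ is precisely the degree-$0$ coboundary expansion (a uniform $L^1$-Poincar\'e inequality for $\Gamma\curvearrowright\ell^1(\Gamma/\Gamma_i)$), and one cannot deduce it by applying $(T_n)_{L^1}$ to the mean-zero subspace $\ell^1_0(\Gamma/\Gamma_i)$, since that subspace is typically \emph{not} an $L^1$-space (its unit ball is not a cross-polytope once $[\Gamma:\Gamma_i]\ge 3$). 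A cleaner route is to realize $V=L^1(Y,\nu)$ with a measure-preserving $\Gamma$-action (Loeb construction) and reduce $V^\Gamma=\bbR\eta$ to ergodicity of $\Gamma\curvearrowright(Y,\nu)$; but this in turn needs a uniform spectral gap for the finite quotients, and you should make explicit where that input comes from under your hypotheses.
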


Real-valued coboundary expansion is not sufficient to yield geometric implications. For that one has to switch to $\bbF_2$-valued or $\bbZ$-valued cosystolic expansion.

\begin{theorem}[Topological overlap property and waist inequalities]\label{thm: topological overlap}\hfil
\begin{enumerate}
\item Assume that there is $\epsilon>0$ and $\mu>0$ such that the family~$\calF$ of finite covers of a compact simplicial complex consists of $\bbF_2$-valued $(\epsilon,\mu)$-cosystolic expanders in degrees $0,1,\dots, d-1$. 

Then there is a uniform constant $c>0$ so that the $d$-skeleton of every complex in $\calF$ has Gromov's $c$-topological overlap property. 
\item Assume that there is $\epsilon>0$ and $\mu>0$ such that the family~$\calF$ of finite covers of a closed Riemannian manifold consists of $\bbZ$-valued $(\epsilon,\mu)$-cosystolic expanders\footnote{with respect to triangulations that are lifted from a triangulation of the base manifold.} in degrees~$0,1,\dots, d-1$. 

Then $\calF$ satisfies a uniform waist inequality in codimension~$d$. 
\end{enumerate}
\end{theorem}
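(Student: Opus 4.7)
The plan is to reduce both statements to a common combinatorial core: a cohomological \emph{rounding/descent} procedure that turns a continuous or smooth map $f\colon X\to \bbR^d$ into a piecewise-linear model in which preimages of points are controlled by cochain norms, and then to invoke $(\epsilon,\mu)$-cosystolic expansion in degrees $\le d-1$ to push these norms down. This is a direct adaptation of the Gromov--Matou\v{s}ek--Wagner style argument (see also Dotterer--Kahle--Parzanchevski and Evra--Kaufman) to the setting of a family $\calF$ of finite covers where the expansion constants are uniform.

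For part~(1), I would first reduce to the case where $f\colon X^{(d)}\to\bbR^d$ is generic and piecewise-linear with respect to a sufficiently fine triangulation of~$\bbR^d$. For a generic $p\in\bbR^d$, define the $\bbF_2$-cochain $\alpha_p\in C^d(X;\bbF_2)$ whose value on a $d$-simplex~$\sigma$ is the parity of $\#\{x\in\sigma:f(x)=p\}$. One checks that $\alpha_p$ is an $\bbF_2$-cocycle (transversality of $f$ on $(d-1)$-faces). The descent step is the content of the proof: one constructs, inductively on $k=d-1,d-2,\ldots,0$, a deformation of $f$ through generic PL maps that progressively simplifies its image, where at each stage the size of the modification is controlled by the $\bbF_2$-coboundary expansion constant in degree~$k$ applied to a suitable $k$-cocycle (the ``cascade'' cocycle detecting how the map wraps in codimension $d-k$). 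Since $X\in\calF$ is a $(\epsilon,\mu)$-cosystolic expander in all degrees $\le d-1$, these modifications cost an amount bounded independently of $X$, while the cosystolic threshold~$\mu$ prevents $\alpha_p$ from being collapsed to zero. A standard averaging over~$p$ then produces a fiber meeting at least a uniform fraction $c=c(\epsilon,\mu)$ of all $d$-simplices, which is precisely Gromov's $c$-topological overlap property.

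For part~(2), the plan is analogous but the $\bbF_2$ bookkeeping is replaced by $\bbZ$-valued bookkeeping, and counting of $d$-simplices is upgraded to $(n-d)$-volume of fibers. Fix a triangulation of the base manifold and lift it to each $M\in\calF$. Given a smooth $f\colon M\to\bbR^d$, approximate it by a PL map with respect to the lifted triangulation, with $(n-d)$-volume of fibers distorted only by a bounded factor depending on the base geometry. For a generic $p$, the preimage $f^{-1}(p)$ is a PL $(n-d)$-subcomplex whose $\ell^1$-mass with respect to the lifted triangulation is comparable to $\vol_{n-d}(f^{-1}(p))$, with comparison constants depending only on the base. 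Then an integral coarea-type identity relates the average fiber mass to $\vol(M)$. Running the same inductive descent as in part~(1) with $\bbZ$-cosystolic expansion in degrees $0,\ldots,d-1$ forces the fiber of some $p$ to have mass at least $\mu$ after at most a bounded number of cost-$\epsilon^{-1}$ modifications; the uniformity of the constants $\epsilon,\mu$ across~$\calF$ gives the desired uniform waist bound $\vol_{n-d}(f^{-1}(p))\ge C_\calF\cdot\vol(M)$.

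The main obstacle in both parts is the descent step itself: packaging the iterative rounding of~$f$ so that each increment of the deformation is represented by a cochain in exactly the right degree, with its norm controlled by the corresponding coboundary expansion constant, and so that the cumulative error does not eat the cosystolic threshold. In part~(2) this is more delicate because one must carry integer coefficients (so no sign cancellation is lost), and one must keep the comparison between combinatorial $\ell^1$-mass and Riemannian $(n-d)$-volume under explicit control throughout the deformation. Once this descent is set up correctly, the conclusions follow from the uniform cosystolic bounds and a standard Fubini/averaging over target points.
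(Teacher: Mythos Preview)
The paper does not prove this theorem; it is recorded as background. Part~(1) is Gromov's topological overlap theorem in the form made precise by Dotterrer--Kaufman--Wagner, and part~(2) is its Riemannian analogue treated in the authors' companion work~\cite{BS-Waist}. So there is no in-paper argument to compare against.

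Your outline lands in the right neighborhood---a Gromov-style cofilling argument driven by $(\epsilon,\mu)$-cosystolic expansion in degrees $0,\dots,d-1$---but the mechanism you describe is not the one that actually works. The standard proof does \emph{not} deform~$f$. One fixes $f$, a generic $p\in\bbR^d$, and a flag of affine subspaces through~$p$ (equivalently, auxiliary cone points $q_1,\dots,q_d$), and uses this data to build cochains $a_0,a_1,\dots,a_d$ on $X$ with $a_d=\alpha_p$ and with each $a_k$ related to $\delta a_{k-1}$ via intersection with the flag. Cosystolic expansion is then invoked degree by degree: the coboundary constant~$\epsilon$ lets one pass to a minimal cofilling, and the systolic constant~$\mu$ certifies that the intermediate cocycles---arranged to be small by averaging over the flag---are in fact coboundaries, so the induction can continue. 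The overlap bound for $\alpha_p$ falls out at the end. Your ``deformation of $f$ through generic PL maps that progressively simplifies its image'' is not how any version of this argument runs, and it is unclear what cochain would control such a deformation at each stage. Also, your remark that $\alpha_p$ is a cocycle ``by transversality on $(d-1)$-faces'' is misplaced: on a $d$-skeleton every $d$-cochain is trivially a cocycle; the nontrivial content lies entirely in the inductive construction below degree~$d$.

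For part~(2) the shape of your plan---PL-approximate with uniform distortion, represent fibers by integral chains, rerun the cofilling over~$\bbZ$---is correct, and the obstacles you flag (no sign cancellation over~$\bbZ$, uniform mass/volume comparison across~$\calF$) are the genuine ones. As written the proposal is a plausible table of contents rather than a proof, which is roughly what the paper itself offers here.
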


We show in~\cite{BS-Waist}*{Theorem~2.12} for a compact CW-complex $X$ with $H^1(X,\bbZ)=0$ that $\bbR$-valued $\epsilon$-coboundary expansion implies $\bbZ$-valued $\epsilon$-cosystolic expansion in degrees~$0$ and~$1$. Together with Theorem~\ref{thm: topological overlap}, one concludes the proof of Theorem~\ref{thm: prop T and waist}. 
Our tool for extracting $\mathbb{Z}$-coboundary expansion out of the $\mathbb{R}$-coboundary expansion given in Theorem~\ref{thm: higher prop T implies coboundary expansion} is the \emph{total unimodularity} of certain coboundary maps, see \cite[Theorem 2.8]{BS-Waist}.
Without overly expanding on this at this point, we remark on the relation between total unimodularity of the boundary maps and homological torsion, which was established in \cite[Theorem 5.2]{THK}. It is also implicit in the recent preprint~\cite{torsion_jakob}. 
We cannot help speculating that one might combine $(T_n)_{L^1}$ and the sublinearity of torsion growth from Conjecture~\ref{conj: torsion cohomology conjecture} to establish $\mathbb{Z}$ cohomological isoperimetric inequalities for lattices.

One might wonder under which conditions one can deduce $\bbF_2$-valued coboundary expansion from $\bbZ$-valued coboundary expansion.
Here is a simple lemma showing a relationship. 

\begin{lemma} \label{lem:ZtoFp}
If $X$ is a finite simplicial complex that is a $\bbZ$-valued $\epsilon$-coboundary expander in degrees $0, \dots, k+1$ and $H^i(X,\bbZ)=0$ for each $i\in\{1,\dots, k+1\}$, then $X$ is a $\bbF_p$-valued $\epsilon'$-coboundary expander in degrees~$0,\dots, k$ for some $\epsilon'=\epsilon'(\epsilon, k)>0$.     
\end{lemma}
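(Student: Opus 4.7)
The plan is to reduce $\bbF_p$-coboundary expansion to the hypothesised $\bbZ$-coboundary expansion by a lift-correct-reduce procedure, combining the Bockstein sequence with the vanishing of integral cohomology in the relevant range.

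Fix a degree $i\in\{0,\dots,k\}$ and $\bar g\in B^{i+1}(X;\bbF_p)$; the task is to produce $\bar f\in C^{i}(X;\bbF_p)$ with $d\bar f=\bar g$ and $\|\bar f\|_{\bbF_p}\le\epsilon'^{-1}\|\bar g\|_{\bbF_p}$. First I would lift $\bar g$ to an integer cochain $g\in C^{i+1}(X;\bbZ)$ with coefficients in $\{0,\dots,p-1\}$, so that $\|g\|_\bbZ\le (p-1)\|\bar g\|_{\bbF_p}$. Since $\bar g$ is an $\bbF_p$-cocycle, $dg=pr$ for a unique integer cochain $r\in C^{i+2}(X;\bbZ)$, which is automatically a cocycle ($C^{i+3}$ is torsion-free). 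The crucial point — which is why no hypothesis on $H^{i+2}(X;\bbZ)$ is required — is that $[r]\in H^{i+2}(X;\bbZ)$ represents the Bockstein image of $[\bar g]\in H^{i+1}(X;\bbF_p)$; because $\bar g$ is an $\bbF_p$-coboundary, $[\bar g]=0$, so $[r]=0$ and $r$ is an integer coboundary. Applying $\bbZ$-coboundary expansion in degree $i+1$ yields $s\in C^{i+1}(X;\bbZ)$ with $ds=r$ and $\|s\|_\bbZ\le\epsilon^{-1}\|r\|_\bbZ$. Then $z\defq g-ps$ satisfies $dz=pr-pr=0$ and, by $H^{i+1}(X;\bbZ)=0$, is an integer coboundary. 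A second application of $\bbZ$-coboundary expansion, now in degree $i$, provides $f\in C^i(X;\bbZ)$ with $df=z$ and $\|f\|_\bbZ\le\epsilon^{-1}\|z\|_\bbZ$. The reduction $\bar f\defq f\bmod p$ then satisfies $d\bar f=\bar g$ and $\|\bar f\|_{\bbF_p}\le \|f\|_\bbZ$, giving
\[\|\bar f\|_{\bbF_p}\le \epsilon^{-1}(p-1)\|\bar g\|_{\bbF_p}+p\epsilon^{-2}\|r\|_\bbZ.\]

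The hard part will be bounding $\|r\|_\bbZ$ by a constant multiple of $\|\bar g\|_{\bbF_p}$ depending only on $\epsilon$, $k$, $p$. The naive estimate $\|r\|_\bbZ=\|dg\|_\bbZ/p$ involves the $\ell^1$-operator norm of $d$ in degree $i+1$, a quantity controlled by the upper-degree of $X$ and not a priori bounded by $\epsilon$. To remove this dependence I plan to iterate the $\bbZ$-expansion: since $pr=dg\in B^{i+2}(X;\bbZ)$, one obtains a small filling $s^\star$ with $ds^\star=pr$ and $\|s^\star\|_\bbZ\le\epsilon^{-1}\|pr\|_\bbZ$; I will then use that the reduction $\overline{s^\star}$ is an $\bbF_p$-coboundary — which follows in the range $i<k$ from $H^{i+1}(X;\bbF_p)=0$ (itself a consequence of the integral hypotheses via universal coefficients), and at the edge $i=k$ from the Bockstein-vanishing already established for $\bar g$ — to correct $s^\star$ by $d$ of an integer lift of an $\bbF_p$-filling until it becomes divisible by~$p$, and finally set $s=s^\star/p$. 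This yields $\|s\|_\bbZ$, and hence $\|r\|_\bbZ$ effectively, bounded purely by $\|g\|_\bbZ$ and $\epsilon^{-1}$. Carrying out this normalisation uniformly across the two degrees, and verifying that the top degree $i=k$ does not require the missing $H^{k+2}(X;\bbZ)=0$, is the delicate point of the argument; once assembled, $\epsilon'$ emerges as a rational expression in $\epsilon$ and~$p$ that is independent of any further structure of~$X$.
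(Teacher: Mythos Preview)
Your lift--correct--reduce scheme via the Bockstein is exactly the paper's approach: the paper lifts the $\bbF_p$-coboundary minimally to $\bbZ$, observes that its integral coboundary is $p$ times a cocycle (the Bockstein class), finds a minimal $\bbZ$-primitive of that class using $\bbZ$-expansion in degree one higher, subtracts $p$ times this primitive to obtain an integral cocycle, and then applies $\bbZ$-expansion again. Your first two paragraphs reproduce this precisely.

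Your third paragraph correctly flags a gap that the paper's sketch does not address: controlling $\|r\|_\bbZ$ (equivalently $\|dg\|_\bbZ$) without reference to the up-degree of $X$. However, your proposed remedy is circular. You want to take a filling $s^\star$ of $pr$ and correct it by $d$ of an integer lift of an $\bbF_p$-primitive of $\overline{s^\star}$, but $\overline{s^\star}$ lives in degree $i+1$, so producing a \emph{small} $\bbF_p$-primitive of it requires $\bbF_p$-coboundary expansion in degree~$i$ --- which is exactly the statement you are proving. A downward induction does not save this: at the top degree $i=k$ you correctly argue that $[\overline{s^\star}]=0$ in $H^{k+1}(X;\bbF_p)$ via injectivity of the Bockstein (since $H^{k+1}(X;\bbZ)=0$), but that only gives existence of an $\bbF_p$-filling, not a norm bound on it.

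So: the strategy matches the paper, and the paper itself only sketches this point. If you want a clean quantitative statement, either accept that $\epsilon'$ also depends on the local combinatorics of $X$ (e.g.\ a bound on the up-degree, which controls $\|d\|_{\ell^1\to\ell^1}$), or on~$p$; the claimed dependence $\epsilon'=\epsilon'(\epsilon,k)$ alone does not follow from the argument as written.
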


\begin{proof}[Sketch of proof]
    %For $k=0$ this follows from~\cite{}. Let $k\ge 1$. 
    Let $f\in C^k(X,\bbF_p)$ be a coboundary and lift it minimally to $\tilde f\in C^k(X,\bbZ)$. If $\tilde f$ were a cocycle, we could choose a minimal $g\in C^{k-1}(X,\bbZ)$ according to the assumption and project it down to $C^{k-1}(X,\bbF_p)$ to obtain a witness to the $\bbF_p$-valued coboundary expansion in degree~$k$. If $\tilde f$ is not a cocycle, we correct it by $p$ times a minimal primitive of $g\in C^{k+1}(X,\bbZ)$ where $g$ is the cocycle obtained from the cochain description of the boundary map in the Bockstein long exact sequence associated with $\bbZ\xrightarrow{\cdot p}\bbZ\to\bbF_p$. 
\end{proof}

In view of the above results, we predict the following. 

\begin{conjecture}
    Let $X$ be a connected $2$-dimensional simplicial complex whose fundamental group has property~T. Then there is $c>0$ such that every finite cover of $X$ has the $c$-topological overlap property. 
\end{conjecture}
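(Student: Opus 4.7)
The plan is to verify the hypotheses of Theorem~\ref{thm: topological overlap}(1) in dimension $d=2$: to produce uniform $\epsilon,\mu>0$ such that every finite cover $\bar X$ of $X$ is an $\bbF_2$-valued $(\epsilon,\mu)$-cosystolic expander in degrees $0$ and $1$. Topological overlap in dimension~$2$ then follows directly.

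Property~T of $\pi_1(X)$ gives property $(T_1)_{L^1}$ by Bader-Gelander-Monod~\cite{BGM} (the $p=1$ case of Theorem~\ref{thm:Opp}). Since $X$ is connected, Theorem~\ref{thm: higher prop T implies coboundary expansion} with $n=1$ provides a uniform $\epsilon>0$ so that every finite cover $\bar X$ is an $\bbR$-valued $\epsilon$-coboundary expander in degrees $0$ and $1$. Moreover, $\pi_1(\bar X)$ has property~T as a finite-index subgroup, so has finite abelianization and $H^1(\bar X,\bbZ)=0$. Applying~\cite{BS-Waist}*{Theorem~2.12} (cited after Lemma~\ref{lem:ZtoFp}), the real coboundary expansion upgrades to $\bbZ$-valued $\epsilon$-cosystolic expansion in degrees $0$ and $1$ (the systolic part being vacuous because there are no nonzero integer $1$-cocycles).

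For the $\bbF_2$-coboundary expansion in degree $1$ one cannot invoke Lemma~\ref{lem:ZtoFp} directly, since that would also require $H^2(\bar X,\bbZ)=0$, which typically fails. Instead one adapts the Bockstein-correction idea in its proof using only $H^1=0$. Given an $\bbF_2$-coboundary $f\in C^1(\bar X,\bbF_2)$, minimally lift it to $\tilde f\in C^1(\bar X,\bbZ)$; then $\delta\tilde f=2g$ with $g\in Z^2(\bar X,\bbZ)$ representing the Bockstein $\beta([f])$, which vanishes because $[f]=0$, so $g$ is a $\bbZ$-coboundary. Degree-$1$ $\bbZ$-coboundary expansion yields $\hat g\in C^1(\bar X,\bbZ)$ with $\delta\hat g=g$ and $\|\hat g\|\lesssim\|g\|\lesssim\|f\|$ (using that the cellular coboundary has uniformly bounded operator norm across covers of a fixed $2$-complex). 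Now $\tilde f-2\hat g\in Z^1(\bar X,\bbZ)$ lifts $f$ and, since $H^1(\bar X,\bbZ)=0$, is itself a $\bbZ$-coboundary; degree-$0$ $\bbZ$-coboundary expansion produces an integer primitive of norm $\lesssim\|f\|$, which reduces mod~$2$ to the required $\bbF_2$-primitive. Degree $0$ is handled analogously and is strictly easier.

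The $\bbF_2$-cosystolic lower bound in degree $1$ is a classical double-cover argument. A nontrivial class $[\phi]\in H^1(\bar X,\bbF_2)$ classifies a connected double cover $\tilde X\to\bar X$ whose deck involution swaps sheets exactly over $\supp(\phi)$; removing the preimages of $\supp(\phi)$ disconnects $\tilde X$ into two copies of its complement, so the edge-Cheeger constant of the $1$-skeleton of $\tilde X$ is at most $2\|\phi\|/|V(\bar X)|$. But this $1$-skeleton is a Schreier graph of the property~T group $\pi_1(X)$, so its edge expansion is bounded below uniformly in the cover by a Kazhdan-type constant; combined with $|V(\bar X)|\asymp |E(\bar X)|$ (bounded valence) this yields the required uniform lower bound $\|\phi\|\geq\mu|E(\bar X)|$. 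Theorem~\ref{thm: topological overlap}(1) then finishes the argument. The hardest step is the norm bookkeeping in the Bockstein correction of the previous paragraph: bypassing the $H^2=0$ hypothesis of Lemma~\ref{lem:ZtoFp} forces one to orchestrate the two-stage primitive construction by hand, and it is here that any sharpening of the lemma in the $2$-dimensional setting — or a genuinely new argument if the sketched one does not close — will be required.
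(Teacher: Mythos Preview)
The statement you are trying to prove is presented in the paper as a \emph{conjecture}, not a theorem; there is no proof in the paper to compare against. The paper explicitly flags the transition from $\bbR$- or $\bbZ$-valued expansion to $\bbF_2$-valued expansion as the missing ingredient, and your attempt runs into exactly that obstacle.

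The specific gap in your argument is an off-by-one confusion that hides the real difficulty. In the paper's convention, degree-$k$ coboundary expansion concerns the map $d\colon C^k\to C^{k+1}$: one is handed $g\in\im(d)\subset C^{k+1}$ and must produce a small $f\in C^k$ with $df=g$. Your Bockstein paragraph starts from an $\bbF_2$-coboundary $f\in C^1$ and produces a controlled primitive in $C^0$; this is \emph{degree~$0$} expansion, not degree~$1$. (Your argument for it is correct and is essentially the $k=0$ case of Lemma~\ref{lem:ZtoFp}, needing only $H^1(\bar X,\bbZ)=0$.) But for topological overlap in dimension~$2$ via Theorem~\ref{thm: topological overlap}(1) you also need degree~$1$: given an $\bbF_2$-coboundary $f\in C^2(\bar X,\bbF_2)$, find a small $\bbF_2$-primitive in $C^1$.

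Here the Bockstein trick collapses. Since $\bar X$ is $2$-dimensional, the minimal integral lift $\tilde f\in C^2(\bar X,\bbZ)$ satisfies $\delta\tilde f=0$ automatically, so there is no correction term to absorb. The class $[\tilde f]\in H^2(\bar X,\bbZ)$ does lie in $2\cdot H^2(\bar X,\bbZ)$ (because its mod-$2$ reduction is trivial), so $\tilde f-2\alpha$ is a $\bbZ$-coboundary for some cocycle $\alpha$; but nothing in your toolkit bounds $\lVert\alpha\rVert$. Degree-$1$ $\bbZ$-coboundary expansion then yields a primitive of size $\lesssim \lVert\tilde f\rVert + \lVert\alpha\rVert$, which is useless without control on $\alpha$. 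This is precisely the $H^2$-obstruction that Lemma~\ref{lem:ZtoFp} circumvents by \emph{assuming} $H^2(\bar X,\bbZ)=0$, and it is why the paper leaves this as a conjecture and asks the question immediately following it about deducing $\bbZ$- and $\bbF_2$-expansion from $\bbR$-expansion in higher degree.

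Your degree-$1$ systolic argument via the double cover is fine, and the degree-$0$ pieces are fine. The conjecture stands or falls on degree-$1$ $\bbF_2$-coboundary expansion, which your proposal does not address.
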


Let us turn again to Conjectures~\ref{conj: waist conjecture for lattice} and~\ref{conj: most general waist conj}. Using Corollary~\ref{cor:com} we can apply  
Theorem~\ref{thm: higher prop T implies coboundary expansion} in both cases to get $\bbR$-valued coboundary expansion.  What is still missing is the transition to $\bbZ$ and from $\bbZ$-valued coboundary expansion to $\bbZ$-valued cosystolic expansion. 

\begin{question}
Under which conditions can one deduce  
$\bbZ$-valued coboundary expansion from $\bbR$-valued cosystolic expansion? Are there combinatorial-homological methods, reminiscent of Evra-Kaufman's local-to-global theorem~\cite{evra+kaufman}, that allow to further deduce $\bbZ$-valued cosystolic expansion? 
\end{question}

\subsection{Applications and conjectures in low degrees } \label{sec:sd}

\subsubsection{Degree 1: The spectral gap conjecture} \label{sec:deg1}

We will not review here the enormous variety of applications of property T and its variants.
Rather, we will focus on the bit which we understand less, that is,  
the degree~$1$ case of Conjecture~\ref{conj:semisimple}. Throughout, we will assume that $\Gamma$ is a center-free irreducible lattice in an $S$-semisimple group~$G$ of higher rank with no compact factors, and refer to $\Gamma$ just as a \emph{center-free irreducible higher-rank lattice}. 
%For simplicity of the discussion, our standing assumption in this subsection is that $G$ is a semisimple Lie group.
%For S-semisimple groups one needs to add Conjecture~\ref{conj: S-arithmetic filling}, which regards an analogue of a theorem by Leuzinger-Young in this setting, see Theorem~\ref{thm: leuzinger+young}.
The above conjecture is open, most intriguingly, already for unitary representations, in which case it is equivalent to the following conjecture of Shalom.

\begin{conjecture}[The spectral gap conjecture] \label{conj:SG}
    Let $\Gamma$ be a center-free irreducible higher rank lattice in $G$.
    Let $V$ be a unitary representation of $\Gamma$. Assume that the $\Gamma$-action on $V$ does not extend to a $G$-action on any non-trivial subrepresentation. Then $V$ has a spectral gap.
\end{conjecture}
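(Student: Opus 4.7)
The strategy is proof by contradiction: assume $V$ has an almost $\Gamma$-invariant sequence of unit vectors and produce from this data a non-zero $\Gamma$-invariant closed subspace $V_0 \subset V$ on which the action extends to a continuous $G$-action. The natural tool is the induced unitary representation $W = \mathrm{Ind}_\Gamma^G V$ on $L^2(G/\Gamma, V)$. Since $\vol(G/\Gamma) < \infty$, averaging an almost $\Gamma$-invariant sequence in~$V$ over a fundamental domain yields an almost $G$-invariant sequence in~$W$; conversely, Frobenius reciprocity matches $G$-subrepresentations of~$W$ with $G$-extendable $\Gamma$-subrepresentations of~$V$. It therefore suffices to show that $W$ contains actual $G$-invariant (or at least $G_i$-invariant for some simple factor~$G_i$) vectors whenever $V$ fails to have spectral gap.

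If some simple factor $G_i$ of~$G$ has classical property~T --- for instance if $\rank(G_i) \ge 2$, or $G_i \in \{\Sp(n,1),\,F_4^{(-20)}\}$ --- then the almost $G$-invariants in~$W$ yield genuine $G_i$-invariants, and a Howe--Moore/Mackey analysis combined with the density of the $\Gamma$-projection to the complementary factors should promote these to the sought extendable subrepresentation, finishing this case. The conjecture therefore reduces to the genuinely hard residual case where $G = G_1 \times \cdots \times G_k$ with each $G_i$ a rank-one simple Lie group without property~T, namely $\SO(n,1)$ or $\SU(n,1)$.

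The main obstacle, and the reason the conjecture has remained open, lives precisely here: without property~T on any simple factor, there is no direct mechanism to upgrade weak containment of the trivial representation in~$W$ to actual containment. A plausible line of attack, in the spirit of Shalom's work and of the Nevo--Stuck--Zimmer philosophy, is to apply the tensor trick $V \otimes \bar V$ to move to positive-definite functions and thence to stationary measures or invariant random subgroups on~$\Sub(\Gamma)$, where the IRS classification for irreducible higher-rank lattices forces stabilizers to be trivial or all of~$\Gamma$. The true difficulty lies in ruling out an intermediate ``diffuse'' regime, where almost invariance is dispersed across a continuum of non-extendable Koopman-type summands with no common $G$-invariant witness; this seems to demand a unitary analogue of Margulis' normal subgroup theorem, perhaps approachable through the $C^\ast$-algebraic characterizations of higher property~T in Theorem~\ref{thm:criteria} applied directly to~$C^\ast\Gamma$, but it remains the open heart of the conjecture.
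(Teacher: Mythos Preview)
This statement is presented in the paper as an \emph{open conjecture}, not a theorem, and the paper does not claim a proof of it. So there is no ``paper's own proof'' to compare against. What the paper does prove is the special case where $G$ has a non-compact factor $G_1$ with property~T: one induces $V$ to $I(V)$ on~$G$, observes that a failure of spectral gap for~$V$ forces a failure for~$I(V)$ and hence for its restriction to~$G_1$, so property~T of~$G_1$ yields $I(V)^{G_1}\neq 0$; then \cite[Proposition~5.1]{BBHP} identifies $I(V)^{G_1}$ with a $\Gamma$-subrepresentation of~$V$ on which the action extends to~$G$. Your outline of this case is essentially the same argument, though you invoke ``Frobenius reciprocity'' and ``Howe--Moore/Mackey analysis'' where the paper simply cites the black-box Proposition~5.1 of~\cite{BBHP}; the underlying mechanism is the same.

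Your proposal correctly identifies that the residual case---$G$ a product of rank-one factors none of which has property~T---is the open heart of the problem, and the paper says exactly this (see Figure~\ref{fig2} and the surrounding discussion). Your speculative lines of attack (tensor trick, IRS rigidity, $C^\ast$-algebraic methods) are reasonable musings but do not constitute a proof, as you yourself acknowledge. In summary: you have not found a gap to fill, because the paper makes no claim of a proof here; your write-up is an accurate survey of what is known and what is not, matching the paper's own account.
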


\begin{proof}[Conjecture~\ref{conj:SG} $\Leftrightarrow$ Conjecture~\ref{conj:semisimple} in degree 1 for unitary representations]
    Let $V$ be a unitary representation of $\Gamma$.
    If $V$ does not have a spectral gap,  then $H^1(\Gamma,V)$ is non-Hausdorff and in particular non-zero. So in this case Conjecture~\ref{conj:semisimple} implies the existence of a non-trivial subrepresentation of $V$ on which the $\Gamma$-action extends to a $G$-action.
    For the reverse direction, assume that $H^1(\Gamma,V)\neq 0$  and that the $\Gamma$-action on $V$ does not extend to a $G$-action on any non-trivial subrepresentation. We now consider the unitary induction $I(V)$ of $V$ from $\Gamma$ to $G$. We have $H^1(G,I(V))\neq 0$. If $\Gamma<G$ is uniform, this follows from the Shapiro lemma. If $\Gamma<G$ is non-uniform, one can use the replacement of the Shapiro lemma mentioned in~\S\ref{subsec:poly}. 
    By \cite[Proposition~5.1]{BBHP}, we have that for every factor $G_1<G$, $I(V)^{G_1}=0$. We conclude by a result of Shalom~\cite{Shalom}*{Proposition~3.2} that $H^1(G,I(V))$ is non-Hausdorff. Thus $I(V)$ does not have a spectral gap.
    Therefore, $V$ does not have a spectral gap.
\end{proof}

\begin{proof}[$G$ has a non-compact factor with T $\Rightarrow$ Conjecture~\ref{conj:SG}]
    Consider a non-compact factor $G_1<G$ with property~T.
    Assume that $V$ does not have a spectral gap.
    Then its unitary induction $I(V)$ to $G$ does not have a spectral gap as $G$-representation. Hence also not as a $G_1$-representation. We deduce that $I(V)^{G_1}\neq 0$.
    However, by 
    \cite[Proposition 5.1]{BBHP} $I(V)$ could be identified with a subrepresentation of $V$ on which the $\Gamma$-action extends to a $G$-action.
\end{proof}

The following is a stronger form of Conjecture~\ref{conj:SG}.

\begin{conjecture}[The strong spectral gap conjecture] \label{conj:SSG}
    Let $\Gamma$ be a center-free irreducible higher rank lattice in $G$.
    Let $V$ be a unitary representation of $\Gamma$ and assume that the $\Gamma$-action on $V$ does not extend to a $G$-action on any non-trivial subrepresentation. Then the unitary induction of $V$ to $G$ has a spectral gap with respect to each factor of $G$.    
\end{conjecture}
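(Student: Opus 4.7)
The plan is to follow the template of the excerpt's proof that \emph{``$G$ has a non-compact factor with property~T $\Rightarrow$ Conjecture~\ref{conj:SG}''} and try to upgrade it from spectral gap with respect to \emph{some} factor to spectral gap with respect to \emph{each} factor. Fix a simple factor $G_1\leq G$ and let $H$ denote the product of the remaining factors, so that $G=G_1\times H$. Assume, for contradiction, that the restriction of $I(V)$ to $G_1$ has no spectral gap, so there exists a sequence of unit vectors $v_n\in I(V)$ that are asymptotically $G_1$-invariant, uniformly on compact subsets of $G_1$.

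The first step is to exploit the commutation of $G_1$ and $H$ inside $G$. Let $\Delta$ be a self-adjoint $G_1$-invariant averaging operator on $I(V)$ --- for instance, convolution by a symmetric compactly supported probability measure on $G_1$. Since $H$ centralizes $G_1$ in $G$, it commutes with $\Delta$, so the spectral projection $P_\epsilon$ of $\Delta$ onto an $\epsilon$-neighborhood of the top of its spectrum is $H$-equivariant, and the image $W_\epsilon\defq P_\epsilon I(V)$ is a closed $G$-subrepresentation of $I(V)$. The absence of $G_1$-spectral gap implies $W_\epsilon\ne 0$ for every $\epsilon>0$, producing a nested family of non-trivial $G$-subrepresentations on which $G_1$ acts ``$\epsilon$-trivially.''

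The second step is to extract a genuine $G_1$-fixed vector inside some non-trivial $G$-subrepresentation of $I(V)$. Such a fixed vector is then identified by BBHP's Proposition~5.1 (cited in~\S\ref{sec:deg1} of the excerpt) with a $\Gamma$-subrepresentation of $V$ that extends to a $G$-action, contradicting the hypothesis on $V$. The natural mechanism is to ultrapower the sequence $(v_n)$: the resulting $v_\omega\in I(V)_\omega$ is honestly $G_1$-fixed, and one then restricts to the maximal closed subspace of $I(V)_\omega$ on which the $G$-action descends to a \emph{continuous} $G$-representation and analyzes the $H$-action there via the irreducibility of $\Gamma$.

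The main obstacle is precisely this last step: the ultrapower destroys continuity of the $G$-representation, and the argument of the excerpt succeeds for a factor with property~T precisely because continuity of any $G_1$-invariant subspace can be recovered via the Kazhdan projection \emph{before} any ultrapower is taken. In the most interesting case, where no simple factor of $G$ has property~T (e.g.\ $G=\mathrm{PSL}_2(\bbR)\times\mathrm{PSL}_2(\bbR)$ with $\Gamma$ an irreducible lattice), the excerpt itself notes that even the weaker Conjecture~\ref{conj:SG} remains open; a serious attack on Conjecture~\ref{conj:SSG} in this regime reduces to Conjecture~\ref{conj:SG} and would likely have to proceed via boundary theory and Nevo--Zimmer-type stiffness of $\Gamma$-stationary measures on $G/P$. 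The essential missing ingredient is a functional-analytic substitute for property~T that can separate almost $G_1$-invariance from genuine $G_1$-invariance inside a continuous Hilbert $G$-module, using only the higher-rank structure of $G$ together with the irreducibility of $\Gamma$.
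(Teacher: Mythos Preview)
The statement you were asked to prove is explicitly labeled a \emph{conjecture} in the paper, and the paper does not prove it. The only remark the paper makes is that ``Clearly, Conjecture~\ref{conj:SSG} implies Conjecture~\ref{conj:SG} and it holds if $G$ has property~T.'' There is no proof to compare against because the statement is open in general.

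Your write-up is honest about this: you correctly isolate the obstacle (ultrapowers destroy continuity, and without a Kazhdan projection for the factor $G_1$ you cannot pass from almost-invariant to invariant vectors inside a continuous $G$-module), and you explicitly note that even the weaker Conjecture~\ref{conj:SG} is open when no factor has property~T. So your proposal is not a proof but a diagnosis of why the obvious strategy stalls, and that diagnosis is accurate. One minor correction: in your first step you say $W_\epsilon$ is a $G$-subrepresentation, but the spectral projection $P_\epsilon$ is only $H$-equivariant (and $G_1$-equivariant since $\Delta$ is built from $G_1$), so $W_\epsilon$ is indeed $G$-invariant---this part is fine. The real issue, as you say, is that nothing forces $\bigcap_\epsilon W_\epsilon$ to be nonzero, and the ultrapower escape route runs into the continuity problem you describe. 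The paper offers no way around this; it simply records the conjecture.
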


Clearly, Conjecture~\ref{conj:SSG} implies Conjecture~\ref{conj:SG} 
and it holds if $G$ has property~T.
If $G$ is a Lie group, the argument of~\cite[Theorem D]{badsau} shows the
theorem still holds without assuming property~T - provided Conjecture~\ref{conj:SSG}, see \cite[Conjecture 1.5]{badsau}.
That is, we have: 

\vskip1mm

\emph{In the Lie case we have the implication:  Conjecture~\ref{conj:SSG} $\Rightarrow$ Conjecture~\ref{conj:semisimple} for unitary representations}.

\vskip1mm

This extends verbatim to S-semisimple groups and cocompact lattices, in particular to $p$-adic groups, where all lattices are cocompact.
For non-cocompact lattices in S-semisimple groups, the implication above still holds, assuming Conjecture~\ref{conj: S-arithmetic filling}.

We list below various conjectures regarding irreducible higher rank lattices
which are all implied by 
Conjecture~\ref{conj:SG} and in particular by Conjecture~\ref{conj:semisimple}.
For convenience, see Figure~\ref{fig2} for a summary of the relation between the various conjectures discussed in this subsection.

\begin{figure}[ht] 
\centering
\begin{tikzcd}[scale=0.8]
& \mbox{\textcolor{blue}{$G$ has T}} \ar[r,Rightarrow] \ar[d,Rightarrow] & \mbox{\textcolor{blue}{$G$ has a T factor}} \ar[d,Rightarrow] & \\
& \shortstack{\mbox{Strong} \\ \mbox{Spectral Gap}} \ar[r,Rightarrow] \ar[d,Rightarrow, "\shortstack{\mbox{\scriptsize{for S-semisimple}}\\\mbox{\scriptsize{assuming}}\\ \mbox{\scriptsize{Conjecture~\ref{conj: S-arithmetic filling}}}}"' blue] & \mbox{Spectral gap} \ar[d,Leftrightarrow] \ar[r,Rightarrow] & \mbox{Property } \tau \\
\mbox{Conjecture~\ref{conj:semisimple}} \ar[r,Rightarrow] & \shortstack{\text{Conjecture~\ref{conj:semisimple}} \\\text{for unireps}} \ar[r,Rightarrow] \ar[d,Rightarrow] &  \shortstack{\mbox{Conjecture~\ref{conj:semisimple}}\\ \mbox{for unireps} \\ \mbox{in degree 1}}\ar[d,Rightarrow] \ar[r,Rightarrow] & \shortstack{\mbox{coamenable} \\ \mbox{subgroups}} \ar[d,Rightarrow] \\
& \shortstack{\mbox{URS} \\ \mbox{rigidity}} \ar[r,Leftarrow] & \mbox{Charfiniteness} \ar[r,Rightarrow] \ar[d,Leftrightarrow] & \shortstack{\mbox{IRS} \\ \mbox{rigidity}} \\
&  & \mbox{Character Rigidity} & 
\end{tikzcd}
\caption{Relations between conjectures for $\Gamma<G$ irreducible higher rank lattice. Extra assumptions are in blue.}
\label{fig2}
\end{figure}
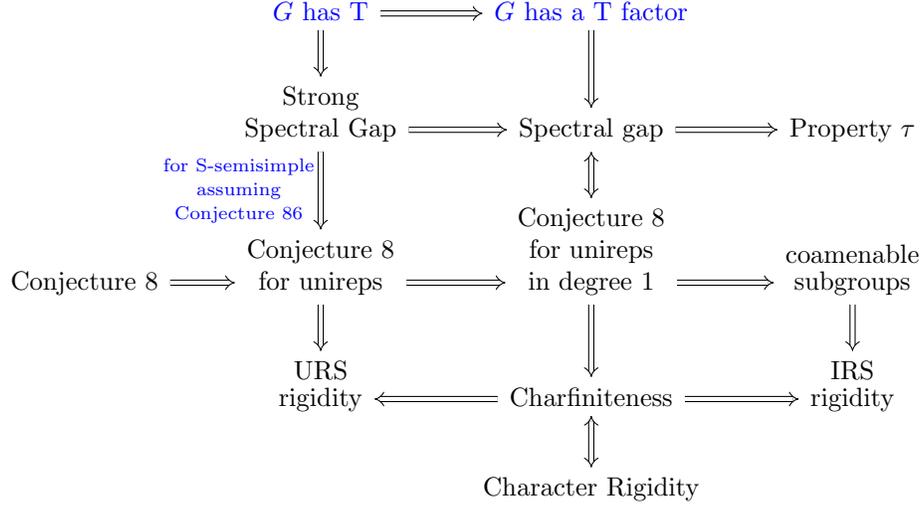

We start with the following conjecture which is due to Lubotzky.

\begin{conjecture}[Property $\tau$] \label{conj:tau}
    Let $\Gamma$ be an irreducible higher rank lattice.
    Then $\Gamma$ has property $\tau$.
\end{conjecture}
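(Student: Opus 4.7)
The plan is to deduce Conjecture~\ref{conj:tau} from Conjecture~\ref{conj:SG}. Recall that $\Gamma$ has property~$\tau$ if and only if the representation $V_\tau := \widehat{\bigoplus}_N \ell^2_0(\Gamma/N)$, where $N$ runs over all proper finite-index normal subgroups of $\Gamma$ and $\ell^2_0$ denotes the orthogonal complement of the constants, admits a spectral gap. By construction $V_\tau^\Gamma = 0$. Assume for contradiction that property~$\tau$ fails, so that $V_\tau$ has no spectral gap. The contrapositive of Conjecture~\ref{conj:SG} then yields a non-zero closed $\Gamma$-subrepresentation $W \subset V_\tau$ whose $\Gamma$-action extends to a continuous $G$-action.

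The contradiction will come from studying a non-zero $G$-irreducible subrepresentation $W' \subseteq W$. If $W'$ is finite-dimensional, then the image of $G$ in the compact group $U(W')$ is trivial, because each non-compact simple factor of $G$ is generated by unipotents and has no non-trivial compact quotient. Then $\Gamma$ acts trivially on $W'$, forcing $W' \subset V_\tau^\Gamma = 0$, a contradiction. If $W'$ is infinite-dimensional, let $G_0$ be the product of those simple factors of $G$ that act non-trivially on~$W'$; by $G$-irreducibility of $W'$ and normality of the simple factors, $W'$ becomes a faithful irreducible unitary $G_0$-representation in which no vector is fixed by any simple factor of $G_0$. The Howe--Moore theorem applied to $G_0$ then forces every diagonal matrix coefficient of $W'$ to vanish at infinity in~$G_0$.

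To reach a contradiction, observe that as a $\Gamma$-subrepresentation of $V_\tau$, $W'$ decomposes as a Hilbert sum of finite-dimensional $\Gamma$-summands, each factoring through some finite quotient $\Gamma/N$. Pick a non-zero vector $v$ in one such summand $\pi$, so that $\langle \pi(\gamma)v, v\rangle = \|v\|^2$ for every $\gamma \in N$. Since $\Gamma$ is irreducible and every simple factor of $G_0$ is non-compact, the projection of $\Gamma$, and therefore of its finite-index subgroup $N$, to $G_0$ is dense, hence unbounded. Choosing $\gamma_n \in N$ whose images in $G_0$ leave every compact set, Howe--Moore yields $\langle \pi(\gamma_n)v, v\rangle \to 0$, contradicting the constant value $\|v\|^2 > 0$. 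The main obstacle I would worry about is the bookkeeping between irreducibility of the $G$-representation $W'$, the passage to the ``essential'' subproduct $G_0$ of non-trivially-acting simple factors, and the preservation of density for the projection of~$N$ to $G_0$: all of these rely crucially on the irreducibility of the lattice and on no simple factor being compact, which are precisely the standing hypotheses of the conjecture.
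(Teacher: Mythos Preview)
Your overall strategy---apply Conjecture~\ref{conj:SG} to the representation coming from finite quotients and use Howe--Moore to rule out any $G$-extension---matches the paper's. The paper works with $L^2(\hat\Gamma)$ rather than $\bigoplus_N\ell^2_0(\Gamma/N)$, but this is cosmetic.

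There is, however, a genuine gap. You write ``the contradiction will come from studying a non-zero $G$-irreducible subrepresentation $W'\subseteq W$,'' and you rely on irreducibility of $W'$ to conclude that no simple factor of $G_0$ has invariant vectors. But a non-zero unitary $G$-representation need not contain any irreducible subrepresentation (think of $L^2(G)$ for $G$ non-compact semisimple). Nothing in the hypotheses forces $W$ to have discrete spectrum as a $G$-module, and Zorn's lemma runs the wrong way here: a descending chain of non-zero closed $G$-invariant subspaces can intersect in $\{0\}$. So the passage to $W'$ is unjustified, and your subsequent use of Howe--Moore (which needs $(W')^{G_i}=0$ for each factor $G_i$ of $G_0$) is exactly where irreducibility is doing work.

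The repair is short and keeps the rest of your argument intact. Do not seek an irreducible $W'$; instead recurse on the simple factors. If $W^{G_i}=0$ for every simple factor $G_i$, Howe--Moore applies to $W$ itself: matrix coefficients vanish at infinity in $G$, while your $N$-fixed vector $v\in W$ (which exists exactly as you argue, since locally constant functions are dense and the projection to $W$ is $\Gamma$-equivariant) gives a matrix coefficient equal to $\|v\|^2$ along the unbounded set $N$---contradiction. If instead $W^{G_i}\neq 0$ for some $i$, replace $W$ by $W^{G_i}$ (still a non-zero closed $G$-subrepresentation of $V_\tau$, now with $G_i$ acting trivially) and repeat with $G/G_i$. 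After finitely many steps you either reach the Howe--Moore case or obtain a non-zero subspace on which all of $G$, hence $\Gamma$, acts trivially---impossible inside $V_\tau$. This is precisely what the paper's one-line ``By Howe--Moore theorem the $\Gamma$-action on $L^2(\hat\Gamma)$ does not extend to a $G$-action on any non-trivial subrepresentation'' is encoding; your finite-dimensional case and your density-of-$N$-in-$G_0$ bookkeeping are then unnecessary.
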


\begin{proof}[Conjecture~\ref{conj:SG} $\Rightarrow$ Conjecture~\ref{conj:tau}]
 Let $\hat{\Gamma}$ be the profinite completion. By Howe-Moore theorem the $\Gamma$-action on $L^2(\hat{\Gamma})$ does not extend to a $G$-action on any non-trivial subrepresentation of $L^2(\hat{\Gamma})$. We deduce that $\Gamma$ has property $\tau$.
\end{proof}

\vskip-1mm

The following conjecture is due to Cornulier.
Some progress in this direction was recently made in \cite{BGL}.

\begin{conjecture}[Coamenable subgroups] \label{conj:coam}
    Let $\Gamma$ be an irreducible higher rank lattice.
    Then every coamenable subgroup in $\Gamma$ is of finite index.
\end{conjecture}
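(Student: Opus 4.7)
The plan is to derive Conjecture~\ref{conj:coam} from Conjecture~\ref{conj:SG}, in complete parallel with the proof that Conjecture~\ref{conj:SG} implies Conjecture~\ref{conj:tau} above. Suppose $H<\Gamma$ is coamenable, and consider the quasi-regular unitary representation $V=L^2(\Gamma/H)$. If $H$ were of infinite index, then $\Gamma/H$ would be a countably infinite $\Gamma$-set and would carry no $\Gamma$-invariant element of $V$, while the coamenability of $H$ supplies a sequence of almost invariant unit vectors in $V$. Hence $V$ would have no spectral gap, and by Conjecture~\ref{conj:SG} there would exist a nonzero closed $\Gamma$-invariant subspace $V_0\subset V$ on which the action extends to a unitary representation of~$G$. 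The task is then to contradict the existence of such a~$V_0$.

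By Frobenius reciprocity applied to $V=\mathrm{Ind}_H^\Gamma\mathbb{C}$, the inclusion $V_0\hookrightarrow V$ produces a nonzero $H$-fixed vector $v_0\in V_0$, where $H$ acts via its embedding into~$G$. The matrix coefficient $\varphi(g)=\langle gv_0,v_0\rangle$ of the $G$-representation $V_0$ is then continuous and $H$-bi-invariant on~$G$, and its restriction to $\Gamma$ is a matrix coefficient of~$V$; moreover, the $L^2$-condition on $V$ forces $v_0$, viewed as an $H$-invariant function on $\Gamma/H$, to be supported on the double cosets $HxH$ with $[H:H\cap xHx^{-1}]<\infty$, that is, on the image in $\Gamma/H$ of the commensurator $\mathrm{Comm}_\Gamma(H)$. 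If no non-compact simple factor $G_i$ of~$G$ had nonzero invariants in~$V_0$, the Howe--Moore theorem would force $\varphi$ to vanish at infinity along each factor; combined with discreteness of $\Gamma$ in $G$ and the density of its projection to each factor (by irreducibility), this would give $v_0=0$ and hence $V_0=0$, the desired contradiction.

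The main obstacle is to handle the case where some simple factor $G_i$ of~$G$ acts trivially on a nonzero subspace of~$V_0$, which blocks the direct appeal to Howe--Moore. Here one would want to argue inductively on the rank: $V_0^{G_i}$ is a nonzero subrepresentation of the smaller $S$-semisimple group $G/G_i$, into which the image of~$\Gamma$ is again dense by irreducibility, and one hopes to extract a coamenability-type statement for~$H$ inside this reduced setup, possibly by combining Howe--Moore with Margulis' commensurator superrigidity to control the support of the vectors produced. Executing this reduction uniformly, especially when $G$ is a pure product of rank-one simple factors with no property~T appeal available on any single factor, is where I expect the real difficulty to lie; in the easier situation where $G$ has a higher-rank simple factor (which then has property~T), one can argue directly, bypassing Conjecture~\ref{conj:SG} altogether, by the classical observation that property~T groups admit no infinite-index coamenable subgroups.
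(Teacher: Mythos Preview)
Your setup is essentially identical to the paper's: take $V=\ell^2(\Gamma/H)$, use coamenability to get no spectral gap, apply Conjecture~\ref{conj:SG} to produce $V_0$, and extract an $H$-fixed vector $v_0\in V_0$. Your Frobenius reciprocity vector $v_0$ is precisely the orthogonal projection of the cyclic vector $\delta_H\in\ell^2(\Gamma/H)$ onto~$V_0$, which is exactly what the paper takes. The digression about commensurators and the support of $v_0$ is unnecessary.

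Where your argument stalls is exactly where the paper's argument is short. You frame $V_0^{G_i}\neq 0$ as the ``main obstacle'' and propose an induction on the rank that you yourself doubt. The paper instead observes two things you overlook. First, $v_0$ is not merely $H$-fixed but \emph{cyclic} for the $\Gamma$-action on $V_0$ (since $\delta_H$ is cyclic in $\ell^2(\Gamma/H)$ and the projection onto $V_0$ is $\Gamma$-equivariant). Second, $H$ is infinite (else $\Gamma$ would be amenable), so the stabilizer of $v_0$ in~$G$ is non-compact, and Howe--Moore then forces $v_0$ itself to be $G_i$-invariant for some simple factor~$G_i$---not merely that $V_0^{G_i}\neq 0$. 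Cyclicity of $v_0$ then gives $V_0=V_0^{G_i}$, and one iterates over the remaining factors. The paper compresses this into the single line ``By the Howe--Moore theorem, $v$ is $G$-invariant.''

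Finally, you are aiming for a contradiction, but none is needed: once $v_0$ is $G$-invariant it is $\Gamma$-invariant, hence $\ell^2(\Gamma/H)^\Gamma\neq 0$, hence $\Gamma/H$ is finite. That is the conclusion, not a contradiction to the existence of~$V_0$.
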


\begin{proof}[Conjecture~\ref{conj:SG} $\Rightarrow$ Conjecture~\ref{conj:coam}]
    Let $\Lambda<\Gamma$ be a coamenable subgroup. Thus $\ell^2(\Gamma/\Lambda)$ does not have a spectral gap. Let $V_0\subset \ell^2(\Gamma/\Lambda)$ be a non-trivial subspace on which the $\Gamma$-action extends to~$G$.
    Let $v\in V_0$ be the image of the cyclic vector $\delta_\Lambda$ under the orthogonal projection $\ell^2(\Gamma/\Lambda)\to V_0$ and note that it is $\Lambda$-invariant and cyclic, hence non-zero. The group $\Lambda$ is infinite because $\Gamma$ is non-amenable. By the Howe-Moore theorem, $v$ is $G$-invariant,  in particular $\Gamma$-invariant. It follows that $\Gamma/\Lambda$ is finite.
\end{proof}

The following is a conjecture of Stuck-Zimmer~\cite{stuck+zimmer}.

\begin{conjecture}[IRS rigidity] \label{conj:SZ}
Let $\Gamma$ be a center-free irreducible higher rank lattice.
Then for every ergodic probability measure preserving action on a probability space, either the space is essentially finite or the action is essentially free.    
\end{conjecture}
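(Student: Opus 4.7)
Following the pattern established in this subsection for Conjectures~\ref{conj:tau} and~\ref{conj:coam}, the plan is to deduce Conjecture~\ref{conj:SZ} from Connes' Character Rigidity Conjecture, which \S\ref{sec:deg1} records as a consequence of Conjecture~\ref{conj:SG}. Given an ergodic pmp action $\Gamma\curvearrowright(X,\mu)$, I attach to it the \emph{stabilizer character}
\[ \chi_\mu(\gamma)\defq\mu\bigl(\{x\in X:\gamma\cdot x=x\}\bigr),\qquad \gamma\in\Gamma. \]
This is the restriction to $\Gamma$ of the canonical trace on the crossed product von Neumann algebra $L^\infty(X,\mu)\rtimes\Gamma$, hence a normalized positive-definite conjugation-invariant function on $\Gamma$, i.e., a character in the sense of Connes. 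A standard ergodic-decomposition argument shows that ergodicity of $(X,\mu)$ forces $\chi_\mu$ to be an extreme point of the Choquet simplex of characters on $\Gamma$.

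Next, I invoke Character Rigidity: for a center-free irreducible higher-rank lattice~$\Gamma$, every extremal character is of one of three types --- (a)~the regular character $\delta_e$, (b)~the trivial character $\chi\equiv 1$, or (c)~a finite-dimensional character $\chi_\pi(\gamma)=d^{-1}\operatorname{tr}\pi(\gamma)$ for some irreducible finite-dimensional representation $\pi$ factoring through a finite quotient $\Gamma/N$. Translating back: in case~(a), $\mu(\mathrm{Fix}(\gamma))=0$ for every $\gamma\ne e$, so the action is essentially free; in case~(b), every element of $\Gamma$ fixes $\mu$-a.e.\ point, and ergodicity forces $(X,\mu)$ to be a single atom, which is essentially finite; in case~(c), for $\gamma\in N$ one has $\chi_\mu(\gamma)=1$, hence $N$ acts trivially $\mu$-a.e., so the action descends to an ergodic pmp action of the finite group~$\Gamma/N$, which again forces $(X,\mu)$ to be essentially finite.

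The main obstacle in this program is the underlying implication Conjecture~\ref{conj:SG}~$\Rightarrow$~Character Rigidity itself; once that is granted, the reduction above is essentially formal. A plan for that implication is to take a putative non-standard extremal character~$\chi$, form its GNS tracial von Neumann algebra $\mathcal{M}_\chi=\pi_\chi(\Gamma)''$, and apply Conjecture~\ref{conj:SG} to the unitary $\Gamma$-representation $\mathcal{H}_\chi\ominus\mathcal{H}_\chi^\Gamma$. If no non-zero subrepresentation extends to a $G$-representation, the resulting spectral gap should force $\mathcal{M}_\chi$ to be finite-dimensional, after which Margulis' finiteness theorem for finite-dimensional unitary representations of higher-rank lattices yields $\chi$ of type~(c); if instead such a $G$-extendable subrepresentation exists, the interplay between Howe--Moore decay of its matrix coefficients and the tracial normalization of $\chi$ should collapse $\chi$ onto type~(a) or~(b). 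The principal technical difficulty, and where the bulk of the work lies, is establishing the precise compatibility between the tracial state on $\mathcal{M}_\chi$ and the putative $G$-equivariance --- a higher-rank refinement of the Peterson--Thom machinery.
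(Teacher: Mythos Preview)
Your route differs from the paper's. The paper deduces Conjecture~\ref{conj:SZ} directly from Conjecture~\ref{conj:coam} (coamenable subgroups have finite index), not from Character Rigidity: by the structural work of Stuck--Zimmer, a non-essentially-free ergodic pmp action of a higher-rank lattice has a.e.\ coamenable stabilizer; Conjecture~\ref{conj:coam} then forces these stabilizers to have finite index, so a.e.\ orbit is finite, and ergodicity collapses the space to something essentially finite. This is a two-line reduction once Stuck--Zimmer is invoked. The paper does also record the implication through characters, but only as the citation ``Conjecture~\ref{conj:charfinite} $\Rightarrow$ Conjecture~\ref{conj:SZ}, see \cite[Propositions~3.4 and~3.5]{BBHP}'', and its own proof of Conjecture~\ref{conj:SG} $\Rightarrow$ Conjecture~\ref{conj:charfinite} is a substantially different (and much longer) argument than your sketch, going through the representation $\pi\otimes\bar\pi$ and Theorem~\ref{thm:rig}.

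More importantly, your argument has a genuine gap at the extremality step. The claim that ergodicity of $\Gamma\curvearrowright(X,\mu)$ forces the stabilizer character $\chi_\mu$ to be extremal is false. Already for the transitive action of $\Gamma$ on a non-trivial finite quotient $\Gamma/N$, every stabilizer equals $N$, so $\chi_\mu=1_N$; pulled back from $\Gamma/N$ this is the normalized character of the \emph{regular} representation of that finite group, i.e.\ a proper convex combination of all its normalized irreducible characters. Ergodicity of the action gives ergodicity of the associated IRS, but the affine map from IRS to characters does not carry extreme points to extreme points. To make the character route work one must decompose $\chi_\mu$ into extremals, use that by Margulis' Normal Subgroup Theorem the non-regular extremal characters form a \emph{countable} set (each factoring through one of countably many finite quotients), and then argue with the resulting atomic decomposition against the ergodic IRS; this is the actual content behind the BBHP reference, and it is not the ``standard ergodic-decomposition argument'' you invoke.
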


\begin{proof}[Conjecture~\ref{conj:coam} $\Rightarrow$ Conjecture~\ref{conj:SZ}]
It follows from the work of Stuck and Zimmer that if the action is not essentially free then a.e stabilizer is coamenable, hence a.e orbit is finite. By ergodicity the space is essentially finite.
\end{proof}

Let us explain the term \emph{IRS rigidity}.
An \emph{invariant random subgroup} of $\Gamma$, in short an \emph{IRS}, is a conjugation invariant probability measure on $\Sub(\Gamma)$, the space of subgroups of $\Gamma$, endowed with the Chabauty topology.
An equivalent formulation of Conjecture~\ref{conj:SZ} is that every IRS on $\Gamma$ is supported on a finite set.
Similarly, a URS on $\Gamma$ is a minimal subsystem in $\Sub(\Gamma)$.

\begin{conjecture}[URS rigidity] \label{conj:URS}
    Let $\Gamma$ be an irreducible higher rank lattice.
    Then every URS on $\Gamma$ is supported on a finite set.
\end{conjecture}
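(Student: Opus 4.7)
The plan is to deduce URS rigidity (Conjecture~\ref{conj:URS}) from IRS rigidity (Conjecture~\ref{conj:SZ}) by showing that every URS $X\subset \Sub(\Gamma)$ carries a $\Gamma$-invariant probability measure. Indeed, if such a measure $\nu$ exists, it is by definition an IRS supported on~$X$; by minimality of the $\Gamma$-action on $X$, the closed $\Gamma$-invariant set $\supp(\nu)$ must equal~$X$, and then Conjecture~\ref{conj:SZ} forces $X$ to be finite.

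To produce $\nu$, I would fix an admissible symmetric probability measure $\mu$ on $\Gamma$ generating $\Gamma$ and with finite logarithmic moment. A standard Markov--Kakutani argument on the compact convex set $\Prob(X)$ yields a $\mu$-stationary probability measure $\nu_0$. The crucial step is to upgrade $\nu_0$ to a $\Gamma$-invariant measure. I would apply the projective factor theorem of Nevo--Zimmer in the Lie case (respectively Bader--Shalom in the $S$-arithmetic case) to the $(G,\mu_G)$-space obtained from $(X,\nu_0)$ via a suitable induction procedure, in the spirit of~\cite{BBHP}*{Proposition~5.1}: any obstruction to $\nu_0$ being $\Gamma$-invariant would yield a non-trivial $\Gamma$-equivariant measurable map from the Poisson--Furstenberg boundary $G/P$ into a projective $G$-factor of this induced space. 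Since the underlying set consists of subgroups of $\Gamma$ and conjugation on $\Sub(\Gamma)$ is Chabauty-continuous, any such projective factor can be excluded via Howe--Moore mixing on $G/P$, forcing $\nu_0$ to be $\Gamma$-invariant.

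The hard part, and the main obstacle, is precisely this projective-factor exclusion. In rank one the analogous statement fails spectacularly: boundary URSs (such as the one coming from the action of a rank-one lattice on its Gromov boundary) have stationary but genuinely non-invariant measures. What saves the higher-rank case is exactly the strong spectral gap phenomenon encoded in Conjecture~\ref{conj:SSG}: any ``proximal'' behavior of the $\Gamma$-action on $X$ should manifest itself in a unitary representation without $G$-extension but lacking spectral gap, contradicting the conjecture. Thus I regard the plan above as essentially reducing Conjecture~\ref{conj:URS} to Conjecture~\ref{conj:SSG}, placing URS rigidity squarely inside the web of degree-$1$ consequences of Conjecture~\ref{conj:semisimple}. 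A more elementary first attempt --- deducing coamenability of some $H\in X$ from the dichotomy Conjecture~\ref{conj:SG} provides for the quasi-regular representation on $\ell^2(\Gamma/H)$, and then invoking Conjecture~\ref{conj:coam} --- appears to break down, since the no-spectral-gap alternative only produces a $G$-extendable subrepresentation rather than actual almost invariant vectors in $\ell^2(\Gamma/H)$.
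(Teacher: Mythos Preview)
First, note that Conjecture~\ref{conj:URS} is a \emph{conjecture}; the paper does not claim an unconditional proof. What the paper establishes is the implication ``Conjecture~\ref{conj:charfinite} $\Rightarrow$ Conjecture~\ref{conj:URS}'' (together with Conjecture~\ref{conj:SZ}), by citing \cite[Propositions~3.4 and~3.5]{BBHP}. In the diagram of Figure~\ref{fig2}, URS rigidity and IRS rigidity are \emph{parallel} consequences of charfiniteness; there is no arrow from IRS rigidity to URS rigidity.

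Your proposal takes a genuinely different route --- deducing URS rigidity from IRS rigidity by producing a $\Gamma$-invariant measure on the URS --- and this route has a real gap. A URS is a minimal compact $\Gamma$-system and need not carry any invariant measure; this is precisely why URS rigidity is not an automatic corollary of IRS rigidity. Your plan to upgrade a $\mu$-stationary measure to an invariant one via a Nevo--Zimmer / Bader--Shalom factor theorem is where the argument breaks down. Those factor theorems concern $(G,\mu_G)$-spaces and their projective quotients; after inducing the $\Gamma$-action on $\Sub(\Gamma)$ to a $G$-space, the conclusion you extract lives on the induced space, and there is no clear mechanism by which ``Howe--Moore mixing on $G/P$'' rules out a boundary map into that induced space. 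The sentence ``since the underlying set consists of subgroups of $\Gamma$ and conjugation is Chabauty-continuous, any such projective factor can be excluded'' is not an argument --- nothing about Chabauty continuity prevents proximal behavior of the conjugation action. You correctly sense in your last paragraph that what you need is a stiffness statement (stationary $=$ invariant on $\Sub(\Gamma)$), but you have not supplied one; you have only relocated the difficulty.

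The paper's route avoids exactly this obstacle: rather than trying to force an invariant measure on a minimal system, charfiniteness works at the level of traces on $C^\ast\Gamma$. The passage from charfiniteness to URS rigidity in~\cite{BBHP} goes through amenable traces and the structure of the Furstenberg boundary, not through a stationary-to-invariant upgrade on $\Sub(\Gamma)$. If you want to stay within the web of degree-$1$ consequences, the honest reduction is Conjecture~\ref{conj:SG} $\Rightarrow$ Conjecture~\ref{conj:charfinite} $\Rightarrow$ Conjecture~\ref{conj:URS}, which is what the paper records.
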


In \cite{BBHP} the concepts of charmenability and charfiniteness are defined.
We will not repeat the definitions here.
It is shown there that every irreducible higher rank lattice is charmenable and conjectured that it is charfinite.

\begin{conjecture}[charfiniteness] \label{conj:charfinite}
    Let $\Gamma$ be an irreducible higher rank lattice.
    Then $\Gamma$ is charfinite.
\end{conjecture}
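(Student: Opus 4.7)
The plan is to deduce Conjecture~\ref{conj:charfinite} from two main ingredients: the charmenability of $\Gamma$ established in \cite{BBHP}, and a version of URS rigidity (Conjecture~\ref{conj:URS}), which itself would be derived from Conjecture~\ref{conj:SG}. Since $\Gamma$ is center-free, charmenability states that every extremal character $\chi$ on $\Gamma$ is either the Dirac character at the identity or has amenable GNS tracial von Neumann algebra $M_\chi=\pi_\chi(\Gamma)''$. So the first step is to reduce to the case of a non-Dirac extremal character $\chi$ with $M_\chi$ amenable.

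The second step is to prove URS rigidity from Conjecture~\ref{conj:SG}, paralleling the proof of Conjecture~\ref{conj:coam} displayed above. Given a non-trivial URS $\mathcal{U}\subset\Sub(\Gamma)$, one argues, by a Furstenberg-boundary argument in the spirit of Kalantar-Kennedy and Kennedy-Ozawa, that every $\Lambda\in\mathcal{U}$ is coamenable in $\Gamma$, in the sense that $\ell^2(\Gamma/\Lambda)$ has almost invariant vectors. Once coamenability is in hand, Conjecture~\ref{conj:SG} combined with the Howe-Moore argument used for Conjecture~\ref{conj:coam} forces $\Lambda$ to be of finite index in $\Gamma$. Minimality of $\mathcal{U}$ then implies that $\mathcal{U}$ is a single finite $\Gamma$-conjugacy class of finite-index subgroups.

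The third step is to attach to any amenable extremal character $\chi$ a URS $\mathcal{U}_\chi$ whose finiteness forces $\chi$ to factor through a finite quotient of $\Gamma$. A natural candidate is the point-stabilizer URS of a $\Gamma$-boundary action canonically associated with $M_\chi$ via a Creutz-Peterson or Boutonnet-Houdayer type deformation-rigidity argument, or the stabilizer URS of an invariant random subalgebra in $M_\chi$. Once $\mathcal{U}_\chi$ is finite and consists of finite-index subgroups, the kernel of $\pi_\chi$ contains a finite-index subgroup; replacing it by its normal core, and invoking Margulis' normal subgroup theorem to confirm that the resulting quotient is finite, we conclude that $\chi$ factors through a finite quotient of $\Gamma$, which is precisely charfiniteness.

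The main obstacle lies in step three: attaching a meaningful URS to a given amenable extremal character. Charmenability and URS rigidity are formally independent statements, and bridging them requires substantial operator-algebraic input beyond the spectral-gap machinery exposed in the present paper, most plausibly a refinement of the Creutz-Peterson or Houdayer-Isono-Peterson dictionary between amenable tracial subrepresentations of $\Gamma$ and amenable minimal dynamical systems on $\Sub(\Gamma)$. A secondary technical point is the claim in step two that every $\Lambda\in\mathcal{U}$ is coamenable for a non-trivial URS, a fact that is classical for IRSs via the ergodic decomposition but more subtle for URSs, where the absence of a natural invariant probability measure forces one to argue through $\mathrm{C}^\ast$-algebraic or topological boundary methods.
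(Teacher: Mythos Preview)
Your proposal and the paper both aim at the conditional implication Conjecture~\ref{conj:SG} $\Rightarrow$ Conjecture~\ref{conj:charfinite}, and both begin correctly by reducing, via charmenability, to showing that every amenable extremal character is finite. After that the two diverge completely, and your route has a genuine gap that you yourself identify.

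Your plan is to pass through URS rigidity: deduce it from Conjecture~\ref{conj:SG}, then attach a URS to an amenable extremal character and use finiteness of that URS to conclude. Step three is not a technicality but the heart of the matter, and there is no mechanism in the paper (or in \cite{BBHP}) that produces a URS from a character in a way that reflects finiteness of the character. Worse, in the paper's logical flow URS rigidity is a \emph{consequence} of charfiniteness (see the proof block ``Conjecture~\ref{conj:charfinite} $\Rightarrow$ Conjecture~\ref{conj:SZ} and Conjecture~\ref{conj:URS}'' and Figure~\ref{fig2}), so any argument that relies on the paper's derivation of URS rigidity would be circular. Your step two is also speculative: the coamenability of every member of a URS is not established here and is, as you note, genuinely harder than the IRS case.

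The paper's argument avoids URSs entirely and works directly with the GNS representation $\pi$ of the amenable extremal character~$\phi$. The key construction is to pass to the tensor square $\pi\otimes\bar\pi$ acting on $V\hat\otimes V$, restrict to the cyclic subrepresentation $U$ generated by $v\otimes v$, and observe (via amenability of $\pi$) that $\pi\otimes\bar\pi$ has almost invariant vectors in $U$. One then invokes the rigid/tempered decomposition of Theorem~\ref{thm:rig} to isolate $U_{\rig}\subset U$, the maximal piece on which the $\Gamma$-action extends to a rigid-type $G$-representation. Conjecture~\ref{conj:SG} is applied exactly once, to show that the complement $U_{\rig}^\perp$ has a spectral gap; combined with the almost invariant vectors this forces $U_{\rig}\neq 0$. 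The projection $u$ of $v\otimes v$ to $U_{\rig}$ is then a nonzero diagonal-invariant vector in a $G\times G$-representation, so Howe--Moore makes it $G\times G$-invariant, hence $\Gamma\times\Gamma$-invariant. This kills weak mixing of $\pi\times\rho$, produces a finite-dimensional $\Gamma\times\Gamma$-subrepresentation, and extremality of $\phi$ forces $V$ itself to be finite-dimensional.

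In short: the missing idea in your proposal is precisely this tensor-square/rigid-type argument, which replaces the entire URS detour and uses Conjecture~\ref{conj:SG} in a single, concrete place.
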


Before this we need some preparation, taken from~\cite{BSS}.
Let $G$ be a semisimple group.
For every unitary representation $V$ of $G$ 
we let $V_{\temp}$ be the unique maximal subrepresentation that is tempered (that is, weakly contained in the regular representation). We let $V_{\rig}$ be its orthogonal complement, thus 
$V=V_{\temp}\oplus V_{\rig}$.
If $V=V_{\rig}$ we say that the representation $V$ is of \emph{rigid type}.
The following theorem explains why.

\begin{theorem}[\cite{BSS}] \label{thm:reg-rig}
Let $G$ be a connected semisimple Lie group with finite center. 
Let $\Gamma<G$ be an irreducible lattice.
Let $U$ and $V$ be unitary $G$-representations and let $T\colon U\to V$ be a $\Gamma$-morphism,
that is, a bounded linear map that is $\Gamma$-equivariant.
Then we have $T(U_{\temp})\subset V_{\temp}$ and $T(U_{\rig})\subset V_{\rig}$.
Furthermore, $T$ is $G$-equivariant on $U_{\rig}$.
\end{theorem}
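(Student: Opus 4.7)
The plan is to reduce the three claims to two core assertions: (a) every bounded $\Gamma$-equivariant operator between a tempered $G$-representation and a rigid one vanishes, and (b) every bounded $\Gamma$-equivariant operator between two rigid $G$-representations is automatically $G$-equivariant. Granted (a), the block decomposition of $T$ with respect to $U_{\temp}\oplus U_{\rig}$ and $V_{\temp}\oplus V_{\rig}$ kills its two off-diagonal parts, yielding (1) and (2) simultaneously; granted (b), the restriction $T|_{U_{\rig}}$ is then $G$-equivariant, yielding (3).

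For the cross-vanishing (a), I would view a bounded $\Gamma$-equivariant $S\colon A\to B$ as a $\Gamma$-invariant element for the natural $G$-action on the Banach space $B(A,B)$ and analyse the function $g\mapsto \langle S\pi_A(g)a,\pi_B(g)b\rangle$. On $\Gamma$ this function is constant at $\langle Sa,b\rangle$, whereas on $G$ it is controlled by matrix coefficients of the $G$-representation $A\otimes\bar B$, which is tempered by Fell absorption. Disintegrating $A\otimes\bar B$ over the unitary dual $\hat G$ and combining Howe--Moore decay on the tempered side with isolation of the rigid spectrum from the tempered spectrum on the other, one should force the constant value $\langle Sa,b\rangle$ to vanish for all $a,b$, hence $S=0$.

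For the $G$-equivariance (b), I would fix $g\in G$ and consider $S_g := \pi_V(g)\,T|_{U_{\rig}}\,\pi_U(g)^{-1}-T|_{U_{\rig}}$, a bounded $(\Gamma\cap g\Gamma g^{-1})$-morphism $U_{\rig}\to V_{\rig}$, strong-operator continuous in $g$ and vanishing for $g\in\Gamma$. In the higher-rank case $\Gamma$ is arithmetic by Margulis and its commensurator is dense in $G$; applying the (a)-type vanishing to $\Gamma\cap g\Gamma g^{-1}$ as $g$ ranges over the commensurator yields $S_g=0$ on a dense subgroup, and strong continuity concludes. In the rank-one case with non-arithmetic $\Gamma$, I would instead observe that $g\mapsto S_g$ is a bounded strongly continuous $1$-cocycle of $G$ in the rigid $G$-module $B(U_{\rig},V_{\rig})$ that is trivial on $\Gamma$, and a second application of the cross-vanishing idea against the tempered representation $L^2_0(G/\Gamma)$ should force it to be trivial.

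The main obstacle will be (a). A bounded $\Gamma$-invariant operator is only weakly accessible via matrix coefficients, so Hilbert--Schmidt tools do not apply directly, and one must argue fibrewise over the disintegration of $A\otimes\bar B$ while uniformly controlling both the tempered decay and the rigid isolation. This is presumably the technical heart of the expected proof in \cite{BSS}.
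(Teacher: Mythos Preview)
The paper does not contain a proof of this theorem; it is quoted from the forthcoming work~\cite{BSS} and then used as a black box in the argument for ``Conjecture~\ref{conj:SG} $\Rightarrow$ Conjecture~\ref{conj:charfinite}''. So there is no proof in the paper to compare your proposal against.

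On the merits of your outline: the reduction to the two assertions (a) and (b) is the right skeleton, and you correctly identify (a) as the crux. Two points, however, are genuine gaps rather than mere technicalities.

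First, in (a) the function $g\mapsto\langle S\pi_A(g)a,\pi_B(g)b\rangle$ is \emph{not} a matrix coefficient of $A\widehat{\otimes}\bar B$ unless $S$ is Hilbert--Schmidt, as you note; but your suggested fix (``argue fibrewise over the disintegration while uniformly controlling both the tempered decay and the rigid isolation'') silently assumes that the rigid spectrum is isolated from the tempered spectrum. This fails already for $\SL_2(\bbR)$, where the complementary series accumulates on the tempered dual, yet the theorem is stated for all connected semisimple $G$ with finite center. So whatever mechanism drives (a), it cannot be a uniform spectral separation argument of this kind.

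Second, your argument for (b) does not close. You form $S_g=\pi_V(g)T\pi_U(g)^{-1}-T$ and want $S_g=0$ for $g$ in the commensurator by ``applying the (a)-type vanishing to $\Gamma\cap g\Gamma g^{-1}$''. But (a) is a statement about $\Gamma$-maps between a \emph{tempered} and a \emph{rigid} representation, whereas $S_g$ maps $U_{\rig}$ to $V_{\rig}$; (a) says nothing about such maps. Knowing $S_\gamma=0$ for $\gamma\in\Gamma$ and having a dense commensurator does not by itself force $S_g=0$ on the commensurator without an additional rigidity input. The rank-one cocycle sketch is likewise incomplete: $g\mapsto S_g$ is a cocycle with values in the Banach $G$-module $B(U_{\rig},V_{\rig})$, and invoking (a) ``against $L^2_0(G/\Gamma)$'' again mixes tempered and rigid in a way that (a) does not cover.
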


\begin{theorem}[\cite{BSS}] \label{thm:rig}
Let $G$ and $\Gamma$ be as in Theorem~\ref{thm:reg-rig}. 
Let $\Gamma<G$ be an irreducible lattice.
Let $U$ be a unitary representation of $\Gamma$.
Then there exists a unique maximal subrepresentation $U_{\rig}$ of $U$ on which the $\Gamma$-action extends to a unitary $G$-representation of rigid type.
\end{theorem}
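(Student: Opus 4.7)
My plan is to construct $U_{\rig}$ as the closed linear span inside~$U$ of the images of all bounded $\Gamma$-morphisms from rigid-type unitary $G$-representations, and to transport their $G$-structures into~$U$ via polar decomposition and Theorem~\ref{thm:reg-rig}. Uniqueness of such a maximal subrepresentation will be immediate: if a closed $\Gamma$-subrepresentation $W\subset U$ carries two rigid-type $G$-extensions $\rho_1,\rho_2$, then the identity map $(W,\rho_1)\to(W,\rho_2)$ is a $\Gamma$-morphism between two rigid $G$-representations, so Theorem~\ref{thm:reg-rig} forces it to be $G$-equivariant on the common rigid part, namely all of~$W$; hence $\rho_1=\rho_2$.

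The key construction is the transport along a single $\Gamma$-morphism. Fix a unitary $G$-representation~$V$ of rigid type together with a bounded $\Gamma$-morphism $T\colon V\to U$. The operator $T^\ast T\colon V\to V$ is itself a $\Gamma$-morphism between two copies of the rigid representation~$V$, so Theorem~\ref{thm:reg-rig} implies that $T^\ast T$, and therefore by functional calculus also $|T|=\sqrt{T^\ast T}$, is $G$-equivariant. The polar decomposition $T=W|T|$ supplies a partial isometry $W\colon V\to U$ with initial space $(\ker T)^\perp=(\ker|T|)^\perp$ and final space $\overline{T(V)}$. Since $\ker|T|$ is $G$-invariant, $(\ker T)^\perp$ is a closed $G$-subrepresentation of the rigid~$V$, hence rigid itself; transporting its $G$-action along the isometry $W$ yields a unitary $G$-representation $\rho_T$ on $\overline{T(V)}$ of rigid type, characterized on $T(V)$ by $\rho_T(g)T(v)=T(gv)$. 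The intertwining relation $T(\gamma v)=\gamma T(v)$ then shows that $\rho_T$ restricts on $\overline{T(V)}$ to the ambient $\Gamma$-action on~$U$.

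Let $\mathcal{P}$ be the collection of all such pairs $(V,T)$, and set
\[U_{\rig} \defq \overline{\sum_{(V,T)\in\mathcal{P}} T(V)}\subset U.\]
The family $\mathcal{P}$ is directed via the operation $(V_1,T_1)\vee(V_2,T_2)\defq(V_1\oplus V_2,\,T_1+T_2)$, whose first coordinate is again rigid and whose image equals $T_1(V_1)+T_2(V_2)$. On the dense subspace $T_1(V_1)+T_2(V_2)$, the transported action $\rho_{T_1+T_2}$ acts by $T_1 v_1+T_2 v_2\mapsto T_1(gv_1)+T_2(gv_2)$, so it preserves each $\overline{T_i(V_i)}$ and restricts there by continuity to $\rho_{T_i}$; in particular $\rho_{T_1}$ and $\rho_{T_2}$ agree on the overlap $\overline{T_1(V_1)}\cap\overline{T_2(V_2)}$. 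Consequently the $\rho_T$'s glue into a unitary action on the directed union $\bigcup_{(V,T)}\overline{T(V)}$, which a standard density argument extends continuously to a unitary $G$-representation $\rho$ on $U_{\rig}$. Maximality is automatic: any closed $\Gamma$-subrepresentation $W\subset U$ with a rigid $G$-extension appears as the pair $(W,\iota_W)\in\mathcal{P}$, forcing $W\subset U_{\rig}$.

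It remains to verify that $(U_{\rig},\rho)$ is itself of rigid type. Let $R\subset U_{\rig}$ be its maximal tempered $G$-subrepresentation. For each $(V,T)\in\mathcal{P}$, the orthogonal projection $\pi_T\colon U_{\rig}\twoheadrightarrow\overline{T(V)}$ is $G$-equivariant since $\overline{T(V)}$ is $G$-invariant and closed, so $\pi_T(R)$ is a tempered $G$-subrepresentation of the rigid representation $\overline{T(V)}$ and hence vanishes. Thus $R$ is orthogonal to every $\overline{T(V)}$, and consequently to their closed linear span $U_{\rig}$, forcing $R=0$. I expect the main obstacle in carrying this out to be the compatibility of the $\rho_T$'s across different pairs $(V,T)$: passing to the enlarged pair $(V_1\oplus V_2,\,T_1+T_2)$ produces the needed identification on $\overline{T_1(V_1)}\cap\overline{T_2(V_2)}$, but the careful bookkeeping of how the polar decompositions and restricted $G$-actions interact -- especially the fact that the $G$-structure on the larger closure, built from $|T_1+T_2|$, really does restrict to the separately-built $G$-structures on each $\overline{T_i(V_i)}$ -- is the most delicate step.
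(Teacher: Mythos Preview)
The paper does not contain its own proof of this statement: Theorem~\ref{thm:rig} is quoted from the external reference~\cite{BSS} and used as a black box in the argument that Conjecture~\ref{conj:SG} implies Conjecture~\ref{conj:charfinite}. So there is nothing to compare against, and I can only assess your argument on its own merits.

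Your approach is sound and the architecture is the natural one: build $U_{\rig}$ as the closed span of images of $\Gamma$-morphisms from rigid $G$-representations, push the $G$-structure through each such morphism via polar decomposition (using Theorem~\ref{thm:reg-rig} to make $|T|$ $G$-equivariant), and then glue. The uniqueness, maximality, and rigid-type verifications are all correct as written. The one place that deserves a slightly fuller sentence is the gluing step you yourself flag. Two small points to spell out: first, the directed union $\bigcup_{(V,T)}\overline{T(V)}$ is genuinely a linear subspace precisely because of the join operation you introduce (any $u_1+u_2$ with $u_i\in\overline{T_i(V_i)}$ lies in $\overline{(T_1+T_2)(V_1\oplus V_2)}$), which is what makes the phrase ``glue into a unitary action'' meaningful; second, strong continuity of the extended action on $U_{\rig}$ follows from a $3\epsilon$-argument using that each $\rho(g)$ is unitary and that continuity already holds on the dense directed union. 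With those two remarks made explicit, the proof is complete.
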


Now we are able to prove the following implication. 

\begin{proof}[Conjecture~\ref{conj:SG} $\Rightarrow$ Conjecture~\ref{conj:charfinite}]

By the main results of \cite{BBHP} and \cite{BH19}, $\Gamma$ is charmenable,
thus it is charfinite if it satisfies items (3)-(5) in \cite[Definition~1.2]{BBHP}.
Item (3) is clear by Zariski density and (4) follows from super-rigidity. So we only need to establish (5), that is, to show that every amenable extremal character of~$\Gamma$ is finite.
To this end, let us fix an amenable extremal character $\phi$. 

We let $\pi\colon\Gamma\to \mathcal{U}(V)$ be the associated amenable GNS representation and $v\in V$ be the corresponding cyclic vector.
Then there exists a representation $\rho\colon\Gamma\to \mathcal{U}(V)$ such that $\rho(\Gamma)$ commutes with $\pi(\Gamma)$, giving rise to a representation $\pi\times \rho:\Gamma\times \Gamma\to \mathcal{U}(V)$ such that $v$ is $(\pi\times \rho)(\Delta)$-invariant, where $\Delta<\Gamma\times \Gamma$ is the diagonal subgroup. 
By the equation 
\[ \phi(g)=\langle \pi(g)v,v \rangle=\langle \pi(g)(\pi\times \rho)(g^{-1},g^{-1})v,v \rangle=\langle \rho(g^{-1})v,v \rangle=\langle \rho(g)^*v,v \rangle \]
and the fact that $v$ is $\pi$-cyclic, we conclude that $\rho\cong \bar{\pi}$ and $v$ is  $\rho$-cyclic.

Similarly, we have the representation $\bar{\pi}\colon\Gamma\to \mathcal{U}(V)$, associated with the character $\bar{\phi}$, and the corresponding commuting representation $\bar{\rho}$.
We get a representation 
\[ \pi\times \bar{\pi}\times \rho\times \bar{\rho}\colon\Gamma\times\Gamma\times\Gamma\times\Gamma\to \mathcal{U}(V\hat{\otimes} V). \]
The vector $v\otimes v$ is cyclic for both $\pi\times \bar{\pi}$
and $\rho\times \bar{\rho}$.
We consider the representation $\pi\otimes \bar{\pi}=(\pi\times \bar{\pi})|_\Delta$
and let $U\subset V\hat{\otimes} V$ be the cyclic $\pi\otimes \bar{\pi}$-subrepresentation generated by $v\otimes v$.
The GNS representation associated with $|\phi|^2=\phi\cdot\bar{\phi}$ is isomorphic to $U$,
endowed with the representation $\pi\otimes \bar{\pi}$ and the commuting representation $\rho\otimes \bar{\rho}$, where the vector $v\otimes v$ is invariant under the corresponding diagonal
and is cyclic for both representations.
By the amenability of $\pi$ we obtain that $\pi\otimes \bar{\pi}$ has almost invariant vectors in $V\hat{\otimes} V$. We claim that it already has almost invariant vectors in~$U$.

We now prove the claim.
We let $\mu\in \Prob(\Gamma)$ be the uniform measure on a finite set of generators and we consider the associated averaging operator $T=\pi\otimes \bar{\pi}(\mu)$.
This is a positive self adjoint operator on $V\hat{\otimes} V$ which preserves $U$.
It has spectrum in $[0,1]$.
For every $\epsilon>0$ we consider the associated $[1-\epsilon,1]$-spectral projection $p_\epsilon$,
which also preserves $U$.
Since $\pi\otimes \bar{\pi}$ has almost invariant vectors, we conclude that $p_\epsilon\neq 0$.
By the fact that $v\otimes v$ is $\rho\times \bar{\rho}$-cyclic
and $\rho\times \bar{\rho}(\Gamma\times \Gamma)$ commutes with $T$, we conclude that $p_\epsilon(v\otimes v)\neq 0$.
We note that $p_\epsilon(v\otimes v)\in U$.
Since $\epsilon$ was arbitrary, this shows that~$1$ is in the spectrum of~$T$ as an operator on~$U$.
This proves the claim.

We keep considering  $U$ as a $\pi\otimes \bar{\pi}$ representation
and let $U_{\rig}<U$ be the unique maximal subrepresentation on which the $\Gamma$-action extends to a unitary $G$-representation of rigid type, guaranteed by Theorem~\ref{thm:rig}.
We note that $U_{\rig}$ is $\rho\otimes \bar{\rho}$-invariant, by commutation.

Next we show that the complement $U_{\rig}^\perp$ of $U_{\rig}$ in $U$ has a spectral gap: A maximal subrepresentation of $U_{\rig}^\perp$ on which the $\Gamma$-action extends to a unitary $G$-representation (which exists by Zorn's Lemma) must be tempered. Hence it has a spectral gap even when restricted to a simple factor $G_1<G$, as does $L^2(G_1)$.  Further, its complement in $U_{\rig}^\perp$ has a spectral gap by the assumed Conjecture~\ref{conj:SG}.

So the subrepresentation  $U_{\rig}$ is non-trivial because $\pi\otimes \bar{\pi}$ has almost invariants vectors in $U$ and $U_{\rig}^\perp$ has a spectral gap. 

We consider the orthogonal projection from $U$ to $U_{\rig}$, which is $(\pi\otimes \bar{\pi})\times (\rho\otimes \bar{\rho})$ equivariant, and denote by $u<U_{\rig}$ the image of $v\otimes v$.
This is a cyclic vector in $U_{\rig}$ for both $\pi\otimes \bar{\pi}$ and $\rho\otimes \bar{\rho}$,
in particular $u\neq 0$,
and it is stable under the diagonal subgroup $((\pi\otimes \bar{\pi})\times (\rho\otimes \bar{\rho}))(\Delta)$.
We conclude as before that $(\rho\otimes \bar{\rho})|_{U_{\rig}}$ is the complex conjugate of $(\pi\otimes \bar{\pi})|_{U_{\rig}}$. 
In particular, $(\rho\otimes \bar{\rho})|_{U_{\rig}}$ extends to a $G$-representation on $U_{\rig}$.
As we may, we view $U_{\rig}$ as a $G\times G$-representation.

The vector $0\neq u\in U_{\rig}$ is $\Delta$-invariant, thus $G\times G$-invariant by the Howe-Moore Theorem, and in particular $\Gamma\times \Gamma$-invariant.
It follows that $V$ is not $\pi\times \rho$-weakly mixing. Thus it contains a finite dimensional $\Gamma\times \Gamma$-subrepresentation, which equals $V$ itself by the extremality assumption on $\phi$.
Hence $\phi$ is finite
\end{proof}

\begin{proof}[Conjecture~\ref{conj:charfinite} $\Rightarrow$ Conjecture~\ref{conj:SZ} and Conjecture~\ref{conj:URS}]
See \cite[Propositions 3.4 and 3.5]{BBHP}.
\end{proof}

The following conjecture is due to Connes.
It is a far reaching generalization of Margulis' Normal Subgroup Theorem.
It was proved by Bekka for $\SL_n(\mathbb{Z})$, $n\geq 3$ and by Peterson in case all simple factors of $G$ are of higher rank.

\begin{conjecture}[Character Rigidity] \label{conj:CR}
    Every character of a higher rank lattice is either central or finite dimensional.
\end{conjecture}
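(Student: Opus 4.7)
The plan is to reduce Conjecture~\ref{conj:CR} to the framework just developed for Conjecture~\ref{conj:charfinite}, keeping Conjecture~\ref{conj:SG} as the standing assumption. By the Choquet decomposition of the simplex of characters of $\Gamma$, it suffices to treat an extremal character $\phi$; form the GNS triple $(\pi,V,v)$ together with the commuting representation $\rho\cong\bar\pi$ exactly as in the preceding argument, and let $M=\pi(\Gamma)''$ be the resulting finite factor. If $\pi$ is amenable, the proof of Conjecture~\ref{conj:charfinite} already shows that $\phi$ is finite, and in particular finite-dimensional, and we are done. It therefore remains to treat the case where $\pi$ is non-amenable and to deduce that $\phi$ is then central.

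Following the strategy of the preceding proof, I would form the tensor product $\pi\otimes\bar\pi$ of $\Gamma\times\Gamma$ on $V\widehat\otimes V$ and let $U$ be the cyclic subspace generated by the $\Delta$-invariant vector $v\otimes v$, where $\Delta<\Gamma\times\Gamma$ is the diagonal. Applying Theorem~\ref{thm:rig} I decompose $U=U_{\rig}\oplus U_{\rig}^\perp$. By Theorem~\ref{thm:reg-rig}, any $\Gamma$-equivariant $G$-extension of a subrepresentation of $U_{\rig}^\perp$ must be tempered, so Conjecture~\ref{conj:SG} applied to $U_{\rig}^\perp$ (as in the corresponding step of the charfiniteness proof) produces a spectral gap there. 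Since $v\otimes v$ is an honest $\Delta$-fixed vector, not merely an almost-invariant one, it must lie entirely in $U_{\rig}$. The symmetric argument using $\rho\otimes\bar\rho$ shows both actions on $U_{\rig}$ extend to $G$, so $U_{\rig}$ carries a rigid-type $G\times G$-representation with a non-zero, cyclic $\Delta$-invariant vector.

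At this stage I would appeal to Howe-Moore for $G\times G$, together with Margulis super-rigidity for finite-dimensional unitary representations of higher-rank lattices, to force the $\Delta$-invariant subspace of $U_{\rig}$ to consist of vectors through which $G\times G$ acts via its center. Unpacking the cyclicity of $v\otimes v$ under $\rho\otimes\bar\rho$ translates this into the statement that $\phi$ factors through the center of $\Gamma$, i.e.\ $\phi$ is central, as required.

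The main obstacle is the last transition, from ``there exists a non-zero $\Delta$-invariant vector inside a rigid-type $G\times G$-subrepresentation'' to ``the original extremal character $\phi$ is central''. In the amenable case one exploited an almost-invariant sequence together with the Kazhdan projection to promote cyclicity in a controlled way; here one has an honest invariant but needs a finer classification of the rigid-type $G$-subrepresentations that can carry diagonal $\Gamma$-invariants, ruling out everything except the trace associated with the center. When $G$ has rank-one factors, this ruling-out is precisely where Conjecture~\ref{conj:SG} is indispensable, and where the interplay between Howe-Moore, super-rigidity and the tensor-square construction will need to be controlled most delicately.
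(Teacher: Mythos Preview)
The paper does not attempt a direct argument here at all: its ``proof'' is the single line \emph{See~\cite{BBHP}}, recording that Conjecture~\ref{conj:CR} is equivalent to Conjecture~\ref{conj:charfinite}. Since the implication Conjecture~\ref{conj:SG} $\Rightarrow$ Conjecture~\ref{conj:charfinite} was just established, nothing more is needed. The equivalence in~\cite{BBHP} combines the (already known) charmenability of $\Gamma$, which yields the dichotomy ``every extremal character is either amenable or supported on the center'', with charfiniteness, which disposes of the amenable branch. Your attempt is essentially trying to reprove the non-amenable half of this dichotomy by hand.

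Your direct argument has a genuine gap. You assert that $v\otimes v$ is an honest $\Delta$-fixed vector for the representation $\pi\otimes\bar\pi$, and then use this to force it into $U_{\rig}$. But this is false: what the GNS construction gives is $\pi(g)\rho(g)v=v$, so $v\otimes v$ is invariant only under the diagonal of the \emph{combined} action $(\pi\otimes\bar\pi)\times(\rho\otimes\bar\rho)$, not under $\pi\otimes\bar\pi$ alone. In the amenable case the paper compensated for exactly this by producing an almost-invariant sequence for $\pi\otimes\bar\pi$ from amenability of $\pi$; in the non-amenable case you have no such sequence, and no fixed vector for $\pi\otimes\bar\pi$ to work with. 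Consequently the projection of $v\otimes v$ to $U_{\rig}^\perp$ need not vanish, and the rest of your argument collapses. The passage from ``non-amenable extremal character'' to ``supported on the center'' genuinely requires the charmenability machinery of~\cite{BBHP} (boundary theory, stationary dynamics), not a variant of the tensor-square trick.
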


\begin{proof}[Conjecture~\ref{conj:charfinite} $\Leftrightarrow$ Conjecture~\ref{conj:CR}]
See~\cite{BBHP}.
\end{proof}

In case $G$ has a factor with T,
Conjecture~\ref{conj:charfinite} was proved in~\cite{BBHP} and~\cite{BH19}.
Very recently it was proved with no conditions on~$G$ for all non-uniform lattices by Dogon-Glasner-Gorfine-Hanany-Levit~\cite{5authors}. 
However, it is still open for all cocompact lattices in products of groups which do not have T.

\subsubsection{Degree 2: Stability}

An excellent background reference for stability is Thom's article~\cite{Thom}.
Here we will only make some comments on recent developments. 

Cohomological vanishing in  degree 2 has applications in the theory of stability. This was an important insight of the work of de Chiffre-Glebsky-Lubotzky-Thom~\cite{CGLT}. 
Based on this and Garland's work they constructed central extensions of certain higher rank $p$-adic lattices that are not Frobenius-approximable. Based on \cite{CGLT} and on~\cite{badsau}, Bader-Lubotzky-Sauer-Weinberger showed that the non-trivial central extensions of symplectic lattices are not Frobenius-approximable~\cite{bader+lubotzky+sauer+weinberger}.

\cite[Question 3.10]{Thom} was answered positively in \cite{badsau}, see Theorem~\ref{thm:higher}.
\cite[Question 3.11]{Thom} was answered positively for $1<p<\infty$ in Lubotzky-Oppenheim \cite{Lub-Opp},
using vanishing of second cohomology in certain $L^p$-representations to show Schatten-$p$-norm stability and non-approximability.
For every $1<p_1<p_2<\infty$ they constructed groups with vanishing of second cohomology with coefficients in these $L^p$ representations, for every $p\in[p_1,p_2]$.
They used that to construct a finitely presented group which is not $p$-norm approximated for any $1<p<\infty$.
They notably left open the cases $p=1$ and $p=\infty$, both with respect to cohomological vanishing and non-approximability. 
The non-approximability question in $p=1$ was very recently solved by Bachner-Dogon-Lubotzky in \cite{BDLub}, in a way bypassing the cohomological vanishing question.
Corollary~\ref{cor:com} settles completely the question of cohomological vanishing also in the case $p=1$.
The case $p=\infty$ remains open. We note, however, that the discouraging reasoning given by the end of the introduction of \cite{Lub-Opp}, describing why their methods must be insufficient for this case, seems somewhat less discouraging in view of Theorem~\ref{thm:vNcriterion} and Conjecture~\ref{conj:tracial} which regard vanishing of cohomology with von Neumann algebra coefficients.

\subsubsection{Degree 2: Applications to measurable diversity}\label{subsec:measdiv}

The fact that there are hyperbolic groups, like uniform lattices in the isometry group of the Cayley hyperbolic plane, with highter property~T (see Theorem~\ref{thm:F4}) combined with the theory of group-theoretic Dehn fillings \cites{osin, DGO}, especially work of Petrosyan--Sun~\cites{PS1, PS2, PS3}, is an interesting source of tension between rigidity and flexibility phenomena. 
In~\cite{fournierfacio+sauer} Fournier-Facio and the second author use the cohomological excision theorem for Cohen-Lyndon triples and property $(T_2)$ to obtain a good control on the second cohomology of hyperbolic quotients obtained by group-theoretic Dehn fillings. See also~\cite{fournierfacio} for a similar construction that shows that poperty $(T_2)$ does not pass to quotients in general. The rigid control on the second cohomology together with the flexibility of group-theoretic Dehn fillings allows to construct an uncountable family consisting of colimits of quotients of a fixed hyperbolic group with property $(T_2)$ such that the range of the second $\ell^2$-Betti number is uncountable. More precisely, the following can be shown if one takes a uniform lattice in the isometry group of the Cayley hyperbolic plane as a starting point.

\begin{theorem}[\cite{fournierfacio+sauer}*{Theorem~A}]
There exists an uncountable family of simple Kazhdan groups of rational cohomological dimension~$16$ with pairwise distinct positive second $\ell^2$-Betti number. Furthermore, these groups are pairwise non-measure equivalent. 
\end{theorem}
It is an easy consequence of Gaboriau's construction \cite{gaboriau:cost}*{Proposition VI.16} that there exist uncountably many pairwise non-measure equivalent groups \cite{ITD}*{Proposition 5.1}. The previous theorem is the first construction of a measurably diverse family with bounded rational cohomological dimension. 
Two constructions of measurably diverse families of Kazhdan groups have appeared last year and precede the above theorem. 
One is due to Fournier-Facio and Sun \cite{dimensions}, where the groups are distinguished by their entire sequence of $\ell^2$-Betti numbers, in a spirit similar to the work of L\'opez Neumann \cite{antonio}, who constructs an infinite family of finitely presented simple groups. Very recently, Ioana--Tucker-Drob \cite{ITD} produced further examples of groups with vanishing $\ell^2$-Betti numbers, arising from the theory of wreath-like products~\cite{wreathlike}. In particular, these groups have infinite rational cohomological dimension.

%\section{tensor f.d}

\bibliographystyle{amsalpha} % or amsralpha, amsplain, amsalpha
\bibliography{biblio.bib}

\end{document}